\documentclass[10pt,reqno]{amsart}

\usepackage[utf8]{inputenc}
\usepackage[T1]{fontenc}
\usepackage[english]{babel}
\usepackage[margin=1.2in]{geometry}
\usepackage{amssymb,amsmath,amsthm,mathrsfs}
\usepackage[dvipsnames]{xcolor}
\usepackage[numbers]{natbib}
\usepackage{enumitem}
\usepackage{mathtools}
\usepackage{bm}
\usepackage{orcidlink}
\usepackage{hyperref}

\hypersetup{
  colorlinks=true,
  linkcolor=NavyBlue,
  citecolor=NavyBlue,
  urlcolor=NavyBlue
}

\theoremstyle{plain}
\newtheorem{theorem}{Theorem}[section]
\newtheorem{proposition}[theorem]{Proposition}
\newtheorem{lemma}[theorem]{Lemma}
\newtheorem{corollary}[theorem]{Corollary}

\theoremstyle{definition}
\newtheorem{definition}[theorem]{Definition}
\newtheorem{remark}[theorem]{Remark}
\newtheorem{example}[theorem]{Example}

\theoremstyle{plain}
\newtheorem{conjecture}[theorem]{Conjecture}

\newcommand{\Q}{\mathbb{Q}}
\newcommand{\Z}{\mathbb{Z}}
\newcommand{\rad}{\operatorname{rad}}
\newcommand{\sq}{\operatorname{sq}}
\newcommand{\ord}{\operatorname{ord}}
\newcommand{\lin}{\mathrm{lin}}
\newcommand{\tot}{\mathrm{tot}}

\begin{document}

\title[Defect identities for the $abc$ conjecture]{Radical defects, Wieferich primes, and the $abc$ conjecture}
\author[R. Laniewski]{R. Laniewski}
\date{\today}

\subjclass[2020]{11D75, 11A25, 11N25}

\keywords{$abc$ conjecture, radical, squarefull integer, parity class, radical defect, Pell equation, Mersenne number, Wieferich prime, squarefree part, primitive divisor, least common multiple, exceptional set}

\begin{abstract}
For coprime $a+b=c$, the parity class in which $a$ and $b$ are odd and $c$ is even admits an exact treatment. Triples are measured by the radical excess
$E_{\varepsilon}=\log c-(1+\varepsilon)\log\operatorname{rad}(abc)$,
and called transgressive at fixed $\varepsilon>0$ when it is non-negative. The $abc$ conjecture asserts that such triples are finite in number.
The excess is written exactly in terms of the defect, the mass of repeated primes the radical discards, and rearranges into a linear threshold in which the smaller summand $s=\min\{a,b\}$ appears explicitly. Either $s$ stays bounded along a subsequence, or the defect must overshoot the threshold and rejoin it at an amplified exponent.

For the Mersenne family $\mathcal{V}_m = (1, 2^m-1, 2^m)$ transgression at $\varepsilon=0$ holds precisely when $2^m-1$ fails to be squarefree, on a set of exponents of density $47/210$. The defect obeys the exact law $\Delta_m = \Omega_m + \log G_m$, where $\Omega_m$ collects the Wieferich primes dividing $2^m-1$ and $G_m$ is the largest divisor of $m$ whose prime factors divide $2^m-1$. This divisor is trivial on prime-power exponents and largest on $m_k = \operatorname{lcm}(1,\dots,k)$, where the margin $q(\mathcal{V}_{m_k}) - 1 \geq (1-o(1))\log m_k / (m_k\log 2)$ is the most any Wieferich-free mechanism can give. An $abc$ counterexample family along the Mersenne line would require infinitely many Wieferich primes with exponential order--defect growth.

A final section separates two elliptic curves attached to the class: the Frey curve, whose minimal discriminant expresses the total defect with a bounded correction at $2$, and a congruent-number Jacobian, whose Szpiro quotient stays below $3$.
\end{abstract}

\maketitle

\tableofcontents

\section{Introduction}\label{sec:introduction}
Let \(a+b=c\) with \(a,b,c\in\Z_{>0}\) pairwise coprime. Such a triple is subject at once to an additive constraint and a multiplicative one, and the two are not readily compared. The \(abc\) conjecture of Masser and Oesterl\'e~\cite{oesterle1988abc} states how weakly they interact: for every \(\varepsilon>0\), only finitely many coprime triples satisfy
\[
  c\geq\rad(abc)^{1+\varepsilon}.
\]
Exactly one class among the coprime solutions has \(c\) even. Writing \(c=2K\) and \(a,b=K\mp M\) turns the additive relation into the identity \(ab=K^{2}-M^{2}\), and the odd parts of \(K\), \(K-M\) and \(K+M\) are pairwise coprime. The radical then separates into three independent pieces and the excess above the \(abc\) threshold can be computed rather than estimated.

A coprime triple \((a,b,c)\) satisfying \(a+b=c\) is called \emph{transgressive at exponent \(\varepsilon\)} if \(c \geq \rad(abc)^{1+\varepsilon}\), or equivalently if the quality \(q(a,b,c) = \log c / \log\rad(abc)\) satisfies \(q(a,b,c) \geq 1+\varepsilon\).
The \(abc\) conjecture asserts that for every fixed \(\varepsilon>0\), the set of transgressive triples is finite.

This paper asks under which arithmetic conditions an explicit one-parameter family can contain infinitely many transgressive triples.
For the family of Mersenne triples
\[
  \mathcal{V}_m = (1, 2^m-1, 2^m)
\]
the condition can be written as an exact criterion involving the order and defect structure of Wieferich primes.

\subsection{The Mersenne criterion}\label{subsec:intro-mersenne}

Write \(\theta_{\varepsilon}=\varepsilon/(1+\varepsilon)\).
Theorem~\ref{thm:mersenne-hits} answers a weak form of the question.
Its triples \(\mathcal{V}_{6n}\) all have quality greater than one.
The Mersenne line thus carries infinitely many triples above the critical quality, though with an excess that decays away.
Theorem~\ref{thm:mersenne-squarefree-classification} gives a complete description.
The quality of \(\mathcal{V}_m\) exceeds one exactly when \(2^m-1\) fails to be squarefree, and the indices at which this happens have lower density at least \(47/210\) by Corollary~\ref{cor:mersenne-hit-density}.
For the size of the excess, Theorem~\ref{thm:lcm-margin} gives, on the exponents \(m_k=\operatorname{lcm}(1,\dots,k)\),
\[
  q(\mathcal{V}_{m_k})-1
  \geq
  (1-o(1))\,\frac{\log m_k}{m_k\log2}.
\]
The growth estimate used in the proof is Nair's elementary lower bound \(\operatorname{lcm}(1,\dots,k) \geq 2^{k-1}\)~\cite{Nair1982}.
Theorem~\ref{thm:extremal-lcm} shows the margin to be extremal.
The part of the Mersenne defect free of Wieferich primes never exceeds \(\log m\).

The question of whether the line carries infinitely many triples transgressive at one fixed exponent lies deeper.
Section~\ref{sec:mersenne} shows this to hold precisely when
\[
  \frac{2^m-1}{\rad(2^m-1)}
  \geq
  2(1-2^{-m})\,2^{\theta_{\varepsilon}m}
\]
for infinitely many integers \(m\).
The left side is the repeated-prime mass of \(2^m-1\), and the criterion asks that this mass grow at a fixed exponential rate in \(m\).

By the lifting-the-exponent identity, the contribution of a fixed prime to the defect of \(2^m-1\) is logarithmic in \(m\) unless the prime is a \emph{Wieferich prime}, that is, a prime \(p\) with \(2^{p-1}\equiv1\pmod{p^2}\).
Theorem~\ref{thm:exact-defect-law} in Section~\ref{sec:wieferich} gives
\[
  \Delta_m=\Omega_m+\log G_m.
\]
Here \(\Omega_m\) is the intrinsic contribution of the Wieferich primes dividing \(2^m-1\), and \(G_m\) is the largest divisor of \(m\) all of whose prime factors divide \(2^m-1\).
On prime-power exponents \(G_m=1\), so quality above one can occur on that subline only through a Wieferich prime of prime-power order, a prime at present undiscovered (Corollary~\ref{cor:prime-power-quality}).
The order-closed exponents behave differently.
Among them are the numbers \(\operatorname{lcm}(1,\dots,k)\), where \(G_m\) contains the whole odd part of \(m\) and yields the quantitative margin recalled above.
Every fixed even base \(g\) gives a line of parity-class triples \((1,g^m-1,g^m)\) on which the same identity holds, by Theorem~\ref{thm:even-base-exact-law}.

For a Wieferich prime \(p\), let \(d_p=\ord_p(2)\) and \(A_p=v_p(2^{d_p}-1)\).
For a finite set \(S\) of Wieferich primes put
\[
  D(S)=\operatorname{lcm}_{p\in S}d_p
  \qquad\text{and}\qquad
  B(S)=\prod_{p\in S}p^{A_p-1}.
\]
Theorems~\ref{thm:wieferich-density-criterion} and~\ref{thm:order-defect-density} then give the following equivalence.
An infinite Mersenne family transgressive at some fixed positive exponent exists precisely when there are a constant \(\kappa>0\) and finite sets \(S_j\) of Wieferich primes with
\[
  D(S_j)\longrightarrow\infty
  \qquad\text{and}\qquad
  B(S_j)\geq2^{\kappa D(S_j)}.
\]
Whether such growth occurs is not known.

Writing \(D_m=D(\mathcal{W}(m))\) for the order core of the exponent, Theorem~\ref{thm:bounded-order-cofactor} bounds the quotient \(m/D_m\) by the integers below \(\lceil1+1/\varepsilon\rceil\) at every sufficiently large transgressive exponent.
By Theorem~\ref{thm:order-core-descent}, any infinite transgressive family at a given exponent may be replaced by one for which that quotient equals one.
On prime exponents \(\ell\) the condition takes a simpler form, since every prime dividing \(2^{\ell}-1\) then has order exactly \(\ell\) (Theorem~\ref{thm:prime-subline}).

Theorem~\ref{thm:wieferich-finite} shows that the infinitude of the Wieferich primes is necessary for an \(abc\) counterexample family on the Mersenne line.
If \(\mathcal{W}\) is finite, the Mersenne defect is at most \(\log m+O(1)\) and the line carries only finitely many transgressive triples.
An explicit ceiling for the surviving indices is supplied by Theorem~\ref{thm:effective-wieferich-bound}.
Infinitude alone would not suffice.
The criterion asks the orders \(d_p\) to stay small against the accumulated weights \(A_p\), and the joint distribution of these two quantities has not been investigated.
The heuristic count of Wieferich primes below \(x\), of order \(\log\log x\), falls short of the \(\exp(\kappa\ell)\) growth in \(B(\mathcal{W}(\ell))\) that Theorem~\ref{thm:wieferich-density-criterion} would require.

\subsection{The class and the defect identities}\label{subsec:intro-frame}

The proofs proceed inside the parity class where \(a\) and \(b\) are odd and \(c\) is even.
Writing
\[
  K=\frac{a+b}{2}
  \qquad\text{and}\qquad
  M=\frac{b-a}{2}
\]
one has \(a=K-M\), \(b=K+M\), and \(c=2K\).
The pair \((a,b)\) corresponds bijectively to a pair of coprime integers \((K,M)\) of opposite parity with \(K>|M|\geq0\).
The advantage of the class is that the identity \(ab=K^2-M^2\) holds in \(\Z\), that the odd prime supports of \(K\), \(K-M\), and \(K+M\) are pairwise disjoint, and that the height is exactly \(\log(2K)\), so that the radical factors into three pairwise coprime parts, and the estimates below rest upon that factorization.

These three properties are all that the identities of Sections~\ref{sec:defects} and~\ref{sec:boundary-free} require. Proposition~\ref{prop:exact-energy} expresses the excess through the defect, Proposition~\ref{prop:boundary-identity} evaluates what remains through the smaller summand \(s=\min\{a,b\}\), and Theorem~\ref{thm:exact-linear-threshold} reads the result as a threshold upon the linear defect. The two regimes announced above are the two ways in which a sequence may meet that threshold, and Theorem~\ref{thm:boundary-interior-dichotomy} separates them.

A transgressive triple is constrained locally as well. Its total defect concentrates on a single prime within one summand (Theorem~\ref{thm:linear-concentration}), and at least two distinct primes must appear with multiplicity at least two in \(abc\) (Theorem~\ref{thm:two-repeated-primes}). Along any transgressive sequence, the squarefull parts, the relative quotients, and the total defects diverge at explicit rates (Theorem~\ref{thm:counterexample-portrait}). The kernel congruence modulo \(8\) is the complete local obstruction at \(2\) for the residual quadratic equation (Proposition~\ref{prop:two-adic-complete}), and a positive proportion of all kernel pairs satisfies it (Proposition~\ref{prop:kernel-classes}).

Subsection~\ref{subsec:recurrences} presents two families that approach these thresholds to within an absolute constant.
A Pell recurrence and a dyadic norm recurrence each produce an infinite family of parity-class triples whose total defect lies within an absolute constant of the principal threshold.
The constraints of Section~\ref{sec:boundary-free} are thus sharp up to bounded error.
On the Pell family the sign of the residual margin can be read off.
Its quality exceeds one exactly when the product \(x_ny_n\) of the two Pell coordinates fails to be squarefree (Theorem~\ref{thm:pell-infinite-quality}), and a divisibility that propagates along the recurrence gives a progression of indices, of density \(1/7\), on which this comes to pass.

\subsection{The curves attached to the class}\label{subsec:intro-descent}

The class carries its own elliptic curves, and they differ from the Frey curve in the treatment of the prime \(2\) and in the rigidity of the family obtained. The construction of Frey attaches to a coprime triple the curve \(y^2=x(x-a)(x+b)\), after a choice of ordering of the summands governed by a congruence modulo \(4\), and the passage from the \(abc\) conjecture to the conjecture of Szpiro~\cite{Szpiro1990} is then made through inequalities in which the power of \(2\) dividing the conductor is carried as a bounded but unspecified factor. Definition~\ref{def:descent-frey} fixes the orientation of the pair \((\alpha,\beta)\) by the two-adic valuation of \(c\), a choice the parity class makes available because \(c\) is the even member. Proposition~\ref{prop:descent-minimal} determines the minimal model at \(2\) for each of the three ranges \(1\leq v_2(c)\leq3\), \(v_2(c)=4\), and \(v_2(c)\geq5\), together with the reduction type in each, and Corollary~\ref{cor:descent-szpiro} turns the Szpiro quotient into the identity
\[
  \log\lvert\Delta_{\min}(E_{\mathcal{T}})\rvert
  =
  2\delta_{\tot}(\mathcal{T})
  +2\log\rad(abc)
  +\kappa_2(\mathcal{T})\log2
\]
where \(\kappa_2\) takes only the two values \(4\) and \(-8\). The total defect appears in the minimal discriminant as an equality, with a two-adic correction of at most \(8\log2\), and the arithmetic of Sections~\ref{sec:defects} and~\ref{sec:wieferich} may therefore be read upon the curve itself. On the Mersenne line every term becomes explicit. Corollary~\ref{cor:descent-mersenne} expresses the Szpiro quotient of \(E_{\mathcal{V}_m}\) through \(\Omega_m\) and \(\log G_m\), so that the Wieferich decomposition of Theorem~\ref{thm:exact-defect-law} is recovered from the curve without loss.

The genus-one intersection of Definition~\ref{def:descent-curve} answers a separate concern. A descent conducted upon a Frey curve opens with the question of local solubility, whereas the two quadrics assembled from the squarefree kernels \(d_a\) and \(d_b\) carry the distinguished rational point \(P_{\mathcal{T}}=(u_a:u_b:1:1)\), supplied by the square extraction and demanding no local--global argument (Corollary~\ref{cor:descent-witnessed-local}). Transgression then constrains two quantities in opposite directions. The height \(H_\times(P_{\mathcal{T}})=u_au_b\) is bounded below by \(\bigl(s(\mathcal{T})R_{\mathcal K}(\mathcal{T})\bigr)^{1/2}(2K)^{\theta_\varepsilon/2}\), while the product \(d_ad_b\) of the kernels is bounded above by \(\bigl(2K-s(\mathcal{T})\bigr)\big/R_{\mathcal K}(\mathcal{T})(2K)^{\theta_\varepsilon}\), the same factor \((2K)^{\theta_\varepsilon}\) governing both (Proposition~\ref{prop:descent-height} and Theorem~\ref{thm:descent-kernel-compression}). The Jacobian retains none of this, since it depends only upon the square class of \(\Gamma_{\mathcal{T}}=d_ad_bKM\), and its Szpiro quotient stays below \(3\) for every triple of the class (Corollary~\ref{cor:descent-jacobian-szpiro}). Section~\ref{sec:descent} therefore treats the two constructions separately.

A second difference concerns rigidity. A Frey curve varies with its triple, its \(j\)-invariant taking a fresh value each time and its reduction at the odd primes being of type \(I_{2v_p(abc)}\), so that the family ranges over the moduli of elliptic curves as the triple varies. Theorem~\ref{thm:descent-jacobian} places \(J_{\mathcal{T}}\) instead within a single isomorphism class up to quadratic twist, that of the congruent-number curve \(Y^2=X^3-\gamma_{\mathcal{T}}^2X\), with constant \(j\)-invariant \(1728\), full rational \(2\)-torsion, and complex multiplication by \(\Z[i]\) over \(\Q(i)\). Proposition~\ref{prop:descent-jacobian-conductor} gives the conductor as \(2^5\gamma_{\mathcal{T}}^2\) for \(\gamma_{\mathcal{T}}\) odd and \(2^4\gamma_{\mathcal{T}}^2\) for \(\gamma_{\mathcal{T}}\) even, with no case left undetermined. Every parity-class triple therefore yields a member of one quadratic-twist family, the family of the congruent-number problem, whose \(L\)-functions, Heegner points, and Selmer groups carry an established literature. A generic Frey curve admits no such reference class. The Jacobian retains only the single parameter \(\gamma_{\mathcal{T}}\), which Theorem~\ref{thm:descent-kernel-compression} bounds from above whenever the triple is transgressive.

\subsection{Position within the literature}\label{subsec:intro-literature}

The strongest unconditional pointwise estimates remain logarithmic in \(c\), whereas the conjecture asserts a bound polynomial in \(\rad(abc)\).
Stewart and Yu proved that only finitely many triples satisfy \(\rad(abc)<(\log c)^{3-\eta}\) for a fixed \(\eta>0\)~\cite{StewartYu2001}.
Pasten obtained sharper subexponential estimates under additional restrictions, among them the hypothesis that one summand be bounded above by a fixed power of \(c\) smaller than \(c\) itself~\cite{Pasten2024}.
In the opposite direction Bright proved that infinitely many coprime triples satisfy
\[
  c>
  \rad(abc)
  \exp\left(
    6.563\sqrt{\frac{\log c}{\log\log c}}
  \right)
\]
which refines the earlier constants of Stewart and Tijdeman and of van Frankenhuysen~\cite{StewartTijdeman1986,vanFrankenhuysen2000}.
Examples of this kind fix a lower-bound scale for whatever strength a refinement of the conjecture might hope to possess~\cite{Bright2024}.

Silverman proved that the \(abc\) conjecture implies the existence of infinitely many non-Wieferich primes~\cite{Silverman1988}.
In the reverse direction it has long been known that a single Wieferich prime forces low-radical triples along a Mersenne sub-line, as in the family of Granville and Tucker~\cite{GranvilleTucker2002}.
On the line \((1,2^m-1,2^m)\), the present paper establishes an exact identity for the radical defect (Theorem~\ref{thm:exact-defect-law}), and an equivalence that ties an infinite family transgressive at a fixed exponent to the order--defect growth of the Wieferich primes (Theorems~\ref{thm:wieferich-density-criterion} and~\ref{thm:order-defect-density}).

Progress has also been made in counting the triples whose radical is too small.
For fixed \(0<\lambda<1\), let \(N_{\lambda}(X)\) count the coprime triples in \([1,X]^3\) satisfying \(a+b=c\) and \(\rad(abc)<c^\lambda\).
An argument resting upon work of de Bruijn gives
\[
  N_{\lambda}(X)
  \ll_{\lambda,\eta}
  X^{2\lambda/3+\eta}
\]
for every \(\eta>0\)~\cite{deBruijn1962}.
The first power saving over this bound is due to Bernert, Browning, Lichtman, and Ter\"av\"ainen, who obtained, for every fixed \(0<\lambda\leq2\) and every \(\eta>0\),
\[
  N_{\lambda}(X)
  \ll_{\lambda,\eta}
  X^{(23\lambda+3)/40+\eta}
\]
and, sharper as \(\lambda\to1\), the estimate
\[
  N_{\lambda}(X)
  \ll_{\lambda,\eta}
  X^{3/5+\eta}
\]
for every fixed \(0<\lambda<1\) and every \(\eta>0\)~\cite[Theorems~1.2 and~1.3]{BernertEtAl2026}.
Lichtman has since shown the conjecture to hold outside a sparse exceptional set~\cite{Lichtman2025}.
The sparsity falls short of the finiteness predicted by the conjecture.
Section~\ref{sec:benchmarks} sets these estimates beside the pointwise constraints obtained here.

\section{The parity class}\label{sec:frame}

\begin{definition}[Parity-class triple]\label{def:parity-triple}
A \emph{parity-class triple} is a triple \(\mathcal{T}=(a,b,c)\) of pairwise coprime positive integers satisfying
\[
  a+b=c
\]
where \(a\) and \(b\) are odd and \(c\) is even.
\end{definition}

Every pairwise coprime solution of \(a+b=c\) has exactly one even member.
Definition~\ref{def:parity-triple} keeps the class in which that member is \(c\), and the class in which the even member is \(a\) or \(b\) is not considered here.

\begin{definition}[Class coordinates]\label{def:frame-coordinates}
For a parity-class triple \(\mathcal{T}\), define
\[
  K=K(\mathcal{T})
  \coloneqq
  \frac{a+b}{2}
  \qquad\text{and}\qquad
  M=M(\mathcal{T})
  \coloneqq
  \frac{b-a}{2}.
\]
Then
\[
  a=K-M
  \qquad
  b=K+M
  \qquad
  c=2K.
\]
\end{definition}

\begin{lemma}[Coprimality of the class coordinates]\label{lem:KM-coprime}
For every parity-class triple one has
\[
  \gcd(K,M)=1.
\]
\end{lemma}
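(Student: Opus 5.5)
Let $d=\gcd(K,M)$, and note first that $K$ and $M$ are integers. Since $a$ and $b$ are both odd, $a+b$ and $b-a$ are even, so Definition~\ref{def:frame-coordinates} produces integers. The identities $a=K-M$ and $b=K+M$ then show that $d$ divides both $a$ and $b$. Because the triple is pairwise coprime by Definition~\ref{def:parity-triple}, $\gcd(a,b)=1$, and therefore $d=1$.

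An equivalent route goes through the inverse relations. From
\[
  2K=a+b \qquad\text{and}\qquad 2M=b-a,
\]
any common prime divisor $p$ of $K$ and $M$ divides $a+b$ and $b-a$. Hence $p$ divides $2a$ and $2b$. If $p$ is odd, this gives $p\mid a$ and $p\mid b$, which contradicts coprimality. If $p=2$, the direct argument $p\mid K-M=a$ already contradicts the oddness of $a$. The first argument covers both cases at once, so I would present that one.

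Nothing here is a real obstacle. The only point needing care is to confirm that $K$ and $M$ are integers, which uses the parity hypothesis, before taking their greatest common divisor. When $M=0$ we would have $a=b$, so $\gcd(a,b)=a=1$, giving $K=1$ and $\gcd(1,0)=1$. This case is also covered by the general argument.
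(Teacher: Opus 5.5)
Your first argument is the paper's proof: any common divisor of $K$ and $M$ divides $K-M=a$ and $K+M=b$, so it equals $1$ because $\gcd(a,b)=1$. Your additional checks, that $K$ and $M$ are integers and that the degenerate case $M=0$ works, are correct but not needed.
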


\begin{proof}
Every common divisor of \(K\) and \(M\) divides \(K-M=a\) and \(K+M=b\).
Since \(\gcd(a,b)=1\), the common divisor is \(1\).
\end{proof}

\begin{lemma}[Disjoint odd supports]\label{lem:disjoint-supports}
No odd prime divides two of the three integers
\[
  K,
  \qquad
  K-M,
  \qquad
  K+M.
\]
\end{lemma}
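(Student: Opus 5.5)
The plan is to check the three pairs of integers one at a time and reduce each to a statement already available. Two of the pairs go directly through Lemma~\ref{lem:KM-coprime}. The third pair, \(K-M\) and \(K+M\), is exactly \(a\) and \(b\), so it falls to the coprimality hypothesis in Definition~\ref{def:parity-triple}. In each case I would fix an odd prime \(p\) and suppose it divides both members of the pair, aiming for a contradiction.

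For the pair \(K\) and \(K-M\), suppose \(p\mid K\) and \(p\mid K-M\). Subtracting gives \(p\mid M\), so \(p\) divides \(\gcd(K,M)\). This equals \(1\) by Lemma~\ref{lem:KM-coprime}, a contradiction. The pair \(K\) and \(K+M\) is handled the same way, again obtaining \(p\mid M\) by subtraction. For the pair \(K-M=a\) and \(K+M=b\), a common prime divisor would divide \(\gcd(a,b)=1\), which is impossible.

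Alternatively, the pair \((K\pm M)\) can be handled in the same style as the others. If \(p\) divides both \(K-M\) and \(K+M\), then it divides their sum \(2K\) and their difference \(2M\). Because \(p\) is odd, it then divides both \(K\) and \(M\), contradicting Lemma~\ref{lem:KM-coprime}. This version makes clear where oddness of \(p\) is used. The prime \(2\) is excluded for a reason: \(K\) and \(M\) have opposite parity, so \(K-M\) and \(K+M\) are both odd, and \(2\) may still divide \(K\).

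There is no substantive obstacle here. The only care needed is to invoke the oddness of \(p\) exactly at the step where the factor \(2\) in \(2K\) and \(2M\) is cancelled. The rest is immediate from Lemma~\ref{lem:KM-coprime}.
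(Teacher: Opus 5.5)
Your proof is correct and follows the paper's own argument: a common odd prime divisor of two of $K$, $K-M$, $K+M$ is pushed by sums and differences (using oddness to cancel the $2$ in $2K$ and $2M$) onto both $K$ and $M$, contradicting Lemma~\ref{lem:KM-coprime}, and your direct appeal to $\gcd(a,b)=1$ for the pair $(K-M,K+M)$ is an equally valid shortcut. One small correction to your closing remark: since $K\pm M$ are both odd, the conclusion also holds for $p=2$, so oddness is needed only for cancelling the $2$ in the sum-and-difference version of the argument, not for the statement to be true.
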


\begin{proof}
Suppose that an odd prime \(p\) divides two of these integers.
Their sum or difference then shows that \(p\) divides the third, and in particular \(p\) divides both \(K\) and \(M\).
This contradicts Lemma~\ref{lem:KM-coprime}.
\end{proof}

\begin{definition}[Odd radical of the center]\label{def:center-radical}
Define
\[
  R_{\mathcal{K}}(\mathcal{T})
  \coloneqq
  \prod_{\substack{p\mid K\\p\ \mathrm{odd}}}p
\]
and
\[
  t^{\mathcal{K}}(\mathcal{T})
  \coloneqq
  \log R_{\mathcal{K}}(\mathcal{T}).
\]
\end{definition}

\begin{proposition}[Exact radical separation]\label{prop:radical-separation}
For every parity-class triple,
\[
  \rad(abc)
  =
  2R_{\mathcal{K}}(\mathcal{T})\rad(a)\rad(b).
\]
It follows that
\[
  \log\rad(abc)
  =
  \log2+t^{\mathcal{K}}(\mathcal{T})
  +\log\rad(a)+\log\rad(b).
\]
\end{proposition}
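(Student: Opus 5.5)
The plan is to compare prime supports directly. Since \(\rad(abc)\) is the product of the distinct primes dividing \(abc\), it suffices to show that the set of primes dividing \(abc\) is the disjoint union of \(\{2\}\), the odd primes dividing \(K\), the primes dividing \(a\), and the primes dividing \(b\). The identity then follows by multiplying over these four sets, and the logarithmic form follows by taking logarithms and recalling that \(t^{\mathcal{K}}(\mathcal{T})=\log R_{\mathcal{K}}(\mathcal{T})\).

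First, since \(c=2K\), the primes dividing \(c\) are exactly \(2\) together with the odd primes dividing \(K\). In particular \(\rad(c)=2R_{\mathcal{K}}(\mathcal{T})\), using that \(c\) is even (Definition~\ref{def:parity-triple}). Second, \(a\) and \(b\) are odd, so \(2\) divides neither. By Definition~\ref{def:frame-coordinates} we have \(a=K-M\) and \(b=K+M\), so Lemma~\ref{lem:disjoint-supports} shows that no odd prime divides two of \(K\), \(a\), \(b\). Hence the three sets of odd primes dividing \(K\), \(a\), and \(b\) are pairwise disjoint, and none of them contains \(2\).

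Third, because the triple is pairwise coprime, \(\rad(abc)=\rad(a)\rad(b)\rad(c)\). Substituting \(\rad(c)=2R_{\mathcal{K}}(\mathcal{T})\) gives \(\rad(abc)=2R_{\mathcal{K}}(\mathcal{T})\rad(a)\rad(b)\). Pairwise coprimality of \(a,b,c\) alone already yields this factorization, so Lemma~\ref{lem:disjoint-supports} is only needed to confirm that the factors are expressed in the class coordinates.

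No step is a genuine obstacle. The only point requiring care is the prime \(2\). It appears in \(c\) to some power \(v_2(c)\geq1\), possibly with \(K\) even, yet it contributes exactly one factor \(2\) to the radical. This is why \(R_{\mathcal{K}}\) is defined using only the odd primes of \(K\), which prevents \(2\) from being counted twice.
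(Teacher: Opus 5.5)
Your proof is correct and follows essentially the paper's argument: the prime support of \(abc\) splits into \(\{2\}\), the odd primes of \(K\), and the primes of \(a\) and of \(b\), and multiplying over these sets gives the identity. Your remark that pairwise coprimality of \(a,b,c\) already gives the needed disjointness is accurate, since the odd primes of \(K\) are exactly those of \(c\); the paper cites Lemma~\ref{lem:disjoint-supports} for this step instead.
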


\begin{proof}
The prime \(2\) divides \(c\) and does not divide \(a\) or \(b\).
Lemma~\ref{lem:disjoint-supports} shows that the odd primes dividing \(K\), \(a\), and \(b\) form three disjoint sets.
Multiplication over these sets gives the identity.
\end{proof}

\section{Linear and total defects}\label{sec:defects}

\begin{definition}[Radical excess and quality]\label{def:energy-quality}
For \(\varepsilon>0\), define the \emph{radical excess}
\[
  E_{\varepsilon}(\mathcal{T})
  \coloneqq
  \log c-(1+\varepsilon)\log\rad(abc).
\]
Define the \emph{quality}
\[
  q(\mathcal{T})
  \coloneqq
  \frac{\log c}{\log\rad(abc)}.
\]
Thus
\[
  E_{\varepsilon}(\mathcal{T})\geq0
  \quad\Longleftrightarrow\quad
  q(\mathcal{T})\geq1+\varepsilon.
\]
A triple satisfying these equivalent inequalities is called \emph{transgressive at exponent \(\varepsilon\)}.
\end{definition}

Since the conjecture forbids only the infinitude of such triples at a fixed exponent, a transgressive triple does not by itself refute it.

\begin{conjecture}[\(abc\) in the parity class]\label{conj:parity-abc}
For every \(\varepsilon>0\), only finitely many parity-class triples satisfy
\[
  E_{\varepsilon}(\mathcal{T})\geq0.
\]
\end{conjecture}

Failure of Conjecture~\ref{conj:parity-abc} would imply failure of the full \(abc\) conjecture.
The converse need not follow, since Conjecture~\ref{conj:parity-abc} concerns only the parity class in which \(c\) is even.

\begin{definition}[Linear defect]\label{def:linear-defect}
The \emph{linear defect} is
\[
  \delta_{\lin}(\mathcal{T})
  \coloneqq
  \log\left(
    \frac{ab}{\rad(a)\rad(b)}
  \right).
\]
Equivalently,
\[
  \delta_{\lin}(\mathcal{T})
  =
  \sum_{p\mid a}\bigl(v_p(a)-1\bigr)\log p
  +
  \sum_{p\mid b}\bigl(v_p(b)-1\bigr)\log p.
\]
\end{definition}

The linear defect is non-negative, and it vanishes precisely when \(a\) and \(b\) are squarefree.

\begin{proposition}[Exact radical-excess identity]\label{prop:exact-energy}
Put
\[
  r(\mathcal{T})
  \coloneqq
  \frac{|M|}{K}.
\]
Then \(0\leq r(\mathcal{T})<1\), and
\[
\begin{aligned}
  E_{\varepsilon}(\mathcal{T})
  ={}&
  -(1+2\varepsilon)\log K
  -(1+\varepsilon)t^{\mathcal{K}}(\mathcal{T})
  +(1+\varepsilon)\delta_{\lin}(\mathcal{T})
  \\[0.3em]
  &{}-(1+\varepsilon)\log\bigl(1-r(\mathcal{T})^2\bigr)
  -\varepsilon\log2.
\end{aligned}
\]
\end{proposition}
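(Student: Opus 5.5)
The identity is a direct computation from the definitions, so the plan is to expand \(E_{\varepsilon}(\mathcal{T})=\log c-(1+\varepsilon)\log\rad(abc)\) term by term, using Proposition~\ref{prop:radical-separation} for the radical and Definition~\ref{def:linear-defect} for the defect.

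\textbf{The bound on \(r\).} We have \(r(\mathcal{T})=|M|/K\) with \(K=(a+b)/2>0\). Since \(a,b>0\), both \(K-M=a\) and \(K+M=b\) are positive, so \(|M|<K\), which gives \(0\leq r<1\). In particular \(1-r^2>0\), and its logarithm is defined.

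\textbf{Expanding the radical.} By Proposition~\ref{prop:radical-separation},
\[
  \log\rad(abc)=\log2+t^{\mathcal{K}}(\mathcal{T})+\log\rad(a)+\log\rad(b).
\]
Definition~\ref{def:linear-defect} gives \(\log\rad(a)+\log\rad(b)=\log(ab)-\delta_{\lin}(\mathcal{T})\). Since \(ab=K^2-M^2=K^2(1-r^2)\), this becomes
\[
  \log\rad(a)+\log\rad(b)=2\log K+\log(1-r^2)-\delta_{\lin}(\mathcal{T}).
\]

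\textbf{Collecting terms.} Substitute this together with \(\log c=\log2+\log K\) into \(E_{\varepsilon}\). The \(\log K\) terms contribute \(\log K-2(1+\varepsilon)\log K=-(1+2\varepsilon)\log K\). The \(\log2\) terms contribute \(\log2-(1+\varepsilon)\log2=-\varepsilon\log2\). The remaining terms give \(-(1+\varepsilon)t^{\mathcal{K}}(\mathcal{T})\), \(+(1+\varepsilon)\delta_{\lin}(\mathcal{T})\) and \(-(1+\varepsilon)\log(1-r^2)\). Together these are exactly the stated formula.

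There is no real obstacle. The only point needing care is the factorization \(ab=K^2(1-r^2)\) and the sign bookkeeping of the factor \((1+\varepsilon)\) against the single \(\log c\).
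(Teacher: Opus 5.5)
Your proposal is correct and follows essentially the same route as the paper: write \(ab=K^2(1-r(\mathcal{T})^2)\), use Definition~\ref{def:linear-defect} to express \(\log\rad(a)+\log\rad(b)\) as \(2\log K+\log(1-r(\mathcal{T})^2)-\delta_{\lin}(\mathcal{T})\), and substitute into Proposition~\ref{prop:radical-separation} and the definition of \(E_{\varepsilon}\). Your explicit check that \(a,b>0\) forces \(|M|<K\), so that \(0\leq r(\mathcal{T})<1\) and \(\log(1-r(\mathcal{T})^2)\) is defined, is a small point the paper leaves implicit.
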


\begin{proof}
Since
\[
  ab=(K-M)(K+M)=K^2\bigl(1-r(\mathcal{T})^2\bigr)
\]
Definition~\ref{def:linear-defect} gives
\[
  \log\rad(a)+\log\rad(b)
  =
  2\log K+\log\bigl(1-r(\mathcal{T})^2\bigr)-\delta_{\lin}(\mathcal{T}).
\]
Substitution into Proposition~\ref{prop:radical-separation} and then into Definition~\ref{def:energy-quality} proves the identity.
\end{proof}

The term involving \(r(\mathcal{T})\) measures the asymmetry between \(a\) and \(b\), and it is of order \(\log(c/s(\mathcal{T}))\) once one summand is small against the other.
In Section~\ref{sec:boundary-free} this term is evaluated exactly through the smaller summand, and the identity then becomes a threshold.

\begin{definition}[Center defect and total defect]\label{def:total-defect}
Define the odd center defect by
\[
  \delta^{\mathcal{K}}(\mathcal{T})
  \coloneqq
  \sum_{\substack{p\mid K\\p\ \mathrm{odd}}}
  \bigl(v_p(K)-1\bigr)\log p.
\]
Define the two-adic defect by
\[
  \delta_2(\mathcal{T})
  \coloneqq
  v_2(K)\log2.
\]
Since \(c=2K\) gives \(v_2(c)=v_2(K)+1\), an equivalent form is
\[
  \delta_2(\mathcal{T})
  =
  \bigl(v_2(c)-1\bigr)\log2.
\]
This exhibits \(\delta_2\) as the two-adic multiplicity discarded by the radical in \(c\), the single factor of \(2\) retained by \(\rad(c)\) accounting for the subtracted unit.
The \emph{total defect} is
\[
  \delta_{\tot}(\mathcal{T})
  \coloneqq
  \delta_2(\mathcal{T})
  +
  \delta^{\mathcal{K}}(\mathcal{T})
  +
  \delta_{\lin}(\mathcal{T}).
\]
\end{definition}

\begin{lemma}[Total defect identity]\label{lem:total-defect-identity}
For every parity-class triple,
\[
  \delta_{\tot}(\mathcal{T})
  =
  \log\left(
    \frac{abc}{\rad(abc)}
  \right).
\]
\end{lemma}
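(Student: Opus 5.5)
The plan is to compute \(\log(abc/\rad(abc))\) one pairwise coprime piece at a time and match each piece with one summand of \(\delta_{\tot}\). First, write \(c=2K=2^{v_2(K)+1}K_{\mathrm{odd}}\), where \(K_{\mathrm{odd}}\) is the odd part of \(K\). Then
\[
  abc=a\cdot b\cdot 2^{v_2(K)+1}\cdot K_{\mathrm{odd}}.
\]
Proposition~\ref{prop:radical-separation} gives \(\rad(abc)=2R_{\mathcal{K}}(\mathcal{T})\rad(a)\rad(b)\). Dividing factor by factor, we get
\[
  \frac{abc}{\rad(abc)}
  =
  \frac{ab}{\rad(a)\rad(b)}
  \cdot 2^{v_2(K)}
  \cdot \frac{K_{\mathrm{odd}}}{R_{\mathcal{K}}(\mathcal{T})}.
\]

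Next, take logarithms and identify the three terms. The first is \(\delta_{\lin}(\mathcal{T})\) by Definition~\ref{def:linear-defect}. The second is \(v_2(K)\log 2=\delta_2(\mathcal{T})\). For the third, \(R_{\mathcal{K}}(\mathcal{T})\) is the product of the odd primes dividing \(K\), so
\[
  \log\frac{K_{\mathrm{odd}}}{R_{\mathcal{K}}(\mathcal{T})}
  =\sum_{p\mid K,\ p\ \mathrm{odd}}\bigl(v_p(K)-1\bigr)\log p
  =\delta^{\mathcal{K}}(\mathcal{T}).
\]
Adding the three terms gives \(\delta_{\tot}(\mathcal{T})\) by Definition~\ref{def:total-defect}.

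There is no real obstacle. The only care needed is the bookkeeping at \(2\): the single factor of \(2\) kept in \(\rad(abc)\) is exactly the extra \(2\) in \(c=2K\). This works because \(a\) and \(b\) are odd, so \(2\) appears in \(abc\) only through \(c\). The pairwise coprimality of the odd supports of \(a\), \(b\), and \(K\) from Lemma~\ref{lem:disjoint-supports} is what allows the radical to split multiplicatively, and this is already built into Proposition~\ref{prop:radical-separation}.
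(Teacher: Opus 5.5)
Your proof is correct and follows the paper's decomposition: the paper also splits the discarded multiplicity into the two-adic part of $c$, the odd part of $K$, and the two odd summands, with Lemma~\ref{lem:disjoint-supports} ensuring these pieces add without overlap. Your version makes that bookkeeping explicit by dividing $abc$ factor by factor by the separated radical of Proposition~\ref{prop:radical-separation}, which already encodes the same disjointness.
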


\begin{proof}
The three summands of Definition~\ref{def:total-defect} are the logarithmic multiplicities discarded by the radical in \(c\), in the odd part of \(K\), and in the two odd summands.
Lemma~\ref{lem:disjoint-supports} permits their addition without overlap.
\end{proof}

\begin{proposition}[Total radical-excess identity]\label{prop:total-energy}
For every \(\varepsilon>0\),
\[
  E_{\varepsilon}(\mathcal{T})
  =
  (1+\varepsilon)
  \bigl(
    \delta_{\tot}(\mathcal{T})-\log(ab)
  \bigr)
  -
  \varepsilon\log(2K).
\]
In particular,
\[
  \log c-\log\rad(abc)
  =
  \delta_{\tot}(\mathcal{T})-\log(ab).
\]
\end{proposition}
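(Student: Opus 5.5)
The plan is to derive the identity directly from Lemma~\ref{lem:total-defect-identity} and the definition of \(E_{\varepsilon}\), without going back through Proposition~\ref{prop:exact-energy}.

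First I prove the special case \(\varepsilon=0\). Lemma~\ref{lem:total-defect-identity} gives
\[
  \log\rad(abc)=\log a+\log b+\log c-\delta_{\tot}(\mathcal{T}).
\]
Subtracting this from \(\log c\) yields
\[
  \log c-\log\rad(abc)=\delta_{\tot}(\mathcal{T})-\log(ab),
\]
which is the second displayed assertion.

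For general \(\varepsilon>0\) I split the radical excess as
\[
  E_{\varepsilon}(\mathcal{T})=(1+\varepsilon)\bigl(\log c-\log\rad(abc)\bigr)-\varepsilon\log c.
\]
This is just \(\log c-(1+\varepsilon)\log\rad(abc)\) with \(\varepsilon\log c\) added and subtracted. I then substitute the \(\varepsilon=0\) identity into the first term and use \(c=2K\) from Definition~\ref{def:frame-coordinates} to write \(\log c=\log(2K)\). This gives exactly the stated formula.

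There is no real obstacle here. The only point needing care is that Lemma~\ref{lem:total-defect-identity} already accounts for the two-adic term correctly: since \(\delta_2=(v_2(c)-1)\log 2\), the factor of \(2\) in \(\rad(abc)\) is matched. As a consistency check, I will compare the result with Proposition~\ref{prop:exact-energy} using \(ab=K^2(1-r^2)\) and \(\delta_{\tot}=\delta_2+\delta^{\mathcal{K}}+\delta_{\lin}\). The needed relation is
\[
  \delta_2+\delta^{\mathcal{K}}=\log K-t^{\mathcal{K}}(\mathcal{T}),
\]
which holds because \(K=2^{v_2(K)}\prod_{p\mid K,\,p\ \mathrm{odd}}p^{v_p(K)}\).
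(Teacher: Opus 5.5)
Your proof is correct and follows essentially the same route as the paper: rewrite \(\log\rad(abc)\) via Lemma~\ref{lem:total-defect-identity} as \(\log(ab)+\log(2K)-\delta_{\tot}(\mathcal{T})\) and substitute into the definition of \(E_{\varepsilon}\). Splitting \(E_{\varepsilon}\) as \((1+\varepsilon)(\log c-\log\rad(abc))-\varepsilon\log c\) only reorders the algebra, and your consistency check \(\delta_2+\delta^{\mathcal{K}}=\log K-t^{\mathcal{K}}(\mathcal{T})\) is also correct.
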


\begin{proof}
Lemma~\ref{lem:total-defect-identity} gives
\[
  \log\rad(abc)
  =
  \log(ab)+\log(2K)-\delta_{\tot}(\mathcal{T}).
\]
Substitution into the radical excess proves the first identity.
The second follows from the same algebraic identity at \(\varepsilon=0\).
\end{proof}

\begin{theorem}[Exact total-defect criterion]\label{thm:total-criterion}
Put
\[
  \theta_{\varepsilon}
  \coloneqq
  \frac{\varepsilon}{1+\varepsilon}.
\]
Then
\[
  E_{\varepsilon}(\mathcal{T})<0
  \quad\Longleftrightarrow\quad
  \delta_{\tot}(\mathcal{T})
  <
  \log(ab)+\theta_{\varepsilon}\log(2K).
\]
Hence if \(E_{\varepsilon}(\mathcal{T})\geq0\), then
\[
  \delta_{\tot}(\mathcal{T})
  \geq
  \log(ab)+\theta_{\varepsilon}\log(2K).
\]
\end{theorem}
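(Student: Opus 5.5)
The plan is to read the criterion directly off Proposition~\ref{prop:total-energy}, which already gives the exact identity
\[
  E_{\varepsilon}(\mathcal{T})
  =
  (1+\varepsilon)\bigl(\delta_{\tot}(\mathcal{T})-\log(ab)\bigr)
  -\varepsilon\log(2K).
\]
No estimate is needed. The sign of \(E_{\varepsilon}(\mathcal{T})\) is decided by one linear inequality in \(\delta_{\tot}(\mathcal{T})\), and only the normalisation remains to be settled.

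First I will divide the identity by the positive constant \(1+\varepsilon\), which preserves the direction of every inequality. This gives
\[
  \frac{E_{\varepsilon}(\mathcal{T})}{1+\varepsilon}
  =
  \delta_{\tot}(\mathcal{T})-\log(ab)-\theta_{\varepsilon}\log(2K),
\]
with \(\theta_{\varepsilon}=\varepsilon/(1+\varepsilon)\). Hence \(E_{\varepsilon}(\mathcal{T})<0\) holds exactly when the right-hand side is negative, that is, when \(\delta_{\tot}(\mathcal{T})<\log(ab)+\theta_{\varepsilon}\log(2K)\). That is the stated equivalence.

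The second assertion is the contrapositive of the forward direction. If \(E_{\varepsilon}(\mathcal{T})\geq0\), the strict inequality fails, so \(\delta_{\tot}(\mathcal{T})\geq\log(ab)+\theta_{\varepsilon}\log(2K)\).

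There is no real obstacle. The one point to check is that \(1+\varepsilon>0\), which holds because \(\varepsilon>0\), so the division introduces no case split. One might also note that \(\log(2K)=\log c\), so the threshold can be written as \(\log(ab)+\theta_{\varepsilon}\log c\), but this is not needed for the proof.
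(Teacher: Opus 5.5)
Your proposal is correct and is essentially the paper's argument. The paper states this theorem without a separate proof, as the immediate rearrangement of the identity in Proposition~\ref{prop:total-energy}, which is exactly your division by \(1+\varepsilon>0\).
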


\begin{corollary}[Quality through the total defect]\label{cor:quality-total}
For every parity-class triple,
\[
  q(\mathcal{T})
  =
  1+
  \frac{
    \delta_{\tot}(\mathcal{T})-\log(ab)
  }{
    \log\rad(abc)
  }.
\]
\end{corollary}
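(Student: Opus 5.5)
The plan is to divide the second identity of Proposition~\ref{prop:total-energy} by \(\log\rad(abc)\) and rearrange. That proposition gives, for every parity-class triple,
\[
  \log c-\log\rad(abc)=\delta_{\tot}(\mathcal{T})-\log(ab),
\]
with no hypothesis on \(\varepsilon\). Before dividing, I will check that \(\log\rad(abc)>0\). By Proposition~\ref{prop:radical-separation} one has \(\rad(abc)=2R_{\mathcal K}(\mathcal{T})\rad(a)\rad(b)\geq2\), so \(\log\rad(abc)\geq\log2>0\) and the quotient \(q(\mathcal{T})\) of Definition~\ref{def:energy-quality} is well defined.

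Dividing both sides by \(\log\rad(abc)\) gives
\[
  \frac{\log c}{\log\rad(abc)}-1=\frac{\delta_{\tot}(\mathcal{T})-\log(ab)}{\log\rad(abc)}.
\]
The left side is \(q(\mathcal{T})-1\) by definition, which is exactly the claimed formula.

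There is no real obstacle here. The only point that needs any attention is the positivity of the denominator, and that is settled by the forced factor of \(2\) in \(\rad(abc)\), which comes from \(c\) being even.

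As a consistency check, the formula agrees with Theorem~\ref{thm:total-criterion}. The condition \(q(\mathcal{T})\geq1+\varepsilon\) becomes \(\delta_{\tot}-\log(ab)\geq\varepsilon\log\rad(abc)\). Using \(\log\rad(abc)=\log(ab)+\log(2K)-\delta_{\tot}\), this rearranges to \((1+\varepsilon)(\delta_{\tot}-\log(ab))\geq\varepsilon\log(2K)\), which is the criterion of that theorem.
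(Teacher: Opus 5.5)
Your proof is correct and matches the paper's intended derivation. The paper states this corollary without a written proof, as an immediate consequence of the second identity in Proposition~\ref{prop:total-energy} divided by \(\log\rad(abc)\), and your check that \(\rad(abc)\geq2\) justifies that division, a point the paper leaves implicit.
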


\section{Threshold certificates and concentration}\label{sec:boundary-free}

\begin{lemma}[Minimal product]\label{lem:minimal-product}
Suppose that \(K>1\).
Then
\[
  ab\geq2K-1.
\]
Equivalently,
\[
  1-r(\mathcal{T})^2
  \geq
  \frac{2K-1}{K^2}.
\]
\end{lemma}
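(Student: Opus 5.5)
The plan is to write the product in the class coordinates, \(ab=(K-|M|)(K+|M|)\), and minimize over the admissible values of \(|M|\). By Definition~\ref{def:frame-coordinates} and the positivity of \(a\) and \(b\), one has \(0\leq |M|<K\), so \(K-|M|\geq 1\) and hence \(|M|\leq K-1\). Since \(ab=K^2-M^2\) is decreasing in \(|M|\), the smallest possible value occurs at \(|M|=K-1\), where
\[
  ab\;\ge\;K^2-(K-1)^2=2K-1 .
\]
Equivalently, with \(s=\min\{a,b\}=K-|M|\geq1\) and \(\max\{a,b\}=K+|M|=2K-s\), one gets \(ab=s(2K-s)\). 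This is a concave function of \(s\) on the range \(1\leq s\leq K\), so its minimum is attained at the endpoint \(s=1\), which gives the same bound \(2K-1\).

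The hypothesis \(K>1\) is needed only to make the statement non-degenerate. If \(K=1\), then \(c=2\), so \(a=b=1\) and \(M=0\); then \(ab=1=2K-1\), and the inequality holds anyway. Hence no separate case analysis is required beyond noting that integrality forces \(K-|M|\geq1\). This integrality step is the only point of substance, because positivity of \(a\) alone gives only \(|M|<K\), and one needs that \(K\) and \(M\) are integers to conclude \(|M|\leq K-1\).

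For the equivalent form, one uses \(ab=K^2(1-r(\mathcal{T})^2)\), established in the proof of Proposition~\ref{prop:exact-energy}, and divides by \(K^2>0\). The two inequalities are then literally the same statement. I expect no real obstacle here; the argument is a one-line extremal computation, and the main care is to record that equality holds exactly when \(s(\mathcal{T})=1\). This equality case is presumably what the later boundary identity exploits.
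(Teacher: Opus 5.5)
Your proof is correct and follows essentially the paper's route: the paper also minimizes the product of two positive odd integers with sum \(2K\), attaining \(2K-1\) at the pair \((1,2K-1)\), and then divides \(ab=K^2(1-r(\mathcal{T})^2)\) by \(K^2\). Your parametrization by \(|M|\leq K-1\), or equivalently by \(s\geq1\), is just that same extremal computation written out in class coordinates.
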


\begin{proof}
The positive odd integers \(a\) and \(b\) have sum \(2K\).
Their product is smallest when the pair is \(1\) and \(2K-1\).
The second inequality follows from \(ab=K^2(1-r(\mathcal{T})^2)\).
\end{proof}

\begin{proposition}[Exact boundary identity]\label{prop:boundary-identity}
Let
\[
  s(\mathcal{T})
  \coloneqq
  \min\{a,b\}.
\]
Then, for every parity-class triple,
\[
\begin{aligned}
  E_{\varepsilon}(\mathcal{T})
  ={}&
  (1+\varepsilon)
  \bigl(
    \delta_{\lin}(\mathcal{T})
    -
    t^{\mathcal{K}}(\mathcal{T})
  \bigr)
  +
  \log K
  \\
  &-
  (1+\varepsilon)
  \log\bigl(
    s(\mathcal{T})\,(2K-s(\mathcal{T}))
  \bigr)
  -
  \varepsilon\log2.
\end{aligned}
\]
\end{proposition}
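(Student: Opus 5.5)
Start from Proposition~\ref{prop:exact-energy}, which already expresses \(E_{\varepsilon}(\mathcal{T})\) through \(\log K\), \(t^{\mathcal{K}}(\mathcal{T})\), \(\delta_{\lin}(\mathcal{T})\) and the asymmetry term \(\log(1-r(\mathcal{T})^2)\). The only term absent from the claimed identity is the asymmetry term, and the plan is to rewrite it exactly in terms of \(s(\mathcal{T})\).

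The key observation is that \(\{a,b\}=\{s,2K-s\}\) as a multiset, since \(a+b=c=2K\) and \(s=\min\{a,b\}\). Hence \(ab=s(\mathcal{T})\bigl(2K-s(\mathcal{T})\bigr)\). The proof of Proposition~\ref{prop:exact-energy} also records \(ab=K^2(1-r(\mathcal{T})^2)\). Comparing the two forms gives
\[
  \log\bigl(1-r(\mathcal{T})^2\bigr)
  =
  \log\bigl(s(\mathcal{T})(2K-s(\mathcal{T}))\bigr)-2\log K .
\]
This holds for every parity-class triple, including \(K=1\), where \(a=b=1\) and both sides vanish.

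Substituting this into Proposition~\ref{prop:exact-energy} turns the term \(-(1+\varepsilon)\log(1-r^2)\) into \(-(1+\varepsilon)\log\bigl(s(2K-s)\bigr)+2(1+\varepsilon)\log K\). This combines with the existing \(-(1+2\varepsilon)\log K\) to give a coefficient \(2+2\varepsilon-1-2\varepsilon=1\) on \(\log K\). The terms \(-(1+\varepsilon)t^{\mathcal{K}}+(1+\varepsilon)\delta_{\lin}\) group into \((1+\varepsilon)(\delta_{\lin}-t^{\mathcal{K}})\), and the \(-\varepsilon\log2\) term carries over unchanged. The result is exactly the displayed formula.

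As a cross-check I would rederive the identity from Proposition~\ref{prop:total-energy}, using \(\delta_{\tot}=\delta_2+\delta^{\mathcal{K}}+\delta_{\lin}\) together with \(\log(2K)=\log 2+v_2(K)\log2+\delta^{\mathcal{K}}+t^{\mathcal{K}}\). There is no real obstacle here. The only point needing care is the bookkeeping of the \(\log K\) coefficients, together with noting that the identity is unaffected by which of \(a,b\) is the smaller summand.
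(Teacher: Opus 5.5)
Your proposal is correct and is essentially the paper's own proof: both use \(ab=s(\mathcal{T})\bigl(2K-s(\mathcal{T})\bigr)=K^2\bigl(1-r(\mathcal{T})^2\bigr)\) to eliminate the asymmetry term in Proposition~\ref{prop:exact-energy}. The coefficient of \(\log K\) then becomes \(-(1+2\varepsilon)+2(1+\varepsilon)=1\). Your remark on \(K=1\) and the optional cross-check via Proposition~\ref{prop:total-energy} are accurate but not needed.
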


\begin{proof}
The larger summand equals \(2K-s(\mathcal{T})\), so that
\[
  ab
  =
  s(\mathcal{T})\bigl(2K-s(\mathcal{T})\bigr)
  =
  K^2\bigl(1-r(\mathcal{T})^2\bigr).
\]
Substitute this expression into the identity of Proposition~\ref{prop:exact-energy}.
The coefficient of \(\log K\) becomes \(-(1+2\varepsilon)+2(1+\varepsilon)=1\).
\end{proof}

\begin{theorem}[Exact linear threshold]\label{thm:exact-linear-threshold}
Suppose that \(K>1\) and put \(s=s(\mathcal{T})\).
Then
\[
  E_{\varepsilon}(\mathcal{T})\geq0
  \quad\Longleftrightarrow\quad
  \delta_{\lin}(\mathcal{T})
  \geq
  t^{\mathcal{K}}(\mathcal{T})
  +
  \theta_{\varepsilon}\log(2K)
  +
  \log\Bigl(s\Bigl(2-\frac{s}{K}\Bigr)\Bigr).
\]
The boundary term satisfies
\[
  \log\Bigl(2-\frac{1}{K}\Bigr)
  \leq
  \log\Bigl(s\Bigl(2-\frac{s}{K}\Bigr)\Bigr)
  \leq
  \log(2s)
\]
and it is an increasing function of \(s\) on \([1,K]\), so that its least value occurs on the lines \(s=1\).
\end{theorem}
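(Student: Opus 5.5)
The plan is to start from the exact boundary identity of Proposition~\ref{prop:boundary-identity}, divide by \(1+\varepsilon\), and rearrange. Dividing gives
\[
  \frac{E_{\varepsilon}(\mathcal{T})}{1+\varepsilon}
  =
  \delta_{\lin}(\mathcal{T})-t^{\mathcal{K}}(\mathcal{T})
  +\frac{1}{1+\varepsilon}\log K
  -\log\bigl(s(2K-s)\bigr)
  -\theta_{\varepsilon}\log 2 .
\]
Since \(1+\varepsilon>0\), the sign of \(E_\varepsilon\) is unchanged by the division.

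The key step is to rewrite the last three terms as a threshold in the claimed form. Write \(s(2K-s)=K\cdot s(2-s/K)\). Then
\[
  \frac{1}{1+\varepsilon}\log K-\log K-\log\Bigl(s\Bigl(2-\frac sK\Bigr)\Bigr)-\theta_\varepsilon\log2
  =
  -\theta_\varepsilon\log(2K)-\log\Bigl(s\Bigl(2-\frac sK\Bigr)\Bigr),
\]
using \(1-\frac{1}{1+\varepsilon}=\theta_\varepsilon\). Consequently \(E_\varepsilon\geq0\) holds if and only if \(\delta_{\lin}\geq t^{\mathcal K}+\theta_\varepsilon\log(2K)+\log(s(2-s/K))\), which is the equivalence. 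The hypothesis \(K>1\) is needed only so that the boundary term is well defined, that is, \(s(2-s/K)>0\). This holds because \(s\leq K\) gives \(2-s/K\geq1\).

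For the bounds on the boundary term, consider \(f(x)=x(2-x/K)\), whose derivative \(f'(x)=2-2x/K\) is non-negative on \([1,K]\). Hence \(f\) is increasing there, and since \(s=\min\{a,b\}\leq K\), the least value is \(f(1)=2-1/K\). The upper bound follows from \(2-s/K\leq2\), which gives \(f(s)\leq2s\). Taking logarithms, which is monotone, transfers both bounds and the monotonicity to the boundary term.

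I expect no real obstacle. The only delicate point is the bookkeeping of the \(\log K\) and \(\log 2\) coefficients in the rearrangement, which must combine into \(\theta_\varepsilon\log(2K)\).
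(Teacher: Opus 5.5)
Your proof is correct and follows the paper's own argument: divide the boundary identity of Proposition~\ref{prop:boundary-identity} by \(1+\varepsilon\), split \(\log\bigl(s(2K-s)\bigr)=\log K+\log\bigl(s(2-s/K)\bigr)\) so that the \(\log K\) and \(\log 2\) terms combine into \(\theta_{\varepsilon}\log(2K)\), and bound the boundary term using the monotonicity of \(x(2-x/K)\) on \([1,K]\). One small correction to a side remark: positivity of \(s(2-s/K)\) follows from \(s\leq K\) alone, so the hypothesis \(K>1\) is not what makes the boundary term well defined.
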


\begin{proof}
Division of the identity of Proposition~\ref{prop:boundary-identity} by \(1+\varepsilon\) shows that \(E_{\varepsilon}(\mathcal{T})\geq0\) holds precisely when
\[
  \delta_{\lin}(\mathcal{T})
  -
  t^{\mathcal{K}}(\mathcal{T})
  \geq
  \log\bigl(s(2K-s)\bigr)
  -
  \frac{\log K}{1+\varepsilon}
  +
  \theta_{\varepsilon}\log2.
\]
Since \(2K-s=K(2-s/K)\),
\[
  \log\bigl(s(2K-s)\bigr)
  =
  \log K+\log s+\log\Bigl(2-\frac{s}{K}\Bigr)
\]
and the right side equals \(\theta_{\varepsilon}\log(2K)+\log(s(2-s/K))\).

For the range of the boundary term, put \(g(x)=x(2-x/K)\) on \([1,K]\).
Then \(g'(x)=2-2x/K\geq0\), so that \(g\) increases, with \(g(1)=2-1/K\) at one end and \(g(s)\leq2s\) throughout.
\end{proof}

When \(s\) is comparable to \(K\), the boundary term \(\log(s(2-s/K))\) in Theorem~\ref{thm:exact-linear-threshold} approaches \(\log K\). The threshold in Theorem~\ref{thm:exact-linear-threshold} then exceeds the boundary-free lower bound of Theorem~\ref{thm:boundary-free-certificate} by approximately \(\log K\), leading to the interior divergence described in Theorem~\ref{thm:boundary-interior-dichotomy} and the exponent amplification of Proposition~\ref{prop:exponent-amplification}.

\begin{theorem}[Boundary-free certificate]\label{thm:boundary-free-certificate}
Suppose that \(K>1\).
If
\[
  \delta_{\lin}(\mathcal{T})
  <
  t^{\mathcal{K}}(\mathcal{T})
  +
  \theta_{\varepsilon}\log(2K)
\]
then
\[
  E_{\varepsilon}(\mathcal{T})<0.
\]
Conversely, if \(E_{\varepsilon}(\mathcal{T})\geq0\), then
\[
  \delta_{\lin}(\mathcal{T})
  \geq
  t^{\mathcal{K}}(\mathcal{T})
  +
  \theta_{\varepsilon}\log(2K).
\]
\end{theorem}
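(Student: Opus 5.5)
The plan is to deduce the statement directly from Theorem~\ref{thm:exact-linear-threshold}, discarding the boundary term only after checking its sign. Since \(K>1\), that theorem applies and states that \(E_{\varepsilon}(\mathcal{T})\geq0\) holds if and only if
\[
  \delta_{\lin}(\mathcal{T})
  \geq
  t^{\mathcal{K}}(\mathcal{T})
  +\theta_{\varepsilon}\log(2K)
  +\log\Bigl(s\Bigl(2-\frac{s}{K}\Bigr)\Bigr).
\]
The one point requiring care is that the boundary term is non-negative. By the range recorded in Theorem~\ref{thm:exact-linear-threshold}, this term is at least \(\log(2-1/K)\). For \(K>1\) we have \(2-1/K>1\), so \(\log(2-1/K)>0\). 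The monotonicity in \(s\) is also stated there, so the minimum \(s=1\) may be invoked directly.

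For the second assertion, assume \(E_{\varepsilon}(\mathcal{T})\geq0\). The displayed inequality then holds. Dropping the non-negative boundary term gives
\[
  \delta_{\lin}(\mathcal{T})\geq t^{\mathcal{K}}(\mathcal{T})+\theta_{\varepsilon}\log(2K).
\]
The first assertion is the contrapositive of the second. If \(\delta_{\lin}\) falls strictly below \(t^{\mathcal{K}}+\theta_{\varepsilon}\log(2K)\), it lies strictly below the larger right-hand side of the threshold. Hence the threshold inequality fails, and the equivalence gives \(E_{\varepsilon}(\mathcal{T})<0\).

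I expect no real obstacle. The only step needing attention is the hypothesis \(K>1\), which is what makes both Theorem~\ref{thm:exact-linear-threshold} and the positivity of \(\log(2-1/K)\) available. The excluded case is \(K=1\), that is the triple \((1,1,2)\), where the boundary term is \(\log 1=0\). For that triple, Proposition~\ref{prop:boundary-identity} gives \(E_{\varepsilon}=-\varepsilon\log2<0\) directly. So the statement would extend to \(K=1\), but the hypothesis avoids needing to do so.
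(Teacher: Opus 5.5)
Your proposal is correct and is essentially the paper's proof: both invoke Theorem~\ref{thm:exact-linear-threshold}, note that for \(K>1\) the boundary term is at least \(\log(2-1/K)>0\), and drop it, obtaining one assertion as the contrapositive of the other. The only difference is that the paper proves the first assertion directly and gets the second by contraposition, whereas you do the reverse; your side remark that \((1,1,2)\) gives \(E_{\varepsilon}=-\varepsilon\log2<0\) is also correct.
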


\begin{proof}
For \(K>1\) the boundary term of Theorem~\ref{thm:exact-linear-threshold} is at least \(\log(2-1/K)>0\).
If \(\delta_{\lin}(\mathcal{T})<t^{\mathcal{K}}(\mathcal{T})+\theta_{\varepsilon}\log(2K)\), then the threshold of that theorem fails, and \(E_{\varepsilon}(\mathcal{T})<0\).
The second assertion is the contrapositive.
\end{proof}

\begin{corollary}[Exact criterion on a fixed line]\label{cor:line-criterion}
Fix an odd integer \(s_0\geq1\), and consider the parity-class triples with \(\min\{a,b\}=s_0\), that is, the triples \((s_0,2K-s_0,2K)\) with \(K>s_0\) and \(\gcd(s_0,2K)=1\).
On this line,
\[
  E_{\varepsilon}(\mathcal{T})\geq0
  \quad\Longleftrightarrow\quad
  \delta_{\lin}(\mathcal{T})
  \geq
  t^{\mathcal{K}}(\mathcal{T})
  +
  \theta_{\varepsilon}\log(2K)
  +
  \log\Bigl(s_0\Bigl(2-\frac{s_0}{K}\Bigr)\Bigr).
\]
Moreover, every transgressive triple on the line satisfies
\[
  \log\left(\frac{2K-s_0}{\rad(2K-s_0)}\right)
  \geq
  t^{\mathcal{K}}(\mathcal{T})
  +
  \theta_{\varepsilon}\log(2K)
  +
  \log\rad(s_0).
\]
\end{corollary}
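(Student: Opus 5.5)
The first assertion is Theorem~\ref{thm:exact-linear-threshold} read on the line. For every triple \((s_0,2K-s_0,2K)\) with \(K>s_0\geq1\) we have \(K>1\), and \(s(\mathcal{T})=\min\{s_0,2K-s_0\}=s_0\), because \(2K-s_0>K>s_0\). The threshold of that theorem therefore specializes directly with \(s=s_0\).

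For the second assertion, let the triple be transgressive. We split the linear defect along the two summands using Definition~\ref{def:linear-defect}:
\[
  \delta_{\lin}(\mathcal{T})
  =
  \log\frac{s_0}{\rad(s_0)}
  +
  \log\frac{2K-s_0}{\rad(2K-s_0)}.
\]
We then substitute this into the threshold from the first part and move the \(s_0\)-term to the right. The right side picks up
\[
  \log\Bigl(s_0\Bigl(2-\frac{s_0}{K}\Bigr)\Bigr)-\log s_0+\log\rad(s_0)
  =
  \log\Bigl(2-\frac{s_0}{K}\Bigr)+\log\rad(s_0).
\]
Since \(s_0<K\), we have \(2-s_0/K>1\), so the extra logarithm is positive. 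Dropping it only weakens the inequality, which leaves exactly the displayed bound on \(\log\bigl((2K-s_0)/\rad(2K-s_0)\bigr)\).

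There is no real obstacle here. The only points to check are that \(s_0\) is indeed the smaller summand for every \(K>s_0\), and that \(2-s_0/K\geq1\) holds on the whole line. The latter also follows from the monotonicity statement in Theorem~\ref{thm:exact-linear-threshold}, since \(s_0\leq K\).
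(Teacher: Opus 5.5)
Your proposal is correct and follows the paper's proof essentially step for step. You specialize Theorem~\ref{thm:exact-linear-threshold} with \(s=s_0\), which is justified by \(K>s_0\geq1\), so that \(K>1\) and \(2K-s_0>s_0\). You then split \(\delta_{\lin}\) over the two summands, use \(\log s_0-\log(s_0/\rad(s_0))=\log\rad(s_0)\), and discard \(\log(2-s_0/K)>0\). Your closing aside is slightly off: the monotonicity of \(x(2-x/K)\) is not what gives \(2-s_0/K\geq1\). That bound is just \(s_0\leq K\), which you had already verified directly.
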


\begin{proof}
For \(K>s_0\) the smaller summand is \(s_0\), and the first equivalence is Theorem~\ref{thm:exact-linear-threshold}.
For the second assertion, the linear defect on the line decomposes as
\[
  \delta_{\lin}(\mathcal{T})
  =
  \log\left(\frac{s_0}{\rad(s_0)}\right)
  +
  \log\left(\frac{2K-s_0}{\rad(2K-s_0)}\right).
\]
The threshold gives
\[
  \log\left(\frac{2K-s_0}{\rad(2K-s_0)}\right)
  \geq
  t^{\mathcal{K}}(\mathcal{T})
  +
  \theta_{\varepsilon}\log(2K)
  +
  \log s_0
  -
  \log\left(\frac{s_0}{\rad(s_0)}\right)
  +
  \log\Bigl(2-\frac{s_0}{K}\Bigr)
\]
and the claim follows from \(\log s_0-\log(s_0/\rad(s_0))=\log\rad(s_0)\) together with \(\log(2-s_0/K)>0\).
\end{proof}

The boundary-free certificate of Theorem~\ref{thm:boundary-free-certificate} differs from the exact threshold in Theorem~\ref{thm:exact-linear-threshold} solely by the non-negative term \(\log(s(2-s/K))\). On the boundary lines \(s=1\), this term is bounded between \(\log(2-1/K)\) and \(\log 2\), whereas for interior sequences with \(s \to \infty\) it diverges as \(\log s\).
Section~\ref{sec:mersenne} examines the case \(s=1\), where Corollary~\ref{cor:line-criterion} yields an explicit single-variable criterion on the Mersenne line.

\begin{definition}[Relative linear quotient]\label{def:relative-quotient}
Define
\[
  D_{\lin}(\mathcal{T})
  \coloneqq
  \frac{ab}{\rad(a)\rad(b)}
\]
and
\[
  Q_{\mathrm{rel}}(\mathcal{T})
  \coloneqq
  \frac{D_{\lin}(\mathcal{T})}{R_{\mathcal{K}}(\mathcal{T})}.
\]
Then
\[
  \log Q_{\mathrm{rel}}(\mathcal{T})
  =
  \delta_{\lin}(\mathcal{T})-t^{\mathcal{K}}(\mathcal{T}).
\]
\end{definition}

\begin{corollary}[Quotient certificate]\label{cor:quotient-certificate}
Suppose that \(K>1\).
If
\[
  Q_{\mathrm{rel}}(\mathcal{T})
  <
  (2K)^{\theta_{\varepsilon}}
\]
then \(E_{\varepsilon}(\mathcal{T})<0\).
If \(E_{\varepsilon}(\mathcal{T})\geq0\), then
\[
  Q_{\mathrm{rel}}(\mathcal{T})
  \geq
  s(\mathcal{T})\,(2K)^{\theta_{\varepsilon}}
\]
and so
\[
  s(\mathcal{T})
  \leq
  \frac{Q_{\mathrm{rel}}(\mathcal{T})}{(2K)^{\theta_{\varepsilon}}}.
\]
\end{corollary}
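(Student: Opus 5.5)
The plan is to read both assertions off the exact linear threshold of Theorem~\ref{thm:exact-linear-threshold} after exponentiation, using Definition~\ref{def:relative-quotient} to replace \(\delta_{\lin}(\mathcal{T})-t^{\mathcal{K}}(\mathcal{T})\) by \(\log Q_{\mathrm{rel}}(\mathcal{T})\). The first assertion then needs no new work. The hypothesis \(Q_{\mathrm{rel}}(\mathcal{T})<(2K)^{\theta_{\varepsilon}}\) is, after taking logarithms, exactly the inequality
\[
  \delta_{\lin}(\mathcal{T})<t^{\mathcal{K}}(\mathcal{T})+\theta_{\varepsilon}\log(2K),
\]
and Theorem~\ref{thm:boundary-free-certificate} (valid since \(K>1\)) yields \(E_{\varepsilon}(\mathcal{T})<0\).

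For the second assertion, assume \(E_{\varepsilon}(\mathcal{T})\geq0\). Theorem~\ref{thm:exact-linear-threshold} gives
\[
  \log Q_{\mathrm{rel}}(\mathcal{T})\geq\theta_{\varepsilon}\log(2K)+\log\Bigl(s\Bigl(2-\frac{s}{K}\Bigr)\Bigr),
\]
with \(s=s(\mathcal{T})\). The only point to check is that the boundary term dominates \(\log s\), that is, that \(2-s/K\geq1\).

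Since \(s=\min\{a,b\}\) and \(a+b=2K\), we have \(s\leq K\). Hence \(2-s/K\geq1\), so \(\log(s(2-s/K))\geq\log s\). This is the same range \([1,K]\) already used in Theorem~\ref{thm:exact-linear-threshold}.

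Exponentiating then gives \(Q_{\mathrm{rel}}(\mathcal{T})\geq s(\mathcal{T})(2K)^{\theta_{\varepsilon}}\). Dividing by the positive quantity \((2K)^{\theta_{\varepsilon}}\) produces the stated upper bound on \(s(\mathcal{T})\).

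There is no real obstacle here. The only care needed is the direction of the inequality \(2-s/K\geq1\). One might also note that equality \(s=K\) forces \(a=b\), which coprimality excludes when \(K>1\), so in fact the inequality is strict. Strictness is not needed, however, since the corollary asserts only a non-strict bound.
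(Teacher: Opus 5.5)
Your proposal is correct and follows the paper's own proof: the first assertion comes from exponentiating Theorem~\ref{thm:boundary-free-certificate}, and the second from Theorem~\ref{thm:exact-linear-threshold} together with \(2-s/K\geq1\), followed by exponentiation and rearrangement. Your side remark that \(s=K\) forces \(a=b=1\), and hence \(K=1\), is also correct, though the non-strict bound does not need it.
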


\begin{proof}
The first assertion follows upon exponentiating Theorem~\ref{thm:boundary-free-certificate}.
For the second, Theorem~\ref{thm:exact-linear-threshold} gives
\[
  \delta_{\lin}(\mathcal{T})-t^{\mathcal{K}}(\mathcal{T})
  \geq
  \theta_{\varepsilon}\log(2K)
  +
  \log s(\mathcal{T})
  +
  \log\Bigl(2-\frac{s(\mathcal{T})}{K}\Bigr)
  \geq
  \theta_{\varepsilon}\log(2K)
  +
  \log s(\mathcal{T})
\]
since \(2-s(\mathcal{T})/K\geq1\).
Exponentiation proves the bound, and the final estimate is its rearrangement.
\end{proof}

\begin{proposition}[Exponent amplification in the interior]\label{prop:exponent-amplification}
Suppose that \(K>1\), that \(0\leq\alpha\leq1\), and that
\[
  s(\mathcal{T})\geq K^{\alpha}.
\]
If \(E_{\varepsilon}(\mathcal{T})\geq0\), then
\[
  \delta_{\lin}(\mathcal{T})
  \geq
  t^{\mathcal{K}}(\mathcal{T})
  +
  (\theta_{\varepsilon}+\alpha)\log(2K)
  -
  \log2.
\]
Suppose in addition that \(\theta_{\varepsilon}+\alpha<1\), and define \(\varepsilon'\) by
\[
  \theta_{\varepsilon'}=\theta_{\varepsilon}+\alpha
  \qquad\text{that is}\qquad
  \varepsilon'=\frac{\theta_{\varepsilon}+\alpha}{1-\theta_{\varepsilon}-\alpha}.
\]
Then the triple satisfies, within the additive constant \(\log2\), the necessary condition of Theorem~\ref{thm:boundary-free-certificate} at the amplified exponent \(\varepsilon'\).
\end{proposition}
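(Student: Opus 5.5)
The plan is to start from the exact linear threshold of Theorem~\ref{thm:exact-linear-threshold}. If \(E_{\varepsilon}(\mathcal{T})\geq0\) and \(K>1\), then
\[
  \delta_{\lin}(\mathcal{T})
  \geq
  t^{\mathcal{K}}(\mathcal{T})
  +\theta_{\varepsilon}\log(2K)
  +\log s
  +\log\Bigl(2-\frac{s}{K}\Bigr),
\]
where \(s=s(\mathcal{T})\). The task is then only to bound the boundary term from below using the hypothesis \(s\geq K^{\alpha}\).

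The first step handles the factor \(2-s/K\). Since \(s\leq K\) (indeed \(s<K\), because \(s=\min\{a,b\}\) and \(a+b=2K\)), we have \(2-s/K\geq1\), so \(\log(2-s/K)\geq0\). This is the same step used in Corollary~\ref{cor:quotient-certificate}.

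The second step uses \(\log s\geq\alpha\log K\). We rewrite this as
\[
  \alpha\log K=\alpha\log(2K)-\alpha\log2\geq\alpha\log(2K)-\log2,
\]
which holds because \(0\leq\alpha\leq1\). Combining the two steps gives
\[
  \delta_{\lin}\geq t^{\mathcal{K}}+(\theta_{\varepsilon}+\alpha)\log(2K)-\log2,
\]
which is the first assertion.

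For the second assertion, the main work is to check that the map \(\varepsilon\mapsto\theta_{\varepsilon}=\varepsilon/(1+\varepsilon)\) is a bijection from \((0,\infty)\) onto \((0,1)\), with inverse \(\theta\mapsto\theta/(1-\theta)\). When \(\theta_{\varepsilon}+\alpha<1\), the value \(\theta_{\varepsilon}+\alpha\) lies in \((0,1)\) since \(\theta_{\varepsilon}>0\). So \(\varepsilon'=(\theta_{\varepsilon}+\alpha)/(1-\theta_{\varepsilon}-\alpha)\) is a well-defined positive exponent with \(\theta_{\varepsilon'}=\theta_{\varepsilon}+\alpha\). Substituting this into the inequality just proved gives
\[
  \delta_{\lin}\geq t^{\mathcal{K}}+\theta_{\varepsilon'}\log(2K)-\log2.
\]
This is exactly the necessary condition of Theorem~\ref{thm:boundary-free-certificate} at exponent \(\varepsilon'\), up to the additive constant \(\log2\).

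No step is a real obstacle. The only point needing care is where the \(\log 2\) loss comes from: it arises from passing from \(\log K\) to \(\log(2K)\). One could recover part of it from the term \(\log(2-s/K)\), but that term can be as small as \(0\) in the limit \(s\to K\), so the constant \(\log2\) cannot be removed uniformly.
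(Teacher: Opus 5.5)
Your proposal is correct and follows the paper's own proof: the paper likewise applies Theorem~\ref{thm:exact-linear-threshold} with \(2-s/K\geq1\), rewrites \(\alpha\log K=\alpha\log(2K)-\alpha\log2\) and uses \(\alpha\log2\leq\log2\). It then concludes via the bijection \(\varepsilon\mapsto\theta_{\varepsilon}\) from \((0,\infty)\) onto \((0,1)\), exactly as you do.
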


\begin{proof}
Theorem~\ref{thm:exact-linear-threshold} together with \(2-s(\mathcal{T})/K\geq1\) gives
\[
  \delta_{\lin}(\mathcal{T})-t^{\mathcal{K}}(\mathcal{T})
  \geq
  \theta_{\varepsilon}\log(2K)+\alpha\log K
  =
  (\theta_{\varepsilon}+\alpha)\log(2K)-\alpha\log2
\]
and \(\alpha\log2\leq\log2\) proves the first claim.
The map \(\varepsilon\mapsto\theta_{\varepsilon}\) carries \((0,\infty)\) bijectively onto \((0,1)\), so the stated \(\varepsilon'\) exists, and the first claim then reads
\[
  \delta_{\lin}(\mathcal{T})
  \geq
  t^{\mathcal{K}}(\mathcal{T})
  +
  \theta_{\varepsilon'}\log(2K)
  -
  \log2
\]
which is the necessary condition of Theorem~\ref{thm:boundary-free-certificate} at exponent \(\varepsilon'\) within \(\log2\).
\end{proof}

\begin{corollary}[Squarefree summands]\label{cor:squarefree-summands}
Suppose that \(K>1\).
If \(a\) and \(b\) are squarefree, then
\[
  E_{\varepsilon}(\mathcal{T})<0
\]
for every \(\varepsilon>0\).
\end{corollary}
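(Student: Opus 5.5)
The plan is to read the corollary off the boundary-free certificate of Theorem~\ref{thm:boundary-free-certificate}. If \(a\) and \(b\) are squarefree, then \(\rad(a)=a\) and \(\rad(b)=b\). Definition~\ref{def:linear-defect} then gives \(\delta_{\lin}(\mathcal{T})=\log 1=0\), since every exponent \(v_p(a)-1\) and \(v_p(b)-1\) vanishes. It therefore suffices to check that the right side of the certificate, \(t^{\mathcal{K}}(\mathcal{T})+\theta_{\varepsilon}\log(2K)\), is strictly positive for every \(\varepsilon>0\).

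For that check, \(t^{\mathcal{K}}(\mathcal{T})=\log R_{\mathcal{K}}(\mathcal{T})\geq0\), because \(R_{\mathcal{K}}(\mathcal{T})\) is a product of primes, possibly empty, and so is at least \(1\). Next, \(\theta_{\varepsilon}=\varepsilon/(1+\varepsilon)>0\) for \(\varepsilon>0\), and \(\log(2K)>0\) since \(K>1\). Hence
\[
  \delta_{\lin}(\mathcal{T})=0<t^{\mathcal{K}}(\mathcal{T})+\theta_{\varepsilon}\log(2K),
\]
and Theorem~\ref{thm:boundary-free-certificate} yields \(E_{\varepsilon}(\mathcal{T})<0\).

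There is no real obstacle here. The only point needing care is strictness: the positivity must come from the term \(\theta_{\varepsilon}\log(2K)\), not from \(t^{\mathcal{K}}\), which may vanish when \(K\) is a power of \(2\). This is exactly where the hypothesis \(K>1\) is used.

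As a cross-check, one can instead use the exact threshold of Theorem~\ref{thm:exact-linear-threshold}. There the boundary term \(\log(s(2-s/K))\geq\log(2-1/K)>0\) already makes the right side strictly positive, even at \(\varepsilon=0\).
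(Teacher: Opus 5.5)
Your proof is correct and is the paper's own argument: squarefreeness gives $\delta_{\lin}(\mathcal{T})=0$, together with $t^{\mathcal{K}}(\mathcal{T})\geq0$, and Theorem~\ref{thm:boundary-free-certificate} then applies. One small correction to your commentary: $\log(2K)>0$ already holds for every $K\geq1$, so the hypothesis $K>1$ enters only as a hypothesis of that theorem, not through the positivity of $\theta_{\varepsilon}\log(2K)$.
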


\begin{proof}
The squarefree hypothesis gives \(\delta_{\lin}(\mathcal{T})=0\), and \(t^{\mathcal{K}}(\mathcal{T})\geq0\).
Theorem~\ref{thm:boundary-free-certificate} applies.
\end{proof}

\subsection{Square extraction}\label{subsec:square-extraction}

\begin{definition}[Largest square divisor]\label{def:square-divisor}
For a positive integer
\[
  N=\prod_p p^{e_p}
\]
define
\[
  \sq(N)
  \coloneqq
  \prod_p p^{\lfloor e_p/2\rfloor}.
\]
Then \(\sq(N)^2\) is the largest square dividing \(N\).
\end{definition}

Put
\[
  u_a=\sq(a)
  \qquad\text{and}\qquad
  u_b=\sq(b).
\]

\begin{lemma}[Linear defect and square divisors]\label{lem:defect-square}
For every parity-class triple,
\[
  D_{\lin}(\mathcal{T})
  \leq
  (u_a u_b)^2.
\]
Equivalently,
\[
  \delta_{\lin}(\mathcal{T})
  \leq
  2\log(u_a u_b).
\]
\end{lemma}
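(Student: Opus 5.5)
The plan is to work prime by prime, since both sides are multiplicative. By Definition~\ref{def:relative-quotient} and Definition~\ref{def:linear-defect},
\[
  D_{\lin}(\mathcal{T})
  =
  \frac{a}{\rad(a)}\cdot\frac{b}{\rad(b)}
  =
  \prod_{p\mid a}p^{v_p(a)-1}\prod_{p\mid b}p^{v_p(b)-1}.
\]
By Definition~\ref{def:square-divisor},
\[
  (u_au_b)^2=\prod_{p\mid a}p^{2\lfloor v_p(a)/2\rfloor}\prod_{p\mid b}p^{2\lfloor v_p(b)/2\rfloor}.
\]
Because \(\gcd(a,b)=1\), the two products run over disjoint sets of primes. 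It therefore suffices to compare exponents separately for each prime dividing \(a\) or \(b\). The inequality reduces to the elementary claim that
\[
  e-1\leq 2\lfloor e/2\rfloor
  \qquad\text{for every integer } e\geq1.
\]

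To verify the claim, split on the parity of \(e\). If \(e=2k\), then \(2\lfloor e/2\rfloor=2k=e>e-1\). If \(e=2k+1\), then \(2\lfloor e/2\rfloor=2k=e-1\), and equality holds. In both cases the exponent of \(p\) in \(D_{\lin}(\mathcal{T})\) is at most its exponent in \((u_au_b)^2\).

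Multiplying these prime-by-prime inequalities gives \(D_{\lin}(\mathcal{T})\leq(u_au_b)^2\). In fact the stronger statement \(D_{\lin}(\mathcal{T})\mid(u_au_b)^2\) holds. Taking logarithms and using \(\delta_{\lin}(\mathcal{T})=\log D_{\lin}(\mathcal{T})\) gives the equivalent form \(\delta_{\lin}(\mathcal{T})\leq2\log(u_au_b)\).

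There is no real obstacle. The only point needing care is that \(\sq\) is multiplicative on coprime arguments, so \(u_au_b=\sq(ab)\). This follows from the coprimality of \(a\) and \(b\) in Definition~\ref{def:parity-triple}, and it is what justifies treating each prime independently. The parity split above also shows when equality holds: exactly when every prime dividing \(ab\) occurs to an odd power.
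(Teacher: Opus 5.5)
Your proof is correct and is the paper's argument: compare exponents prime by prime using $e-1\leq 2\lfloor e/2\rfloor$ for every $e\geq1$, with equality exactly when $e$ is odd. The appeal to $\gcd(a,b)=1$ is harmless but not needed, since $a/\rad(a)\leq u_a^2$ and $b/\rad(b)\leq u_b^2$ hold separately and can simply be multiplied.
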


\begin{proof}
For every integer \(e\geq1\) one has
\[
  e-1\leq2\lfloor e/2\rfloor
\]
since the two sides agree when \(e\) is odd and differ by one when \(e\) is even.
Apply this inequality to each prime exponent in \(a\) and \(b\), every such exponent being at least one.
\end{proof}

\begin{theorem}[Large square forced by transgression]\label{thm:large-square}
Suppose that \(K>1\) and \(E_{\varepsilon}(\mathcal{T})\geq0\).
Then
\[
  \max\{u_a^2,u_b^2\}
  \geq
  u_a u_b
  \geq
  \bigl(s(\mathcal{T})\,R_{\mathcal{K}}(\mathcal{T})\bigr)^{1/2}
  (2K)^{\theta_{\varepsilon}/2}
\]
where \(u_a=\sq(a)\) and \(u_b=\sq(b)\) are the square roots of the largest square divisors of the two summands.
\end{theorem}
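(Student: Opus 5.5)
The plan is to chain Corollary~\ref{cor:quotient-certificate} with Lemma~\ref{lem:defect-square}. Since \(K>1\) and \(E_{\varepsilon}(\mathcal{T})\geq0\), Corollary~\ref{cor:quotient-certificate} gives
\[
  D_{\lin}(\mathcal{T})
  =
  Q_{\mathrm{rel}}(\mathcal{T})\,R_{\mathcal{K}}(\mathcal{T})
  \geq
  s(\mathcal{T})\,R_{\mathcal{K}}(\mathcal{T})\,(2K)^{\theta_{\varepsilon}}.
\]
Lemma~\ref{lem:defect-square} bounds the left side above by \((u_au_b)^2\). Taking square roots, which preserves the inequality since everything is positive, yields
\[
  u_au_b\geq\bigl(s(\mathcal{T})R_{\mathcal{K}}(\mathcal{T})\bigr)^{1/2}(2K)^{\theta_{\varepsilon}/2}.
\]

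The remaining inequality \(\max\{u_a^2,u_b^2\}\geq u_au_b\) is elementary. For positive reals, \(u_au_b\leq\max\{u_a,u_b\}^2=\max\{u_a^2,u_b^2\}\).

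No step is a genuine obstacle. The only point that needs checking is that the hypothesis \(K>1\) is exactly what Corollary~\ref{cor:quotient-certificate} requires, and that this corollary already absorbs the boundary factor \(2-s/K\geq1\), so no further estimate of that factor is needed here. If I wanted to avoid relying on the corollary, I would instead start from Theorem~\ref{thm:exact-linear-threshold}, drop the term \(\log(2-s/K)\geq0\), and exponentiate.
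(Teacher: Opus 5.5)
Your proposal is correct and follows the paper's proof essentially verbatim. The paper combines Corollary~\ref{cor:quotient-certificate}, which gives \(D_{\lin}(\mathcal{T})\geq s(\mathcal{T})R_{\mathcal{K}}(\mathcal{T})(2K)^{\theta_{\varepsilon}}\), with the bound \(D_{\lin}(\mathcal{T})\leq(u_au_b)^2\) of Lemma~\ref{lem:defect-square}, takes square roots, and closes with \(\max\{u_a,u_b\}^2\geq u_au_b\).
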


\begin{proof}
Corollary~\ref{cor:quotient-certificate} gives
\[
  D_{\lin}(\mathcal{T})
  \geq
  s(\mathcal{T})\,R_{\mathcal{K}}(\mathcal{T})\,
  (2K)^{\theta_{\varepsilon}}.
\]
Lemma~\ref{lem:defect-square} gives \((u_a u_b)^2\geq D_{\lin}(\mathcal{T})\), and taking square roots bounds \(u_au_b\) from below by the stated quantity.
The remaining step is \(\max\{u_a,u_b\}^2\geq u_au_b\).
\end{proof}

Write
\[
  a=d_a u_a^2
  \qquad\text{and}\qquad
  b=d_b u_b^2
\]
where \(d_a\) and \(d_b\) are squarefree.

\begin{corollary}[Residual quadratic equation]\label{cor:residual-equation}
The extracted variables satisfy
\[
  d_a u_a^2+d_b u_b^2=2K.
\]
If \(E_{\varepsilon}(\mathcal{T})\geq0\), then the product \(u_a u_b\) satisfies the lower bound in Theorem~\ref{thm:large-square}.
\end{corollary}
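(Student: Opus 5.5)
The statement to prove is Corollary~\ref{cor:residual-equation}. It has two parts: an algebraic identity, and a transfer of the bound in Theorem~\ref{thm:large-square}. I will treat them in that order.

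For the identity, I start from the factorization just fixed in the text. By Definition~\ref{def:square-divisor}, \(u_a^2=\sq(a)^2\) is the largest square dividing \(a\). So \(d_a\coloneqq a/u_a^2\) is a positive integer, and every prime exponent of \(d_a\) equals \(e_p-2\lfloor e_p/2\rfloor\in\{0,1\}\). Hence \(d_a\) is squarefree, and the decomposition \(a=d_au_a^2\) is the unique one with squarefree cofactor. The same holds for \(b=d_bu_b^2\). I then add the two decompositions and use \(a+b=c=2K\) from Definition~\ref{def:frame-coordinates}, which gives \(d_au_a^2+d_bu_b^2=2K\) directly.

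I will also note, for later use, that \(d_a,d_b,u_a,u_b\) are all odd, because \(a\) and \(b\) are odd. The pairs \((d_a,d_b)\) and \((u_a,u_b)\) are coprime, because \(\gcd(a,b)=1\). No appeal to Lemma~\ref{lem:disjoint-supports} is needed for the identity itself.

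For the second assertion, I assume \(E_{\varepsilon}(\mathcal{T})\geq0\). The variables \(u_a\) and \(u_b\) here are literally \(\sq(a)\) and \(\sq(b)\), the same as in Theorem~\ref{thm:large-square}. So that theorem applies verbatim, provided its hypothesis \(K>1\) holds. This hypothesis is the only point needing care. If \(K=1\), then \(a=b=1\) and \(c=2\), so \(\rad(abc)=2\), and \(\log c-(1+\varepsilon)\log 2=-\varepsilon\log2<0\). Thus transgression forces \(K>1\), and the bound \(u_au_b\geq(s(\mathcal{T})R_{\mathcal K}(\mathcal{T}))^{1/2}(2K)^{\theta_\varepsilon/2}\) follows.

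I expect no real obstacle. The only step that needs care is excluding \(K=1\) so that Theorem~\ref{thm:large-square} applies. Apart from that, the argument is a substitution.
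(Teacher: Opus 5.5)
Your proposal is correct and matches the paper, which gives no separate proof and treats the corollary as immediate from $a=d_au_a^2$, $b=d_bu_b^2$, $a+b=2K$, and Theorem~\ref{thm:large-square}. Your check that $E_{\varepsilon}(\mathcal{T})\geq0$ rules out $K=1$, since $(1,1,2)$ has $E_{\varepsilon}=-\varepsilon\log2<0$, correctly supplies the hypothesis $K>1$ of Theorem~\ref{thm:large-square}, which the corollary's statement leaves implicit.
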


\begin{lemma}[Two-adic congruence of the squarefree kernels]\label{lem:two-adic-kernel}
Write
\[
  a=d_a u_a^2
  \qquad\text{and}\qquad
  b=d_b u_b^2
\]
with \(d_a\) and \(d_b\) squarefree.
Then \(d_a\) and \(d_b\) are odd and coprime, and
\[
  d_a+d_b\equiv2K\pmod8.
\]
The residues of \(d_a\) and \(d_b\) modulo \(8\) then determine the two-adic valuation of \(c\), according to the three cases below.
\begin{enumerate}[label=\textup{(\arabic*)}]
\item \(v_2(c)=1\) precisely when \(d_a+d_b\equiv2\) or \(6\pmod8\).
\item \(v_2(c)=2\) precisely when \(d_a+d_b\equiv4\pmod8\).
\item \(v_2(c)\geq3\) precisely when \(d_a\equiv-d_b\pmod8\).
\end{enumerate}
\end{lemma}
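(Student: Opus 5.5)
The approach is to reduce everything to the elementary fact that an odd square is congruent to \(1\) modulo \(8\). First, \(d_a\) divides \(a\) and \(d_b\) divides \(b\), so both kernels are odd because \(a\) and \(b\) are odd. A common prime of \(d_a\) and \(d_b\) would divide both \(a\) and \(b\), so \(\gcd(d_a,d_b)=1\) follows from \(\gcd(a,b)=1\). Next, \(u_a\) and \(u_b\) divide odd numbers and are therefore odd, which gives \(u_a^2\equiv u_b^2\equiv1\pmod8\). Reducing the residual equation of Corollary~\ref{cor:residual-equation} modulo \(8\) then yields \(d_a+d_b\equiv d_au_a^2+d_bu_b^2=2K\pmod8\).

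For the three cases I will use \(c=2K\), so that \(c\equiv d_a+d_b\pmod8\), and read off \(v_2(c)\) from the residue of \(c\) modulo \(8\). Since \(c\) is even, its residue lies in \(\{0,2,4,6\}\). We have \(v_2(c)=1\) exactly when \(c\equiv2,6\pmod8\), and \(v_2(c)=2\) exactly when \(c\equiv4\pmod8\). Finally \(v_2(c)\geq3\) exactly when \(c\equiv0\pmod8\), that is, when \(d_a\equiv-d_b\pmod8\).

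Each case is an equivalence, because the three residue conditions on \(d_a+d_b\) exhaust the even residues modulo \(8\) and are mutually exclusive. No step is a genuine obstacle. The only point needing care is that \(u_a\) and \(u_b\) are odd, which is exactly what licenses \(u^2\equiv1\pmod8\).
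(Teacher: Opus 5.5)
Your proposal is correct and follows essentially the paper's argument: the kernels and \(u_a,u_b\) are odd, coprimality comes from \(\gcd(a,b)=1\), \(u^2\equiv1\pmod8\) gives \(d_a+d_b\equiv2K\pmod8\), and \(v_2(c)\) is read off from the even residue of \(c=2K\) modulo \(8\). The only cosmetic difference is that the paper phrases the case split through \(v_2(K)\) rather than directly through the residue of \(c\).
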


\begin{proof}
The summands \(a\) and \(b\) are odd.
Since \(a=d_au_a^2\) and \(b=d_bu_b^2\), every one of \(d_a\), \(u_a\), \(d_b\), and \(u_b\) divides an odd number and is therefore odd itself.
Every divisor common to \(d_a\) and \(d_b\) divides both \(a\) and \(b\), whence \(\gcd(d_a,d_b)=1\).
The square of an odd integer is congruent to \(1\) modulo \(8\), so that
\[
  a\equiv d_a
  \qquad\text{and}\qquad
  b\equiv d_b
  \pmod8.
\]
Addition gives \(2K=a+b\equiv d_a+d_b\pmod8\).

The sum \(d_a+d_b\) is even.
The congruence \(2K\equiv2\) or \(6\pmod8\) holds precisely when \(K\) is odd, that is, when \(v_2(c)=1\).
The congruence \(2K\equiv4\pmod8\) holds precisely when \(v_2(K)=1\), that is, when \(v_2(c)=2\).
The congruence \(2K\equiv0\pmod8\) holds precisely when \(v_2(K)\geq2\), that is, when \(v_2(c)\geq3\).
These three cases exhaust the even residues modulo \(8\).
\end{proof}

\begin{proposition}[Two-adic completeness of the kernel congruence]\label{prop:two-adic-complete}
Let \(d_a\) and \(d_b\) be odd positive integers, and let \(2K\) be an even positive integer.
Then the equation
\[
  d_aX^2+d_bY^2=2K
\]
has a solution with \(X\) and \(Y\) in the unit group \(\Z_2^{\times}\) of the ring \(\Z_2\) of two-adic integers precisely when
\[
  d_a+d_b\equiv2K\pmod8.
\]
\end{proposition}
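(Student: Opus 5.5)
The plan is to prove the two implications separately, using that the squares of units in \(\Z_2\) are exactly the elements of \(1+8\Z_2\).

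\emph{Necessity.} Suppose \(X,Y\in\Z_2^{\times}\) satisfy the equation. Every unit of \(\Z_2\) is congruent modulo \(8\) to an odd integer, and every odd square is \(1\) modulo \(8\). Hence \(X^2\equiv Y^2\equiv1\pmod{8\Z_2}\), and reducing the equation modulo \(8\) gives \(d_a+d_b\equiv2K\pmod8\). This is the computation already made in the proof of Lemma~\ref{lem:two-adic-kernel}, carried over from \(\Z\) to \(\Z_2\).

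\emph{Sufficiency.} Assume the congruence holds. I take \(Y=1\) and try to solve \(d_aX^2=2K-d_b\) with \(X\in\Z_2^{\times}\). Since \(d_a\) is a unit, this asks that
\[
  w\coloneqq(2K-d_b)/d_a
\]
be the square of a unit. Two facts are needed.
\begin{enumerate}[label=\textup{(\roman*)}]
\item By hypothesis \(2K-d_b\equiv d_a\pmod8\). So \(2K-d_b\) is odd, and \(w\equiv d_a\cdot d_a^{-1}\equiv1\pmod8\), where \(d_a^{-1}\equiv d_a\pmod8\) because \(d_a^2\equiv1\pmod8\).
\item Every \(w\in1+8\Z_2\) is the square of a unit. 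Apply Hensel's lemma in its strong form to \(f(X)=X^2-w\) at \(X_0=1\). Here \(|f(1)|_2\leq2^{-3}\) and \(|f'(1)|_2^2=|2|_2^2=2^{-2}\), so \(|f(1)|_2<|f'(1)|_2^2\) and there is a root \(X\equiv1\pmod4\), which is a unit. Alternatively, the binomial series for \((1+8t)^{1/2}\) converges 2-adically.
\end{enumerate}

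With \(X^2=w\) we have \(d_aX^2+d_b\cdot1^2=2K\), which gives the required solution in units.

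The only step of substance is (ii), since simple Hensel fails at \(p=2\) because \(f'(1)=2\) is not a unit. I would therefore state the strengthened Hensel criterion \(|f(a)|<|f'(a)|^2\) explicitly, or give the short successive-approximation argument: if \(x^2\equiv w\pmod{2^k}\) with \(k\geq3\), then either \(x\) or \(x+2^{k-1}\) squares to \(w\) modulo \(2^{k+1}\). The limit of these approximations is a unit square root of \(w\).
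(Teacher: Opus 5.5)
Your proof is correct and follows the paper's argument essentially step for step. Necessity comes from reducing modulo $8$, using that squares of units lie in $1+8\Z_2$; sufficiency comes from taking $Y=1$, checking that $(2K-d_b)/d_a\equiv1\pmod8$, and extracting a unit square root through the strengthened Hensel criterion $v_2(f(1))>2v_2(f'(1))$, with your successive-approximation variant being an equally valid, more explicit way to do that last step.
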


\begin{proof}
The square of a two-adic unit lies in \(1+8\Z_2\), and conversely every element of \(1+8\Z_2\) is the square of a two-adic unit.
The direct half is the congruence for odd squares.
For the converse half, let \(\alpha\in1+8\Z_2\) and put \(f(X)=X^2-\alpha\).
Then \(v_2(f(1))=v_2(1-\alpha)\geq3\) and \(v_2(f'(1))=1\), so that \(v_2(f(1))>2v_2(f'(1))\), and Hensel's lemma furnishes a root of \(f\) in \(\Z_2\), necessarily a unit.

Suppose first that units \(X\) and \(Y\) solve the equation.
Reduction modulo \(8\) gives \(2K\equiv d_aX^2+d_bY^2\equiv d_a+d_b\pmod8\).

Suppose now that the congruence holds.
Take \(Y=1\) and put
\[
  \alpha
  =
  \frac{2K-d_b}{d_a}.
\]
The numerator is odd and the denominator is a two-adic unit, so that \(\alpha\in\Z_2^{\times}\).
The congruence gives \(2K-d_b\equiv d_a\pmod8\), and multiplication by \(d_a^{-1}\), together with \(d_a^{-1}\equiv d_a\pmod8\) and \(d_a^2\equiv1\pmod8\), gives \(\alpha\equiv1\pmod8\).
By the first paragraph, \(\alpha\) is the square of a unit \(X\), and the pair \((X,1)\) solves the equation.
\end{proof}

\begin{remark}[Local filters and size conditions]
\label{rem:residual-local-obstructions}
For fixed odd squarefree integers \(d_a\) and \(d_b\), the equation
\[
  d_au_a^2+d_bu_b^2=2K
\]
admits direct local tests at every prime dividing \(2Kd_ad_b\).
At an odd prime, reduction gives quadratic-residue conditions which may exclude a candidate pair of kernels.
At the prime \(2\), Proposition~\ref{prop:two-adic-complete} gives the exact unit criterion
\[
  d_a+d_b\equiv2K\pmod8.
\]
These tests can only exclude candidates.
They neither produce coprime integral values of \(u_a\) and \(u_b\) nor give the lower bound
\[
  u_au_b
  \geq
  \bigl(s(\mathcal{T})R_{\mathcal K}(\mathcal{T})\bigr)^{1/2}
  (2K)^{\theta_\varepsilon/2}
\]
required by Theorem~\ref{thm:large-square}.
Local solubility and the integral size demanded of the solution remain separate questions.
\end{remark}

\begin{proposition}[Equidistribution of squarefree kernels modulo \(8\)]\label{prop:kernel-classes}
For each odd residue \(r\) modulo \(8\),
\[
  \#\bigl\{d\leq x:d\ \text{squarefree},\ d\equiv r\ (\mathrm{mod}\ 8)\bigr\}
  =
  \frac{x}{\pi^2}
  +
  O(\sqrt{x}).
\]
\end{proposition}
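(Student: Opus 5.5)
The plan is to count squarefree integers in a residue class by Möbius inversion over square divisors. For squarefree \(d\) one has \(\mu^2(d)=\sum_{e^2\mid d}\mu(e)\). Since \(r\) is odd, any \(d\equiv r\pmod 8\) is odd, so only odd \(e\) contribute. This gives
\[
  \#\{d\leq x:\ \mu^2(d)=1,\ d\equiv r\ (\mathrm{mod}\ 8)\}
  =
  \sum_{\substack{e\leq\sqrt{x}\\ e\ \mathrm{odd}}}\mu(e)\,
  \#\{m\leq x/e^2:\ e^2m\equiv r\ (\mathrm{mod}\ 8)\}.
\]
For odd \(e\) we have \(e^2\equiv1\pmod8\), so the inner condition is simply \(m\equiv r\pmod 8\). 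Hence the inner count equals \(x/(8e^2)+O(1)\), uniformly in \(e\).

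Summing over \(e\), the main term is
\[
  \frac{x}{8}\sum_{e\ \mathrm{odd}}\frac{\mu(e)}{e^2}
  =
  \frac{x}{8}\prod_{p>2}\bigl(1-p^{-2}\bigr)
  =
  \frac{x}{8}\cdot\frac{6}{\pi^2}\cdot\frac{4}{3}
  =
  \frac{x}{\pi^2}.
\]
Two error terms remain, and each is \(O(\sqrt x)\):
\begin{enumerate}
  \item the \(O(1)\) errors, summed over the at most \(\sqrt{x}\) values of \(e\), give \(O(\sqrt x)\);
  \item completing the series to all odd \(e\) costs \(\frac{x}{8}\sum_{e>\sqrt x}e^{-2}\ll\sqrt{x}\).
\end{enumerate}
Together these give the stated asymptotic.

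The only point needing care is the observation that odd squares are \(1\) modulo \(8\). This makes the residue condition on \(m\) independent of \(e\), and it is the source of the equidistribution: each of the four odd classes receives a quarter of the odd squarefree density \(4/\pi^2\). Beyond that, the argument is the standard elementary squarefree count and presents no real obstacle.
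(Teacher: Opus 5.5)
Your proof is correct and follows essentially the same route as the paper: M\"obius inversion over square divisors, discarding even \(e\), using \(e^2\equiv1\pmod 8\) to get the inner count \(x/(8e^2)+O(1)\), and evaluating \(\sum_{e\ \mathrm{odd}}\mu(e)e^{-2}=8/\pi^2\). You also spell out the \(O(\sqrt{x})\) cost of completing the series over \(e>\sqrt{x}\), which the paper absorbs silently into its error term; one minor wording point is that the identity \(\mu^2(d)=\sum_{e^2\mid d}\mu(e)\) holds for every \(d\), not only for squarefree \(d\), and the inversion relies on exactly that.
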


\begin{proof}
By M\"obius inversion over square divisors,
\[
  \#\bigl\{d\leq x:d\ \text{squarefree},\ d\equiv r\ (\mathrm{mod}\ 8)\bigr\}
  =
  \sum_{e\leq\sqrt{x}}
  \mu(e)\,
  \#\bigl\{f\leq x/e^2:e^2f\equiv r\ (\mathrm{mod}\ 8)\bigr\}.
\]
For even \(e\) the congruence \(e^2f\equiv r\pmod8\) has no solution, since \(e^2f\) is even while \(r\) is odd.
For odd \(e\) one has \(e^2\equiv1\pmod8\), the congruence reads \(f\equiv r\pmod8\), and the inner count equals \(x/(8e^2)+O(1)\).
The sum therefore equals
\[
  \frac{x}{8}
  \sum_{e\ \mathrm{odd}}
  \frac{\mu(e)}{e^2}
  +
  O(\sqrt{x})
\]
and the Euler products
\[
  \sum_{e\ \mathrm{odd}}\frac{\mu(e)}{e^2}
  =
  \prod_{p>2}\left(1-\frac{1}{p^2}\right)
  =
  \frac{6/\pi^2}{1-1/4}
  =
  \frac{8}{\pi^2}
\]
complete the evaluation.
\end{proof}

Proposition~\ref{prop:kernel-classes} shows that odd squarefree integers are equidistributed among the four reduced residue classes modulo \(8\).
For a fixed even residue \(2K\) modulo \(8\), exactly \(4\) of the \(16\) residue pairs \((d_a,d_b)\) modulo \(8\) satisfy \(d_a+d_b\equiv2K\pmod8\).
One fourth of all squarefree pairs therefore meets the two-adic condition of Proposition~\ref{prop:two-adic-complete}, leaving the size bound of Theorem~\ref{thm:large-square} and the coprimality of the extracted variables still to be met.

\subsection{Prime-power concentration}\label{subsec:concentration}

For each prime \(p\mid abc\), let \(N_p\) denote the unique member of \(\{a,b,c\}\) divisible by \(p\).
This member exists and is unique because the three supports are pairwise disjoint by Lemma~\ref{lem:disjoint-supports}.
Put
\[
  e_p
  \coloneqq
  v_p(N_p)-1
  \qquad\text{and}\qquad
  x_p
  \coloneqq
  e_p\log p.
\]

\begin{definition}[Linear support data]\label{def:linear-carriers}
Define
\[
  \omega_{\lin}(\mathcal{T})
  \coloneqq
  \omega(a)+\omega(b)
  =
  \omega(ab).
\]
The second equality holds because \(\gcd(a,b)=1\) places the two prime supports in disjoint sets.
When \(K>1\), let \(P_{\lin}(\mathcal{T})\) be the largest prime dividing \(ab\).
\end{definition}

\begin{lemma}[Weighted maximum over a prime support]\label{lem:weighted-max}
Let \(\mathcal{P}\) be a finite non-empty set of primes, and let \((e_p)_{p\in\mathcal{P}}\) be non-negative integers.
Then
\[
  \max_{p\in\mathcal{P}}e_p
  \geq
  \frac{
    \sum_{p\in\mathcal{P}}e_p\log p
  }{
    \sum_{p\in\mathcal{P}}\log p
  }.
\]
Since the left member is an integer, one has moreover
\[
  \max_{p\in\mathcal{P}}e_p
  \geq
  \left\lceil
    \frac{
      \sum_{p\in\mathcal{P}}e_p\log p
    }{
      \sum_{p\in\mathcal{P}}\log p
    }
  \right\rceil.
\]
\end{lemma}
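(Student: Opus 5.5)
The plan is to write the weighted sum as a convex combination of the $e_p$ and compare it with the maximum. Put $w_p=\log p$ for $p\in\mathcal{P}$. Every prime is at least $2$, so each $w_p\geq\log2>0$. Since $\mathcal{P}$ is non-empty, the denominator $W=\sum_{p\in\mathcal{P}}w_p$ is then strictly positive, and the quotient in the statement is well defined. Write $e^{*}=\max_{p\in\mathcal{P}}e_p$; this maximum exists because $\mathcal{P}$ is finite and non-empty.

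The first inequality comes from bounding each term of the numerator. For every $p\in\mathcal{P}$ one has $e_pw_p\leq e^{*}w_p$, because $w_p>0$. Summing over $\mathcal{P}$ gives
\[
  \sum_{p\in\mathcal{P}}e_p\log p\leq e^{*}\sum_{p\in\mathcal{P}}\log p,
\]
and dividing by $W>0$ gives the first claim. Equivalently, the quotient is the weighted mean of the $e_p$ under the probability weights $w_p/W$, and no weighted mean exceeds the largest value averaged. The non-negativity of the $e_p$ is not needed for this step. It only guarantees that the quotient is non-negative, so the ceiling in the second claim is a non-negative integer.

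For the second inequality, note that $e^{*}$ is an integer and is at least the real number $x=\bigl(\sum e_p\log p\bigr)/\bigl(\sum\log p\bigr)$. Since $\lceil x\rceil$ is the least integer that is at least $x$, it follows that $e^{*}\geq\lceil x\rceil$.

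No step is a real obstacle. The only points needing care are that the weights are strictly positive, so that multiplying and dividing by them preserves the inequalities, and that the ceiling step relies on $e^{*}$ being an integer.
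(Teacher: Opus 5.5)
Your proposal is correct and is essentially the paper's own argument: bound each term by $e_p\log p\leq E\log p$ using $\log p>0$, sum over $\mathcal{P}$, divide by the positive total $\sum_{p\in\mathcal{P}}\log p$, and obtain the ceiling form from the integrality of the maximum. Your remark that the non-negativity of the $e_p$ is not used in the first inequality is accurate and harmless.
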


\begin{proof}
Put \(E=\max_{p\in\mathcal{P}}e_p\).
For each \(p\in\mathcal{P}\) one has \(\log p>0\), whence \(e_p\log p\leq E\log p\).
Summation over \(\mathcal{P}\) gives
\[
  \sum_{p\in\mathcal{P}}e_p\log p
  \leq
  E\sum_{p\in\mathcal{P}}\log p
\]
and division by the positive quantity \(\sum_{p\in\mathcal{P}}\log p\) proves the first inequality.
The second follows from the integrality of \(E\).
\end{proof}

Since \(\sum_{p\mid ab}\log p=\log\rad(ab)\), Lemma~\ref{lem:weighted-max} bounds the maximum valuation \(v_p(N)-1\) from below by the average defect per logarithmic prime mass.
Division of the defect by \(\log\rad(ab)\), or else by \(\log\rad(abc)\), bounds the largest single-prime extraction from below, and the exponents being integers, that bound may be raised to its ceiling.

\begin{theorem}[Weighted concentration in one summand]\label{thm:linear-concentration}
Suppose that \(K>1\) and \(E_{\varepsilon}(\mathcal{T})\geq0\).
Then some prime \(p\) dividing a summand \(N\in\{a,b\}\) satisfies
\[
  v_p(N)-1
  \geq
  \frac{
    t^{\mathcal{K}}(\mathcal{T})
    +
    \theta_{\varepsilon}\log(2K)
  }{
    \log\rad(ab)
  }.
\]
In particular,
\[
  v_p(N)-1
  \geq
  \frac{
    t^{\mathcal{K}}(\mathcal{T})
    +
    \theta_{\varepsilon}\log(2K)
  }{
    \omega_{\lin}(\mathcal{T})
    \log P_{\lin}(\mathcal{T})
  }.
\]
\end{theorem}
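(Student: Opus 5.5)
The plan is to combine the boundary-free certificate with the weighted-maximum lemma over the prime support of \(ab\). First, since \(K>1\) and \(E_{\varepsilon}(\mathcal{T})\geq0\), Theorem~\ref{thm:boundary-free-certificate} gives
\[
  \delta_{\lin}(\mathcal{T})
  \geq
  t^{\mathcal{K}}(\mathcal{T})+\theta_{\varepsilon}\log(2K).
\]
The right side is positive because \(K\geq1\) forces \(\log(2K)>0\). Hence \(\delta_{\lin}(\mathcal{T})>0\), so \(ab>1\) and the set \(\mathcal{P}\) of primes dividing \(ab\) is non-empty. This also justifies the definition of \(P_{\lin}(\mathcal{T})\).

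Next I would apply Lemma~\ref{lem:weighted-max} to \(\mathcal{P}\) with \(e_p=v_p(N_p)-1\), where \(N_p\in\{a,b\}\) is the unique summand divisible by \(p\). Uniqueness holds because \(\gcd(a,b)=1\). By Definition~\ref{def:linear-defect} the numerator \(\sum_{p\in\mathcal{P}}e_p\log p\) is exactly \(\delta_{\lin}(\mathcal{T})\), and the denominator \(\sum_{p\in\mathcal{P}}\log p\) is \(\log\rad(ab)\), which is positive. The lemma yields a prime \(p\in\mathcal{P}\) with
\[
  v_p(N_p)-1\geq\frac{\delta_{\lin}(\mathcal{T})}{\log\rad(ab)}.
\]
Inserting the lower bound from the first step then gives the first displayed inequality.

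For the second form, I would bound the denominator from above. Each of the \(\omega(ab)=\omega_{\lin}(\mathcal{T})\) primes dividing \(ab\) is at most \(P_{\lin}(\mathcal{T})\), so
\[
  \log\rad(ab)\leq\omega_{\lin}(\mathcal{T})\log P_{\lin}(\mathcal{T}).
\]
Since the numerator is non-negative, replacing the denominator by this larger quantity only weakens the bound, and the second inequality follows.

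No step is a genuine obstacle. The only points needing care are checking that \(\mathcal{P}\) is non-empty and that the numerator is non-negative, so that the lemma applies and the final division by a larger denominator goes in the right direction.
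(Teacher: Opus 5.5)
Your proposal is correct and follows the paper's proof essentially step for step. Like the paper, you apply Lemma~\ref{lem:weighted-max} to the primes of \(ab\) with \(e_p=v_p(N_p)-1\), insert the lower bound \(\delta_{\lin}(\mathcal{T})\geq t^{\mathcal{K}}(\mathcal{T})+\theta_{\varepsilon}\log(2K)\) from Theorem~\ref{thm:boundary-free-certificate}, and finish with \(\log\rad(ab)\leq\omega_{\lin}(\mathcal{T})\log P_{\lin}(\mathcal{T})\); the one cosmetic difference is that you get non-emptiness of the prime set from \(\delta_{\lin}(\mathcal{T})>0\), whereas the paper uses \(ab\geq2K-1>1\) from Lemma~\ref{lem:minimal-product}, and both work.
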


\begin{proof}
For each prime \(p\mid ab\) one has \(N_p\in\{a,b\}\), and
\[
  \delta_{\lin}(\mathcal{T})
  =
  \sum_{p\mid ab}e_p\log p.
\]
Since
\[
  \sum_{p\mid ab}\log p
  =
  \log\rad(ab)
\]
and since \(K>1\) gives \(ab\geq2K-1>1\), the set of primes dividing \(ab\) is non-empty.
Lemma~\ref{lem:weighted-max}, applied to that set, shows that the largest value among the \(e_p\) is at least
\[
  \frac{
    \delta_{\lin}(\mathcal{T})
  }{
    \log\rad(ab)
  }.
\]
Theorem~\ref{thm:boundary-free-certificate} gives
\[
  \delta_{\lin}(\mathcal{T})
  \geq
  t^{\mathcal{K}}(\mathcal{T})
  +
  \theta_{\varepsilon}\log(2K)
\]
which proves the first inequality.
The second follows from
\[
  \log\rad(ab)
  \leq
  \omega_{\lin}(\mathcal{T})
  \log P_{\lin}(\mathcal{T}).
\]
\end{proof}

\begin{corollary}[Few small primes]\label{cor:few-small-primes}
Fix \(\delta_0>0\) and an integer \(k\geq1\).
Suppose that \(K>1\), that every prime dividing \(ab\) is at most \(K^{\delta_0}\), and that \(\omega_{\lin}(\mathcal{T})\leq k\).
If \(E_{\varepsilon}(\mathcal{T})\geq0\), then some prime \(p\mid ab\) satisfies
\[
  v_p(ab)-1
  \geq
  \frac{\theta_{\varepsilon}}{\delta_0k}.
\]
\end{corollary}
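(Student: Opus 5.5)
The plan is to derive Corollary~\ref{cor:few-small-primes} directly from the second inequality of Theorem~\ref{thm:linear-concentration}, discarding the non-negative centre term.

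\textbf{Step 1: bound the denominator.} Under the hypotheses, \(K>1\) and \(E_{\varepsilon}(\mathcal{T})\geq0\), so Theorem~\ref{thm:linear-concentration} applies. It produces a prime \(p\) dividing a summand \(N\in\{a,b\}\) with
\[
  v_p(N)-1
  \geq
  \frac{t^{\mathcal{K}}(\mathcal{T})+\theta_{\varepsilon}\log(2K)}{\omega_{\lin}(\mathcal{T})\log P_{\lin}(\mathcal{T})}.
\]
Since every prime dividing \(ab\) is at most \(K^{\delta_0}\), we have \(\log P_{\lin}(\mathcal{T})\leq\delta_0\log K\). Together with \(\omega_{\lin}(\mathcal{T})\leq k\), the denominator is at most \(k\delta_0\log K\), which is positive because \(K>1\).

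\textbf{Step 2: bound the numerator.} We have \(t^{\mathcal{K}}(\mathcal{T})\geq0\) and \(\log(2K)\geq\log K\). Hence the numerator is at least \(\theta_{\varepsilon}\log K\), and the quotient is at least \(\theta_{\varepsilon}/(\delta_0 k)\).

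\textbf{Step 3: pass from \(N\) to \(ab\).} Because \(\gcd(a,b)=1\) and \(p\mid N\), the prime \(p\) divides exactly one of \(a\) and \(b\), so \(v_p(ab)=v_p(N)\). This gives the stated inequality.

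There is no real obstacle. The only points needing care are the positivity of \(\log K\), which is needed to divide, and the identification \(v_p(ab)=v_p(N)\), which uses coprimality. If one wanted integrality, a ceiling could be taken as in Lemma~\ref{lem:weighted-max}, but the statement does not require it.
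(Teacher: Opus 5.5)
Your proposal is correct and follows the paper's own proof: apply Theorem~\ref{thm:linear-concentration}, discard \(t^{\mathcal{K}}(\mathcal{T})\geq0\), use \(\log(2K)\geq\log K\), and bound the denominator by \(k\delta_0\log K\). Your explicit step \(v_p(ab)=v_p(N)\) via \(\gcd(a,b)=1\) is a detail the paper leaves implicit.
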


\begin{proof}
Apply Theorem~\ref{thm:linear-concentration}, discard the non-negative term \(t^{\mathcal{K}}(\mathcal{T})\), use \(\log(2K)\geq\log K\), and use \(\log P_{\lin}(\mathcal{T})\leq\delta_0\log K\).
\end{proof}

\subsection{Concentration of the total defect}\label{subsec:total-concentration}

With \(x_p=e_p\log p\) as above, now taken over every prime dividing \(abc\),
\[
  \delta_{\tot}(\mathcal{T})
  =
  \sum_{p\mid abc}x_p.
\]

\begin{definition}[Total support data]\label{def:total-carriers}
Define
\[
  \omega_{\tot}(\mathcal{T})
  \coloneqq
  \omega(abc)
\]
and let \(P_{\tot}(\mathcal{T})\) be the largest prime dividing \(abc\).
\end{definition}

\begin{theorem}[Weighted total concentration]\label{thm:total-concentration}
Suppose that \(K>1\) and \(E_{\varepsilon}(\mathcal{T})\geq0\).
Then some prime \(p\mid abc\) satisfies
\[
  v_p(N_p)-1
  \geq
  \frac{
    \log(2K-1)+\theta_{\varepsilon}\log(2K)
  }{
    \log\rad(abc)
  }.
\]
In particular,
\[
  v_p(N_p)-1
  \geq
  \frac{
    \log(2K-1)+\theta_{\varepsilon}\log(2K)
  }{
    \omega_{\tot}(\mathcal{T})
    \log P_{\tot}(\mathcal{T})
  }.
\]
\end{theorem}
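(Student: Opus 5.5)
This follows the pattern of Theorem~\ref{thm:linear-concentration}, with the total defect in place of the linear defect and the full support of \(abc\) in place of that of \(ab\). The first step is to record the decomposition \(\delta_{\tot}(\mathcal{T})=\sum_{p\mid abc}x_p=\sum_{p\mid abc}e_p\log p\), where \(e_p=v_p(N_p)-1\). This follows from Lemma~\ref{lem:total-defect-identity} together with the disjointness of supports in Lemma~\ref{lem:disjoint-supports}. One point needs checking: at the prime \(2\) one has \(N_2=c\), and \(e_2\log2=(v_2(c)-1)\log2=\delta_2(\mathcal{T})\), which matches Definition~\ref{def:total-defect}. Likewise, for odd \(p\mid K\) the exponent \(v_p(c)-1=v_p(K)-1\) reproduces \(\delta^{\mathcal{K}}\). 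The index set \(\{p\mid abc\}\) is non-empty, since it contains \(2\), so Lemma~\ref{lem:weighted-max} applies. It yields a prime \(p\) with
\[
  e_p\geq\frac{\delta_{\tot}(\mathcal{T})}{\sum_{p\mid abc}\log p}=\frac{\delta_{\tot}(\mathcal{T})}{\log\rad(abc)}.
\]

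The second step bounds the numerator from below. Transgression and Theorem~\ref{thm:total-criterion} give \(\delta_{\tot}(\mathcal{T})\geq\log(ab)+\theta_{\varepsilon}\log(2K)\). Lemma~\ref{lem:minimal-product}, which uses \(K>1\), gives \(ab\geq2K-1\), hence \(\log(ab)\geq\log(2K-1)\). Substituting these two bounds proves the first inequality.

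For the second form, I would bound the denominator by
\[
  \log\rad(abc)=\sum_{p\mid abc}\log p\leq\omega_{\tot}(\mathcal{T})\log P_{\tot}(\mathcal{T}).
\]
The numerator is positive, since \(2K-1\geq3\), so enlarging the denominator weakens the bound in the correct direction.

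I expect no real obstacle here. The only step needing care is the bookkeeping at the prime \(2\) and at the odd primes of \(K\), namely confirming that \(\sum_p x_p\) over \(p\mid abc\) equals \(\delta_{\tot}\) exactly, with no leftover factor. This holds because \(v_2(c)=v_2(K)+1\) and, for odd \(p\), \(v_p(c)=v_p(K)\).
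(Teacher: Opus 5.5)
Your proof is correct and follows the paper's argument essentially verbatim. The paper likewise writes \(\delta_{\tot}(\mathcal{T})=\sum_{p\mid abc}e_p\log p\), applies Lemma~\ref{lem:weighted-max}, bounds \(\delta_{\tot}\) from below using Theorem~\ref{thm:total-criterion} and Lemma~\ref{lem:minimal-product}, and finishes with \(\log\rad(abc)\leq\omega_{\tot}(\mathcal{T})\log P_{\tot}(\mathcal{T})\). Your extra checks are details the paper leaves implicit: the bookkeeping at \(2\) and at the odd primes of \(K\), the non-emptiness of the support, and the positivity of the numerator.
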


\begin{proof}
For each prime \(p\mid abc\), put
\[
  e_p
  \coloneqq
  v_p(N_p)-1.
\]
Then
\[
  \delta_{\tot}(\mathcal{T})
  =
  \sum_{p\mid abc}e_p\log p.
\]
Since
\[
  \sum_{p\mid abc}\log p
  =
  \log\rad(abc)
\]
Lemma~\ref{lem:weighted-max}, applied to the set of primes dividing \(abc\), shows that some prime \(p\) satisfies
\[
  e_p
  \geq
  \frac{
    \delta_{\tot}(\mathcal{T})
  }{
    \log\rad(abc)
  }.
\]
Theorem~\ref{thm:total-criterion} and Lemma~\ref{lem:minimal-product} give
\[
  \delta_{\tot}(\mathcal{T})
  \geq
  \log(2K-1)+\theta_{\varepsilon}\log(2K).
\]
This proves the first inequality.
The second follows from
\[
  \log\rad(abc)
  \leq
  \omega_{\tot}(\mathcal{T})
  \log P_{\tot}(\mathcal{T}).
\]
\end{proof}

\begin{corollary}[Scale-independent total floor]\label{cor:total-floor}
Fix \(\delta_0>0\) and an integer \(k\geq1\).
Suppose that every prime dividing \(abc\) is at most \(K^{\delta_0}\) and that \(\omega_{\tot}(\mathcal{T})\leq k\).
If \(E_{\varepsilon}(\mathcal{T})\geq0\), then some prime \(p\mid abc\) satisfies
\[
  v_p(N_p)-1
  \geq
  \frac{1+\theta_{\varepsilon}}{\delta_0k}.
\]
\end{corollary}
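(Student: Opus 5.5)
The plan is to read Corollary~\ref{cor:total-floor} off the second inequality of Theorem~\ref{thm:total-concentration}, in the same way Corollary~\ref{cor:few-small-primes} was read off Theorem~\ref{thm:linear-concentration}. First we note that \(K>1\), which that theorem requires. The hypotheses make \(K^{\delta_0}\) at least the prime \(2\mid c\), so \(K>1\). Theorem~\ref{thm:total-concentration} then produces a prime \(p\mid abc\) with
\[
  v_p(N_p)-1
  \geq
  \frac{\log(2K-1)+\theta_{\varepsilon}\log(2K)}{\omega_{\tot}(\mathcal{T})\log P_{\tot}(\mathcal{T})}.
\]

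Next we bound the fraction term by term. In the denominator, \(\omega_{\tot}\leq k\) and \(\log P_{\tot}\leq\delta_0\log K\) give \(\omega_{\tot}\log P_{\tot}\leq\delta_0k\log K\). In the numerator, \(2K-1\geq K\) and \(2K\geq K\) give \(\log(2K-1)+\theta_\varepsilon\log(2K)\geq(1+\theta_\varepsilon)\log K\). Since \(\log K>0\), the two bounds combine to give the ratio
\[
  v_p(N_p)-1
  \geq
  \frac{(1+\theta_{\varepsilon})\log K}{\delta_0k\log K}
  =
  \frac{1+\theta_{\varepsilon}}{\delta_0k}.
\]

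The only step needing care is the numerator estimate \(\log(2K-1)\geq\log K\), which holds for all \(K\geq1\). Everything else is a direct substitution of the hypotheses.
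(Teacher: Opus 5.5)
Your proposal is correct and follows the paper's own proof: the paper likewise bounds the numerator of Theorem~\ref{thm:total-concentration} below by \((1+\theta_{\varepsilon})\log K\) and the denominator above by \(\delta_0k\log K\). Your explicit check that \(K>1\), from \(2\mid c\) and \(2\leq K^{\delta_0}\), supplies a hypothesis of that theorem which the paper's corollary leaves implicit.
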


\begin{proof}
The numerator of Theorem~\ref{thm:total-concentration} is at least \((1+\theta_{\varepsilon})\log K\).
The denominator is at most \(\delta_0k\log K\).
\end{proof}

As \(\varepsilon\) tends to zero the lower bound of Corollary~\ref{cor:total-floor} tends to \(1/(\delta_0k)\).
So long as the carrier count and the prime scale are held fixed, a positive floor for the total concentration persists.

Every lower bound in Theorems~\ref{thm:linear-concentration} and~\ref{thm:total-concentration} may be replaced by its ceiling, since \(v_p(N)-1\) and \(v_p(N_p)-1\) are integers, which is the second form of Lemma~\ref{lem:weighted-max}. The exact logarithmic support masses appear in the first inequalities of those two theorems, while the coarser consequences replace those masses by the worst-case quantities \(\omega_{\lin}\log P_{\lin}\) and \(\omega_{\tot}\log P_{\tot}\).
Both forms weaken as \(\omega_{\tot}(\mathcal{T})\) grows, the guaranteed multiplicity at a single prime falling off like \(\delta_{\tot}(\mathcal{T})/\log\rad(abc)\), and the exceptional-set estimates of Section~\ref{sec:benchmarks} draw upon such configurations.
Where \(\omega_{\tot}(\mathcal{T})\) stays bounded the opposite holds, and a large total defect forces a large multiplicity at one prime or at a few.
Any stronger pointwise conclusion would have to know more of the way multiplicity distributes itself across the prime support.

\begin{proposition}[Every defect share is at most \(\log K\)]\label{prop:share-ceiling}
Suppose that \(K>1\).
Then
\[
  x_p\leq\log K
\]
for every prime \(p\mid abc\).
\end{proposition}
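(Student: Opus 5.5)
The plan is to bound each share $x_p=e_p\log p$ directly by the size of the member $N_p$ that carries $p$. No appeal to transgression is needed. The elementary inequality $p^{v_p(N_p)}\leq N_p$ gives
\[
  p^{e_p}=\frac{p^{v_p(N_p)}}{p}\leq\frac{N_p}{p},
  \qquad\text{that is,}\qquad
  x_p\leq\log N_p-\log p.
\]
It then suffices to show $N_p/p\leq K$ in each of the cases determined by which member of $\{a,b,c\}$ is $N_p$. By Lemma~\ref{lem:disjoint-supports} and the parity of the class, $N_p$ is well defined.

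First I take $N_p\in\{a,b\}$. Then $p$ is odd, so $p\geq3$, and $N_p\leq 2K-1<2K$ because both summands are positive and sum to $2K$. Hence $N_p/p<2K/3<K$, which gives $x_p<\log K$.

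Next I take $N_p=c=2K$, and I split according to whether $p$ is $2$ or odd. If $p=2$, then $e_2=v_2(c)-1=v_2(K)$, and $2^{v_2(K)}\leq K$ gives $x_2=\delta_2(\mathcal{T})\leq\log K$. Equality occurs when $K$ is a power of $2$, so the bound is sharp. If $p$ is odd, then $v_p(c)=v_p(K)$, so $p^{e_p}=p^{v_p(K)-1}\leq K/p<K$.

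The only point requiring care is the prime $2$ in $c$. There the crude bound $N_p/p$ must be taken with $p=2$ and $N_p=2K$, which gives exactly $K$. This is the step where the inequality can be an equality, and it explains why the statement is phrased with $\log K$ rather than a strict bound. The hypothesis $K>1$ only guarantees that $abc$ has prime divisors beyond trivial cases and that $\log K>0$. I do not expect any genuine obstacle beyond this bookkeeping of cases.
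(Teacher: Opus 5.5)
Your proof is correct and follows essentially the same route as the paper: bound $x_p\leq\log N_p-\log p$, then split according to whether $p=2$ (where $x_2=v_2(K)\log2\leq\log K$), $p$ is odd and divides $c$, or $p$ divides a summand (where $N_p\leq 2K-1$ and $p\geq3$). The only difference is cosmetic. In the odd $p\mid c$ case you use $v_p(c)=v_p(K)$ to get $p^{e_p}\leq K/p$, whereas the paper uses the cruder bound $\log(2K)-\log3<\log K$.
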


\begin{proof}
For \(p=2\) one has \(N_2=c=2K\), so that \(x_2=v_2(K)\log2\), which is the two-adic defect \(\delta_2(\mathcal{T})\) of Definition~\ref{def:total-defect}, and \(2^{v_2(K)}\mid K\) gives \(x_2\leq\log K\).

For an odd prime \(p\mid c\),
\[
  x_p
  \leq
  \log c-\log p
  \leq
  \log(2K)-\log3
  <
  \log K.
\]

Suppose finally that \(p\) divides \(a\) or \(b\).
The corresponding summand \(N_p\) is at most \(2K-1\) and \(p\geq3\), whence
\[
  x_p
  \leq
  \log(2K-1)-\log3
  <
  \log K
\]
the last inequality resting on \(2K-1<3K\).
Thus \(x_p\leq\log K\) in every case.
\end{proof}

\begin{theorem}[Two repeated primes]\label{thm:two-repeated-primes}
Suppose that \(K>1\) and \(E_{\varepsilon}(\mathcal{T})\geq0\).
Then at least two distinct primes \(p\mid abc\) satisfy
\[
  v_p(N_p)\geq2.
\]
More precisely, for every prime \(p\mid abc\) one has
\[
  \delta_{\tot}(\mathcal{T})-x_p
  \geq
  \log\left(2-\frac{1}{K}\right)
  +
  \theta_{\varepsilon}\log(2K).
\]
\end{theorem}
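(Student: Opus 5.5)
The plan is to prove the ``more precisely'' inequality first and then read off the two repeated primes from it. Theorem~\ref{thm:total-criterion} says that transgression forces
\[
  \delta_{\tot}(\mathcal{T})\geq\log(ab)+\theta_{\varepsilon}\log(2K),
\]
and Proposition~\ref{prop:share-ceiling} says that no single share exceeds \(\log K\). Subtracting one share from the total should therefore leave at least \(\log(ab)-\log K+\theta_{\varepsilon}\log(2K)\).

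The first step is to bound \(\log(ab)-\log K\) from below. Lemma~\ref{lem:minimal-product} gives \(ab\geq2K-1\), which is exactly where \(K>1\) is needed. Hence
\[
  \log(ab)-\log K\geq\log\frac{2K-1}{K}=\log\Bigl(2-\frac1K\Bigr).
\]
Combining this with \(x_p\leq\log K\) gives, for every prime \(p\mid abc\),
\[
  \delta_{\tot}(\mathcal{T})-x_p\geq\log\Bigl(2-\frac1K\Bigr)+\theta_{\varepsilon}\log(2K),
\]
which is the displayed claim.

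The second step deduces the two repeated primes. Since \(K>1\), \(\log(2-1/K)>0\), and \(\theta_{\varepsilon}\log(2K)>0\) as well, so the right side is strictly positive. With \(\delta_{\tot}=\sum_{q\mid abc}x_q\) as in Subsection~\ref{subsec:total-concentration}, this says \(\sum_{q\neq p}x_q>0\) for every \(p\mid abc\). I will first apply this with any prime \(p\); it produces a prime \(q_1\neq p\) with \(x_{q_1}>0\), that is, \(v_{q_1}(N_{q_1})\geq2\). Applying it again with \(p=q_1\) produces \(q_2\neq q_1\) with \(x_{q_2}>0\). These are two distinct primes with \(v(N)\geq2\).

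The only delicate point is that the set of primes dividing \(abc\) is non-empty and that the argument uses strict positivity; both are clear because \(2\mid c\) and \(K>1\). There is no real obstacle here. The step that carries content is the pairing of the share ceiling with the minimal-product bound, so that the \(\log K\) lost by removing one prime is absorbed by \(\log(ab)\geq\log(2K-1)\).
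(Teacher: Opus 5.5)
Your proposal is correct and follows the paper's proof essentially step for step: Theorem~\ref{thm:total-criterion} with Lemma~\ref{lem:minimal-product} gives \(\delta_{\tot}(\mathcal{T})\geq\log(2K-1)+\theta_{\varepsilon}\log(2K)\), Proposition~\ref{prop:share-ceiling} shows that removing one share costs at most \(\log K\), and strict positivity of what remains yields the two repeated primes. Your two-step extraction of \(q_1\) and \(q_2\) is the direct form of the paper's contradiction argument, and it also makes explicit the case of no repeated prime at all, which the paper leaves implicit.
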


\begin{proof}
Theorem~\ref{thm:total-criterion} and Lemma~\ref{lem:minimal-product} give
\[
  \delta_{\tot}(\mathcal{T})
  \geq
  \log(2K-1)+\theta_{\varepsilon}\log(2K).
\]
Fix any prime \(p\mid abc\).
Proposition~\ref{prop:share-ceiling} gives \(x_p\leq\log K\), and hence
\[
\begin{aligned}
  \delta_{\tot}(\mathcal{T})-x_p
  &\geq
  \log(2K-1)-\log K
  +
  \theta_{\varepsilon}\log(2K)
  \\
  &=
  \log\left(2-\frac{1}{K}\right)
  +
  \theta_{\varepsilon}\log(2K).
\end{aligned}
\]
Since \(K>1\), the right-hand side is strictly positive.

Suppose, to the contrary, that a single prime \(p_0\) satisfies \(v_{p_0}(N_{p_0})\geq2\).
The shares \(x_p=(v_p(N_p)-1)\log p\) attached to the remaining primes then vanish, so that \(\delta_{\tot}(\mathcal{T})=x_{p_0}\) and \(\delta_{\tot}(\mathcal{T})-x_{p_0}=0\), against the strict positivity just established.
At least two distinct primes therefore carry positive defect shares.
\end{proof}

\subsection{Two recurrence families near the defect threshold}\label{subsec:recurrences}

The thresholds established above admit no improvement beyond a bounded additive amount.
The two recurrences below keep the total defect within an absolute constant of the principal threshold \(\log(ab)\) of Theorem~\ref{thm:total-criterion}.

\subsubsection{A Pell family}\label{subsec:pell-family}

\begin{lemma}[Pell recurrence]\label{lem:pell-recurrence}
Define
\[
  (x_1,y_1)=(1,1)
\]
and
\[
  x_{n+1}=3x_n+4y_n
  \qquad
  y_{n+1}=2x_n+3y_n.
\]
Then
\[
  x_n^2-2y_n^2=-1
\]
for every \(n\geq1\).
The integers \(x_n\) and \(y_n\) are positive, odd, and coprime.
\end{lemma}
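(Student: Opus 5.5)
The plan is to argue by induction on \(n\), treating the four assertions (the norm equation, positivity, oddness, coprimality) in that order. Each reduces to a one-line check on the recurrence, so the only real work is arranging the invariants so that each one feeds the next.

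For the norm equation, the base case is \(1-2=-1\). For the inductive step, expand
\[
  x_{n+1}^2-2y_{n+1}^2=(3x_n+4y_n)^2-2(2x_n+3y_n)^2 .
\]
The cross terms \(24x_ny_n\) cancel, since \(24x_ny_n-2\cdot12x_ny_n=0\). The square terms give \((9-8)x_n^2+(16-18)y_n^2=x_n^2-2y_n^2\). The form \(X^2-2Y^2\) is therefore invariant under the matrix \(\begin{pmatrix}3&4\\2&3\end{pmatrix}\), which is multiplication by the unit \(3+2\sqrt2\) of norm \(1\), and the value \(-1\) propagates.

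For positivity, the recurrence has nonnegative coefficients and starts from positive values, so induction gives \(x_n,y_n\geq1\). For oddness, reduce modulo \(2\): \(x_{n+1}\equiv x_n\) and \(y_{n+1}\equiv y_n\pmod2\), so both coordinates inherit the parity of \((1,1)\).

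Coprimality follows directly from the norm equation rather than by a separate induction. Any common divisor \(d\) of \(x_n\) and \(y_n\) has \(d^2\) dividing \(x_n^2-2y_n^2=-1\), so \(d=1\).

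There is no genuine obstacle in this lemma. The only step needing any care is the cancellation in the norm computation, and that is a finite check.
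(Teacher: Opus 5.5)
Your proposal is correct and follows the paper's proof essentially step for step. You verify the invariance of \(x^2-2y^2\) under the recurrence and induct, note that positivity and oddness are preserved, and deduce coprimality from \(x_n^2-2y_n^2=-1\).
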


\begin{proof}
A direct calculation gives
\[
  (3x+4y)^2-2(2x+3y)^2=x^2-2y^2.
\]
The quadratic identity follows by induction.
Positivity and oddness are preserved by the recurrence.
Every common divisor of \(x_n\) and \(y_n\) divides \(x_n^2-2y_n^2=-1\).
\end{proof}

\begin{proposition}[Pell triples]\label{prop:pell-triples}
For \(n\geq2\), define
\[
  \mathcal{P}_n
  \coloneqq
  \bigl(1,x_n^2,2y_n^2\bigr).
\]
Then \(\mathcal{P}_n\) is a parity-class triple and
\[
  \rad(abc)
  \leq
  2x_ny_n
  <
  \sqrt2\,c.
\]
One therefore has
\[
  \delta_{\tot}(\mathcal{P}_n)-\log(ab)
  >
  -\frac12\log2.
\]
Moreover,
\[
  \delta_{\lin}(\mathcal{P}_n)
  \geq
  \log x_n.
\]
\end{proposition}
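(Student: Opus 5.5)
The proof is a sequence of direct verifications, carried out in the order of the statement.

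First, \(\mathcal{P}_n\) is a parity-class triple. Lemma~\ref{lem:pell-recurrence} gives \(x_n^2-2y_n^2=-1\), that is, \(1+x_n^2=2y_n^2\), so \(a=1\), \(b=x_n^2\), \(c=2y_n^2\) satisfy \(a+b=c\). Since \(x_n\) is odd, \(a\) and \(b\) are odd and \(c\) is even. Pairwise coprimality follows from \(\gcd(x_n,y_n)=1\) together with the oddness of \(x_n\). The condition \(n\geq2\) only ensures \(x_n>1\); for instance \(x_2=7\) and \(y_2=5\).

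Second, the radical bound. We have \(\rad(abc)=\rad(2x_n^2y_n^2)=\rad(2x_ny_n)\leq2x_ny_n\). For the strict comparison with \(\sqrt2\,c=2\sqrt2\,y_n^2\), it suffices to show \(x_n<\sqrt2\,y_n\). This holds because \(x_n^2=2y_n^2-1<2y_n^2\).

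Third, the defect bound. Proposition~\ref{prop:total-energy} at \(\varepsilon=0\) gives \(\delta_{\tot}-\log(ab)=\log c-\log\rad(abc)\). By the previous step \(\rad(abc)<\sqrt2\,c\), so \(\log c-\log\rad(abc)>-\tfrac12\log2\), as claimed.

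Finally, \(\delta_{\lin}(\mathcal{P}_n)=\log\bigl(x_n^2/\rad(x_n^2)\bigr)=\log\bigl(x_n^2/\rad(x_n)\bigr)\), since \(a=1\) contributes nothing. Because \(\rad(x_n)\leq x_n\), this is at least \(\log x_n\).

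There is no real obstacle here. The only point needing care is the strict inequality \(2x_ny_n<\sqrt2\,c\), which rests on the Pell relation giving \(x_n^2<2y_n^2\).
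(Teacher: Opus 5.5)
Your proof is correct and follows essentially the same route as the paper: the Pell identity with Lemma~\ref{lem:pell-recurrence} for the parity-class structure, \(\rad(abc)\leq2x_ny_n\), the inequality \(x_n<\sqrt2\,y_n\), and Proposition~\ref{prop:total-energy} at exponent zero. The one cosmetic difference is in the last step, where you use \(\rad(x_n)\leq x_n\) globally while the paper bounds each prime's contribution by \((2v_p(x_n)-1)\log p\geq v_p(x_n)\log p\). Summed over the primes dividing \(x_n\), the paper's bound is the same inequality as yours.
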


\begin{proof}
The Pell identity gives \(1+x_n^2=2y_n^2\).
Coprimality follows from Lemma~\ref{lem:pell-recurrence}.
Since \(\rad(x_n^2)\leq x_n\) and \(\rad(y_n^2)\leq y_n\), one has \(\rad(abc)\leq2x_ny_n\).
The inequality \(x_n<\sqrt2\,y_n\) gives the stated comparison with \(c=2y_n^2\).
Proposition~\ref{prop:total-energy} at exponent zero gives the total-defect bound.
For the final assertion, each prime \(p\mid x_n\) contributes \((2v_p(x_n)-1)\log p\geq v_p(x_n)\log p\) to the linear defect.
\end{proof}

\begin{example}\label{ex:pell-hit}
The fourth Pell pair is \((x_4,y_4)=(239,169)\).
It gives
\[
  1+239^2=2\cdot169^2
\]
in which the second coordinate is the square \(y_4=13^2\).
The radical of the product is \(2\cdot239\cdot13=6214\), while \(c=57122\), so this triple has quality greater than one.
\end{example}

The non-squarefree factor in \(y_4=13^2\) propagates along the recurrence via algebraic divisibility, yielding an infinite sequence of triples with quality strictly greater than one.

\begin{lemma}[Divisibility propagation in the Pell family]\label{lem:pell-divisibility-propagation}
Let \((x_n,y_n)\) be the sequence of Lemma~\ref{lem:pell-recurrence}.
If \(2n-1=t(2r-1)\) for positive integers \(n, r, t\), then \(t\) is odd, \(x_r\mid x_n\), and \(y_r\mid y_n\).
\end{lemma}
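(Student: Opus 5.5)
The plan is to pass to the ring \(\Z[\sqrt2]\) and read the recurrence as multiplication by a power of a unit. Put \(\eta=1+\sqrt2\). The recurrence \(x_{n+1}=3x_n+4y_n\), \(y_{n+1}=2x_n+3y_n\) is multiplication of \(x_n+y_n\sqrt2\) by a unit of norm one, but the coefficients \((3,4;2,3)\) do not match \((1+\sqrt2)^2=3+2\sqrt2\) in the naive coordinates. So the first step will be to identify the correct parametrisation. The values \((1,1),(7,5),(41,29),(239,169)\) are exactly the pairs with \(x_n+y_n\sqrt2=\eta^{2n-1}\): for instance \(\eta^3=7+5\sqrt2\), and
\[
(x+y\sqrt2)(3+2\sqrt2)=(3x+4y)+(2x+3y)\sqrt2 .
\]
I will prove \(x_n+y_n\sqrt2=\eta^{2n-1}\) by induction, with base case \(\eta^1=1+\sqrt2\) and the displayed identity as inductive step. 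Taking conjugates then gives
\[
x_n=\frac{\eta^{2n-1}+\bar\eta^{\,2n-1}}{2},\qquad y_n=\frac{\eta^{2n-1}-\bar\eta^{\,2n-1}}{2\sqrt2},\qquad \bar\eta=1-\sqrt2=-\eta^{-1}.
\]

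Next, with \(k=2r-1\) and \(2n-1=tk\), the parity claim is immediate: \(t\) divides the odd number \(2n-1\), so \(t\) is odd.

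For the divisibility, set \(\alpha=\eta^{k}\) and \(\beta=\bar\eta^{\,k}\). Then \(x_r=(\alpha+\beta)/2\) and \(y_r=(\alpha-\beta)/(2\sqrt2)\), while \(x_n=(\alpha^t+\beta^t)/2\) and \(y_n=(\alpha^t-\beta^t)/(2\sqrt2)\). For the \(y\) part, I will use the factorisation
\[
\frac{\alpha^t-\beta^t}{\alpha-\beta}=\sum_{j=0}^{t-1}\alpha^{j}\beta^{t-1-j}.
\]
This expression is symmetric in \(\alpha,\beta\), so it lies in \(\Z[\sqrt2]\) and is fixed by conjugation, hence lies in \(\Z\). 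It follows that \(y_n=y_r\cdot(\text{integer})\). For the \(x\) part, I will use that \(t\) is odd to write \(\alpha^t+\beta^t=(\alpha+\beta)\sum_{j=0}^{t-1}(-1)^j\alpha^{j}\beta^{t-1-j}\). The second factor is again a symmetric polynomial in \(\alpha,\beta\) with integer coefficients, hence a rational integer by the same conjugation argument. Dividing by \(2\) gives \(x_n=x_r\cdot(\text{integer})\).

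The main obstacle is the first step, namely matching the recurrence index to the odd powers \(\eta^{2n-1}\) rather than to powers of \(3+2\sqrt2\). Once that identification is made, the rest is the standard Lucas-sequence divisibility argument. One small point needs checking: the cofactors must be rational integers and not merely elements of \(\Z[\sqrt2]\). This holds because they are fixed by the nontrivial automorphism \(\sqrt2\mapsto-\sqrt2\), which swaps \(\alpha\) and \(\beta\).
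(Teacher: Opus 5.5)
Your proposal is correct and follows essentially the same route as the paper. You identify $x_n+y_n\sqrt2=(1+\sqrt2)^{2n-1}$, set $A=(1+\sqrt2)^{2r-1}$ and $B=(1-\sqrt2)^{2r-1}$, and observe that $(A^t-B^t)/(A-B)$ and, for odd $t$, $(A^t+B^t)/(A+B)$ are conjugation-invariant elements of $\Z[\sqrt2]$, hence rational integers. Your aside that the recurrence coefficients do not match $3+2\sqrt2$ is a harmless slip: the recurrence is exactly multiplication by $3+2\sqrt2$ in the basis $1,\sqrt2$, as your own displayed identity shows, and nothing in the argument depends on the remark.
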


\begin{proof}
Put
\[
  \alpha=1+\sqrt2
  \qquad\text{and}\qquad
  \beta=1-\sqrt2.
\]
The recurrence of Lemma~\ref{lem:pell-recurrence} is multiplication by \(\alpha^2=3+2\sqrt2\), and hence
\[
  x_n+y_n\sqrt2=\alpha^{2n-1}
  \qquad\text{and}\qquad
  x_n-y_n\sqrt2=\beta^{2n-1}.
\]
Set \(h=2r-1\), \(A=\alpha^h\), and \(B=\beta^h\).
Since both \(2n-1\) and \(h\) are odd, the quotient \(t\) is odd.
The two expressions
\[
  \frac{A^t-B^t}{A-B}
  \qquad\text{and}\qquad
  \frac{A^t+B^t}{A+B}
\]
are algebraic integers.
For odd \(t\) they are symmetric in \(A\) and \(B\), so they are fixed by conjugation and are therefore rational integers.
The identities
\[
  2\sqrt2\,y_n=A^t-B^t
  \qquad\text{and}\qquad
  2x_n=A^t+B^t
\]
together with
\[
  2\sqrt2\,y_r=A-B
  \qquad\text{and}\qquad
  2x_r=A+B
\]
give \(y_r\mid y_n\) and \(x_r\mid x_n\).
\end{proof}

\begin{theorem}[An infinite Pell family above quality one]\label{thm:pell-infinite-quality}
For every \(n\geq2\),
\[
  q(\mathcal{P}_n)>1
  \quad\Longleftrightarrow\quad
  x_ny_n\ \text{is not squarefree}.
\]
In particular,
\[
  q(\mathcal{P}_{7j+4})>1
  \qquad(j\geq0).
\]
More quantitatively,
\[
  q(\mathcal{P}_{7j+4})-1
  >
  \frac{\log(13/\sqrt2)}
       {\log\bigl(2y_{7j+4}^2\bigr)}.
\]
The progression \(n\equiv4\pmod7\) therefore yields
\[
  \liminf_{X\to\infty}
  \frac{
    \#\{n\leq X:q(\mathcal{P}_n)>1\}
  }{X}
  \geq
  \frac17.
\]
\end{theorem}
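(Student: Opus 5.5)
The plan is to compute the radical of \(\mathcal{P}_n=(1,x_n^2,2y_n^2)\) exactly and compare it with \(c=2y_n^2\). Since \(x_n\) and \(y_n\) are odd and coprime (Lemma~\ref{lem:pell-recurrence}), I will use
\[
  \rad(abc)=2\rad(x_n)\rad(y_n)=2\rad(x_ny_n).
\]
Then \(q(\mathcal{P}_n)>1\) holds exactly when \(\rad(x_ny_n)<y_n^2\).

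For the forward direction, suppose \(x_ny_n\) is squarefree. Then \(\rad(x_ny_n)=x_ny_n\). For \(n\geq2\) one has \(y_n>1\), so \(x_n^2=2y_n^2-1>y_n^2\) and hence \(x_n>y_n\). This gives \(\rad(x_ny_n)>y_n^2\), so \(q\leq1\); in fact \(q<1\).

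For the converse, suppose some prime \(p\) has \(p^2\mid x_ny_n\). Since \(x_ny_n\) is odd, \(p\geq3\). Then
\[
  \rad(x_ny_n)\leq\frac{x_ny_n}{p}\leq\frac{x_ny_n}{3}<\frac{\sqrt2\,y_n^2}{3}<y_n^2,
\]
using \(x_n<\sqrt2\,y_n\) as in Proposition~\ref{prop:pell-triples}. This proves the equivalence.

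For the progression, take \(r=4\), so \(2r-1=7\). If \(n=7j+4\), then \(2n-1=7(2j+1)\). Lemma~\ref{lem:pell-divisibility-propagation} then gives \(y_4=169\mid y_n\), so \(13^2\mid y_n\) and \(x_ny_n\) is not squarefree. For the quantitative margin, the factor \(13\) can be removed from the radical:
\[
  \rad(abc)\leq\frac{2x_ny_n}{13}<\frac{2\sqrt2\,y_n^2}{13}=\frac{\sqrt2}{13}\,c.
\]
Hence \(\log c-\log\rad(abc)>\log(13/\sqrt2)\). Dividing by \(\log\rad(abc)\), which is positive and less than \(\log c=\log(2y_n^2)\), yields the stated lower bound for \(q-1\). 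The quantity divided by is positive, so replacing the denominator by the larger \(\log c\) only weakens the bound.

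Finally, the set of \(n\leq X\) with \(n\equiv4\pmod7\) has \(X/7+O(1)\) elements. All of them satisfy \(q>1\), which gives the liminf bound \(\geq1/7\).

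I expect no real obstacle here. The one point needing care is the inequality \(x_n>y_n\), which requires \(n\geq2\) (at \(n=1\) one has \(x_1=y_1=1\)). The other is checking that \(t=2j+1\) is a valid odd cofactor in Lemma~\ref{lem:pell-divisibility-propagation}, including \(j=0\), where \(n=4\) itself is Example~\ref{ex:pell-hit}.
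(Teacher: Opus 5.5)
Your proof is correct and follows essentially the same route as the paper. The paper writes the repeated-prime factor as \(H_n=x_ny_n/(\rad(x_n)\rad(y_n))\) and uses \(c/\rad(abc)=H_n\,y_n/x_n\) with \(1<x_n/y_n<\sqrt2\), which is your bound \(\rad(x_ny_n)\leq x_ny_n/p\) in multiplicative form, and it then applies Lemma~\ref{lem:pell-divisibility-propagation} with \(r=4\) to get \(H_n\geq13\), exactly as you do.
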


\begin{proof}
Lemma~\ref{lem:pell-recurrence} gives \(\gcd(x_n,y_n)=1\) with both coordinates odd, and therefore
\[
  \rad\bigl(1\cdot x_n^2\cdot2y_n^2\bigr)
  =
  2\rad(x_n)\rad(y_n).
\]
Define
\[
  H_n
  \coloneqq
  \frac{x_ny_n}{\rad(x_n)\rad(y_n)}.
\]
Then
\[
  \frac{c}{\rad(abc)}
  =
  \frac{2y_n^2}{2\rad(x_n)\rad(y_n)}
  =
  H_n\frac{y_n}{x_n}.
\]
For \(n\geq2\) the Pell identity \(x_n^2=2y_n^2-1\) gives
\[
  1<\frac{x_n}{y_n}<\sqrt2.
\]
If \(x_ny_n\) is squarefree, then \(H_n=1\), so that \(c/\rad(abc)=y_n/x_n<1\).
If \(x_ny_n\) is not squarefree, then an odd prime occurs to exponent at least two in one of \(x_n\) and \(y_n\), and hence \(H_n\geq3\).
It follows that
\[
  \frac{c}{\rad(abc)}
  >
  \frac{3}{\sqrt2}
  >
  1.
\]
This proves the equivalence, and shows in addition that quality one itself never occurs for \(n\geq2\).

Example~\ref{ex:pell-hit} gives \(y_4=169=13^2\).
For \(n=7j+4\) one has
\[
  2n-1=7(2j+1)
\]
so that Lemma~\ref{lem:pell-divisibility-propagation}, applied with \(r=4\), gives \(y_4\mid y_n\).
Thus \(13^2\mid y_n\) and \(H_n\geq13\).
It follows that
\[
  \frac{c}{\rad(abc)}
  =
  H_n\frac{y_n}{x_n}
  >
  \frac{13}{\sqrt2}.
\]
Since \(\rad(abc)<c\) on this subsequence, and since \(c=2y_n^2\),
\[
\begin{aligned}
  q(\mathcal{P}_n)-1
  &=
  \frac{\log\bigl(c/\rad(abc)\bigr)}{\log\rad(abc)}
  \\
  &>
  \frac{\log(13/\sqrt2)}{\log\bigl(2y_n^2\bigr)}
\end{aligned}
\]
which is the asserted estimate.
The progression \(n\equiv4\pmod7\) has density \(1/7\).
\end{proof}

Theorem~\ref{thm:pell-infinite-quality} provides an explicit one-parameter family with infinitely many triples of quality \(q>1\), of the same kind as the Mersenne family in Theorem~\ref{thm:mersenne-hits}. In both cases the excess \(q-1\) decays to zero as the height grows, so neither family contains infinitely many triples transgressive at any fixed exponent \(\varepsilon>0\).

\subsubsection{A dyadic norm family}\label{subsec:dyadic-family}

\begin{lemma}[A dyadic recurrence]\label{lem:dyadic-ascent}
Suppose that \(m\geq3\) and that \(x,y\) are odd coprime positive integers satisfying
\[
  x^2+7y^2=2^m.
\]
Define
\[
  (x',y')
  =
  \begin{cases}
  \left(
    \dfrac{x+7y}{2},
    \dfrac{|x-y|}{2}
  \right)
  &
  x\not\equiv y\pmod4
  \\[1.2em]
  \left(
    \dfrac{|x-7y|}{2},
    \dfrac{x+y}{2}
  \right)
  &
  x\equiv y\pmod4.
  \end{cases}
\]
Then \(x'\) and \(y'\) are odd coprime positive integers satisfying
\[
  x'^2+7y'^2=2^{m+1}.
\]
\end{lemma}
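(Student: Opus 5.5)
The plan is to view the recurrence as multiplication in \(\Z[\omega]\), \(\omega=(1+\sqrt{-7})/2\), where \(2=\omega\bar\omega\). At the level of integers, though, everything reduces to one direct identity plus parity bookkeeping, and I will argue at that level.

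\emph{The norm identity.} I first verify, for either choice of sign,
\[
  \Bigl(\frac{x\pm7y}{2}\Bigr)^2+7\Bigl(\frac{x\mp y}{2}\Bigr)^2
  =\frac{8x^2+56y^2}{4}
  =2(x^2+7y^2)=2^{m+1}.
\]
The cross terms \(\pm14xy\) and \(\mp14xy\) cancel, so only the squared terms survive. Taking absolute values does not affect the squares. Hence both branches produce a pair satisfying \(x'^2+7y'^2=2^{m+1}\), and integrality is immediate because \(x\) and \(y\) are both odd.

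\emph{Oddness, which fixes the choice of branch.} Since \(x,y\) are odd, either \(x\equiv y\) or \(x\equiv-y\pmod 4\).
\begin{enumerate}[label=\textup{(\roman*)}]
\item If \(x\not\equiv y\pmod4\), then \(x+y\equiv0\pmod4\), so \(x-y\equiv2\pmod4\) and \((x-y)/2\) is odd. Also \(x+7y=(x-y)+8y\equiv2\pmod4\), so \((x+7y)/2\) is odd.
\item If \(x\equiv y\pmod4\), then \(x-y\equiv0\) and \(x+y\equiv2\pmod4\), so \((x+y)/2\) is odd. Also \(x-7y=(x+y)-8y\equiv2\pmod4\), so \((x-7y)/2\) is odd.
\end{enumerate}
In both cases \(x'\) and \(y'\) are odd, hence nonzero, and the absolute values make them positive.

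\emph{Coprimality.} Let \(g=\gcd(x',y')\); it is odd. In case (i), \(x'+y'\) and \(x'-y'\) equal \(\{x+3y,\,3y\}\) up to sign, so \(g\mid 3y\). Also \(x'-7y'=\pm\)-combination giving \(g\mid 4x\) or \(g\mid 4y\) together with one of these. More simply, I use the inverse map: from \(2x'=x+7y\) and \(2y'=\pm(x-y)\), the determinant of the linear system is \(\pm8\), so \(8x\) and \(8y\) are integer combinations of \(x',y'\). Therefore \(g\mid8x\) and \(g\mid8y\). Since \(g\) is odd, \(g\mid\gcd(x,y)=1\). Case (ii) is identical.

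\emph{Main obstacle.} The only delicate point is matching each congruence case to the formula that yields odd outputs. This is exactly what makes the descent stay among odd coprime solutions instead of picking up a factor of \(2\). The mod-\(4\) check above settles it. The hypothesis \(m\ge3\) is not needed for the argument; it just matches the base solution \(1+7=2^3\).
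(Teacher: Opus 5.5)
Your proof is correct and follows the paper's route: the same norm identity and the same mod-$4$ case split forcing odd outputs. Your remark that odd implies nonzero also handles positivity more cleanly than the paper's separate exclusion of $x=y$ and $x=7y$.

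For coprimality the paper just notes that a common divisor of $x',y'$ is odd and divides $x'^2+7y'^2=2^{m+1}$, so $\gcd(x,y)=1$ is not even needed. Your determinant-$\pm8$ argument is also valid. Delete the abandoned attempt before it, though: it is miscomputed, since in case (i) the pair $\{x'+y',\,x'-y'\}$ is $\{x+3y,\,4y\}$, not $\{x+3y,\,3y\}$.
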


\begin{proof}
For either compatible choice of signs, one has
\[
\begin{aligned}
  \left(\frac{x\pm7y}{2}\right)^2
  +
  7\left(\frac{x\mp y}{2}\right)^2
  &=
  \frac{(x\pm7y)^2+7(x\mp y)^2}{4}
  \\
  &=
  2(x^2+7y^2).
\end{aligned}
\]
Thus the transformed pair satisfies
\[
  x'^2+7y'^2=2^{m+1}.
\]

It remains to verify the parity of the quotients.
Suppose first that \(x\not\equiv y\pmod4\).
Since \(x\) and \(y\) are odd, one is congruent to \(1\) modulo \(4\) and the other is congruent to \(3\) modulo \(4\).
Hence
\[
  x-y\equiv2\pmod4.
\]
Moreover,
\[
  x+7y=(x-y)+8y\equiv2\pmod4.
\]
Therefore both \((x+7y)/2\) and \(|x-y|/2\) are odd integers.

Suppose now that \(x\equiv y\pmod4\).
Since both integers are odd, one has
\[
  x+y\equiv2\pmod4.
\]
One also has
\[
  x-7y=(x+y)-8y\equiv2\pmod4.
\]
Therefore both \(|x-7y|/2\) and \((x+y)/2\) are odd integers.

Every common divisor of \(x'\) and \(y'\) is odd and divides
\[
  x'^2+7y'^2=2^{m+1}.
\]
It must therefore equal \(1\).
The transformed coordinates are positive.
Indeed, suppose that \(x=y\).
Then the identity \(x^2+7y^2=2^m\) gives \(8x^2=2^m\), which with \(x\) odd occurs only for the seed \((x,y,m)=(1,1,3)\).
In this case the second branch gives the positive pair \((3,1)\).
Suppose that \(x=7y\).
Then \(7\) divides \(x^2+7y^2=2^m\), which is impossible.
This completes the proof.
\end{proof}

Starting with \((x_3,y_3)=(1,1)\), Lemma~\ref{lem:dyadic-ascent} gives a pair \((x_m,y_m)\) for every \(m\geq3\).

\begin{proposition}[Dyadic triples]\label{prop:dyadic-triples}
For \(m\geq4\), let \(\mathcal{D}_m\) have odd summands \(x_m^2\) and \(7y_m^2\) and let
\[
  c=2^m.
\]
Then \(\mathcal{D}_m\) is a parity-class triple with \(K=2^{m-1}\), and
\[
  \rad(abc)
  \leq
  14x_my_m
  \leq
  \sqrt7\,c.
\]
Hence
\[
  \delta_{\tot}(\mathcal{D}_m)-\log(ab)
  \geq
  -\frac12\log7.
\]
The two-adic defect is
\[
  \delta_2(\mathcal{D}_m)=\log K.
\]
\end{proposition}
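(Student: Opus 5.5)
The proof will follow the pattern of Proposition~\ref{prop:pell-triples}, with the dyadic recurrence of Lemma~\ref{lem:dyadic-ascent} in place of the Pell recurrence.

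\emph{Parity-class property.} By Lemma~\ref{lem:dyadic-ascent}, started at the seed \((x_3,y_3)=(1,1)\), every pair \((x_m,y_m)\) with \(m\geq3\) consists of odd coprime positive integers with \(x_m^2+7y_m^2=2^m\). Both summands \(x_m^2\) and \(7y_m^2\) are therefore odd, and \(c=2^m\) is even. A common prime of the two summands would be odd and divide \(2^m\), which is impossible, so they are coprime. Each summand is odd, so each is also coprime to \(c\). Thus \(\mathcal{D}_m\) is a parity-class triple with \(K=c/2=2^{m-1}\). The restriction \(m\geq4\) ensures \(K>1\), and it also excludes the seed, where \(x=y\).

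\emph{Radical bound.} The summands \(x_m^2\) and \(7y_m^2\) are coprime and both are odd, so
\[
  \rad(abc)=2\,\rad(x_m)\,\rad(7y_m)\leq 2\cdot x_m\cdot 7y_m=14x_my_m .
\]
This uses \(\rad(7y_m)\leq 7y_m\). The bound remains valid even if \(7\mid y_m\), and \(7\nmid x_m\) in any case. For the comparison with \(c\), apply AM--GM to the two summands:
\[
  2\sqrt7\,x_my_m\leq x_m^2+7y_m^2=2^m .
\]
Hence
\[
  14x_my_m=\sqrt7\cdot 2\sqrt7\,x_my_m\leq\sqrt7\cdot 2^m=\sqrt7\,c .
\]
The same inequality can also be read off as \(x_my_m\leq 2^{m-1}/\sqrt7\).

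\emph{Total-defect bound.} Proposition~\ref{prop:total-energy} at exponent zero gives
\[
  \delta_{\tot}(\mathcal{D}_m)-\log(ab)=\log c-\log\rad(abc)\geq -\log\sqrt7=-\tfrac12\log7 .
\]

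\emph{Two-adic defect.} Definition~\ref{def:total-defect} gives \(\delta_2=v_2(K)\log2=(m-1)\log2=\log K\), since \(K\) is a power of \(2\).

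The only step needing care is the radical estimate. The point is to use the factorization as \(x_m\) times \(7y_m\), rather than bounding \(\rad(y_m^2)\) separately, and then to apply AM--GM to the weighted sum of squares. Everything else follows directly from the earlier lemmas.
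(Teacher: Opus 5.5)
Your proof is correct and follows the paper's argument: you get parity and coprimality from Lemma~\ref{lem:dyadic-ascent}, the AM--GM bound \(2\sqrt7\,x_my_m\leq x_m^2+7y_m^2=2^m\), the total-defect bound from Proposition~\ref{prop:total-energy} at exponent zero, and \(\delta_2\) from Definition~\ref{def:total-defect}. You also spell out \(\rad(abc)=2\rad(x_m)\rad(7y_m)\leq14x_my_m\), which the paper leaves implicit. Your side remark that \(m\geq4\) is what ensures \(K>1\) is inaccurate, since \(K=4\) already at \(m=3\), but nothing in your proof depends on it.
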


\begin{proof}
The summands are odd and coprime, and their sum is \(2^m\).
The arithmetic-geometric mean inequality gives
\[
  x_m^2+7y_m^2
  \geq
  2\sqrt7\,x_my_m.
\]
Thus \(14x_my_m\leq\sqrt7\,2^m\).
The total-defect inequality follows from Proposition~\ref{prop:total-energy} at exponent zero.
Since \(c=2^m\) and \(K=2^{m-1}\), the last identity follows from Definition~\ref{def:total-defect}.
\end{proof}

\begin{example}\label{ex:dyadic-hit}
At \(m=6\), the recurrence gives \((x_6,y_6)=(1,3)\).
The associated triple is
\[
  1+63=64.
\]
Its radical is \(42\), and its quality is greater than one.
\end{example}

Propositions~\ref{prop:pell-triples} and~\ref{prop:dyadic-triples} keep the total defect within an absolute constant of the principal threshold \(\log(ab)\).
For the Pell family the sign of what remains is settled by Theorem~\ref{thm:pell-infinite-quality}.
For the dyadic family it is undetermined, since the recurrence of Lemma~\ref{lem:dyadic-ascent} controls neither the squarefree parts of \(x_m\) and \(y_m\) nor the balance between the two summands.

\section{The Mersenne line}\label{sec:mersenne}

On the line
\[
  1+(2^m-1)=2^m
\]
the whole defect of \(c\) lies at the prime \(2\), and what remains to be understood is the squarefull part of the Mersenne number \(2^m-1\), so that the obstruction becomes a question in the single variable \(m\).

\subsection{The exact Mersenne criterion}\label{subsec:mersenne-criterion}

\begin{definition}[Mersenne triple and defect]\label{def:mersenne-triple}
For \(m\geq2\), define
\[
  \mathcal{V}_m
  \coloneqq
  \bigl(1,2^m-1,2^m\bigr).
\]
Define
\[
  \Delta_m
  \coloneqq
  \log\left(
    \frac{2^m-1}{\rad(2^m-1)}
  \right)
\]
and
\[
  L_m
  \coloneqq
  \exp(\Delta_m)
  =
  \frac{2^m-1}{\rad(2^m-1)}.
\]
\end{definition}

For \(\mathcal{V}_m\) one has
\[
  K=2^{m-1}
  \qquad
  t^{\mathcal{K}}(\mathcal{V}_m)=0
  \qquad
  \delta_{\lin}(\mathcal{V}_m)=\Delta_m
\]
and
\[
  \delta_{\tot}(\mathcal{V}_m)
  =
  (m-1)\log2+\Delta_m.
\]

\begin{proposition}[Exact Mersenne transgression criterion]\label{prop:mersenne-exact}
Let \(\varepsilon>0\) and \(m\geq2\).
Then
\[
  E_{\varepsilon}(\mathcal{V}_m)\geq0
\]
holds precisely when
\[
  \Delta_m
  \geq
  \theta_{\varepsilon}m\log2
  +
  \log\bigl(2(1-2^{-m})\bigr).
\]
Equivalently,
\[
  E_{\varepsilon}(\mathcal{V}_m)\geq0
\]
holds precisely when
\[
  L_m
  \geq
  2(1-2^{-m})\,2^{\theta_{\varepsilon}m}.
\]
The quality satisfies
\[
  q(\mathcal{V}_m)>1
\]
precisely when
\[
  \Delta_m
  >
  \log\bigl(2(1-2^{-m})\bigr).
\]
\end{proposition}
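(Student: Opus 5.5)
The plan is to specialise Corollary~\ref{cor:line-criterion}, which is the exact linear threshold of Theorem~\ref{thm:exact-linear-threshold} on a fixed line, to \(s_0=1\). For \(m\geq2\) the triple \(\mathcal{V}_m\) has \(a=1\), \(b=2^m-1\), \(c=2^m\), and \(K=2^{m-1}\geq2>1=s_0\). Both summands are odd and coprime to \(c\), so \(\mathcal{V}_m\) is a parity-class triple on the line \(s_0=1\).

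The invariants recorded after Definition~\ref{def:mersenne-triple} are as follows. Since \(K\) is a power of \(2\), it has no odd prime divisor, so \(R_{\mathcal K}=1\) and \(t^{\mathcal K}(\mathcal{V}_m)=0\). Since \(a=1\), the linear defect reduces to \(\delta_{\lin}(\mathcal{V}_m)=\Delta_m\). The boundary term becomes
\[
  \log\Bigl(1\cdot\Bigl(2-\frac{1}{2^{m-1}}\Bigr)\Bigr)
  =
  \log\bigl(2(1-2^{-m})\bigr),
\]
and \(\theta_\varepsilon\log(2K)=\theta_\varepsilon m\log2\). Substituting these values into the equivalence of Corollary~\ref{cor:line-criterion} gives the first criterion verbatim. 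Exponentiating both sides, which is legitimate because \(\exp\) is increasing, gives the form in terms of \(L_m\).

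For the quality statement we cannot put \(\varepsilon=0\) directly into the theorems, since they are stated for \(\varepsilon>0\). Instead we return to Proposition~\ref{prop:boundary-identity}. Its proof is pure algebra and remains valid at \(\varepsilon=0\), where \(E_0=\log c-\log\rad(abc)\). Alternatively, one can compute directly:
\[
  \rad(abc)=2\rad(2^m-1)=2(2^m-1)/L_m,
\]
so that
\[
  \frac{c}{\rad(abc)}
  =
  \frac{2^m L_m}{2(2^m-1)}
  =
  \frac{L_m}{2(1-2^{-m})}.
\]
Because \(\rad(abc)\geq2>1\), the condition \(q>1\) is equivalent to \(c>\rad(abc)\). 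This in turn is equivalent to \(\Delta_m>\log(2(1-2^{-m}))\).

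The only point needing care is this \(\varepsilon=0\) case, where the strict inequality must be handled without the theorems. The direct computation of \(c/\rad(abc)\) above handles it, and it also recovers the first criterion independently as a check.
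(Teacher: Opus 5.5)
Your proof is correct, but it specializes a different identity from the one the paper uses. The paper starts from the total-defect criterion, Theorem~\ref{thm:total-criterion}. It inserts \(\delta_{\tot}(\mathcal{V}_m)=(m-1)\log2+\Delta_m\), \(\log(ab)=\log(2^m-1)=m\log2+\log(1-2^{-m})\) and \(\log(2K)=m\log2\), then rearranges. The constant \(\log2\) thus appears as the gap between the two-adic defect \((m-1)\log2\) and \(m\log2\). The quality clause is read off from the \(\varepsilon=0\) identity \(\log c-\log\rad(abc)=\delta_{\tot}-\log(ab)\), which Proposition~\ref{prop:total-energy} records explicitly.

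You instead take the linear threshold, Corollary~\ref{cor:line-criterion} at \(s_0=1\). There the two-adic part never enters, because Proposition~\ref{prop:boundary-identity} has already absorbed it, and the same constant appears as the boundary term \(\log(2-1/K)\). The two routes are equally short and algebraically equivalent; the paper notes just after its proof that the proposition is the exact linear threshold specialized. Your version shows directly that the Mersenne line realizes the least possible boundary term. The paper's version needs only the bookkeeping \(\delta_{\tot}=\delta_2+\Delta_m\) and no hypothesis \(K>1\), though that hypothesis holds here since \(K=2^{m-1}\geq2\).

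On the quality clause, your worry about \(\varepsilon=0\) is unnecessary, since the needed identity is already stated in Proposition~\ref{prop:total-energy}. Your direct computation \(c/\rad(abc)=L_m/\bigl(2(1-2^{-m})\bigr)\) is nonetheless correct, handles the strict inequality properly via \(\rad(abc)\geq2\), and independently recovers the first criterion.
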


\begin{proof}
Theorem~\ref{thm:total-criterion} gives
\[
  (m-1)\log2+\Delta_m
  \geq
  \log(2^m-1)+\theta_{\varepsilon}m\log2.
\]
Since
\[
  \log(2^m-1)
  =
  m\log2+\log(1-2^{-m})
\]
rearrangement proves the first equivalence.
Exponentiation proves the second.
The quality-one criterion follows from the sign of \(\log c-\log\rad(abc)\).
\end{proof}

Proposition~\ref{prop:mersenne-exact} is the exact linear threshold specialized.
For \(\mathcal{V}_m\) one has \(s=1\), \(t^{\mathcal{K}}=0\), and \(2K=2^m\), so the boundary term of Theorem~\ref{thm:exact-linear-threshold} equals
\[
  \log\Bigl(2-\frac{1}{K}\Bigr)
  =
  \log\bigl(2(1-2^{-m})\bigr).
\]
The Mersenne line thus realizes the least possible boundary term, and the distance between the exact threshold and the certificate of Theorem~\ref{thm:boundary-free-certificate} stays bounded along it.
That same distance diverges like \(\log s\) along an interior sequence.

\subsection{An infinite family above quality one}\label{subsec:mersenne-hits}

\begin{theorem}[Infinite Mersenne triples above quality one]\label{thm:mersenne-hits}
For every integer \(n\geq1\), the triple
\[
  \mathcal{V}_{6n}
  =
  \bigl(1,2^{6n}-1,2^{6n}\bigr)
\]
has quality greater than one.
\end{theorem}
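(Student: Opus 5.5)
By Proposition~\ref{prop:mersenne-exact}, the claim $q(\mathcal{V}_{6n})>1$ is equivalent to
\[
  L_{6n}=\frac{2^{6n}-1}{\rad(2^{6n}-1)}>2(1-2^{-6n}).
\]
Since $L_{6n}$ is a positive integer and the right side is strictly less than $2$, it suffices to show $L_{6n}\geq2$. Any non-squarefree $N$ has $N/\rad(N)\geq p\geq3$ for some odd prime $p$, and $2^{6n}-1$ is odd. So the whole task reduces to showing that $2^{6n}-1$ is not squarefree.

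To do this, I would exhibit the square factor $9$. Note that $2^6-1=63=9\cdot7$, and that $2^6-1$ divides $2^{6n}-1$ by the factorization $x^n-1=(x-1)(x^{n-1}+\dots+1)$ with $x=2^6$. Hence $9\mid2^{6n}-1$. Equivalently, $2^6=64\equiv1\pmod9$, so $2^{6n}\equiv1\pmod 9$.

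It follows that $v_3(2^{6n}-1)\geq2$, so $L_{6n}\geq3$ and $\Delta_{6n}\geq\log3>\log2>\log\bigl(2(1-2^{-6n})\bigr)$. The last criterion of Proposition~\ref{prop:mersenne-exact} then gives $q(\mathcal{V}_{6n})>1$.

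There is no real obstacle. The only point needing care is using the strict form of the quality criterion, that is, checking that $\log 3$ strictly exceeds the boundary term $\log(2-2^{1-6n})$, which holds since that term is below $\log2$.
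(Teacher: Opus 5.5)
Your proposal is correct and follows essentially the same route as the paper. The paper likewise uses $2^6\equiv1\pmod9$ to get $9\mid2^{6n}-1$, deduces $L_{6n}\geq3>2>2(1-2^{-6n})$, and concludes with the quality criterion of Proposition~\ref{prop:mersenne-exact}.
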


\begin{proof}
Since \(2^6\equiv1\pmod9\), one has
\[
  9\mid2^{6n}-1.
\]
The prime \(3\) therefore contributes at least a factor \(3\) to \(L_{6n}\), so
\[
  L_{6n}\geq3.
\]
On the other hand,
\[
  2(1-2^{-6n})<2.
\]
Proposition~\ref{prop:mersenne-exact} now gives \(q(\mathcal{V}_{6n})>1\).
\end{proof}

The same argument applies to any odd prime lifted to a higher power.
Whenever \(p^h \mid 2^d-1\) with \(h\geq2\), the entire arithmetic progression of multiples of \(d\) yields triples with quality strictly greater than one.
Theorem~\ref{thm:mersenne-squarefree-classification} characterizes the union of all such arithmetic progressions as the full set of indices where \(q(\mathcal{V}_m) > 1\).

Theorem~\ref{thm:mersenne-hits} gives infinitely many triples with \(q>1\).
At no fixed \(\varepsilon>0\) does it give \(q\geq1+\varepsilon\) infinitely often.
Proposition~\ref{prop:mersenne-exact} requires \(\Delta_m\) to grow linearly in \(m\) for a fixed exponent.
Theorem~\ref{thm:lcm-margin} strengthens the present statement to a quantitative margin of order \(\log m/m\) on the exponents \(\operatorname{lcm}(1,\dots,k)\), and Theorem~\ref{thm:extremal-lcm} shows that margin to be extremal among the mechanisms free of Wieferich primes.

\subsection{Fixed primes give only logarithmic defect}\label{subsec:fixed-primes}

\begin{lemma}[Lifting a fixed prime, by lifting the exponent]\label{lem:mersenne-lifting}
Let \(p\) be an odd prime.
Define
\[
  d_p=\ord_p(2)
  \qquad\text{and}\qquad
  A_p=v_p(2^{d_p}-1).
\]
Then
\[
  p\mid2^m-1
  \quad\Longleftrightarrow\quad
  d_p\mid m.
\]
If \(d_p\mid m\), then
\[
  v_p(2^m-1)
  =
  A_p+v_p\left(\frac{m}{d_p}\right).
\]
\end{lemma}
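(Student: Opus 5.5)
The plan is to treat the two assertions separately: first the divisibility criterion, then the valuation formula. The valuation formula will come from the lifting-the-exponent lemma for odd primes, in the form that needs \(p\mid x-y\) and \(p\nmid xy\).

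For the first assertion, \(p\) is odd, so \(2\) is a unit modulo \(p\) and \(d_p=\ord_p(2)\) is well defined. Then \(p\mid2^m-1\) means \(2^m\equiv1\pmod p\). By the defining property of the multiplicative order this holds precisely when \(d_p\mid m\). Write \(m=d_pk\).

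For the valuation, put \(x=2^{d_p}\) and \(y=1\), so that \(2^m-1=x^k-y^k\). We have \(p\mid x-y\), with \(v_p(x-y)=A_p\geq1\), and \(p\nmid xy\). The odd-prime lifting-the-exponent lemma then gives
\[
  v_p(x^k-1)=v_p(x-1)+v_p(k),
\]
which is the claimed identity, since \(k=m/d_p\). To keep the argument self-contained, I would prove the lemma in two steps.

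\emph{Step 1: \(p\nmid k\).} Factor
\[
  \frac{x^k-1}{x-1}=\sum_{i=0}^{k-1}x^i .
\]
Since \(x\equiv1\pmod p\), this sum is congruent to \(k\) modulo \(p\), which is a unit. So the valuation is unchanged.

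\emph{Step 2: \(k=p\).} Write \(x=1+p^{A}u\) with \(A=A_p\geq1\) and \(p\nmid u\). The binomial expansion gives
\[
  \sum_{i=0}^{p-1}x^i\equiv p+\binom{p}{2}p^{A}u\pmod{p^{2A}} .
\]
Here \(p\) odd gives \(p\mid\binom p2\), so \(\binom p2 p^{A}u\) is divisible by \(p^{A+1}\), hence by \(p^2\). Also \(2A\geq2\). Therefore the sum is exactly divisible by \(p\), and \(v_p(x^p-1)=v_p(x-1)+1\).

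\emph{Induction.} Write \(k=p^{j}k'\) with \(p\nmid k'\). Applying Step 2 \(j\) times raises the valuation by exactly \(j\). Applying Step 1 to the base \(x^{p^j}\), which is still \(\equiv1\pmod p\), adds nothing. This yields \(A_p+v_p(k)\).

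The only delicate point is Step 2, where oddness of \(p\) is essential: for \(p=2\) the term \(\binom22 2^{A}u\) is not divisible by \(4\) when \(A=1\). Beyond that, everything is routine.
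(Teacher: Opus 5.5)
Your proof is correct and follows the paper's route: the first equivalence comes from the definition of the multiplicative order, and the valuation formula from the odd-prime lifting-the-exponent identity applied to \(x=2^{d_p}\), \(y=1\). The only difference is that the paper cites this identity (Lucas, Birkhoff--Vandiver), whereas you prove it inline through the \(p\nmid k\) step, the \(k=p\) step, and the induction on \(v_p(k)\), all of which you carry out correctly.
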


\begin{proof}
Every prime dividing \(2^m-1\) is odd, so that the restriction to odd \(p\) loses nothing of the prime support.
The first equivalence is the definition of the multiplicative order.
Suppose that \(d_p\mid m\), and write
\[
  m=d_pr.
\]
Since \(p\geq3\) is odd and \(p\mid2^{d_p}-1\), no exceptional case arises, those being confined to \(p=2\), and the lifting-the-exponent identity, whose classical form for the divisors of \(a^n-b^n\) goes back to Lucas and to Birkhoff and Vandiver~\cite{Lucas1878,BirkhoffVandiver1904}, gives
\[
\begin{aligned}
  v_p(2^m-1)
  &=
  v_p\bigl((2^{d_p})^r-1\bigr)
  \\
  &=
  v_p(2^{d_p}-1)+v_p(r)
  \\
  &=
  A_p+v_p\left(\frac{m}{d_p}\right).
\end{aligned}
\]
\end{proof}

\begin{definition}[Defect on a finite prime set]\label{def:finite-prime-defect}
For a finite set \(S\) of odd primes, define
\[
  \Delta_{m,S}
  \coloneqq
  \sum_{\substack{p\in S\\p\mid2^m-1}}
  \bigl(v_p(2^m-1)-1\bigr)\log p.
\]
\end{definition}

\begin{proposition}[Finite-prime bound]\label{prop:finite-prime-bound}
Let \(S\) be a finite set of odd primes, and put
\[
  C_S
  \coloneqq
  \sum_{p\in S}(A_p-1)\log p.
\]
Then
\[
  \Delta_{m,S}
  \leq
  \#S\log m+C_S
\]
for every \(m\geq2\).
\end{proposition}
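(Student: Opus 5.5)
The plan is a prime-by-prime bound using Lemma~\ref{lem:mersenne-lifting}, summed over \(S\). Fix \(m\geq2\) and a prime \(p\in S\) with \(p\mid2^m-1\). By Lemma~\ref{lem:mersenne-lifting}, \(d_p\mid m\) and
\[
  v_p(2^m-1)-1
  =
  (A_p-1)+v_p\!\left(\frac{m}{d_p}\right).
\]
The contribution of \(p\) to \(\Delta_{m,S}\) is therefore
\[
  (A_p-1)\log p+v_p\!\left(\frac{m}{d_p}\right)\log p .
\]
Primes of \(S\) not dividing \(2^m-1\) contribute nothing to \(\Delta_{m,S}\). Extending the sum to all of \(S\) only adds non-negative quantities, since \(A_p\geq1\) holds for every odd prime \(p\) because \(p\mid2^{d_p}-1\).

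For the second piece, write \(r=m/d_p\). Then \(p^{v_p(r)}\mid r\), so
\[
  v_p(r)\log p\leq\log r\leq\log m .
\]
This is the only place where the size of \(m\) enters. It yields at most \(\log m\) per prime, hence at most \(\#S\log m\) in total.

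Summing the two pieces over \(p\in S\) gives
\[
  \Delta_{m,S}
  \leq
  \sum_{p\in S}(A_p-1)\log p+\#S\log m
  =
  C_S+\#S\log m ,
\]
which is the claim. There is no real obstacle. The points needing care are that \(C_S\) is a finite non-negative constant independent of \(m\), and that primes in \(S\) with \(d_p\nmid m\) are handled by discarding the non-negative term \((A_p-1)\log p\) rather than inserting it. A slightly sharper bound is available if desired, since \(\prod_{p}p^{v_p(m)}\leq m\) gives \(\sum_p v_p(m/d_p)\log p\leq\log m\). The stated bound does not require this refinement.
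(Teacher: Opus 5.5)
Your proof is correct and follows the paper's argument essentially step for step. Both apply the lifting identity prime by prime, bound $v_p(m/d_p)\log p\le\log m$ using $p^{v_p(m/d_p)}\mid m$, and extend $\sum(A_p-1)\log p$ from the primes dividing $2^m-1$ to all of $S$ by non-negativity. One slip in wording: your closing remark is phrased backwards, since for primes with $d_p\nmid m$ you \emph{insert} the non-negative term $(A_p-1)\log p$ when passing to $C_S$ rather than discard it, but your earlier step already states this correctly.
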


\begin{proof}
Put
\[
  S(m)
  \coloneqq
  \bigl\{
    p\in S
    :
    p\mid2^m-1
  \bigr\}
\]
so that Definition~\ref{def:finite-prime-defect} reads
\[
  \Delta_{m,S}
  =
  \sum_{p\in S(m)}
  \bigl(v_p(2^m-1)-1\bigr)\log p.
\]
Let \(p\in S(m)\).
Lemma~\ref{lem:mersenne-lifting} gives \(d_p\mid m\) and
\[
  \bigl(v_p(2^m-1)-1\bigr)\log p
  =
  (A_p-1)\log p
  +
  v_p\left(\frac{m}{d_p}\right)\log p.
\]
The integer \(m/d_p\) divides \(m\), whence \(p^{v_p(m/d_p)}\) divides \(m\) and
\[
  v_p\left(\frac{m}{d_p}\right)\log p
  =
  \log p^{v_p(m/d_p)}
  \leq
  \log m.
\]
Summation over \(S(m)\) therefore yields
\[
\begin{aligned}
  \Delta_{m,S}
  &\leq
  \sum_{p\in S(m)}
  \bigl(
    (A_p-1)\log p+\log m
  \bigr)
  \\
  &=
  \sum_{p\in S(m)}(A_p-1)\log p
  +
  \#S(m)\log m.
\end{aligned}
\]
Every term \((A_p-1)\log p\) is non-negative, since \(A_p\geq1\) for every odd prime \(p\).
Extension of the first sum from \(S(m)\) to \(S\) can therefore only increase it, and it gives \(C_S\).
The inclusion \(S(m)\subseteq S\) gives \(\#S(m)\leq\#S\), and \(\log m\geq0\) for \(m\geq2\).
Hence
\[
  \Delta_{m,S}
  \leq
  C_S+\#S\log m
\]
as asserted.
\end{proof}

\begin{corollary}[Remainder after a finite prime set]\label{cor:finite-prime-escape}
Fix \(\varepsilon>0\).
Suppose that \((m_j)\) is an increasing sequence satisfying
\[
  E_{\varepsilon}(\mathcal{V}_{m_j})\geq0.
\]
Then, for every finite set \(S\) of odd primes,
\[
\begin{aligned}
  \Delta_{m_j}-\Delta_{m_j,S}
  \geq{}&
  \theta_{\varepsilon}m_j\log2
  +
  \log\bigl(2(1-2^{-m_j})\bigr)
  \\
  &-
  \#S\log m_j-C_S.
\end{aligned}
\]
In particular,
\[
  \liminf_{j\to\infty}
  \frac{
    \Delta_{m_j}-\Delta_{m_j,S}
  }{
    m_j\log2
  }
  \geq
  \theta_{\varepsilon}.
\]
\end{corollary}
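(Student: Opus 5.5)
The plan is to combine the exact Mersenne criterion of Proposition~\ref{prop:mersenne-exact} with the finite-prime bound of Proposition~\ref{prop:finite-prime-bound}. The set \(S\), the constant \(C_S\) and \(\#S\) stay fixed throughout, and \(\varepsilon\) stays fixed as well. For each index \(j\), the hypothesis \(E_{\varepsilon}(\mathcal{V}_{m_j})\geq0\) and Proposition~\ref{prop:mersenne-exact}, applied at \(m=m_j\geq2\), give
\[
  \Delta_{m_j}
  \geq
  \theta_{\varepsilon}m_j\log2
  +
  \log\bigl(2(1-2^{-m_j})\bigr).
\]
Proposition~\ref{prop:finite-prime-bound} gives \(\Delta_{m_j,S}\leq\#S\log m_j+C_S\). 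Subtracting the second inequality from the first yields the displayed lower bound for \(\Delta_{m_j}-\Delta_{m_j,S}\) exactly as stated. No further manipulation is needed, since both ingredients are already written in the form required.

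For the \(\liminf\) assertion, divide the bound by \(m_j\log2>0\). The resulting right side is
\[
  \theta_{\varepsilon}
  +
  \frac{\log\bigl(2(1-2^{-m_j})\bigr)-\#S\log m_j-C_S}{m_j\log2}.
\]
An increasing sequence of integers satisfies \(m_j\to\infty\). The term \(\log(2(1-2^{-m_j}))\) lies between \(\log(3/2)\) and \(\log2\) for \(m_j\geq2\), so it is bounded. The quantity \(C_S\) is a constant. Finally, \(\#S\log m_j/m_j\to0\). The fraction therefore tends to \(0\), and taking the \(\liminf\) of both sides gives the bound \(\theta_{\varepsilon}\).

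I expect no real obstacle here. The one point to check is that \(C_S\) is finite and independent of \(m\), which holds because \(S\) is finite and each \(A_p\) is a fixed integer. It is also worth remarking that \(\Delta_{m,S}\leq\Delta_m\) always holds, so the left side is non-negative. This fact is not needed for the inequality itself.
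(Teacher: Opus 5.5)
Your proposal is correct and follows the paper's own proof: subtract the bound of Proposition~\ref{prop:finite-prime-bound} from the lower bound of Proposition~\ref{prop:mersenne-exact}, then divide by \(m_j\log2\) and take the lower limit. You also justify explicitly why the error terms vanish after normalization, which the paper leaves implicit.
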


\begin{proof}
Subtract Proposition~\ref{prop:finite-prime-bound} from the lower bound in Proposition~\ref{prop:mersenne-exact}.
Divide by \(m_j\log2\) and pass to the lower limit.
\end{proof}

\begin{example}[The prime \(3\)]\label{ex:prime-three}
For
\[
  m=6\cdot3^k
\]
Lemma~\ref{lem:mersenne-lifting} gives
\[
  v_3(2^m-1)=k+2.
\]
The contribution of \(3\) to \(\Delta_m\) is
\[
  (k+1)\log3
  =
  \log\left(\frac{m}{2}\right).
\]
This is logarithmic in \(m\), and falls far short of the quantity linear in \(m\) that transgression at a fixed exponent would require.
\end{example}

\subsection{Squarefull and prime-power conditions}\label{subsec:conditional-route}

\begin{definition}[Mersenne threshold]\label{def:mersenne-threshold}
Define
\[
  \mathcal{A}_{\varepsilon}(m)
  \coloneqq
  2(1-2^{-m})\,2^{\theta_{\varepsilon}m}.
\]
Then Proposition~\ref{prop:mersenne-exact} reads
\[
  E_{\varepsilon}(\mathcal{V}_m)\geq0
  \quad\Longleftrightarrow\quad
  L_m\geq\mathcal{A}_{\varepsilon}(m).
\]
\end{definition}

\begin{proposition}[A single-prime condition]\label{prop:single-prime-route}
Fix \(\varepsilon>0\) and \(m\geq2\).
Suppose that an odd prime \(p\) and an integer \(e\geq2\) satisfy
\[
  p^e\mid2^m-1
\]
and
\[
  p^{e-1}
  \geq
  \mathcal{A}_{\varepsilon}(m).
\]
Then
\[
  E_{\varepsilon}(\mathcal{V}_m)\geq0.
\]
Moreover,
\[
  e>1+\varepsilon.
\]
\end{proposition}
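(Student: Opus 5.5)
The plan is to reduce the first assertion to the exact criterion of Proposition~\ref{prop:mersenne-exact} in its threshold form (Definition~\ref{def:mersenne-threshold}), namely \(E_{\varepsilon}(\mathcal{V}_m)\geq0\) if and only if \(L_m\geq\mathcal{A}_{\varepsilon}(m)\). It will then suffice to show \(L_m\geq p^{e-1}\).

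For that step, write
\[
  L_m=\prod_{q\mid 2^m-1}q^{v_q(2^m-1)-1}.
\]
Every factor here is at least one. The factor at \(q=p\) is \(p^{v_p(2^m-1)-1}\), and the hypothesis \(p^e\mid2^m-1\) gives \(v_p(2^m-1)\geq e\), so this factor is at least \(p^{e-1}\). Hence \(L_m\geq p^{e-1}\geq\mathcal{A}_{\varepsilon}(m)\), and the first claim follows. No lifting-the-exponent input is needed.

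For the inequality \(e>1+\varepsilon\), I would compare sizes. Since \(p^e\mid2^m-1\), we have \(p^e\leq 2^m-1<2^m\), so \(p^{e-1}<2^{m(e-1)/e}\). On the other hand \(p^{e-1}\geq\mathcal{A}_{\varepsilon}(m)=2(1-2^{-m})2^{\theta_\varepsilon m}\). For \(m\geq2\) the prefactor \(2(1-2^{-m})\) is at least \(3/2\geq1\), so
\[
  2^{\theta_\varepsilon m}<2^{m(e-1)/e}.
\]
This gives \(\theta_\varepsilon<(e-1)/e=1-1/e\), that is, \(1/e<1/(1+\varepsilon)\). Hence \(e>1+\varepsilon\), strictly.

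The only point needing care is keeping the inequality strict. Strictness comes from \(p^e\leq2^m-1<2^m\), and the prefactor is \(\geq1\) for \(m\geq2\), so nothing degenerates. I do not foresee a real obstacle; the main step is just correctly unwinding \(\theta_\varepsilon=\varepsilon/(1+\varepsilon)\).
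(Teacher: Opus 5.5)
Your proposal is correct and follows the paper's own argument essentially step for step: you bound \(L_m\geq p^{e-1}\) from the divisibility, apply the threshold form \(L_m\geq\mathcal{A}_{\varepsilon}(m)\) of Proposition~\ref{prop:mersenne-exact}, and compare \(p^e<2^m\) with \(p^{e-1}\geq\mathcal{A}_{\varepsilon}(m)\geq2^{\theta_{\varepsilon}m}\) to get \(\theta_{\varepsilon}<(e-1)/e\), which is equivalent to \(e>1+\varepsilon\). Your treatment of strictness, through \(p^e\leq2^m-1<2^m\) and the prefactor \(2(1-2^{-m})\geq1\), is if anything slightly more explicit than the paper's.
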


\begin{proof}
The divisibility gives
\[
  L_m\geq p^{e-1}.
\]
Definition~\ref{def:mersenne-threshold} gives the transgression.
Since
\[
  p^e<2^m
\]
and
\[
  p^{e-1}>2^{\theta_{\varepsilon}m}
\]
one obtains
\[
  \frac{e-1}{e}>\theta_{\varepsilon}.
\]
This is equivalent to \(e>1+\varepsilon\).
\end{proof}

\begin{corollary}[A fixed prime cannot meet the threshold]\label{cor:fixed-prime-no-route}
Let \(p\) be a fixed odd prime.
Whenever \(p\mid2^m-1\), one has
\[
  p^{v_p(2^m-1)-1}
  \leq
  p^{A_p-1}m.
\]
An infinite family satisfying Proposition~\ref{prop:single-prime-route} must therefore use primes escaping every fixed finite set.
\end{corollary}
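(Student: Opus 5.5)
The plan is to derive both assertions from Lemma~\ref{lem:mersenne-lifting}. For the inequality, fix an odd prime \(p\) and an exponent \(m\) with \(p\mid2^m-1\). The lemma gives \(d_p\mid m\) and
\[
  v_p(2^m-1)-1=(A_p-1)+v_p\!\left(\frac{m}{d_p}\right).
\]
Exponentiating base \(p\), we get
\[
  p^{v_p(2^m-1)-1}=p^{A_p-1}\,p^{v_p(m/d_p)}.
\]
Since \(p^{v_p(m/d_p)}\) divides \(m/d_p\), which divides \(m\), we have \(p^{v_p(m/d_p)}\leq m\). This is the same step used in the proof of Proposition~\ref{prop:finite-prime-bound}, and it gives the displayed bound at once.

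For the consequence, suppose an infinite family of exponents \(m_j\to\infty\) satisfies the hypotheses of Proposition~\ref{prop:single-prime-route} with primes \(p_j\) and exponents \(e_j\geq2\). Suppose further, for contradiction, that every \(p_j\) lies in a fixed finite set \(S\) of odd primes. Since \(p_j^{e_j}\mid2^{m_j}-1\), we have \(e_j\leq v_{p_j}(2^{m_j}-1)\), and so the first part gives
\[
  p_j^{e_j-1}\leq p_j^{A_{p_j}-1}m_j\leq C\,m_j,
  \qquad
  C=\max_{p\in S}p^{A_p-1}.
\]
On the other hand, the hypothesis \(p_j^{e_j-1}\geq\mathcal{A}_\varepsilon(m_j)\geq2^{\theta_\varepsilon m_j}\) holds once \(m_j\geq1\), because \(2(1-2^{-m})\geq1\). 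Thus \(2^{\theta_\varepsilon m_j}\leq Cm_j\), which fails for all large \(m_j\) because \(\theta_\varepsilon>0\). Hence only finitely many members of such a family can use primes from \(S\), and the primes must leave every fixed finite set.

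The only point needing care is the phrasing "infinite family". If the same \(m\) could occur repeatedly, the contradiction would not follow. I would read the statement as concerning infinitely many distinct exponents \(m\), so that \(m_j\to\infty\). Beyond that, the argument is a routine comparison of linear and exponential growth, and I expect no real obstacle.
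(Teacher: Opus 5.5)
Your proof is correct and follows the paper's own argument. Lemma~\ref{lem:mersenne-lifting} gives \(p^{v_p(2^m-1)-1}=p^{A_p-1}p^{v_p(m/d_p)}\leq p^{A_p-1}m\), and the consequence comes from comparing this bound, linear in \(m\), with the exponential threshold \(\mathcal{A}_{\varepsilon}(m)\); you additionally make explicit the constant \(C\) over a finite set and the reading of the family as one with distinct exponents, both of which the paper leaves implicit.
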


\begin{proof}
Lemma~\ref{lem:mersenne-lifting} gives
\[
  p^{v_p(2^m-1)-1}
  =
  p^{A_p-1}
  p^{v_p(m/d_p)}
  \leq
  p^{A_p-1}m.
\]
For fixed \(p\), this bound is linear in \(m\), while \(\mathcal{A}_{\varepsilon}(m)\) is exponential in \(m\).
\end{proof}

\begin{proposition}[Squarefull bounds]\label{prop:squarefull-corridor}
Put
\[
  u_m=\sq(2^m-1).
\]
Then
\[
  u_m\leq L_m\leq u_m^2.
\]
Thus if \(E_{\varepsilon}(\mathcal{V}_m)\geq0\), then
\[
  u_m
  \geq
  \mathcal{A}_{\varepsilon}(m)^{1/2}.
\]
If
\[
  u_m
  \geq
  \mathcal{A}_{\varepsilon}(m)
\]
then
\[
  E_{\varepsilon}(\mathcal{V}_m)\geq0.
\]
\end{proposition}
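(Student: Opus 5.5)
The plan is to reduce everything to a prime-by-prime comparison of exponents, using the factorization \(2^m-1=\prod_p p^{e_p}\) with every \(p\) odd. For each prime the contribution of \(p\) to \(L_m\) is \(p^{e_p-1}\), while its contribution to \(u_m=\sq(2^m-1)\) is \(p^{\lfloor e_p/2\rfloor}\). Both sides of the asserted double inequality are multiplicative over the primes dividing \(2^m-1\), so it suffices to prove the exponent inequalities
\[
  \lfloor e/2\rfloor\leq e-1\leq2\lfloor e/2\rfloor
\]
for every integer \(e\geq1\).

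The right inequality is exactly the one established in the proof of Lemma~\ref{lem:defect-square}. For the left inequality, \(e=1\) gives \(0\leq0\). For \(e\geq2\), one has \(\lfloor e/2\rfloor\leq e/2\leq e-1\), since \(e/2\leq e-1\) is equivalent to \(e\geq2\). Multiplying the prime-power contributions over all \(p\mid2^m-1\) then gives \(u_m\leq L_m\leq u_m^2\). In fact the upper bound is Lemma~\ref{lem:defect-square} itself applied to \(\mathcal{V}_m\), where \(u_a=1\) and \(u_b=u_m\).

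The two transgression statements then follow from Definition~\ref{def:mersenne-threshold}, which gives \(E_{\varepsilon}(\mathcal{V}_m)\geq0\) if and only if \(L_m\geq\mathcal{A}_{\varepsilon}(m)\).
\begin{itemize}
  \item If the triple is transgressive, then \(u_m^2\geq L_m\geq\mathcal{A}_{\varepsilon}(m)\), and taking square roots gives \(u_m\geq\mathcal{A}_{\varepsilon}(m)^{1/2}\).
  \item If instead \(u_m\geq\mathcal{A}_{\varepsilon}(m)\), then \(L_m\geq u_m\geq\mathcal{A}_{\varepsilon}(m)\), so the triple is transgressive.
\end{itemize}

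There is no real obstacle here. The only point needing care is the lower bound \(\lfloor e/2\rfloor\leq e-1\) at \(e=1\), where both sides vanish, which confirms that squarefree primes contribute trivially to both \(u_m\) and \(L_m\).
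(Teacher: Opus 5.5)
Your proposal is correct and follows the paper's proof essentially verbatim: the prime-by-prime exponent inequalities \(\lfloor e/2\rfloor\leq e-1\leq2\lfloor e/2\rfloor\) give \(u_m\leq L_m\leq u_m^2\), and the two transgression statements then follow from the equivalence \(E_{\varepsilon}(\mathcal{V}_m)\geq0\Leftrightarrow L_m\geq\mathcal{A}_{\varepsilon}(m)\) of Definition~\ref{def:mersenne-threshold}. Your further remark that the upper bound is Lemma~\ref{lem:defect-square} applied to \(\mathcal{V}_m\) with \(u_a=1\) and \(u_b=u_m\) is also correct.
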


\begin{proof}
Write
\[
  2^m-1=\prod_p p^{e_p}.
\]
For every \(e_p\geq1\),
\[
  \left\lfloor\frac{e_p}{2}\right\rfloor
  \leq
  e_p-1
  \leq
  2\left\lfloor\frac{e_p}{2}\right\rfloor.
\]
Multiplication over the prime factors gives \(u_m\leq L_m\leq u_m^2\).
The implications follow from Definition~\ref{def:mersenne-threshold}.
\end{proof}

\begin{theorem}[Sufficient conditions for a Mersenne counterexample family]\label{thm:conditional-mersenne}
Fix \(\varepsilon>0\).
Suppose that there is an infinite set \(\mathcal{M}\) of positive integers satisfying
\[
  \sq(2^m-1)
  \geq
  \mathcal{A}_{\varepsilon}(m)
\]
for every \(m\in\mathcal{M}\).
Then
\[
  \bigl\{
    \mathcal{V}_m
    :
    m\in\mathcal{M}
  \bigr\}
\]
is an infinite family violating Conjecture~\ref{conj:parity-abc} at exponent \(\varepsilon\).

The same conclusion follows if, for every \(m\in\mathcal{M}\), there are an odd prime \(p_m\) and an integer \(e_m\geq2\) satisfying
\[
  p_m^{e_m}\mid2^m-1
\]
and
\[
  p_m^{e_m-1}
  \geq
  \mathcal{A}_{\varepsilon}(m).
\]
\end{theorem}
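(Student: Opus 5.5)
The statement follows directly from results already in place, so the plan is a short chain of citations for each of its two halves.

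For the first half, fix \(m\in\mathcal{M}\) and put \(u_m=\sq(2^m-1)\). Proposition~\ref{prop:squarefull-corridor} gives \(L_m\geq u_m\). The hypothesis gives \(u_m\geq\mathcal{A}_{\varepsilon}(m)\), hence \(L_m\geq\mathcal{A}_{\varepsilon}(m)\). Definition~\ref{def:mersenne-threshold}, which restates Proposition~\ref{prop:mersenne-exact}, then yields \(E_{\varepsilon}(\mathcal{V}_m)\geq0\).

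Before using this, I would check that each \(\mathcal{V}_m\) is a parity-class triple in the sense of Definition~\ref{def:parity-triple}. The hypothesis forces \(m\geq2\), since \(\mathcal{A}_{\varepsilon}(1)=2^{\theta_\varepsilon}>1=\sq(1)\). For \(m\geq2\), the summands \(1\) and \(2^m-1\) are odd, \(2^m\) is even, and the three entries are pairwise coprime.

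The next step is that distinct \(m\) give distinct triples, because \(c=2^m\) determines \(m\). An infinite \(\mathcal{M}\) therefore produces infinitely many parity-class triples with \(E_\varepsilon\geq0\), contradicting Conjecture~\ref{conj:parity-abc} at exponent \(\varepsilon\).

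For the second half, apply Proposition~\ref{prop:single-prime-route} to each \(m\in\mathcal{M}\) with \(p=p_m\) and \(e=e_m\). Its hypotheses are exactly the two assumed conditions together with \(m\geq2\). That bound holds because \(p_m^{e_m}\mid2^m-1\) with \(p_m\geq3\) and \(e_m\geq2\) forces \(2^m-1\geq9\). The proposition gives \(E_{\varepsilon}(\mathcal{V}_m)\geq0\), and the counting argument from the first half finishes the proof.

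There is no substantive obstacle. The only points needing care are the degenerate small values of \(m\), which must be excluded before Proposition~\ref{prop:mersenne-exact} applies (handled above), and the injectivity \(m\mapsto\mathcal{V}_m\), which is what turns an infinite index set into an infinite family of triples.
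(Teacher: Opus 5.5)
Your proposal is correct and follows the paper's own proof: the first half via Proposition~\ref{prop:squarefull-corridor} (using \(L_m\geq\sq(2^m-1)\)), the second via Proposition~\ref{prop:single-prime-route}, and infinitude from the injectivity \(m\mapsto c=2^m\). Your extra check that the hypotheses force \(m\geq2\) (since \(\mathcal{A}_{\varepsilon}(1)=2^{\theta_\varepsilon}>1\), and \(9\mid2^m-1\) in the second case) is a small point of care that the paper leaves implicit.
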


\begin{proof}
The first assertion follows from Proposition~\ref{prop:squarefull-corridor} and the second from Proposition~\ref{prop:single-prime-route}, the triples being pairwise distinct because distinct indices give distinct values of \(c=2^m\).
\end{proof}

Theorem~\ref{thm:mersenne-hits} gives an infinite family above quality one, and Theorem~\ref{thm:conditional-mersenne} supplies mechanisms sufficient for a fixed positive exponent.
An exact asymptotic criterion in terms of Wieferich primes is proved in Section~\ref{sec:wieferich}.

\section{Wieferich primes and the asymptotic Mersenne criterion}\label{sec:wieferich}

The Mersenne line depends on the growth of \(L_m\). Beyond a divisor of the exponent, that growth comes from the primes Wieferich described in 1909~\cite{Wieferich1909}.
Theorem~\ref{thm:exact-defect-law} states that division exactly.

\subsection{The exact defect law}\label{subsec:wieferich-decomposition}

\begin{definition}[Wieferich prime]\label{def:wieferich}
An odd prime \(p\) is a \emph{Wieferich prime} when
\[
  2^{p-1}\equiv1\pmod{p^2}.
\]
The set of Wieferich primes is written \(\mathcal{W}\).
\end{definition}

Only two members of \(\mathcal{W}\) are known, namely \(1093\) and \(3511\).
Extensive computation has produced no third example, and it is not known whether \(\mathcal{W}\) is finite or infinite.
The following lemma explains the relevance of this question to the Mersenne line.

\begin{lemma}[Wieferich primes and the lifting constant]\label{lem:wieferich-lifting}
Let \(p\) be an odd prime, and let
\[
  d_p=\ord_p(2)
  \qquad\text{and}\qquad
  A_p=v_p(2^{d_p}-1)
\]
be as in Lemma~\ref{lem:mersenne-lifting}.
Then
\[
  A_p\geq2
  \quad\Longleftrightarrow\quad
  p\in\mathcal{W}.
\]
\end{lemma}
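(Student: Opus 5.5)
We must prove that $A_p\geq2$ holds precisely when $2^{p-1}\equiv1\pmod{p^2}$. The plan is to compare $v_p(2^{p-1}-1)$ with $A_p=v_p(2^{d_p}-1)$ through Lemma~\ref{lem:mersenne-lifting}. Fermat's little theorem gives $2^{p-1}\equiv1\pmod p$, so $d_p\mid p-1$. Lemma~\ref{lem:mersenne-lifting}, applied with $m=p-1$, then yields
\[
  v_p(2^{p-1}-1)=A_p+v_p\Bigl(\frac{p-1}{d_p}\Bigr).
\]

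The quotient $(p-1)/d_p$ is a positive divisor of $p-1$, hence smaller than $p$ and prime to $p$. Its $p$-adic valuation is therefore zero, and we obtain the identity $v_p(2^{p-1}-1)=A_p$.

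Both directions now follow from this identity. The condition $p\in\mathcal{W}$ of Definition~\ref{def:wieferich} says exactly that $v_p(2^{p-1}-1)\geq2$, which is the same as $A_p\geq2$.

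There is no real obstacle here. The only point requiring care is that $p\nmid (p-1)/d_p$, and this holds because that integer lies strictly between $0$ and $p$. The argument also recovers the fact, used in Proposition~\ref{prop:finite-prime-bound}, that $A_p\geq1$ for every odd prime, since $p\mid 2^{d_p}-1$ by the definition of the order.
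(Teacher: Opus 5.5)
Your proof is correct and uses the same identity as the paper: Lemma~\ref{lem:mersenne-lifting} at $m=p-1$, together with $p\nmid(p-1)/d_p$, gives $v_p(2^{p-1}-1)=A_p$. The paper uses that identity only for $p\in\mathcal{W}\Rightarrow A_p\geq2$ and proves the converse separately via $\ord_{p^2}(2)$ and $d_p\mid p-1$, so reading both directions off the single equality, as you do, is a slight streamlining of the same argument.
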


\begin{proof}
Suppose first that \(A_p\geq2\).
Then \(p^2\) divides \(2^{d_p}-1\), so \(\ord_{p^2}(2)\) divides \(d_p\).
Reduction modulo \(p\) gives the reverse divisibility.
Hence the two orders coincide.
Since \(d_p\) divides \(p-1\), one obtains
\[
  p^2\mid2^{p-1}-1.
\]
Thus \(p\) lies in \(\mathcal{W}\).

Suppose now that \(p\in\mathcal{W}\).
Lemma~\ref{lem:mersenne-lifting}, applied with \(m=p-1\), gives
\[
  v_p(2^{p-1}-1)
  =
  A_p+v_p\left(\frac{p-1}{d_p}\right).
\]
The prime \(p\) does not divide \((p-1)/d_p\), so the second term vanishes.
Therefore
\[
  A_p=v_p(2^{p-1}-1)\geq2.
\]
\end{proof}

The constant \(A_p\) therefore exceeds one precisely for the Wieferich primes.
It measures the power of \(p\) present in \(2^m-1\) at the first exponent that admits the prime at all.
Every other prime enters \(2^m-1\) to the first power at that exponent, and only the factor \(v_p(m/d_p)\) of Lemma~\ref{lem:mersenne-lifting} can lift it further.

\begin{theorem}[Exact defect law]\label{thm:exact-defect-law}
For \(m\geq2\) put
\[
  \mathcal{W}(m)
  \coloneqq
  \bigl\{
    p\in\mathcal{W}
    :
    p\mid2^m-1
  \bigr\}
\]
and
\[
  \Omega_m
  \coloneqq
  \sum_{p\in\mathcal{W}(m)}
  (A_p-1)\log p.
\]
The convention adopted is that the empty sum equals \(0\), so that
\[
  \Omega_m=0
  \qquad\text{whenever}\qquad
  \mathcal{W}(m)=\varnothing.
\]
Define
\[
  G_m
  \coloneqq
  \prod_{p\mid2^m-1}p^{v_p(m)}.
\]
Then \(G_m\) is the largest divisor of \(m\) all of whose prime factors divide \(2^m-1\), it divides the odd part of \(m\), and
\[
  \Delta_m
  =
  \Omega_m
  +
  \log G_m.
\]
\end{theorem}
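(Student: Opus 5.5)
The plan is to compute \(\Delta_m\) prime by prime with Lemma~\ref{lem:mersenne-lifting}, and then split the sum according to Lemma~\ref{lem:wieferich-lifting}. Write
\[
  \Delta_m=\sum_{p\mid2^m-1}\bigl(v_p(2^m-1)-1\bigr)\log p .
\]
Every prime dividing \(2^m-1\) is odd and satisfies \(d_p\mid m\). The lifting lemma then gives
\[
  v_p(2^m-1)-1=(A_p-1)+v_p(m/d_p).
\]

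The first step removes the order \(d_p\) from the valuation: we need \(v_p(m/d_p)=v_p(m)\). Since \(d_p=\ord_p(2)\) divides \(p-1\), it is coprime to \(p\), so \(v_p(d_p)=0\) and the identity follows. The sum then splits as
\[
  \Delta_m=\sum_{p\mid2^m-1}(A_p-1)\log p+\sum_{p\mid2^m-1}v_p(m)\log p .
\]

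The second step identifies the two pieces.
\begin{itemize}
\item In the first sum, Lemma~\ref{lem:wieferich-lifting} says \(A_p-1\ge1\) exactly when \(p\in\mathcal{W}\), and \(A_p=1\) otherwise. The sum therefore reduces to the primes of \(\mathcal{W}(m)\), which is \(\Omega_m\). The empty-sum convention covers the case \(\mathcal{W}(m)=\varnothing\).
\item The second sum is \(\log\prod_{p\mid2^m-1}p^{v_p(m)}=\log G_m\).
\end{itemize}
Together these give \(\Delta_m=\Omega_m+\log G_m\).

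The last step checks the two structural claims about \(G_m\). By construction, \(G_m\) keeps each prime of \(m\) that divides \(2^m-1\) to its full exponent and discards every other prime. Hence \(G_m\) divides \(m\) and has only admissible prime factors. Any divisor of \(m\) whose primes all divide \(2^m-1\) has \(p\)-adic valuation at most \(v_p(m)\) at each such prime, so it divides \(G_m\); this makes \(G_m\) the largest such divisor. Finally, \(2\nmid2^m-1\), so \(G_m\) is odd, and being an odd divisor of \(m\) it divides the odd part of \(m\).

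No step is a real obstacle. The one point needing care is the reduction \(v_p(m/d_p)=v_p(m)\), which rests on \(\gcd(p,d_p)=1\). The fact that primes with \(A_p=1\) drop out of the first sum exactly, rather than just being bounded, is supplied by Lemma~\ref{lem:wieferich-lifting}.
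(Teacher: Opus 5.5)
Your proposal is correct and follows the paper's proof step for step. It uses the lifting identity with \(v_p(m/d_p)=v_p(m)\) from \(p\nmid d_p\), splits the sum via Lemma~\ref{lem:wieferich-lifting} into \(\Omega_m\) plus \(\log G_m\), and identifies \(G_m\) by the same valuation argument as the largest divisor of \(m\) whose prime factors divide \(2^m-1\), a divisor of the odd part of \(m\).
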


\begin{proof}
Let \(p\) be a prime dividing \(2^m-1\).
Then \(p\) is odd, and Lemma~\ref{lem:mersenne-lifting} gives \(d_p\mid m\) together with
\[
  v_p(2^m-1)-1
  =
  (A_p-1)+v_p\left(\frac{m}{d_p}\right).
\]
The order \(d_p\) divides \(p-1\) (hence \(p\nmid d_p\)), and therefore
\[
  v_p\left(\frac{m}{d_p}\right)
  =
  v_p(m)-v_p(d_p)
  =
  v_p(m).
\]
Summation over the primes dividing \(2^m-1\) yields
\[
  \Delta_m
  =
  \sum_{p\mid2^m-1}(A_p-1)\log p
  +
  \sum_{p\mid2^m-1}v_p(m)\log p.
\]
By Lemma~\ref{lem:wieferich-lifting}, the terms of the first sum vanish outside \(\mathcal{W}(m)\), so that the first sum equals \(\Omega_m\), and the second sum equals \(\log G_m\) by the definition of \(G_m\).

It remains to identify \(G_m\).
Every prime factor of \(G_m\) divides \(2^m-1\) and is therefore odd, and \(G_m\) divides \(\prod_pp^{v_p(m)}=m\), whence \(G_m\) divides the odd part of \(m\).
Conversely, let \(g\) be a divisor of \(m\) all of whose prime factors divide \(2^m-1\).
Then \(v_p(g)\leq v_p(m)\) for every such prime, whence \(g\) divides \(G_m\).
Thus \(G_m\) is the largest divisor of the stated kind.
\end{proof}

\begin{corollary}[Two-sided Wieferich decomposition]\label{cor:defect-bounds}
For every \(m\geq2\),
\[
  \Omega_m
  \leq
  \Delta_m
  \leq
  \Omega_m+\log\left(\frac{m}{2^{v_2(m)}}\right)
  \leq
  \Omega_m+\log m.
\]
Equivalently,
\[
  \prod_{p\in\mathcal{W}(m)}p^{A_p-1}
  \leq
  L_m
  \leq
  m
  \prod_{p\in\mathcal{W}(m)}p^{A_p-1}.
\]
\end{corollary}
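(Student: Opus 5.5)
The plan is to read the corollary off the exact identity \(\Delta_m=\Omega_m+\log G_m\) of Theorem~\ref{thm:exact-defect-law}, so the whole task is to bound \(\log G_m\) on both sides.

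For the lower bound, \(G_m\) is a positive integer, so \(\log G_m\geq0\) and hence \(\Delta_m\geq\Omega_m\).

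For the upper bound, Theorem~\ref{thm:exact-defect-law} also states that \(G_m\) divides the odd part \(m/2^{v_2(m)}\) of \(m\). This gives
\[
  \log G_m\leq\log\bigl(m/2^{v_2(m)}\bigr)\leq\log m,
\]
where the last step is simply \(2^{v_2(m)}\geq1\). Substituting into the identity yields the full chain of inequalities.

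The multiplicative form follows by exponentiating. Write \(L_m=\exp(\Delta_m)\) and note that
\[
  \exp(\Omega_m)=\prod_{p\in\mathcal{W}(m)}p^{A_p-1}.
\]
When \(\mathcal{W}(m)=\varnothing\), the empty product equals \(1\), consistent with the convention \(\Omega_m=0\).

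There is no real obstacle here. The only point needing care is that the divisibility of the odd part of \(m\) by \(G_m\) is already part of the theorem, so nothing beyond \(G_m\geq1\) and monotonicity of \(\log\) is required.
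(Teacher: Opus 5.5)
Your proposal is correct and matches the paper's proof essentially step for step. Both read the bounds off the exact law \(\Delta_m=\Omega_m+\log G_m\), using \(1\leq G_m\leq m\,2^{-v_2(m)}\leq m\) (from \(G_m\) dividing the odd part of \(m\)), and then exponentiate to get the multiplicative form.
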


\begin{proof}
The divisor \(G_m\) of Theorem~\ref{thm:exact-defect-law} satisfies \(1\leq G_m\leq m\,2^{-v_2(m)}\leq m\), since it divides the odd part of \(m\).
The additive bounds follow from the exact law, and exponentiation gives the multiplicative form.
\end{proof}

The estimate of Corollary~\ref{cor:defect-bounds} improves upon the finite-prime bound of Proposition~\ref{prop:finite-prime-bound}, the single term \(\log m\) accounting at once for every non-Wieferich prime, however numerous these may be, and Theorem~\ref{thm:exact-defect-law} goes further in naming the remainder as the divisor \(G_m\) of \(m\).
The defect therefore separates into an intrinsic Wieferich part \(\Omega_m\) and a part bounded by the arithmetic of the exponent alone.
Since \(G_m\) divides \(m\), the second part is \(\log G_m\leq\log m=o(m)\), and linear growth in \(\Delta_m\), should it occur, can come only from \(\Omega_m\).

In the cyclotomic factorization \(2^m-1=\prod_{d\mid m}\Phi_d(2)\) the prime divisors are grouped by their order.
A prime rises above the first power in \(2^m-1\) either through the intrinsic constant \(A_p\geq2\), which places \(p\) within \(\mathcal{W}\), or through the divisibility \(p\mid m\), whose whole contribution the divisor \(G_m\) measures and whose smallest instance is \(3^2\mid2^6-1\), with \(3\) outside \(\mathcal{W}\).
From this dichotomy Theorem~\ref{thm:mersenne-squarefree-classification} draws the complete description of the crossings of quality one.

In its classical form the Wieferich--Mersenne connection is a construction. A single Wieferich prime \(p\) forces a square factor into \(2^{p(p-1)}-1\), yielding the low-radical triple \((1,2^{p(p-1)}-1,2^{p(p-1)})\) and its prime-power iterates, as constructed by Granville and Tucker~\cite{GranvilleTucker2002}. Transgressive triples arise in such a family, and the exact law gives the full defect at each exponent together with its variation from one exponent to the next.

\begin{remark}[General bases]\label{rem:general-base}
Nothing in the proof of Theorem~\ref{thm:exact-defect-law} is peculiar to the base \(2\).
The lifting identity of Lemma~\ref{lem:mersenne-lifting} holds for the divisors of \(a^m-b^m\) with coprime integers \(a>b\geq1\), the multiplicative order of \(a/b\) modulo \(p\) taking the place of the order of \(2\), and the primitive-divisor theorem of Bang extends to this generality in the form given it by Zsigmondy and by Birkhoff and Vandiver~\cite{Zsigmondy1892,BirkhoffVandiver1904,Bang1886}.
The exact law therefore transfers verbatim.
The defect of \(a^m-b^m\) equals an intrinsic sum over the base-\(a/b\) Wieferich primes together with the logarithm of the largest divisor of \(m\) whose prime factors divide \(a^m-b^m\).
The sole adjustments are the exclusion of the primes dividing \(ab\) and the replacement of the exceptional pair \((2,6)\) in Proposition~\ref{prop:primitive-ceiling} by the exceptional cases of the general theorem.

The parity class accommodates a whole family of such lines.
For every fixed even \(g\geq2\) the two summands \(1\) and \(g^m-1\) are odd while \(g^m\) is even, so that \((1,g^m-1,g^m)\) is a parity-class triple whose smaller summand is \(1\).
The two-adic part of the defect along such a line is
\[
  \delta_2\bigl(1,g^m-1,g^m\bigr)
  =
  \bigl(mv_2(g)-1\bigr)\log2
\]
by Definition~\ref{def:total-defect}.
This grows linearly in \(m\) for every even \(g\), the base entering only through the constant \(v_2(g)\), and at \(g=2\) it returns the value \((m-1)\log2\) of the Mersenne line.
Subsection~\ref{subsec:even-base-lines} carries the exact law and its normalized criterion along every one of these lines.
The exposition stays centered upon \(g=2\), where the odd radical of the center vanishes and the arithmetic of \(2^m-1\) is the most fully documented.
\end{remark}

\subsection{The normalized defect criterion}\label{subsec:wieferich-density}

\begin{definition}[Normalized Wieferich defect density]\label{def:wieferich-density}
With \(\Omega_m\) as in Theorem~\ref{thm:exact-defect-law}, define
\[
  \rho_{\mathcal{W}}
  \coloneqq
  \limsup_{m\to\infty}
  \frac{\Omega_m}{m\log2}.
\]
Then
\[
  0\leq\rho_{\mathcal{W}}\leq1.
\]
\end{definition}

Indeed, Corollary~\ref{cor:defect-bounds} gives
\[
  0
  \leq
  \Delta_m-\Omega_m
  \leq
  \log m
\]
while
\[
  \Delta_m
  =
  \log\left(
    \frac{2^m-1}{\rad(2^m-1)}
  \right)
  <
  m\log2.
\]

\begin{theorem}[Sharp density criterion on the Mersenne line]\label{thm:wieferich-density-criterion}
Let \(\varepsilon>0\) and put
\[
  \theta_{\varepsilon}
  =
  \frac{\varepsilon}{1+\varepsilon}.
\]
Then the three assertions below hold.
\begin{enumerate}[label=\textup{(\arabic*)}]
\item If \(\theta_{\varepsilon}<\rho_{\mathcal{W}}\), then \(E_{\varepsilon}(\mathcal{V}_m)\geq0\) for infinitely many integers \(m\).

\item If \(\theta_{\varepsilon}>\rho_{\mathcal{W}}\), then \(E_{\varepsilon}(\mathcal{V}_m)\geq0\) for only finitely many integers \(m\).

\item There exist \(\varepsilon>0\) and infinitely many \(m\) satisfying \(E_{\varepsilon}(\mathcal{V}_m)\geq0\) if and only if \(\rho_{\mathcal{W}}>0\).
\end{enumerate}
At the boundary \(\theta_{\varepsilon}=\rho_{\mathcal{W}}\), the normalized density alone yields no conclusion.
\end{theorem}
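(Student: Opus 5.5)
The plan is to combine the exact criterion of Proposition~\ref{prop:mersenne-exact} with the two-sided decomposition of Corollary~\ref{cor:defect-bounds}. Normalizing by \(m\log2\), transgression at exponent \(\varepsilon\) is equivalent to
\[
  \frac{\Delta_m}{m\log2}
  \geq
  \theta_{\varepsilon}
  +
  \frac{\log\bigl(2(1-2^{-m})\bigr)}{m\log2}.
\]
The last term is \(O(1/m)\). Corollary~\ref{cor:defect-bounds} gives \(\Omega_m\leq\Delta_m\leq\Omega_m+\log m\). Hence \(\Delta_m/(m\log2)\) and \(\Omega_m/(m\log2)\) differ by at most \(\log m/(m\log2)\to0\), and their limit superiors coincide, both being \(\rho_{\mathcal{W}}\).

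For (1), I would fix \(\eta>0\) with \(\theta_{\varepsilon}+2\eta<\rho_{\mathcal{W}}\). By the definition of \(\limsup\), there are infinitely many \(m\) with \(\Omega_m\geq(\rho_{\mathcal{W}}-\eta)m\log2\). For each such \(m\), \(\Delta_m\geq\Omega_m\geq(\theta_{\varepsilon}+\eta)m\log2\). This exceeds \(\theta_{\varepsilon}m\log2+\log\bigl(2(1-2^{-m})\bigr)\) as soon as \(\eta m\log2>\log2\), so for all large such \(m\) Proposition~\ref{prop:mersenne-exact} gives \(E_{\varepsilon}(\mathcal{V}_m)\geq0\).

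For (2), I would choose \(\eta>0\) with \(\rho_{\mathcal{W}}+2\eta<\theta_{\varepsilon}\). For all large \(m\), \(\Omega_m\leq(\rho_{\mathcal{W}}+\eta)m\log2\), and so \(\Delta_m\leq(\theta_{\varepsilon}-\eta)m\log2+\log m\). The threshold is \(\theta_{\varepsilon}m\log2+\log\bigl(2(1-2^{-m})\bigr)\geq\theta_{\varepsilon}m\log2\) for \(m\geq1\). Transgression would therefore force \(\eta m\log2\leq\log m\), which fails for large \(m\), leaving only finitely many exceptions.

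For (3), if \(\rho_{\mathcal{W}}>0\), pick \(\varepsilon\) small so that \(\theta_{\varepsilon}<\rho_{\mathcal{W}}\); this is possible because \(\varepsilon\mapsto\theta_{\varepsilon}\) maps \((0,\infty)\) onto \((0,1)\) and tends to \(0\). Part (1) then applies. Conversely, if \(\rho_{\mathcal{W}}=0\) (it is non-negative by Definition~\ref{def:wieferich-density}), every \(\varepsilon>0\) has \(\theta_{\varepsilon}>0=\rho_{\mathcal{W}}\), and part (2) gives finiteness.

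The boundary remark needs no proof beyond noting that the \(\limsup\) can be approached from below or attained with lower-order corrections of either sign, so it is not a claim requiring argument. No real obstacle is expected. The only care needed is the bookkeeping of the \(\log m\) and \(\log 2\) error terms against the \(\eta m\) margin, which the choice of \(2\eta\) in each case handles.
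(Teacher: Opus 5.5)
Your proof is correct and follows the paper's argument: Proposition~\ref{prop:mersenne-exact} is combined with the bounds $\Omega_m\le\Delta_m\le\Omega_m+\log m$ of Corollary~\ref{cor:defect-bounds}, and an $\eta$-margin absorbs the bounded and logarithmic error terms in each of (1) and (2). The only difference is cosmetic: you get the converse in (3) as the contrapositive of (2), using $\rho_{\mathcal{W}}\ge 0$, whereas the paper derives $\rho_{\mathcal{W}}\ge\theta_{\varepsilon}$ directly from an infinite transgressive sequence.
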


\begin{proof}
Suppose first that \(\theta_{\varepsilon}<\rho_{\mathcal{W}}\).
Choose \(\eta>0\) such that
\[
  \theta_{\varepsilon}+\eta<\rho_{\mathcal{W}}.
\]
There are infinitely many \(m\) for which
\[
  \Omega_m
  \geq
  (\theta_{\varepsilon}+\eta)m\log2.
\]
For all sufficiently large such \(m\), one has
\[
  \eta m\log2
  >
  \log\bigl(2(1-2^{-m})\bigr).
\]
Since \(\Delta_m\geq\Omega_m\), it follows that
\[
  \Delta_m
  \geq
  \theta_{\varepsilon}m\log2
  +
  \log\bigl(2(1-2^{-m})\bigr).
\]
Proposition~\ref{prop:mersenne-exact} now gives \(E_{\varepsilon}(\mathcal{V}_m)\geq0\).

Suppose next that \(\theta_{\varepsilon}>\rho_{\mathcal{W}}\).
Choose \(\eta>0\) such that
\[
  \rho_{\mathcal{W}}+\eta<\theta_{\varepsilon}.
\]
For all sufficiently large \(m\), one has
\[
  \Omega_m
  \leq
  (\rho_{\mathcal{W}}+\eta)m\log2.
\]
By Corollary~\ref{cor:defect-bounds},
\[
  \Delta_m
  \leq
  (\rho_{\mathcal{W}}+\eta)m\log2+\log m.
\]
The linear gap between \(\theta_{\varepsilon}m\log2\) and \((\rho_{\mathcal{W}}+\eta)m\log2\) eventually dominates \(\log m\).
Hence, for all sufficiently large \(m\),
\[
  \Delta_m
  <
  \theta_{\varepsilon}m\log2
  +
  \log\bigl(2(1-2^{-m})\bigr).
\]
Proposition~\ref{prop:mersenne-exact} gives \(E_{\varepsilon}(\mathcal{V}_m)<0\).

Suppose now that \(\rho_{\mathcal{W}}>0\).
Choose \(0<\theta<\rho_{\mathcal{W}}\) and put
\[
  \varepsilon=\frac{\theta}{1-\theta}.
\]
The first assertion gives infinitely many transgressions.
Conversely, suppose that infinitely many transgressions occur at some fixed \(\varepsilon\).
Then Proposition~\ref{prop:mersenne-exact} and Corollary~\ref{cor:defect-bounds} give
\[
  \Omega_m
  \geq
  \theta_{\varepsilon}m\log2
  +
  \log\bigl(2(1-2^{-m})\bigr)
  -
  \log m
\]
along an infinite sequence.
Therefore
\[
  \rho_{\mathcal{W}}\geq\theta_{\varepsilon}>0.
\]
\end{proof}

\begin{corollary}[Critical quality of the Mersenne line]\label{cor:mersenne-critical-quality}
One has
\[
  \limsup_{m\to\infty}q(\mathcal{V}_m)
  =
  \begin{cases}
    \dfrac{1}{1-\rho_{\mathcal{W}}}
      & 0\leq\rho_{\mathcal{W}}<1
    \\[1.2em]
    +\infty
      & \rho_{\mathcal{W}}=1.
  \end{cases}
\]
One may therefore define
\[
  \varepsilon_{\mathcal{V}}^*
  \coloneqq
  \sup\left\{
    \varepsilon>0
    :
    E_{\varepsilon}(\mathcal{V}_m)\geq0
    \text{ for infinitely many }m
  \right\}
\]
where the supremum of the empty set is taken to be \(0\).
Then
\[
  \varepsilon_{\mathcal{V}}^*
  =
  \begin{cases}
    \dfrac{\rho_{\mathcal{W}}}{1-\rho_{\mathcal{W}}}
      & 0\leq\rho_{\mathcal{W}}<1
    \\[1.2em]
    +\infty
      & \rho_{\mathcal{W}}=1.
  \end{cases}
\]
\end{corollary}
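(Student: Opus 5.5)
The plan is to express the quality of \(\mathcal{V}_m\) directly through \(\Delta_m\), and then pass from \(\Delta_m\) to \(\Omega_m\) using Corollary~\ref{cor:defect-bounds}.

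\emph{Step 1: an exact formula for the quality.} Since \(\rad(abc)=2\rad(2^m-1)=2(2^m-1)/L_m\) and \(c=2^m\), we have
\[
  \log\rad(abc)=m\log2+\log(1-2^{-m})+\log2-\Delta_m.
\]
Hence
\[
  q(\mathcal{V}_m)=\frac{m\log2}{m\log2-\Delta_m+O(1)},
\]
where the \(O(1)\) term is exactly \(\log 2+\log(1-2^{-m})\in[\log(3/2),\log2]\).

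\emph{Step 2: the \(\limsup\) of \(\Delta_m/(m\log2)\).} By Corollary~\ref{cor:defect-bounds}, \(\Omega_m\le\Delta_m\le\Omega_m+\log m\). Dividing by \(m\log2\) and using \(\log m=o(m)\) gives
\[
  \limsup_{m\to\infty}\frac{\Delta_m}{m\log2}=\rho_{\mathcal{W}}.
\]

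\emph{Step 3: transfer to the quality.} Write \(x_m=\Delta_m/(m\log2)\in[0,1)\). Then \(q(\mathcal{V}_m)=1/(1-x_m+o(1))\), where the error is \(O(1/m)\) uniformly. The map \(x\mapsto1/(1-x)\) is continuous and increasing on \([0,1)\) and tends to \(+\infty\) as \(x\to1\).

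\begin{itemize}
\item If \(\rho_{\mathcal{W}}<1\), take a subsequence with \(x_m\to\rho_{\mathcal{W}}\); along it \(q\to1/(1-\rho_{\mathcal{W}})\), which gives the lower bound. For the upper bound, any subsequence along which \(q\) converges must have \(\limsup x_m\le\rho_{\mathcal{W}}\), so by monotonicity the limit is at most \(1/(1-\rho_{\mathcal{W}})\).
\item If \(\rho_{\mathcal{W}}=1\), the denominator \(1-x_m+O(1/m)\) tends to \(0\) along a subsequence, so \(q\to\infty\) along it. The denominator stays positive because it equals \(\log\rad(abc)/(m\log2)>0\).
\end{itemize}

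\emph{Step 4: the critical exponent.} The formula for \(\varepsilon_{\mathcal{V}}^*\) follows from Theorem~\ref{thm:wieferich-density-criterion}. By part (1), every \(\varepsilon\) with \(\theta_\varepsilon<\rho_{\mathcal{W}}\) lies in the set; by part (2), no \(\varepsilon\) with \(\theta_\varepsilon>\rho_{\mathcal{W}}\) does. Since \(\varepsilon\mapsto\theta_\varepsilon\) is an increasing bijection from \((0,\infty)\) onto \((0,1)\), the supremum is the \(\varepsilon\) with \(\theta_\varepsilon=\rho_{\mathcal{W}}\), namely \(\rho_{\mathcal{W}}/(1-\rho_{\mathcal{W}})\). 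Two edge cases remain. If \(\rho_{\mathcal{W}}=0\), the set is empty by part (2), so the supremum is \(0\) by convention. If \(\rho_{\mathcal{W}}=1\), every \(\varepsilon\) satisfies part (1), so the supremum is \(+\infty\). The boundary value of \(\varepsilon\) does not affect the supremum.

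The main obstacle I expect is the limsup bookkeeping in Step 3: the upper bound must be taken along subsequences, and the additive \(O(1)\) in the denominator must be handled uniformly. This is routine given Step 1.
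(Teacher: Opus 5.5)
Your proposal is correct and follows the paper's proof: the exact formula \(\log\rad(abc)=(m+1)\log2+\log(1-2^{-m})-\Delta_m\), then \(\limsup_m \Delta_m/(m\log2)=\rho_{\mathcal{W}}\) from Corollary~\ref{cor:defect-bounds}, and finally the formula for \(\varepsilon_{\mathcal{V}}^*\) from Theorem~\ref{thm:wieferich-density-criterion}. Your subsequence argument (using that the positive \(O(1/m)\) correction gives \(q\le 1/(1-x_m)\)) and your treatment of the edge cases \(\rho_{\mathcal{W}}=0\) and \(\rho_{\mathcal{W}}=1\) spell out what the paper leaves implicit in ``the asserted quality formula follows.''
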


\begin{proof}
For the Mersenne triple,
\[
  \log\rad(abc)
  =
  (m+1)\log2
  +
  \log(1-2^{-m})
  -
  \Delta_m.
\]
It follows that
\[
  q(\mathcal{V}_m)
  =
  \left(
    1+\frac{1}{m}
    +
    \frac{\log(1-2^{-m})}{m\log2}
    -
    \frac{\Delta_m}{m\log2}
  \right)^{-1}.
\]
Corollary~\ref{cor:defect-bounds} gives
\[
  \limsup_{m\to\infty}
  \frac{\Delta_m}{m\log2}
  =
  \rho_{\mathcal{W}}.
\]
The asserted quality formula follows.
The formula for \(\varepsilon_{\mathcal{V}}^*\) follows from Theorem~\ref{thm:wieferich-density-criterion}.
\end{proof}

\subsection{Order--defect data}\label{subsec:order-defect}

The density \(\rho_{\mathcal{W}}\) can be expressed entirely through finite collections of Wieferich primes and the orders of \(2\) modulo those primes.

\begin{definition}[Order--defect data]\label{def:order-defect-data}
For a finite set \(S\subseteq\mathcal{W}\), define
\[
  D(S)
  \coloneqq
  \operatorname{lcm}_{p\in S}d_p
\]
and
\[
  B(S)
  \coloneqq
  \prod_{p\in S}p^{A_p-1}.
\]
The conventions adopted are
\[
  D(\varnothing)=1
  \qquad\text{and}\qquad
  B(\varnothing)=1.
\]
A sequence \((S_j)\) of finite subsets of \(\mathcal{W}\) is called \emph{admissible} when \(D(S_j)\to\infty\).
Define
\[
  \sigma_{\mathcal{W}}
  \coloneqq
  \sup_{(S_j)\ \mathrm{admissible}}\
  \limsup_{j\to\infty}
  \frac{\log B(S_j)}{D(S_j)\log2}.
\]
Should no admissible sequence exist, then the value is taken to be \(0\).
\end{definition}

\begin{remark}[The datum \(D_m\) divides the exponent]\label{rem:order-divides-m}
For a Mersenne index \(m\), every prime \(p\) dividing \(2^m-1\) has order \(d_p\mid m\), whence the datum \(D_m=D(\mathcal{W}(m))\) of Proposition~\ref{prop:order-obstruction} is itself a divisor of \(m\).
The division becomes proper as soon as \(m\) carries a prime factor shared by none of the orders \(d_p\).
The passage from \(D_m\) to the exponent \(m\) in that proposition rests on this divisibility alone.
\end{remark}

\begin{theorem}[Order--defect criterion]\label{thm:order-defect-density}
One has
\[
  \rho_{\mathcal{W}}
  =
  \sigma_{\mathcal{W}}.
\]
Thus an infinite Mersenne family transgressive at some fixed positive exponent exists if and only if there are a constant \(\kappa>0\) and finite sets \(S_j\subseteq\mathcal{W}\) such that
\[
  D(S_j)\longrightarrow\infty
\]
and
\[
  \log B(S_j)
  \geq
  \kappa D(S_j)\log2
\]
for infinitely many \(j\).
Equivalently,
\[
  B(S_j)\geq2^{\kappa D(S_j)}.
\]
\end{theorem}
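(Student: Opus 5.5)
The plan is to prove the two inequalities \(\rho_{\mathcal{W}}\leq\sigma_{\mathcal{W}}\) and \(\sigma_{\mathcal{W}}\leq\rho_{\mathcal{W}}\) separately. In each direction, one family of objects (Mersenne indices, or finite sets of Wieferich primes) is converted into the other using only two facts. The first is Remark~\ref{rem:order-divides-m}: every \(p\in\mathcal{W}(m)\) has \(d_p\mid m\), so \(D(\mathcal{W}(m))\mid m\). The second is the definitional identity \(\log B(\mathcal{W}(m))=\Omega_m\) from Theorem~\ref{thm:exact-defect-law}. The stated equivalence then follows by combining \(\rho_{\mathcal{W}}=\sigma_{\mathcal{W}}\) with assertion~(3) of Theorem~\ref{thm:wieferich-density-criterion}. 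Indeed, \(\sigma_{\mathcal{W}}>0\) holds exactly when some admissible sequence and some \(\kappa>0\) satisfy \(\log B(S_j)\geq\kappa D(S_j)\log2\) for infinitely many \(j\); passing to that subsequence keeps it admissible.

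\emph{The bound \(\sigma_{\mathcal{W}}\leq\rho_{\mathcal{W}}\).} Let \((S_j)\) be admissible and put \(m_j=D(S_j)\), so that \(m_j\to\infty\). For \(p\in S_j\) one has \(d_p\mid m_j\), hence \(p\mid2^{m_j}-1\) by Lemma~\ref{lem:mersenne-lifting}, and so \(S_j\subseteq\mathcal{W}(m_j)\). Every weight \((A_p-1)\log p\) is non-negative, so \(\log B(S_j)\leq\Omega_{m_j}\). Dividing by \(m_j\log2=D(S_j)\log2\) and taking \(\limsup_j\) gives a value at most \(\rho_{\mathcal{W}}\), because \(m_j\to\infty\) and repeated indices do not affect a limsup along a sequence tending to infinity. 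Taking the supremum over admissible sequences gives \(\sigma_{\mathcal{W}}\leq\rho_{\mathcal{W}}\). If no admissible sequence exists, then \(\sigma_{\mathcal{W}}=0\leq\rho_{\mathcal{W}}\) trivially.

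\emph{The bound \(\rho_{\mathcal{W}}\leq\sigma_{\mathcal{W}}\).} If \(\rho_{\mathcal{W}}=0\) there is nothing to prove, since \(\sigma_{\mathcal{W}}\geq0\): each quotient \(\log B/(D\log2)\) is non-negative. Otherwise choose \(m_j\to\infty\) with \(\Omega_{m_j}/(m_j\log2)\to\rho_{\mathcal{W}}>0\), and put \(S_j=\mathcal{W}(m_j)\). Then \(\log B(S_j)=\Omega_{m_j}\), and \(D(S_j)\mid m_j\) gives \(D(S_j)\leq m_j\). Hence
\[
\frac{\log B(S_j)}{D(S_j)\log2}\geq\frac{\Omega_{m_j}}{m_j\log2},
\]
and the limsup of the left side is at least \(\rho_{\mathcal{W}}\).

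The step I expect to be the main obstacle is \emph{admissibility}, that is, showing \(D(S_j)\to\infty\). I would argue by contradiction along a subsequence. Suppose \(D(S_j)\leq N\) along a subsequence. Every \(p\in S_j\) then has \(d_p\leq N\), so \(p\) divides \(\prod_{d\leq N}(2^d-1)\). The sets \(S_j\) therefore lie in a fixed finite set of primes, and \(B(S_j)\) is bounded by a constant. On the other hand, \(\Omega_{m_j}\geq\tfrac12\rho_{\mathcal{W}}m_j\log2\to\infty\) for large \(j\), which is a contradiction. Hence \(D(S_j)\to\infty\), the sequence is admissible, and \(\sigma_{\mathcal{W}}\geq\rho_{\mathcal{W}}\), completing the proof.
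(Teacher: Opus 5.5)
Your proposal is correct and follows the paper's proof essentially step for step. The inclusion $S\subseteq\mathcal{W}(D(S))$ gives $\Omega_{D(S)}\geq\log B(S)$ and hence $\sigma_{\mathcal{W}}\leq\rho_{\mathcal{W}}$; the choice $S_j=\mathcal{W}(m_j)$ with $D(S_j)\mid m_j$, together with the bounded-order contradiction for admissibility, gives the reverse inequality; and the equivalence is read off from Theorem~\ref{thm:wieferich-density-criterion}, with your explicit treatment of repeated indices in the limsup and of the case with no admissible sequence only tidying points the paper passes over.
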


\begin{proof}
Let \(S\subseteq\mathcal{W}\) be finite and put \(D=D(S)\).
For every \(p\in S\), one has \(d_p\mid D\), and Lemma~\ref{lem:mersenne-lifting} gives
\[
  v_p(2^D-1)
  =
  A_p+v_p\left(\frac{D}{d_p}\right)
  \geq
  A_p.
\]
It follows that
\[
  \Omega_D
  \geq
  \sum_{p\in S}(A_p-1)\log p
  =
  \log B(S).
\]
Pass to sequences for which \(D(S)\to\infty\).
This gives
\[
  \rho_{\mathcal{W}}\geq\sigma_{\mathcal{W}}.
\]

Suppose that \(\rho_{\mathcal{W}}>0\), and choose a sequence \(m_j\) such that
\[
  \frac{\Omega_{m_j}}{m_j\log2}
  \longrightarrow
  \rho_{\mathcal{W}}.
\]
Put
\[
  S_j=\mathcal{W}(m_j)
  \qquad\text{and}\qquad
  D_j=D(S_j).
\]
Then
\[
  D_j\mid m_j
\]
and
\[
  \log B(S_j)=\Omega_{m_j}.
\]
Moreover, \(D_j\to\infty\).
Suppose otherwise.
Then, along a subsequence, all primes in \(S_j\) would have orders dividing one of finitely many bounded integers.
Such primes divide one of finitely many integers \(2^d-1\).
The sets of possible primes and the quantities \(B(S_j)\) would therefore be bounded.
This would contradict the positive limiting density.

Since \(D_j\leq m_j\), one has
\[
  \frac{\log B(S_j)}{D_j\log2}
  =
  \frac{\Omega_{m_j}}{D_j\log2}
  \geq
  \frac{\Omega_{m_j}}{m_j\log2}.
\]
Therefore
\[
  \sigma_{\mathcal{W}}\geq\rho_{\mathcal{W}}.
\]
Suppose that \(\rho_{\mathcal{W}}=0\).
Then the previously proved inequality
\[
  \rho_{\mathcal{W}}\geq\sigma_{\mathcal{W}}\geq0
\]
gives equality in this case as well.

The final assertion follows from Theorem~\ref{thm:wieferich-density-criterion}.
It may also be seen directly.
Suppose that \(\theta_{\varepsilon}<\kappa\).
Then, for all sufficiently large \(j\),
\[
  \log B(S_j)
  \geq
  \theta_{\varepsilon}D(S_j)\log2
  +
  \log\bigl(2(1-2^{-D(S_j)})\bigr).
\]
Since
\[
  \Delta_{D(S_j)}
  \geq
  \log B(S_j)
\]
Proposition~\ref{prop:mersenne-exact} applies.
\end{proof}

\begin{proposition}[Order obstruction at a transgressive exponent]\label{prop:order-obstruction}
Let \(\varepsilon>0\), and suppose that
\[
  E_{\varepsilon}(\mathcal{V}_m)\geq0.
\]
Put
\[
  S_m=\mathcal{W}(m)
  \qquad
  D_m=D(S_m)
  \qquad
  B_m=B(S_m).
\]
Then
\[
  B_m
  \geq
  \frac{2(1-2^{-m})}{m}\,
  2^{\theta_{\varepsilon}m}.
\]
Moreover, writing \(\log_2\) for the logarithm to the base \(2\), the symbol \(\log\) being reserved throughout for the natural logarithm,
\[
  D_m
  >
  \theta_{\varepsilon}m
  -
  \log_2m
  +
  \log_2\bigl(2(1-2^{-m})\bigr).
\]
In particular,
\[
  D_m
  \geq
  \bigl(\theta_{\varepsilon}-o(1)\bigr)m
\]
along every infinite sequence transgressive at exponent \(\varepsilon\).
\end{proposition}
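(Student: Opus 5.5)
The first inequality comes from combining Proposition~\ref{prop:mersenne-exact} with the upper half of Corollary~\ref{cor:defect-bounds}. The lower bound on \(D_m\) then comes from comparing \(B_m\) with \(2^{D_m}-1\) through the divisibility structure of Mersenne numbers.

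\emph{First inequality.} By the definition of \(\Omega_m\) in Theorem~\ref{thm:exact-defect-law}, \(\log B_m=\sum_{p\in\mathcal{W}(m)}(A_p-1)\log p=\Omega_m\), so \(B_m=\exp(\Omega_m)\). Corollary~\ref{cor:defect-bounds} gives \(L_m\leq m\,B_m\). Proposition~\ref{prop:mersenne-exact} gives \(L_m\geq2(1-2^{-m})2^{\theta_{\varepsilon}m}\) whenever \(E_{\varepsilon}(\mathcal{V}_m)\geq0\). Dividing by \(m\) yields the first claim.

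\emph{Upper bound for \(B_m\).} Next I show that \(B_m<2^{D_m}\). For each \(p\in S_m\), the order \(d_p\) divides \(D_m\), so \(2^{d_p}-1\) divides \(2^{D_m}-1\). By the definition of \(A_p\), \(p^{A_p}\) divides \(2^{d_p}-1\), hence \(p^{A_p}\mid 2^{D_m}-1\). The primes in \(S_m\) are distinct, so the prime powers \(p^{A_p}\) are pairwise coprime. Their product therefore divides \(2^{D_m}-1\), which gives
\[
  B_m
  \leq
  \prod_{p\in S_m}p^{A_p}
  \leq
  2^{D_m}-1
  <
  2^{D_m}.
\]
This includes the case \(S_m=\varnothing\), where \(B_m=D_m=1\) by the conventions of Definition~\ref{def:order-defect-data}. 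Taking \(\log_2\) and inserting the first inequality gives
\[
  D_m>\log_2B_m\geq\theta_{\varepsilon}m-\log_2m+\log_2\bigl(2(1-2^{-m})\bigr),
\]
which is the second claim.

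\emph{Asymptotic form.} Divide the second claim by \(m\). Both \(\log_2 m/m\) and \(\log_2(2(1-2^{-m}))/m\) tend to \(0\), so along any infinite transgressive sequence \(D_m\geq(\theta_{\varepsilon}-o(1))m\).

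The only step that needs care is the bound \(B_m<2^{D_m}\). It relies on the divisibility \(d_p\mid D_m\) and on the coprimality of the prime powers \(p^{A_p}\) for distinct \(p\). Everything else is direct substitution of earlier results.
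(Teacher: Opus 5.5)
Your proof is correct and follows the paper's argument: the first bound comes from Proposition~\ref{prop:mersenne-exact} combined with \(L_m\leq mB_m\) from Corollary~\ref{cor:defect-bounds}, and the second comes from \(B_m<2^{D_m}\) via \(d_p\mid D_m\). The only cosmetic difference is that you bound \(B_m\) by \(\prod_{p\in S_m}p^{A_p}\leq2^{D_m}-1\), whereas the paper passes through \(B_m\leq L_{D_m}\); both routes give the same strict inequality.
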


\begin{proof}
Proposition~\ref{prop:mersenne-exact} and Corollary~\ref{cor:defect-bounds} give
\[
\begin{aligned}
  \log B_m
  =
  \Omega_m
  &\geq
  \Delta_m-\log m
  \\
  &\geq
  \theta_{\varepsilon}m\log2
  +
  \log\bigl(2(1-2^{-m})\bigr)
  -
  \log m.
\end{aligned}
\]
Exponentiation proves the first assertion.

Every \(p\in S_m\) satisfies \(d_p\mid D_m\).
Hence
\[
  B_m
  \leq
  L_{D_m}
  <
  2^{D_m}.
\]
Combining this inequality with the lower bound for \(B_m\) and taking base-\(2\) logarithms proves the second assertion.
\end{proof}

\begin{corollary}[Two quantitative alternatives]\label{cor:order-defect-dichotomy}
Two statements hold.
\begin{enumerate}[label=\textup{(\arabic*)}]
\item For every fixed \(\varepsilon>0\), only finitely many Mersenne triples are transgressive at exponent \(\varepsilon\) if and only if
\[
  \log B(S_j)=o\bigl(D(S_j)\bigr)
\]
along every admissible sequence of finite sets \(S_j\subseteq\mathcal{W}\).

\item Suppose that there are \(\kappa>0\) and finite sets \(S_j\subseteq\mathcal{W}\) such that
\[
  D(S_j)\to\infty
  \qquad\text{and}\qquad
  B(S_j)\geq2^{\kappa D(S_j)}.
\]
Then, for every \(\varepsilon>0\) satisfying \(\theta_{\varepsilon}<\kappa\), the triples \(\mathcal{V}_{D(S_j)}\) form an infinite transgressive family once finitely many repeated indices and finitely many initial terms have been discarded.
\end{enumerate}
\end{corollary}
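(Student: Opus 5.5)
Both parts of the corollary are read off from the identity \(\rho_{\mathcal{W}}=\sigma_{\mathcal{W}}\) of Theorem~\ref{thm:order-defect-density}, combined with the dichotomy of Theorem~\ref{thm:wieferich-density-criterion}.

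For part (1) I will first show that finiteness at every fixed \(\varepsilon>0\) is equivalent to \(\rho_{\mathcal{W}}=0\). This is assertion (3) of Theorem~\ref{thm:wieferich-density-criterion}, negated: there is some \(\varepsilon>0\) with infinitely many transgressions if and only if \(\rho_{\mathcal{W}}>0\). By Theorem~\ref{thm:order-defect-density}, \(\rho_{\mathcal{W}}=0\) is the same as \(\sigma_{\mathcal{W}}=0\). It then remains to show that \(\sigma_{\mathcal{W}}=0\) holds exactly when \(\log B(S_j)=o(D(S_j))\) along every admissible sequence. Both \(\log B\) and \(D\) are non-negative, so each ratio \(\log B(S_j)/(D(S_j)\log 2)\) is non-negative. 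Hence the supremum of the lim sups vanishes if and only if every such lim sup is \(0\), that is, if and only if the ratio tends to \(0\) along each admissible sequence. I will also treat the convention for an empty family of admissible sequences: in that case \(\sigma_{\mathcal{W}}=0\) and the \(o(\cdot)\) condition holds vacuously, so the equivalence survives.

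For part (2) I will argue directly, as at the end of the proof of Theorem~\ref{thm:order-defect-density}. Fix \(\varepsilon\) with \(\theta_\varepsilon<\kappa\) and put \(D_j=D(S_j)\). From \(d_p\mid D_j\) and Lemma~\ref{lem:mersenne-lifting} we get \(v_p(2^{D_j}-1)\geq A_p\) for each \(p\in S_j\). Therefore \(\Delta_{D_j}\geq\Omega_{D_j}\geq\log B(S_j)\geq\kappa D_j\log2\). Since \(D_j\to\infty\), the gap \((\kappa-\theta_\varepsilon)D_j\log2\) eventually exceeds \(\log2\ge\log(2(1-2^{-D_j}))\). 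Proposition~\ref{prop:mersenne-exact} then gives \(E_\varepsilon(\mathcal{V}_{D_j})\geq0\) for all large \(j\).

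The last point is that this yields infinitely many distinct triples. The indices \(D_j\) may repeat, but because \(D_j\to\infty\) each value occurs only finitely often, so the set \(\{D_j\}\) is infinite. Discarding the repetitions and the finitely many initial indices below the threshold leaves an infinite family. Its members are pairwise distinct because \(c=2^{D_j}\) separates them.

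The only delicate step is the translation between the supremum over admissible sequences and the uniform \(o(\cdot)\) condition in part (1). It needs care with the non-negativity of the ratios and with the convention for the empty family; everything else is direct citation.
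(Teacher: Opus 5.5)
Your proposal is correct and follows the paper's proof. Part (1) combines \(\rho_{\mathcal{W}}=\sigma_{\mathcal{W}}\) with Theorem~\ref{thm:wieferich-density-criterion}, and part (2) is the direct argument from the end of the proof of Theorem~\ref{thm:order-defect-density}, with \(D(S_j)\to\infty\) guaranteeing infinitely many distinct exponents. Your explicit translation of \(\sigma_{\mathcal{W}}=0\) into the \(o(\cdot)\) condition along every admissible sequence, using \(\log B(S_j)\geq0\) and the empty-family convention, fills in a step the paper leaves implicit.
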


\begin{proof}
The first assertion follows from the equality \(\rho_{\mathcal{W}}=\sigma_{\mathcal{W}}\) together with Theorem~\ref{thm:wieferich-density-criterion}.
The second follows from the final assertion of Theorem~\ref{thm:order-defect-density}.
The condition \(D(S_j)\to\infty\) ensures that infinitely many resulting exponents are distinct.
\end{proof}

The condition that reaches the \(abc\) threshold is the positive exponential growth of \(B(S)\) relative to \(D(S)\), and this requires in particular that the Wieferich primes be infinite.
Subsection~\ref{subsec:order-cofactor-constraint} converts the asymptotic lower bound of Proposition~\ref{prop:order-obstruction} for the order datum into an integrality constraint on the quotient \(m/D_m\), and gives a reduction that normalizes any infinite transgressive family.

\subsection{Exact defect laws on every fixed even base}\label{subsec:even-base-lines}

Remark~\ref{rem:general-base} admits a strengthening within the parity class itself.
Each fixed even base gives a boundary line of the class, along which the Mersenne argument runs with the same normalized conclusion.

\begin{definition}[Even-base exponential data]\label{def:even-base-data}
Fix an even integer \(g\geq2\).
For \(m\geq2\), define
\[
  \mathcal{V}^{(g)}_m
  \coloneqq
  \bigl(1,g^m-1,g^m\bigr)
\]
together with
\[
  R_g\coloneqq\rad(g)
  \qquad\text{and}\qquad
  \Delta^{(g)}_m
  \coloneqq
  \log\left(
    \frac{g^m-1}{\rad(g^m-1)}
  \right).
\]
For every prime \(p\nmid g\), put
\[
  d_{g,p}\coloneqq\ord_p(g)
  \qquad\text{and}\qquad
  A_{g,p}\coloneqq v_p(g^{d_{g,p}}-1)
\]
and define the set of base-\(g\) Wieferich primes by
\[
  \mathcal{W}_g
  \coloneqq
  \bigl\{
    p\nmid g:
    g^{p-1}\equiv1\pmod{p^2}
  \bigr\}.
\]
Finally, put
\[
  \mathcal{W}_g(m)
  \coloneqq
  \bigl\{
    p\in\mathcal{W}_g:
    d_{g,p}\mid m
  \bigr\}
\]
and
\[
  \Omega^{(g)}_m
  \coloneqq
  \sum_{p\in\mathcal{W}_g(m)}
  (A_{g,p}-1)\log p
  \qquad\text{and}\qquad
  G^{(g)}_m
  \coloneqq
  \prod_{p\mid g^m-1}p^{v_p(m)}.
\]
\end{definition}

For \(g=2\) these data reduce to those of Definition~\ref{def:mersenne-triple} and Theorem~\ref{thm:exact-defect-law}.

\begin{theorem}[Exact even-base threshold and defect law]\label{thm:even-base-exact-law}
For every even \(g\geq2\) and every \(m\geq2\), the triple \(\mathcal{V}^{(g)}_m\) lies in the parity class.
Moreover,
\[
\begin{aligned}
  E_{\varepsilon}(\mathcal{V}^{(g)}_m)
  ={}&
  (1+\varepsilon)\Delta^{(g)}_m
  -
  \varepsilon m\log g
  \\
  &-
  (1+\varepsilon)
  \log\bigl(R_g(1-g^{-m})\bigr).
\end{aligned}
\]
Thus
\[
  E_{\varepsilon}(\mathcal{V}^{(g)}_m)\geq0
\]
holds precisely when
\[
  \Delta^{(g)}_m
  \geq
  \theta_{\varepsilon}m\log g
  +
  \log\bigl(R_g(1-g^{-m})\bigr).
\]
The defect admits the exact decomposition
\[
  \Delta^{(g)}_m
  =
  \Omega^{(g)}_m+\log G^{(g)}_m
\]
in which \(G^{(g)}_m\) is the largest divisor of \(m\) all of whose prime factors divide \(g^m-1\).
In particular,
\[
  0
  \leq
  \Delta^{(g)}_m-\Omega^{(g)}_m
  \leq
  \log\left(\frac{m}{2^{v_2(m)}}\right)
  \leq
  \log m.
\]
\end{theorem}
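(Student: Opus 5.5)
The plan has three parts: check membership in the parity class, compute the radical excess and rearrange it, and then transfer the exact defect law from base $2$ to base $g$.

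\emph{Membership in the class.} Since $g$ is even, $g^m-1$ is odd and $g^m$ is even. Coprimality is immediate because one summand is $1$. So $\mathcal{V}^{(g)}_m$ is a parity-class triple.

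\emph{The excess identity.} I will compute $\rad(abc)$ directly rather than through the class coordinates. Every prime dividing $g^m$ divides $g$, and every such prime is coprime to $g^m-1$. Hence
\[
  \rad(abc)=R_g\,\rad(g^m-1),
  \qquad
  \log\rad(g^m-1)=\log(g^m-1)-\Delta^{(g)}_m .
\]
Substituting into $E_\varepsilon=\log c-(1+\varepsilon)\log\rad(abc)$ with $c=g^m$, and writing $\log(g^m-1)=m\log g+\log(1-g^{-m})$, gives
\[
  E_{\varepsilon}=m\log g-(1+\varepsilon)\bigl(\log R_g+m\log g+\log(1-g^{-m})-\Delta^{(g)}_m\bigr).
\]
This is exactly the stated identity. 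Dividing $E_\varepsilon\geq0$ by $1+\varepsilon$ and using $\varepsilon/(1+\varepsilon)=\theta_\varepsilon$ gives the threshold.

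\emph{The defect law.} I will repeat the proof of Theorem~\ref{thm:exact-defect-law} with base $g$. Let $p\mid g^m-1$. Then $p\nmid g$, and $p$ is odd because $g^m-1$ is odd. Moreover $d_{g,p}\mid m$. For odd $p$ with $p\mid g^{d_{g,p}}-1$, lifting the exponent, as in Lemma~\ref{lem:mersenne-lifting}, gives
\[
  v_p(g^m-1)=A_{g,p}+v_p\!\left(\frac{m}{d_{g,p}}\right).
\]
Since $d_{g,p}\mid p-1$, we have $p\nmid d_{g,p}$, so the last term equals $v_p(m)$.

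The analogue of Lemma~\ref{lem:wieferich-lifting} says that $A_{g,p}\geq2$ if and only if $p\in\mathcal{W}_g$. The proof is verbatim. In one direction, $\ord_{p^2}(g)=d_{g,p}$ divides $p-1$. In the other, the lifting formula at exponent $p-1$ gives $A_{g,p}=v_p(g^{p-1}-1)$.

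Summing $(v_p(g^m-1)-1)\log p$ over all $p\mid g^m-1$ then splits the defect into $\Omega^{(g)}_m$ and $\log G^{(g)}_m$. Here I use that $p\in\mathcal{W}_g$ divides $g^m-1$ exactly when $d_{g,p}\mid m$, so the first sum matches $\mathcal{W}_g(m)$ as defined.

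\emph{Identifying $G^{(g)}_m$ and the final bound.} The identification of $G^{(g)}_m$ as the largest divisor of $m$ whose prime factors divide $g^m-1$ follows the earlier argument word for word. Its prime factors are odd, so it divides the odd part of $m$, and this gives the two-sided bound
\[
  0\leq\Delta^{(g)}_m-\Omega^{(g)}_m\leq\log\!\left(\frac{m}{2^{v_2(m)}}\right)\leq\log m .
\]
The only point needing care is that all relevant primes are odd and coprime to $g$, so the exceptional $p=2$ case of lifting the exponent never arises. Everything else is routine.
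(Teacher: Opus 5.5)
Your proposal is correct and follows essentially the same route as the paper. You split the radical as $R_g\rad(g^m-1)$ and substitute into the definition of $E_\varepsilon$; you then apply lifting the exponent at the odd primes $p\mid g^m-1$, use $p\nmid d_{g,p}$ and $A_{g,p}=v_p(g^{p-1}-1)$, and identify $G^{(g)}_m$ exactly as in Theorem~\ref{thm:exact-defect-law}.
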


\begin{proof}
Since \(g\) is even, the two summands \(1\) and \(g^m-1\) are odd while \(g^m\) is even, and the three entries are pairwise coprime.
Also
\[
  \rad\bigl(g^m(g^m-1)\bigr)
  =
  R_g\rad(g^m-1).
\]
Since
\[
  \log\rad(g^m-1)
  =
  m\log g+\log(1-g^{-m})-\Delta^{(g)}_m
\]
substitution into Definition~\ref{def:energy-quality} gives the first identity and its threshold form.

Let \(p\mid g^m-1\).
Then \(p\) is odd, one has \(d_{g,p}\mid m\), and the lifting identity gives
\[
  v_p(g^m-1)
  =
  A_{g,p}
  +
  v_p\left(\frac{m}{d_{g,p}}\right).
\]
Hence
\[
  v_p\left(\frac{m}{d_{g,p}}\right)=v_p(m)
\]
(since \(d_{g,p}\mid p-1\), so \(p\nmid d_{g,p}\)).
The same calculation at the exponent \(p-1\) shows that
\[
  A_{g,p}
  =
  v_p(g^{p-1}-1)
\]
so that \(A_{g,p}\geq2\) precisely when \(p\in\mathcal{W}_g\).
Summing over the primes dividing \(g^m-1\) gives
\[
\begin{aligned}
  \Delta^{(g)}_m
  &=
  \sum_{p\mid g^m-1}
  (A_{g,p}-1)\log p
  +
  \sum_{p\mid g^m-1}v_p(m)\log p
  \\
  &=
  \Omega^{(g)}_m+\log G^{(g)}_m.
\end{aligned}
\]
The description and the bounds for \(G^{(g)}_m\) follow exactly as in Theorem~\ref{thm:exact-defect-law}, the primes dividing \(g^m-1\) being coprime to \(g\) and therefore odd.
\end{proof}

\begin{remark}[The law is free of the exceptional cases]\label{rem:even-base-no-exceptions}
The exact decomposition \(\Delta^{(g)}_m=\Omega^{(g)}_m+\log G^{(g)}_m\) is independent of the theory of primitive divisors.
Its proof uses the lifting identity, the divisibility \(d_{g,p}\mid p-1\), and Definition~\ref{def:even-base-data}, all of which hold at every prime \(p\nmid g\).
The exceptional cases of Bang, Birkhoff, and Vandiver are used only in the lower bounds for \(\rad(2^m-1)\), namely in Proposition~\ref{prop:primitive-ceiling} and Lemma~\ref{lem:mersenne-radical-superlinear}.
For \(g^d-1\) the theorem fails only when \(d=1\) with \(g-1=1\), when \(d=2\) with \(g+1\) a power of two, and at the single pair \((g,d)=(2,6)\)~\cite{Bang1886,BirkhoffVandiver1904}.
An even base leaves \(g+1\) odd, so the second case would require \(g+1=1\) and never arises, while the first requires \(g=2\).
The excluded set \(\{1,6\}\) of Proposition~\ref{prop:primitive-ceiling} therefore belongs to the base \(2\) alone, and for every even \(g\geq4\) the product may be taken over all divisors of \(m\) without exception.
\end{remark}

\begin{definition}[Even-base order--defect density]\label{def:even-base-density}
Define
\[
  \rho_{\mathcal{W},g}
  \coloneqq
  \limsup_{m\to\infty}
  \frac{\Omega^{(g)}_m}{m\log g}.
\]
For a finite set \(S\subseteq\mathcal{W}_g\), put
\[
  D_g(S)
  \coloneqq
  \operatorname{lcm}_{p\in S}d_{g,p}
  \qquad\text{and}\qquad
  B_g(S)
  \coloneqq
  \prod_{p\in S}p^{A_{g,p}-1}
\]
with \(D_g(\varnothing)=B_g(\varnothing)=1\), and define
\[
  \sigma_{\mathcal{W},g}
  \coloneqq
  \sup_{\substack{(S_j)\\D_g(S_j)\to\infty}}
  \limsup_{j\to\infty}
  \frac{\log B_g(S_j)}
       {D_g(S_j)\log g}.
\]
If no such sequence exists, then the supremum is taken to be zero.
\end{definition}

\begin{theorem}[Even-base density criterion]\label{thm:even-base-density-criterion}
For every fixed even \(g\geq2\),
\[
  \rho_{\mathcal{W},g}
  =
  \sigma_{\mathcal{W},g}.
\]
For every \(\varepsilon>0\) the four assertions below hold.
\begin{enumerate}[label=\textup{(\arabic*)}]
\item If \(\theta_{\varepsilon}<\rho_{\mathcal{W},g}\), then
\[
  E_{\varepsilon}(\mathcal{V}^{(g)}_m)\geq0
\]
for infinitely many \(m\).

\item If \(\theta_{\varepsilon}>\rho_{\mathcal{W},g}\), then the same inequality holds for only finitely many \(m\).

\item Some fixed positive exponent is attained infinitely often on the base-\(g\) line if and only if \(\rho_{\mathcal{W},g}>0\).

\item Equivalently, this occurs if and only if there are a constant \(\kappa>0\) and finite sets \(S_j\subseteq\mathcal{W}_g\) such that
\[
  D_g(S_j)\longrightarrow\infty
  \qquad\text{and}\qquad
  B_g(S_j)\geq g^{\kappa D_g(S_j)}
\]
for infinitely many \(j\).
\end{enumerate}
At \(\theta_{\varepsilon}=\rho_{\mathcal{W},g}\) the density alone yields no conclusion.
\end{theorem}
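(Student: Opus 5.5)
The plan is to transcribe the proofs of Theorems~\ref{thm:wieferich-density-criterion} and~\ref{thm:order-defect-density} to base \(g\). Theorem~\ref{thm:even-base-exact-law} replaces Proposition~\ref{prop:mersenne-exact} and Corollary~\ref{cor:defect-bounds}. The threshold now reads
\[
  \Delta^{(g)}_m\geq\theta_{\varepsilon}m\log g+\log\bigl(R_g(1-g^{-m})\bigr),
\]
and the two-sided bound \(0\leq\Delta^{(g)}_m-\Omega^{(g)}_m\leq\log m\) is available. Since \(R_g\) is fixed, the boundary term \(\log(R_g(1-g^{-m}))\) is bounded, so it plays exactly the role that \(\log(2(1-2^{-m}))\) played before. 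We also have \(\Delta^{(g)}_m<m\log g\), so \(0\leq\rho_{\mathcal{W},g}\leq1\).

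For assertion (1), pick \(\eta>0\) with \(\theta_\varepsilon+\eta<\rho_{\mathcal{W},g}\). Infinitely many \(m\) have \(\Omega^{(g)}_m\geq(\theta_\varepsilon+\eta)m\log g\). For large such \(m\), the slack \(\eta m\log g\) exceeds the bounded boundary term, and \(\Delta^{(g)}_m\geq\Omega^{(g)}_m\) gives transgression.

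For assertion (2), eventually \(\Omega^{(g)}_m\leq(\rho_{\mathcal{W},g}+\eta)m\log g\). Hence \(\Delta^{(g)}_m\leq(\rho_{\mathcal{W},g}+\eta)m\log g+\log m\), and the linear gap to \(\theta_\varepsilon m\log g\) beats \(\log m\) plus the bounded boundary term.

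Assertion (3) follows from (1) with \(\theta\in(0,\rho_{\mathcal{W},g})\) and \(\varepsilon=\theta/(1-\theta)\). For the converse, infinitely many transgressions force \(\Omega^{(g)}_m\geq\theta_\varepsilon m\log g-\log m-O(1)\), whence \(\rho_{\mathcal{W},g}\geq\theta_\varepsilon>0\).

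For \(\rho_{\mathcal{W},g}=\sigma_{\mathcal{W},g}\), first take a finite \(S\subseteq\mathcal{W}_g\) with \(D=D_g(S)\). Each \(p\in S\) has \(d_{g,p}\mid D\), and the lifting identity gives \(v_p(g^D-1)\geq A_{g,p}\). Thus \(\mathcal{W}_g(D)\supseteq S\) and \(\Omega^{(g)}_D\geq\log B_g(S)\), which yields \(\rho\geq\sigma\) along admissible sequences. For the reverse inequality when \(\rho_{\mathcal{W},g}>0\), take \(m_j\) realizing the limsup and set \(S_j=\mathcal{W}_g(m_j)\). Then \(\log B_g(S_j)=\Omega^{(g)}_{m_j}\), and \(D_g(S_j)\mid m_j\) because every \(d_{g,p}\) divides \(m_j\).

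The main point to check is that \(D_g(S_j)\to\infty\). If it failed along a subsequence, all \(p\in S_j\) would divide \(g^d-1\) for \(d\) in a bounded set. Then \(B_g(S_j)\) would be bounded, contradicting \(\Omega^{(g)}_{m_j}\gg m_j\). Given this, \(\log B_g(S_j)/(D_g(S_j)\log g)\geq\Omega^{(g)}_{m_j}/(m_j\log g)\), so \(\sigma\geq\rho\). The case \(\rho=0\) is immediate from \(\rho\geq\sigma\geq0\).

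Assertion (4) then follows from (3) and \(\rho=\sigma\). Alternatively, it can be seen directly: if \(\theta_\varepsilon<\kappa\), use the exponents \(D_g(S_j)\) together with \(\Delta^{(g)}_{D}\geq\log B_g(S_j)\), discarding repeated indices. The boundary case gives no conclusion, as stated.
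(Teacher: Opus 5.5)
Your proposal is correct and follows the paper's proof essentially step for step. You obtain statements (1)--(3) by transcribing the Mersenne density argument with the bounded boundary term $\log(R_g(1-g^{-m}))$, and the inequality $\rho_{\mathcal{W},g}\geq\sigma_{\mathcal{W},g}$ from $\Omega^{(g)}_{D_g(S)}\geq\log B_g(S)$. The reverse inequality comes via $S_j=\mathcal{W}_g(m_j)$ with $D_g(S_j)\mid m_j$, together with the boundedness argument forcing $D_g(S_j)\to\infty$, and statement (4) follows from the positivity of the common density. The only difference is that you write out (1)--(3) explicitly, whereas the paper cites the arguments of Theorem~\ref{thm:wieferich-density-criterion}.
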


\begin{proof}
Theorem~\ref{thm:even-base-exact-law} gives
\[
  0
  \leq
  \Delta^{(g)}_m-\Omega^{(g)}_m
  \leq
  \log m
\]
while the exact transgression threshold differs from \(\theta_{\varepsilon}m\log g\) by a bounded term.
The arguments of Theorem~\ref{thm:wieferich-density-criterion} therefore prove statements \textup{(1)}--\textup{(3)}.

Let \(S\subseteq\mathcal{W}_g\) be finite and put \(D=D_g(S)\).
Every \(p\in S\) divides \(g^D-1\) with valuation at least \(A_{g,p}\), so that
\[
  \Omega^{(g)}_D
  \geq
  \log B_g(S).
\]
Passage to sequences with \(D_g(S_j)\to\infty\) gives
\[
  \rho_{\mathcal{W},g}
  \geq
  \sigma_{\mathcal{W},g}.
\]

Conversely, suppose that \(\rho_{\mathcal{W},g}>0\) and choose \(m_j\to\infty\) with
\[
  \frac{\Omega^{(g)}_{m_j}}{m_j\log g}
  \longrightarrow
  \rho_{\mathcal{W},g}.
\]
Put
\[
  S_j=\mathcal{W}_g(m_j)
  \qquad\text{and}\qquad
  D_j=D_g(S_j).
\]
Then \(D_j\mid m_j\) and
\[
  \log B_g(S_j)=\Omega^{(g)}_{m_j}.
\]
Moreover, \(D_j\to\infty\).
Suppose otherwise.
Then the primes of all the sets \(S_j\) would divide one of finitely many fixed integers \(g^d-1\), and the quantities \(\log B_g(S_j)\) would remain bounded, against the positivity of the limiting density.
Therefore
\[
  \frac{\log B_g(S_j)}{D_j\log g}
  \geq
  \frac{\Omega^{(g)}_{m_j}}{m_j\log g}
\]
which gives the reverse inequality.
If \(\rho_{\mathcal{W},g}=0\), then the first inequality forces \(\sigma_{\mathcal{W},g}=0\) on its own.
Statement \textup{(4)} follows from the positivity of this common density.
\end{proof}

At \(g=2\) one has \(R_2=2\).
Theorem~\ref{thm:even-base-exact-law} then returns Proposition~\ref{prop:mersenne-exact} together with Theorem~\ref{thm:exact-defect-law}, and Theorem~\ref{thm:even-base-density-criterion} returns Theorems~\ref{thm:wieferich-density-criterion} and~\ref{thm:order-defect-density}.
Passage to a general even base adds to the threshold only the supplement \(\log(R_g/2)\), a quantity independent of \(m\), and the criterion for a fixed positive exponent is therefore insensitive to the base.
The arithmetic of \(\mathcal{W}_g\) changes with \(g\). As little is known about it as about \(\mathcal{W}\).

\subsection{A complete classification of the crossings of quality one}\label{subsec:mersenne-quality-classification}

For quality greater than one the exact law yields a description far simpler than anything transgression at a fixed exponent would require.

\begin{theorem}[Squarefreeness criterion on the Mersenne line]\label{thm:mersenne-squarefree-classification}
For every \(m\geq2\) the five conditions below are equivalent.
\begin{enumerate}[label=\textup{(\arabic*)}]
\item \(q(\mathcal{V}_m)>1\).
\item \(2^m-1\) is not squarefree.
\item \(L_m>1\).
\item \(\mathcal{W}(m)\neq\varnothing\) or \(G_m>1\).
\item There is an odd prime \(p\) such that
\[
  d_p\mid m
  \qquad\text{and}\qquad
  \bigl(p\mid m\ \text{or}\ p\in\mathcal{W}\bigr).
\]
\end{enumerate}
In particular, \(q(\mathcal{V}_m)\neq1\) for every \(m\geq2\).
If
\[
  \mathscr{H}
  \coloneqq
  \{m\geq2:q(\mathcal{V}_m)>1\}
\]
then
\[
  \mathscr{H}
  =
  \left(
    \bigcup_{\substack{p\ {\rm prime}\\p\ {\rm odd}}}
    p\,d_p\,\mathbb{N}
  \right)
  \cup
  \left(
    \bigcup_{p\in\mathcal{W}}
    d_p\,\mathbb{N}
  \right).
\]
\end{theorem}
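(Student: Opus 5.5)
The equivalences form a chain, and I will prove them in order. For (1)\(\Leftrightarrow\)(3) I use Proposition~\ref{prop:mersenne-exact}: \(q(\mathcal{V}_m)>1\) holds precisely when \(L_m>2(1-2^{-m})\).

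The right side lies strictly between \(1\) and \(2\) for \(m\geq2\). The quantity \(L_m\) is a positive integer, since \(\rad(2^m-1)\) divides \(2^m-1\). So \(L_m>2(1-2^{-m})\) is equivalent to \(L_m\geq2\), that is, to \(L_m>1\). The same observation gives \(q(\mathcal{V}_m)\neq1\). Equality would require \(L_m=2(1-2^{-m})\), and the right side is not an integer.

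Next, (2)\(\Leftrightarrow\)(3) is immediate: \(L_m=1\) exactly when \(2^m-1=\rad(2^m-1)\).

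For (3)\(\Leftrightarrow\)(4) I invoke Theorem~\ref{thm:exact-defect-law}, which gives \(\Delta_m=\Omega_m+\log G_m\).
\begin{itemize}
\item Both summands are non-negative. The terms \((A_p-1)\log p\) with \(p\in\mathcal{W}(m)\) are strictly positive by Lemma~\ref{lem:wieferich-lifting}, and \(G_m\geq1\).
\item Hence \(\Delta_m>0\) if and only if \(\Omega_m>0\) or \(G_m>1\).
\item Moreover \(\Omega_m>0\) if and only if \(\mathcal{W}(m)\neq\varnothing\).
\end{itemize}

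For (4)\(\Leftrightarrow\)(5) I translate each clause using Lemma~\ref{lem:mersenne-lifting}, which says \(p\mid2^m-1\) if and only if \(d_p\mid m\).
\begin{itemize}
\item \(\mathcal{W}(m)\neq\varnothing\) means there is a \(p\in\mathcal{W}\) with \(d_p\mid m\).
\item \(G_m>1\) means some prime \(p\mid2^m-1\) has \(v_p(m)\geq1\), i.e.\ some odd \(p\) has \(d_p\mid m\) and \(p\mid m\).
\end{itemize}

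Finally, the description of \(\mathscr{H}\) comes from reading condition (5) set-theoretically.
\begin{itemize}
\item The condition \(d_p\mid m\) together with \(p\mid m\) is equivalent to \(\operatorname{lcm}(p,d_p)\mid m\).
\item Since \(d_p\mid p-1\), the integers \(p\) and \(d_p\) are coprime, so \(\operatorname{lcm}(p,d_p)=p\,d_p\). This gives membership in \(p\,d_p\,\mathbb{N}\).
\item The Wieferich clause gives membership in \(d_p\,\mathbb{N}\) for \(p\in\mathcal{W}\).
\end{itemize}
I restrict to \(m\geq2\) throughout. This loses nothing: \(m=1\) is never a multiple of \(p\,d_p\) with \(p\) odd, nor of \(d_p\geq2\), since \(2\not\equiv1\pmod p\).

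The only delicate step is the integrality argument in (1)\(\Leftrightarrow\)(3), which converts the threshold \(2(1-2^{-m})\) into the condition \(L_m\geq2\). It also yields the strict form \(q(\mathcal{V}_m)\neq1\). Everything else is bookkeeping from the exact defect law.
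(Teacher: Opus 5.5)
Your proof is correct and follows essentially the paper's route: integrality of $L_m$ against the threshold $2(1-2^{-m})\in(1,2)$ from Proposition~\ref{prop:mersenne-exact}, then the exact defect law, then the lifting identity together with $p\nmid d_p$ to obtain the progressions $p\,d_p\,\mathbb{N}$. The differences are cosmetic. You use the defect law additively as $\Delta_m=\Omega_m+\log G_m$, where the paper uses $L_m=B(\mathcal{W}(m))\,G_m$. You reach (5) from (4) by unpacking the definitions of $\mathcal{W}(m)$ and $G_m$, where the paper uses the valuation formula $v_p(2^m-1)=A_p+v_p(m)$. You also make explicit the non-integrality argument for $q(\mathcal{V}_m)\neq1$ and the exclusion of $m=1$, which the paper leaves implicit.
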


\begin{proof}
The number \(L_m\) is a positive odd integer, since it divides \(2^m-1\).
Proposition~\ref{prop:mersenne-exact} at quality one gives
\[
  q(\mathcal{V}_m)>1
  \quad\Longleftrightarrow\quad
  L_m>2(1-2^{-m}).
\]
For \(m\geq2\) one has
\[
  1<2(1-2^{-m})<2.
\]
The last inequality therefore holds precisely when \(L_m\geq3\), which is equivalent to \(L_m>1\) and to the failure of squarefreeness of \(2^m-1\).

Theorem~\ref{thm:exact-defect-law} gives
\[
  L_m
  =
  B(\mathcal{W}(m))G_m.
\]
Both factors are positive odd integers, so that \(L_m>1\) precisely when \(\mathcal{W}(m)\) is non-empty or \(G_m>1\).

Finally, if \(p\mid2^m-1\), then \(d_p\mid m\) and
\[
  v_p(2^m-1)
  =
  A_p+v_p(m).
\]
This valuation is at least two precisely when \(A_p\geq2\) or \(p\mid m\).
By Lemma~\ref{lem:wieferich-lifting}, the first alternative reads \(p\in\mathcal{W}\).
Since \(p\nmid d_p\), the simultaneous conditions \(p\mid m\) and \(d_p\mid m\) amount to \(pd_p\mid m\).
This proves the final description.
\end{proof}

\begin{corollary}[A positive-density supply of crossings]\label{cor:mersenne-hit-density}
One has
\[
  \liminf_{X\to\infty}
  \frac{
    \#\{m\leq X:q(\mathcal{V}_m)>1\}
  }{X}
  \geq
  \frac{47}{210}.
\]
Moreover, every \(m\in\mathscr{H}\) satisfies
\[
  q(\mathcal{V}_m)-1
  >
  \frac{\log(3/2)}{(m+1)\log2}.
\]
\end{corollary}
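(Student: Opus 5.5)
The plan has two parts: a lower density bound from explicit subsets of $\mathscr{H}$, and a uniform margin from the integrality of $L_m$.

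For the density, I would use the description of $\mathscr{H}$ in Theorem~\ref{thm:mersenne-squarefree-classification}. It contains $p\,d_p\,\mathbb{N}$ for every odd prime $p$, so any finite union of such progressions gives a lower bound for the lower density. The natural choices are the smallest odd primes:
\begin{itemize}
\item $p=3$ with $d_3=2$, giving $6\mathbb{N}$;
\item $p=5$ with $d_5=4$, giving $20\mathbb{N}$;
\item $p=7$ with $d_7=3$, giving $21\mathbb{N}$.
\end{itemize}
The set $6\mathbb{N}\cup20\mathbb{N}\cup21\mathbb{N}$ is periodic modulo $420$, so its density exists. The pairwise least common multiples are $60$, $42$ and $420$, and the triple one is $420$. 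Inclusion--exclusion then gives the density
\[
  \tfrac16+\tfrac1{20}+\tfrac1{21}-\tfrac1{60}-\tfrac1{42}-\tfrac1{420}+\tfrac1{420}
  =\tfrac{94}{420}=\tfrac{47}{210}.
\]
Since this union lies in $\mathscr{H}$, the $\liminf$ bound follows at once. The main point to check here is the arithmetic. The Wieferich progressions and larger primes are simply discarded, because only a lower bound is needed.

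For the margin, I would take $m\in\mathscr{H}$. By Theorem~\ref{thm:mersenne-squarefree-classification}, $L_m>1$, and $L_m$ is an odd integer, so $L_m\geq3$. For $\mathcal{V}_m$ the radical is $\rad(abc)=2\rad(2^m-1)=2(2^m-1)/L_m$. Hence
\[
  \frac{c}{\rad(abc)}=\frac{2^mL_m}{2(2^m-1)}>\frac{L_m}{2}\geq\frac32 .
\]
It then suffices to bound the denominator of $q(\mathcal{V}_m)-1=\log(c/\rad(abc))/\log\rad(abc)$. We have $\rad(abc)\leq2(2^m-1)<2^{m+1}$, so $\log\rad(abc)<(m+1)\log2$. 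The numerator is positive and exceeds $\log(3/2)$, which gives the strict inequality
\[
  q(\mathcal{V}_m)-1>\frac{\log(3/2)}{(m+1)\log 2}.
\]
No step is expected to be a real obstacle; the only points that need care are the inclusion--exclusion arithmetic and the strictness of the final inequality.
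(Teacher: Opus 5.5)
Your proposal is correct and follows essentially the same route as the paper: it takes the three progressions $6\mathbb{N}$, $20\mathbb{N}$, $21\mathbb{N}$ from the squarefreeness classification and combines them by inclusion--exclusion to get $47/210$. For the margin, it uses $L_m\geq 3$ with numerator $\log(c/\rad(abc))>\log(3/2)$ and denominator $\log\rad(abc)<(m+1)\log 2$, which is the paper's quality identity written out directly.
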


\begin{proof}
The orders
\[
  d_3=2
  \qquad
  d_5=4
  \qquad
  d_7=3
\]
together with Theorem~\ref{thm:mersenne-squarefree-classification} show that \(\mathscr{H}\) contains every multiple of \(6\), of \(20\), and of \(21\).
Their union has density
\[
\begin{aligned}
  {}&
  \frac16+\frac1{20}+\frac1{21}
  -\frac1{60}-\frac1{42}-\frac1{420}
  +\frac1{420}
  \\
  &=
  \frac{47}{210}.
\end{aligned}
\]

If \(m\in\mathscr{H}\), then \(L_m\geq3\).
The exact quality identity is
\[
  q(\mathcal{V}_m)-1
  =
  \frac{
    \log L_m-\log\bigl(2(1-2^{-m})\bigr)
  }{
    \log\rad\bigl(2^m(2^m-1)\bigr)
  }.
\]
Its numerator exceeds \(\log(3/2)\), while its denominator is smaller than \((m+1)\log2\).
The claimed estimate follows.
\end{proof}

\begin{remark}[Margins available without Wieferich primes]\label{rem:repetition-sources}
Of the two mechanisms separated by Theorem~\ref{thm:mersenne-squarefree-classification}, the divisibility \(p\mid m\) is the one that operates in the absence of Wieferich primes, and every elementary instance of \(q(\mathcal{V}_m)>1\) belongs to it, \(3^2\mid2^6-1\) among them.
It yields margins of order \(1/m\) across a set of positive lower density by Corollary~\ref{cor:mersenne-hit-density}, and of order \(\log m/m\) upon the least common multiples by Theorem~\ref{thm:lcm-margin}.
A uniform margin \(\varepsilon>0\) holding for infinitely many \(m\) lies beyond it.
\end{remark}

\subsection{Prime-power exponents and the prime subline}\label{subsec:prime-power-exponents}

The exact law of Theorem~\ref{thm:exact-defect-law} splits the Mersenne line according to the arithmetic of the exponent.
On the prime powers the divisor \(G_m\) is trivial, and the defect is purely Wieferich there.

\begin{lemma}[Orders over a prime-power exponent]\label{lem:prime-power-orders}
Let \(\ell\) be an odd prime and let \(f\geq1\).
Then \(\ell\) does not divide \(2^{\ell^f}-1\).
Moreover, every prime \(p\) dividing \(2^{\ell^f}-1\) satisfies
\[
  d_p=\ell^i
  \quad\text{for some}\ i\in\{1,\dots,f\}
  \qquad\text{and}\qquad
  p\equiv1\pmod{2\ell^i}
\]
and in particular \(p\geq2\ell+1\).
In particular \(G_{\ell^f}=1\).
\end{lemma}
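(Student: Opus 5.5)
The plan rests on one observation: every prime dividing \(2^{\ell^f}-1\) has an order dividing \(\ell^f\). The standard facts that \(d_p\mid p-1\) and that \(2\) is a quadratic residue modulo \(p\) then do the rest.

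\emph{Step 1: \(\ell\nmid 2^{\ell^f}-1\).} By Fermat, \(2^{\ell}\equiv 2\pmod{\ell}\). Induction on \(f\) gives \(2^{\ell^f}\equiv 2\pmod \ell\). Hence \(2^{\ell^f}-1\equiv 1\pmod\ell\), and \(\ell\) does not divide it.

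\emph{Step 2: the shape of the orders.} Let \(p\mid 2^{\ell^f}-1\). Then \(p\) is odd, and Lemma~\ref{lem:mersenne-lifting} gives \(d_p\mid \ell^f\), so \(d_p=\ell^i\) for some \(0\le i\le f\). The case \(i=0\) would mean \(p\mid 2-1=1\), which is impossible, so \(1\le i\le f\). Since \(d_p\mid p-1\), we get \(\ell^i\mid p-1\). The integer \(p-1\) is even and \(\ell\) is odd, so \(2\ell^i\mid p-1\), that is, \(p\equiv1\pmod{2\ell^i}\). In particular \(p\ge 2\ell^i+1\ge2\ell+1\).

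\emph{Step 3: \(G_{\ell^f}=1\).} By Theorem~\ref{thm:exact-defect-law}, \(G_{\ell^f}\) is the largest divisor of \(\ell^f\) whose prime factors all divide \(2^{\ell^f}-1\). The only prime factor available is \(\ell\), and Step~1 excludes it. Hence \(G_{\ell^f}=1\). Equivalently, \(G_{\ell^f}=\prod_{p\mid 2^{\ell^f}-1}p^{v_p(\ell^f)}\) is an empty product, since every such \(p\) satisfies \(p\ge2\ell+1>\ell\) and so \(v_p(\ell^f)=0\).

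None of these steps is a real obstacle. The only point needing care is the passage from \(\ell^i\mid p-1\) to \(2\ell^i\mid p-1\), which uses the parity of \(p-1\) together with the oddness of \(\ell\). A sharper congruence modulo \(8\ell^i\), coming from quadratic reciprocity, is not needed.
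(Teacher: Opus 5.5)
Your proof is correct and follows the paper's argument step for step. Fermat plus induction gives \(\ell\nmid 2^{\ell^f}-1\); then \(d_p\mid\ell^f\) with \(d_p>1\), together with \(d_p\mid p-1\) and the parity of \(p-1\), gives \(2\ell^i\mid p-1\); and \(G_{\ell^f}=1\) because \(\ell\) is the only possible prime factor and it is excluded. Your opening appeal to \(2\) being a quadratic residue modulo \(p\) is true but never used, so you can drop it.
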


\begin{proof}
Fermat's little theorem gives \(2^{\ell}\equiv2\pmod{\ell}\), and induction on \(f\) gives \(2^{\ell^f}\equiv2\pmod{\ell}\).
Since \(\ell\) is odd, \(2\not\equiv1\pmod{\ell}\), so \(\ell\) does not divide \(2^{\ell^f}-1\).

Let \(p\) divide \(2^{\ell^f}-1\).
Lemma~\ref{lem:mersenne-lifting} gives \(d_p\mid\ell^f\), and \(d_p>1\) since \(2^1-1=1\), whence \(d_p=\ell^i\) with \(1\leq i\leq f\).
The order \(d_p\) divides \(p-1\), and \(p-1\) is even, so that \(2\ell^i\) divides \(p-1\) and \(p\geq2\ell^i+1\geq2\ell+1\).

No prime factor of \(\ell^f\) divides \(2^{\ell^f}-1\), so the divisor \(G_{\ell^f}\) of Theorem~\ref{thm:exact-defect-law} equals \(1\).
\end{proof}

\begin{theorem}[Exact decomposition on prime-power exponents]\label{thm:prime-power-exact}
Let \(m=\ell^f\) with \(\ell\) an odd prime and \(f\geq1\).
Then
\[
  \Delta_m=\Omega_m
  \qquad\text{and}\qquad
  L_m
  =
  \prod_{p\in\mathcal{W}(m)}p^{A_p-1}
  =
  B(\mathcal{W}(m)).
\]
Thus
\[
  E_{\varepsilon}(\mathcal{V}_m)\geq0
  \quad\Longleftrightarrow\quad
  B(\mathcal{W}(m))
  \geq
  \mathcal{A}_{\varepsilon}(m).
\]
\end{theorem}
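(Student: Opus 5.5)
The plan is to combine the exact defect law of Theorem~\ref{thm:exact-defect-law} with Lemma~\ref{lem:prime-power-orders}, and then to read the transgression criterion off Definition~\ref{def:mersenne-threshold}.

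First, take \(m=\ell^f\) with \(\ell\) an odd prime and \(f\geq1\). Lemma~\ref{lem:prime-power-orders} shows that \(\ell\nmid2^m-1\). Since \(\ell\) is the only prime dividing \(m\), no prime factor of \(m\) divides \(2^m-1\), and therefore \(G_m=1\). The lemma records this directly.

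Substituting \(G_m=1\) into \(\Delta_m=\Omega_m+\log G_m\) gives \(\Delta_m=\Omega_m\). Exponentiating yields
\[
  L_m=\exp(\Omega_m)=\prod_{p\in\mathcal{W}(m)}p^{A_p-1}.
\]
By Definition~\ref{def:order-defect-data}, this product is \(B(\mathcal{W}(m))\). When \(\mathcal{W}(m)=\varnothing\), both sides equal \(1\), by the empty-sum convention for \(\Omega_m\) and the convention \(B(\varnothing)=1\).

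Finally, Definition~\ref{def:mersenne-threshold}, which restates Proposition~\ref{prop:mersenne-exact}, gives
\[
  E_{\varepsilon}(\mathcal{V}_m)\geq0 \iff L_m\geq\mathcal{A}_{\varepsilon}(m).
\]
Replacing \(L_m\) by \(B(\mathcal{W}(m))\) gives the stated equivalence.

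No step here is a genuine obstacle. The only point that needs care is confirming that \(G_m\) is trivial. That comes down to \(\ell\nmid2^{\ell^f}-1\), which follows from \(2^{\ell^f}\equiv2\pmod\ell\) via Fermat's little theorem, as shown in the lemma. The restriction \(m\geq2\) in the earlier results holds automatically, since \(m\geq3\).
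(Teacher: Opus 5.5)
Your proposal is correct and follows essentially the same route as the paper's own proof. You insert $G_{\ell^f}=1$ from Lemma~\ref{lem:prime-power-orders} into the exact defect law of Theorem~\ref{thm:exact-defect-law}, and then convert the transgression condition through Proposition~\ref{prop:mersenne-exact} in the form of Definition~\ref{def:mersenne-threshold}.
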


\begin{proof}
Theorem~\ref{thm:exact-defect-law} together with the identity \(G_{\ell^f}=1\) of Lemma~\ref{lem:prime-power-orders} gives the equalities, and Proposition~\ref{prop:mersenne-exact} converts the transgression condition into the stated comparison.
\end{proof}

\begin{corollary}[Quality on the prime-power subline]\label{cor:prime-power-quality}
Let \(m=\ell^f\geq3\) with \(\ell\) an odd prime.
Then
\[
  q(\mathcal{V}_m)>1
  \quad\Longleftrightarrow\quad
  \mathcal{W}(m)\neq\varnothing
\]
and \(q(\mathcal{V}_m)<1\) whenever \(\mathcal{W}(m)=\varnothing\).
\end{corollary}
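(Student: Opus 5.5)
The plan is to combine three earlier results: Theorem~\ref{thm:mersenne-squarefree-classification}, Lemma~\ref{lem:prime-power-orders}, and Theorem~\ref{thm:prime-power-exact}. Write \(m=\ell^f\) with \(\ell\) an odd prime and \(f\geq1\); the hypothesis \(m\geq3\) is then automatic and simply places \(m\) in the range \(m\geq2\) where the earlier results apply.

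First, the equivalence. By Theorem~\ref{thm:mersenne-squarefree-classification}, \(q(\mathcal{V}_m)>1\) holds exactly when \(\mathcal{W}(m)\neq\varnothing\) or \(G_m>1\). Lemma~\ref{lem:prime-power-orders} gives \(G_{\ell^f}=1\), because \(\ell\) does not divide \(2^{\ell^f}-1\). So the second alternative drops out, and \(q(\mathcal{V}_m)>1\) holds exactly when \(\mathcal{W}(m)\neq\varnothing\).

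Second, the strict inequality \(q(\mathcal{V}_m)<1\) when \(\mathcal{W}(m)=\varnothing\). Theorem~\ref{thm:mersenne-squarefree-classification} already shows \(q(\mathcal{V}_m)\neq1\) for every \(m\geq2\), so \(q(\mathcal{V}_m)\leq1\) upgrades to \(q(\mathcal{V}_m)<1\). One can also see this directly. Theorem~\ref{thm:prime-power-exact} gives \(L_m=B(\varnothing)=1\), that is, \(\Delta_m=0\), and the identity \(\log c-\log\rad(abc)=\delta_{\tot}-\log(ab)\) from Proposition~\ref{prop:total-energy} becomes
\[
  (m-1)\log2-\log(2^m-1)=\log\frac{2^{m-1}}{2^m-1}<0 .
\]
The denominator \(\log\rad(abc)\) is positive, so the quality is strictly below one.

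There is no real obstacle. The only point needing care is that every prime of \(G_m\) must divide both \(m=\ell^f\) and \(2^m-1\). The only candidate is \(\ell\), and Lemma~\ref{lem:prime-power-orders} excludes it via Fermat's little theorem, \(2^{\ell^f}\equiv2\pmod{\ell}\).
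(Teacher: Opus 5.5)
Your proof is correct and is essentially the paper's: the route through Theorem~\ref{thm:mersenne-squarefree-classification} with $G_{\ell^f}=1$ is exactly the observation the paper records in the last line of its proof. The paper's main line is the direct computation you give for the strict inequality, namely $L_m=B(\mathcal{W}(m))$ from Theorem~\ref{thm:prime-power-exact} compared against $2(1-2^{-m})$ in Proposition~\ref{prop:mersenne-exact}. In the nonempty case it uses $L_m\geq p\geq 7$ from Lemma~\ref{lem:prime-power-orders}.
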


\begin{proof}
Suppose that \(\mathcal{W}(m)\) contains a prime \(p\).
Lemma~\ref{lem:prime-power-orders} gives \(p\geq2\ell+1\geq7\), so that
\[
  L_m=B(\mathcal{W}(m))\geq p^{A_p-1}\geq p\geq7>2>2(1-2^{-m})
\]
and Proposition~\ref{prop:mersenne-exact} gives \(q(\mathcal{V}_m)>1\).
Suppose that \(\mathcal{W}(m)\) is empty.
Then \(L_m=1<2(1-2^{-m})\) for \(m\geq2\), and the same criterion gives \(q(\mathcal{V}_m)<1\).
Both implications are also the case \(G_m=1\) of Theorem~\ref{thm:mersenne-squarefree-classification}.
\end{proof}

\begin{remark}[Known Wieferich primes and prime-power exponents]\label{rem:prime-power-known}
The orders of the two known Wieferich primes are
\[
  d_{1093}=364=2^2\cdot7\cdot13
  \qquad\text{and}\qquad
  d_{3511}=1755=3^3\cdot5\cdot13.
\]
Neither of these is a power of an odd prime, so neither known Wieferich prime divides any number \(2^{\ell^f}-1\).
On presently known arithmetic, Corollary~\ref{cor:prime-power-quality} accordingly places every prime-power exponent below quality one.
Were a single Mersenne triple of prime-power exponent to have quality above one, it would exhibit a Wieferich prime, at present undiscovered, whose order is a power of an odd prime.
The composite exponents of Theorem~\ref{thm:mersenne-hits} behave differently, reaching quality above one through the lifting of the fixed prime \(3\) and without any Wieferich prime.
That route is unavailable on the prime powers, since \(G_m\) is trivial there and the second source of repetition fails with it.
\end{remark}

\begin{theorem}[The order datum on prime exponents]\label{thm:prime-subline}
Let \(\ell\) be an odd prime.
Then every prime \(p\) dividing \(2^{\ell}-1\) satisfies \(d_p=\ell\) and \(p\equiv1\pmod{2\ell}\), and
\[
  \Delta_{\ell}=\Omega_{\ell}
  \qquad\text{together with}\qquad
  D(\mathcal{W}(\ell))=\ell\ \text{whenever}\ \mathcal{W}(\ell)\neq\varnothing.
\]
Define
\[
  \rho_{\mathcal{W}}^{\mathrm{pr}}
  \coloneqq
  \limsup_{\ell\to\infty}
  \frac{\Omega_{\ell}}{\ell\log2}
\]
where \(\ell\) runs through the odd primes.
Then, for every fixed \(\varepsilon>0\), the prime-exponent family \((\mathcal{V}_{\ell})\) contains infinitely many triples transgressive at exponent \(\varepsilon\) precisely when infinitely many primes \(\ell\) satisfy
\[
  B(\mathcal{W}(\ell))
  \geq
  \mathcal{A}_{\varepsilon}(\ell)
\]
and such an \(\varepsilon>0\) exists precisely when \(\rho_{\mathcal{W}}^{\mathrm{pr}}>0\).
\end{theorem}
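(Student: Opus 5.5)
The argument specializes Lemma~\ref{lem:prime-power-orders} and Theorem~\ref{thm:prime-power-exact} to \(f=1\), then reruns the density argument of Theorem~\ref{thm:wieferich-density-criterion} restricted to prime indices.

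\emph{Local statements.} Let \(p\mid2^{\ell}-1\). By Lemma~\ref{lem:prime-power-orders} with \(f=1\), \(d_p=\ell^i\) with \(1\leq i\leq1\). Hence \(d_p=\ell\), and \(2\ell\mid p-1\) because \(d_p\mid p-1\) and \(p-1\) is even. Theorem~\ref{thm:prime-power-exact} with \(m=\ell\) gives \(\Delta_\ell=\Omega_\ell\) and \(L_\ell=B(\mathcal{W}(\ell))\). If \(\mathcal{W}(\ell)\neq\varnothing\), every \(p\in\mathcal{W}(\ell)\) divides \(2^\ell-1\) and so has \(d_p=\ell\). The lcm defining \(D(\mathcal{W}(\ell))\) in Definition~\ref{def:order-defect-data} is therefore \(\ell\).

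\emph{First equivalence.} Theorem~\ref{thm:prime-power-exact} also gives, for each prime \(\ell\), \(E_\varepsilon(\mathcal{V}_\ell)\geq0\iff B(\mathcal{W}(\ell))\geq\mathcal{A}_\varepsilon(\ell)\). Since distinct \(\ell\) give distinct triples, the prime-exponent family has infinitely many transgressive triples at exponent \(\varepsilon\) exactly when infinitely many primes satisfy this inequality.

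\emph{Existence of \(\varepsilon\).} By the first equivalence, transgression at \(\ell\) reads
\[
  \Omega_\ell\geq\theta_\varepsilon\ell\log2+\log\bigl(2(1-2^{-\ell})\bigr).
\]
\begin{enumerate}[label=\textup{(\roman*)}]
\item Suppose \(\rho_{\mathcal{W}}^{\mathrm{pr}}>0\). Pick \(0<\theta<\rho_{\mathcal{W}}^{\mathrm{pr}}\) and \(\eta>0\) with \(\theta+\eta<\rho_{\mathcal{W}}^{\mathrm{pr}}\), and set \(\varepsilon=\theta/(1-\theta)\), so that \(\theta_\varepsilon=\theta\). Infinitely many primes satisfy \(\Omega_\ell\geq(\theta+\eta)\ell\log2\). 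For large \(\ell\), \(\eta\ell\log2\) exceeds the bounded term \(\log\bigl(2(1-2^{-\ell})\bigr)<\log2\), so the displayed inequality holds infinitely often.
\item Conversely, suppose infinitely many prime \(\ell\) are transgressive at some \(\varepsilon>0\). Dividing the displayed inequality by \(\ell\log2\) and letting \(\ell\to\infty\) along them gives \(\rho_{\mathcal{W}}^{\mathrm{pr}}\geq\theta_\varepsilon>0\).
\end{enumerate}
This is simpler than the general case because the \(\log G_m\) term is identically zero on primes.

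There is no real obstacle here; the proof only has to apply Lemma~\ref{lem:prime-power-orders} at \(f=1\) correctly. The one point needing care is that the order datum equals \(\ell\) only when \(\mathcal{W}(\ell)\) is nonempty. Otherwise \(D(\varnothing)=1\) by convention, which is why the statement carries that hypothesis.
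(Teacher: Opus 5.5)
Your proposal is correct and matches the paper's proof essentially step for step. The local statements and the first equivalence come from specializing Lemma~\ref{lem:prime-power-orders} and Theorem~\ref{thm:prime-power-exact} to \(f=1\), and the existence of \(\varepsilon\) is the limsup argument of Theorem~\ref{thm:wieferich-density-criterion} rerun along prime exponents, where \(\Delta_\ell=\Omega_\ell\) removes the \(\log m\) error term (one small tacit point, left unstated in the paper as well: \(\theta<\rho_{\mathcal{W}}^{\mathrm{pr}}\leq1\) because \(\Omega_\ell\leq\Delta_\ell<\ell\log2\), so \(\varepsilon=\theta/(1-\theta)\) is a well-defined positive exponent).
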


\begin{proof}
Let \(p\) divide \(2^{\ell}-1\).
Lemma~\ref{lem:prime-power-orders} with \(f=1\) gives \(d_p=\ell\) and \(p\equiv1\pmod{2\ell}\), and the identity \(\Delta_{\ell}=\Omega_{\ell}\) is the case \(f=1\) of Theorem~\ref{thm:prime-power-exact}.
Every order over the exponent \(\ell\) equals \(\ell\), so \(D(\mathcal{W}(\ell))=\ell\) whenever the set is non-empty.
The first equivalence is that of Theorem~\ref{thm:prime-power-exact} read along prime exponents.

Suppose that \(\rho_{\mathcal{W}}^{\mathrm{pr}}>0\).
Choose \(\theta>0\) and \(\eta>0\) with \(\theta+\eta<\rho_{\mathcal{W}}^{\mathrm{pr}}\), and put \(\varepsilon=\theta/(1-\theta)\).
Infinitely many primes \(\ell\) satisfy \(\Omega_{\ell}\geq(\theta+\eta)\ell\log2\), and for all large such \(\ell\) one has \(\eta\ell\log2>\log(2(1-2^{-\ell}))\), whence \(\Delta_{\ell}=\Omega_{\ell}\) attains the threshold of Proposition~\ref{prop:mersenne-exact} at exponent \(\varepsilon\).
Conversely, transgression at a fixed exponent \(\varepsilon\) along infinitely many primes \(\ell\) gives
\[
  \Omega_{\ell}
  =
  \Delta_{\ell}
  \geq
  \theta_{\varepsilon}\ell\log2+\log\bigl(2(1-2^{-\ell})\bigr)
\]
whence \(\rho_{\mathcal{W}}^{\mathrm{pr}}\geq\theta_{\varepsilon}>0\).
\end{proof}

The condition of Theorem~\ref{thm:order-defect-density} takes a sharp form on the prime subline.
The admissible sets are the full sets \(\mathcal{W}(\ell)\), the order datum \(D\) collapses to the exponent itself, and the defect identity carries no error term.
The requirement on the Wieferich primes then reads as follows.
For one fixed \(\kappa>0\), infinitely many primes \(\ell\) must satisfy \(B(\mathcal{W}(\ell))\geq2^{\kappa\ell}\), every contributing prime being congruent to \(1\) modulo \(2\ell\).

\begin{proposition}[A certificate from one Wieferich prime of prime order]\label{prop:prime-order-certificate}
Suppose that \(p\) is a Wieferich prime whose order \(d_p\) is an odd prime \(\ell\).
Then
\[
  E_{\varepsilon}(\mathcal{V}_{\ell})\geq0
\]
for every \(\varepsilon>0\) satisfying
\[
  2(1-2^{-\ell})\,2^{\theta_{\varepsilon}\ell}
  \leq
  p^{A_p-1}
\]
and in particular for every \(\varepsilon\) with
\[
  \theta_{\varepsilon}
  \leq
  \frac{(A_p-1)\log_2p-1}{\ell}.
\]
Here \(p\equiv1\pmod{2\ell}\) and \(p^{A_p}\leq2^{\ell}-1\).
\end{proposition}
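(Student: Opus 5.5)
The plan is to reduce everything to the exact Mersenne criterion of Proposition~\ref{prop:mersenne-exact} at the prime exponent \(\ell\), using the lower bound \(\Delta_\ell\geq\Omega_\ell\) from Corollary~\ref{cor:defect-bounds}. Since \(d_p=\ell\), the first clause of Lemma~\ref{lem:mersenne-lifting} gives \(p\mid2^\ell-1\), so \(p\in\mathcal{W}(\ell)\). The sum defining \(\Omega_\ell\) has non-negative terms (each \(A_q\geq1\)), so \(\Omega_\ell\geq(A_p-1)\log p\), that is, \(L_\ell\geq B(\mathcal{W}(\ell))\geq p^{A_p-1}\). Alternatively one can cite Theorem~\ref{thm:prime-subline}, which gives \(L_\ell=B(\mathcal{W}(\ell))\) exactly on prime exponents. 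Under the hypothesis \(2(1-2^{-\ell})2^{\theta_\varepsilon\ell}\leq p^{A_p-1}\), we then have \(L_\ell\geq\mathcal{A}_\varepsilon(\ell)\), and Definition~\ref{def:mersenne-threshold} gives \(E_\varepsilon(\mathcal{V}_\ell)\geq0\).

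For the ``in particular'' clause, I take \(\log_2\) of the sufficient condition. Since \(2(1-2^{-\ell})<2\), it is enough to have \(1+\theta_\varepsilon\ell\leq(A_p-1)\log_2p\). This rearranges to \(\theta_\varepsilon\leq((A_p-1)\log_2p-1)/\ell\), which is exactly the stated bound; the strict inequality \(2(1-2^{-\ell})<2\) only gives room to spare.

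The closing assertions are routine. The congruence \(p\equiv1\pmod{2\ell}\) is the case \(f=1\) of Lemma~\ref{lem:prime-power-orders}, or directly: \(\ell=d_p\mid p-1\) and \(p-1\) is even while \(\ell\) is odd. For the bound, \(A_p=v_p(2^{d_p}-1)=v_p(2^\ell-1)\) by definition, so \(p^{A_p}\mid2^\ell-1\) and hence \(p^{A_p}\leq2^\ell-1\).

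No step is a genuine obstacle. The only point needing care is confirming that \(p\) actually belongs to \(\mathcal{W}(\ell)\), so that its factor \(p^{A_p-1}\) really appears in \(L_\ell\). This follows from \(d_p\mid\ell\) together with Lemma~\ref{lem:mersenne-lifting}, and the Wieferich hypothesis guarantees \(A_p\geq2\) through Lemma~\ref{lem:wieferich-lifting}. That last point matters only to see that the certificate is non-vacuous for some \(\varepsilon>0\), namely when \((A_p-1)\log_2p>1\), which holds automatically since \(p\geq2\ell+1\geq7\).
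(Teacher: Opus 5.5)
Your proof is correct and follows essentially the same route as the paper: obtain \(L_\ell\geq p^{A_p-1}\), apply Proposition~\ref{prop:mersenne-exact}, and use \(2(1-2^{-\ell})<2\) to get the explicit bound on \(\theta_\varepsilon\). The only difference is that the paper reads \(L_\ell\geq p^{A_p-1}\) directly off the divisibility \(p^{A_p}\mid2^{d_p}-1=2^\ell-1\), whereas you pass through \(\Omega_\ell\) and Corollary~\ref{cor:defect-bounds}, which is valid but slightly longer.
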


\begin{proof}
The hypothesis gives \(p^{A_p}\mid2^{d_p}-1=2^{\ell}-1\) with \(A_p\geq2\), whence \(L_{\ell}\geq p^{A_p-1}\) and \(p^{A_p}\leq2^{\ell}-1\), while Theorem~\ref{thm:prime-subline} gives \(p\equiv1\pmod{2\ell}\).
The first sufficiency is Proposition~\ref{prop:mersenne-exact}.
For the second, suppose that \(\theta_{\varepsilon}\ell\leq(A_p-1)\log_2p-1\).
Then
\[
  2(1-2^{-\ell})\,2^{\theta_{\varepsilon}\ell}
  <
  2^{1+\theta_{\varepsilon}\ell}
  \leq
  p^{A_p-1}
\]
and the first sufficiency applies.
\end{proof}

The orders \(364\) and \(1755\) of the two known Wieferich primes are composite, so that the proposition is for the present without an unconditional instance.
Its unconditional counterpart at the composite order \(364\) is Proposition~\ref{prop:wieferich-explicit-hit} below.

\subsection{Order-closed exponents and an extremal family}\label{subsec:order-closed}

Consider now the exponents whose odd prime divisors contain their own multiplicative orders modulo \(2\).

\begin{definition}[Order-closed exponent]\label{def:order-closed}
An integer \(m\geq2\) is called \emph{order-closed} if \(d_p\mid m\) for every odd prime \(p\mid m\).
\end{definition}

\begin{lemma}[Order closure and the divisor \(G_m\)]\label{lem:order-closed-G}
For every \(m\geq2\),
\[
  G_m
  \leq
  \frac{m}{2^{v_2(m)}}
\]
with equality precisely when \(m\) is order-closed.
In that case
\[
  \Delta_m
  =
  \Omega_m
  +
  \log\left(\frac{m}{2^{v_2(m)}}\right).
\]
\end{lemma}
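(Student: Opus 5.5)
The proof is a direct comparison of \(G_m\) with the odd part of \(m\), prime by prime. Write \(m'=m/2^{v_2(m)}\), so \(m'=\prod_{p\ \mathrm{odd}}p^{v_p(m)}\). By Theorem~\ref{thm:exact-defect-law}, \(G_m=\prod_{p\mid 2^m-1}p^{v_p(m)}\). Every prime dividing \(2^m-1\) is odd, so this product runs over a subset of the odd primes, with the same exponents as in \(m'\). Hence \(G_m\mid m'\), and the inequality \(G_m\leq m'\) follows at once; it was already recorded in the proof of Corollary~\ref{cor:defect-bounds}.

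For the equality case, the quotient \(m'/G_m\) is \(\prod p^{v_p(m)}\) over the odd primes \(p\mid m\) with \(p\nmid 2^m-1\). So \(G_m=m'\) holds exactly when every odd prime \(p\mid m\) divides \(2^m-1\). By Lemma~\ref{lem:mersenne-lifting}, for odd \(p\) we have \(p\mid 2^m-1\) if and only if \(d_p\mid m\). Therefore equality holds exactly when \(d_p\mid m\) for every odd prime \(p\mid m\), which is Definition~\ref{def:order-closed} of an order-closed exponent.

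The displayed identity then comes from substituting \(G_m=m/2^{v_2(m)}\) into \(\Delta_m=\Omega_m+\log G_m\) from Theorem~\ref{thm:exact-defect-law}.

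There is no real obstacle here. The only step needing care is the equality case: one must use that the primes dividing \(2^m-1\) are odd, so the prime \(2\) is correctly excluded, and that Lemma~\ref{lem:mersenne-lifting} gives the exact equivalence \(p\mid 2^m-1\Leftrightarrow d_p\mid m\) for odd \(p\). With both in hand, the characterization goes through in both directions.
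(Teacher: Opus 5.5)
Your proposal is correct and follows essentially the same route as the paper: $G_m$ divides the odd part of $m$, equality holds exactly when every odd prime divisor of $m$ divides $2^m-1$, and Lemma~\ref{lem:mersenne-lifting} converts this into $d_p\mid m$, after which the identity follows by substitution into Theorem~\ref{thm:exact-defect-law}. Your explicit description of the quotient $m/(2^{v_2(m)}G_m)$ as a product over the odd primes $p\mid m$ with $p\nmid 2^m-1$ is a slightly more concrete rendering of the paper's condition $v_p(G_m)=v_p(m)$, but the argument is the same.
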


\begin{proof}
The divisor \(G_m\) of Theorem~\ref{thm:exact-defect-law} divides the odd part of \(m\), which gives the inequality.
Equality holds precisely when \(v_p(G_m)=v_p(m)\) for every odd prime \(p\) dividing \(m\), that is, precisely when every such prime divides \(2^m-1\).
By Lemma~\ref{lem:mersenne-lifting} this last condition holds precisely when \(d_p\mid m\) for every odd prime \(p\) dividing \(m\).
The displayed identity is then Theorem~\ref{thm:exact-defect-law}.
\end{proof}

Lemma~\ref{lem:order-closed-G} gives \(G_m\) equal to the entire odd part of \(m\) on an order-closed exponent, against the trivial value it takes on the prime powers \(\ell^f\) of Lemma~\ref{lem:prime-power-orders}.
The least common multiples \(m_k\) are order-closed, and the divisor attains there its greatest size.

\begin{proposition}[Order saturation of the least common multiples]\label{prop:lcm-saturated}
For \(k\geq1\) put
\[
  m_k
  \coloneqq
  \operatorname{lcm}(1,2,\dots,k).
\]
Then every odd prime \(p\leq k\) divides \(2^{m_k}-1\), the exponent \(m_k\) is order-closed, and
\[
  \Delta_{m_k}
  =
  \Omega_{m_k}
  +
  \log\left(\frac{m_k}{2^{v_2(m_k)}}\right)
  \qquad\text{with}\qquad
  2^{v_2(m_k)}\leq k.
\]
\end{proposition}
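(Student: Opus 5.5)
The plan is to show first that every odd prime \(p\leq k\) divides \(2^{m_k}-1\), then deduce order-closure, and finally read off the displayed identity from Lemma~\ref{lem:order-closed-G}.

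\emph{Odd primes up to \(k\) divide \(2^{m_k}-1\).} Let \(p\leq k\) be an odd prime. The order \(d_p=\ord_p(2)\) divides \(p-1\), so \(d_p\leq p-1<k\). Since \(m_k\) is the least common multiple of all integers up to \(k\), we get \(d_p\mid m_k\). Lemma~\ref{lem:mersenne-lifting} then gives \(p\mid 2^{m_k}-1\).

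\emph{Order-closure.} Every odd prime dividing \(m_k\) divides some integer \(j\leq k\), so it is itself at most \(k\). The first step therefore gives \(d_p\mid m_k\) for each such prime, and this is exactly Definition~\ref{def:order-closed}.

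\emph{The identity and the bound on the \(2\)-part.} With \(m_k\) order-closed, Lemma~\ref{lem:order-closed-G} applies directly. It gives \(G_{m_k}=m_k/2^{v_2(m_k)}\) and the stated formula for \(\Delta_{m_k}\). For the last claim, note that \(v_2(m_k)=\max_{j\leq k}v_2(j)\). This maximum is the exponent of the largest power of \(2\) not exceeding \(k\), so \(2^{v_2(m_k)}\leq k\).

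The only mild subtlety is the degenerate case \(k=1\), where \(m_1=1\) falls below the range \(m\geq2\) assumed in Definition~\ref{def:order-closed} and in Theorem~\ref{thm:exact-defect-law}. That case is vacuous or excluded by convention, and I would restrict to \(k\geq2\). Beyond this there is no real obstacle: the substance of the proposition is the inequality \(d_p\leq p-1<k\) combined with the lcm structure of \(m_k\).
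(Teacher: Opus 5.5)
Your proposal is correct and follows the paper's proof essentially line for line: $d_p\mid p-1$ forces $d_p\le k-1$ and hence $d_p\mid m_k$; order-closure follows because every odd prime dividing $m_k$ is at most $k$; Lemma~\ref{lem:order-closed-G} gives the identity; and $2^{v_2(m_k)}$ is the largest power of $2$ not exceeding $k$. Your remark on $k=1$ is a fair point that the paper passes over silently, since $m_1=1$ lies outside the range $m\geq2$ of the definitions, although every term of the identity is then $0$.
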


\begin{proof}
Let \(p\leq k\) be an odd prime.
The order \(d_p\) divides \(p-1\) and is therefore at most \(k-1\).
Every positive integer not exceeding \(k\) divides \(m_k\), whence \(d_p\mid m_k\), and Lemma~\ref{lem:mersenne-lifting} gives \(p\mid2^{m_k}-1\).

Every odd prime dividing \(m_k\) is at most \(k\), so the first paragraph shows that \(m_k\) is order-closed, and Lemma~\ref{lem:order-closed-G} gives the identity.
Finally, \(2^{v_2(m_k)}\) is the largest power of \(2\) not exceeding \(k\), whence \(2^{v_2(m_k)}\leq k\).
\end{proof}

\begin{lemma}[An elementary lower bound for \(m_k\)]\label{lem:nair}
For every \(k\geq1\),
\[
  m_k\geq2^{k-1}.
\]
\end{lemma}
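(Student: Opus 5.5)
The plan is the classical integral argument attributed to Nair, which the paper cites. For integers \(1\leq j\leq n\) consider
\[
  I(j,n)
  \coloneqq
  \int_0^1 x^{j-1}(1-x)^{n-j}\,dx
  =
  \frac{1}{j\binom{n}{j}}.
\]
We compute \(I(j,n)\) in two ways. The right-hand value is the Beta integral, obtained by repeated integration by parts. Expanding \((1-x)^{n-j}\) by the binomial theorem instead gives
\[
  I(j,n)
  =
  \sum_{i=0}^{n-j}(-1)^i\binom{n-j}{i}\frac{1}{j+i},
\]
a rational number whose denominator divides \(m_n=\operatorname{lcm}(1,\dots,n)\). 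Comparing the two evaluations shows that \(j\binom{n}{j}\) divides \(m_n\) for every \(1\leq j\leq n\).

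Next we specialize. Take \(n=2r+1\) and \(j=r+1\). Then \((r+1)\binom{2r+1}{r+1}=(2r+1)\binom{2r}{r}\) divides \(m_{2r+1}\). Taking \(n=2r\) and \(j=r\) gives \(r\binom{2r}{r}\mid m_{2r}\), and \(m_{2r}\mid m_{2r+1}\). Since \(\gcd(r,2r+1)=1\), we conclude \(r(2r+1)\binom{2r}{r}\mid m_{2r+1}\). Using \(\binom{2r}{r}\geq 4^r/(2r+1)\), which holds because \(\binom{2r}{r}\) is the largest of the \(2r+1\) terms summing to \(4^r\), we obtain
\[
  m_{2r+1}\geq r\,4^r.
\]
For \(r\geq2\) this is at least \(2\cdot 4^r=2^{2r+1}\).

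It remains to convert this into the stated bound for all \(k\). For odd \(k=2r+1\geq5\) the inequality above gives \(m_k\geq 2^{k}\geq 2^{k-1}\). For even \(k=2r+2\geq6\), monotonicity gives \(m_k\geq m_{2r+1}\geq 2^{2r+1}=2^{k-1}\). The small cases \(k\leq 5\) are checked directly from the values \(m_1,\dots,m_5=1,2,6,12,60\), which dominate \(1,2,4,8,16\). The bookkeeping here is routine; the only care needed is that \(r\geq2\) is used to absorb the factor \(2\), which is exactly why the cases \(k\leq4\) are handled separately.

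The main obstacle is the first step, namely justifying that the alternating sum has denominator dividing \(m_n\). Each term \(\binom{n-j}{i}/(j+i)\) has denominator \(j+i\leq n\), so the sum lies in \(\frac{1}{m_n}\mathbb{Z}\). Hence \(m_n I(j,n)\) is a positive integer equal to \(m_n/(j\binom nj)\), which is precisely the required divisibility.
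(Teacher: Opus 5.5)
Your proof is correct and follows the same route as the paper: the paper cites Nair's argument via the integrals \(\int_0^1 x^{j-1}(1-x)^{n-j}\,dx\) and divisibility for \(k\geq7\) and checks \(k\leq6\) from the listed values, while you write that argument out in full. Because you only aim at \(2^{k-1}\), the factor \(r\geq2\) absorbs the missing power of \(2\), so you need direct checking only for \(k\leq4\).
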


\begin{proof}
For \(k\geq7\) this is the inequality of Nair, whose proof proceeds through the integrals \(\int_0^1x^{m-1}(1-x)^{n-m}\,dx\) and elementary divisibility alone~\cite{Nair1982}.
The cases \(1\leq k\leq6\) are verified directly from the values \(1\), \(2\), \(6\), \(12\), \(60\), \(60\).
\end{proof}

\begin{theorem}[A quantitative margin on the least common multiples]\label{thm:lcm-margin}
For every \(k\geq3\),
\[
  q(\mathcal{V}_{m_k})>1.
\]
Moreover,
\[
  \Delta_{m_k}
  \geq
  \log m_k-\log\log_2(2m_k)
\]
and, for every \(k\geq4\),
\[
  q(\mathcal{V}_{m_k})
  \geq
  1
  +
  \frac{
    \log m_k-\log\log_2(2m_k)-\log2
  }{
    (m_k+1)\log2
  }.
\]
In particular,
\[
  q(\mathcal{V}_{m_k})-1
  \geq
  \bigl(1-o(1)\bigr)
  \frac{\log m_k}{m_k\log2}
\]
as \(k\to\infty\), and therefore as \(m_k\to\infty\), the exponents \(m_k=\operatorname{lcm}(1,\dots,k)\) being strictly increasing in \(k\).
\end{theorem}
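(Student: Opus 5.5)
Since $m_k$ is order-closed, Proposition~\ref{prop:lcm-saturated} together with $\Omega_{m_k}\geq0$ gives
\[
  \Delta_{m_k}\geq\log m_k-v_2(m_k)\log2 .
\]
The whole defect bound therefore reduces to controlling $2^{v_2(m_k)}$. That proposition gives $2^{v_2(m_k)}\leq k$. Lemma~\ref{lem:nair} gives $m_k\geq2^{k-1}$, hence $k\leq1+\log_2m_k=\log_2(2m_k)$. Combining the two,
\[
  \log\bigl(2^{v_2(m_k)}\bigr)\leq\log\log_2(2m_k),
\]
which is exactly the stated lower bound $\Delta_{m_k}\geq\log m_k-\log\log_2(2m_k)$.

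For the quality, I will use the exact identity from the proof of Corollary~\ref{cor:mersenne-hit-density}:
\[
  q(\mathcal{V}_m)-1=\frac{\Delta_m-\log\bigl(2(1-2^{-m})\bigr)}{\log\rad\bigl(2^m(2^m-1)\bigr)} .
\]
I will bound the numerator below by $\Delta_{m_k}-\log2$, using $2(1-2^{-m})<2$. I will bound the denominator above by $(m+1)\log2$, since $\rad(2^m(2^m-1))\leq2(2^m-1)<2^{m+1}$. This step needs the numerator to be nonnegative. For $k\geq4$ the bound $\log m_k-\log\log_2(2m_k)-\log2>0$ should hold, and I will check it directly: $m_4=12$ gives $\log12-\log\log_2 24-\log2=\log(6/\log_2 24)>0$. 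Monotonicity in $m$ then handles larger $k$, because $x/\log_2(2x)$ is increasing. With the numerator positive, shrinking it and enlarging the denominator yields the displayed inequality.

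For $q(\mathcal{V}_{m_k})>1$ with $k\geq3$, I will use Theorem~\ref{thm:mersenne-squarefree-classification}. Since $6\mid m_k$, we have $3\cdot d_3=6\mid m_k$, so $m_k\in\mathscr{H}$. This is also Theorem~\ref{thm:mersenne-hits}.

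For the asymptotic statement, $\log\log_2(2m_k)+\log2=O(\log\log m_k)=o(\log m_k)$ and $(m_k+1)\sim m_k$, which gives the $(1-o(1))$ form. Strict increase of $m_k$ in $k$ is false as literally stated (for instance $m_5=m_6=60$). I expect this to be the only delicate point. I will handle it by noting that $m_k$ is nondecreasing and unbounded, so $k\to\infty$ is equivalent to $m_k\to\infty$, and the asymptotic conclusion does not depend on strict monotonicity.
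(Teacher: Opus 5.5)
Your proposal is correct and follows the paper's proof essentially step for step. You use the same bound $\Delta_{m_k}\geq\log m_k-\log 2^{v_2(m_k)}$ from order-closedness combined with $2^{v_2(m_k)}\leq k\leq\log_2(2m_k)$, the same quality identity with numerator at least $\Delta_{m_k}-\log2$ and denominator at most $(m_k+1)\log2$, and the same positivity check at $m_4=12$. The only cosmetic difference is that you get $q(\mathcal{V}_{m_k})>1$ from $6\mid m_k$ via Theorem~\ref{thm:mersenne-squarefree-classification}, where the paper uses $\Delta_{m_k}\geq\log3$. You are also right that $m_k$ is not strictly increasing ($m_5=m_6=60$, as the paper's own list in Lemma~\ref{lem:nair} shows), and your replacement, that $m_k$ is nondecreasing and unbounded, is exactly what the asymptotic conclusion needs.
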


\begin{proof}
For \(k\geq3\) the odd part of \(m_k\) is divisible by \(3\), so Proposition~\ref{prop:lcm-saturated} gives
\[
  \Delta_{m_k}
  \geq
  \log\left(\frac{m_k}{2^{v_2(m_k)}}\right)
  \geq
  \log3
  >
  \log\bigl(2(1-2^{-m_k})\bigr)
\]
and Proposition~\ref{prop:mersenne-exact} gives \(q(\mathcal{V}_{m_k})>1\).

Lemma~\ref{lem:nair} gives \(k\leq\log_2m_k+1=\log_2(2m_k)\), and \(2^{v_2(m_k)}\leq k\) then gives
\[
  \Delta_{m_k}
  \geq
  \log m_k-\log k
  \geq
  \log m_k-\log\log_2(2m_k).
\]

For every \(m\geq2\) one has the identity
\[
  q(\mathcal{V}_m)-1
  =
  \frac{
    \Delta_m-\log\bigl(2(1-2^{-m})\bigr)
  }{
    \log\rad\bigl(2^m(2^m-1)\bigr)
  }
\]
whose denominator equals \((m+1)\log2+\log(1-2^{-m})-\Delta_m\) and is therefore at most \((m+1)\log2\).
The map \(x\mapsto\log x-\log\log_2(2x)\) increases for \(x\geq1\), so for \(k\geq4\) one has \(m_k\geq12\) and
\[
  \Delta_{m_k}-\log2
  \geq
  \log m_k-\log\log_2(2m_k)-\log2
  \geq
  \log12-\log\log_2(24)-\log2
  >
  0.
\]
Since the numerator of the quality identity is at least \(\Delta_{m_k}-\log2\), it follows that
\[
  q(\mathcal{V}_{m_k})-1
  \geq
  \frac{\Delta_{m_k}-\log2}{(m_k+1)\log2}
  \geq
  \frac{\log m_k-\log\log_2(2m_k)-\log2}{(m_k+1)\log2}.
\]
Since \(\log\log_2(2m_k)=o(\log m_k)\) and \(m_k\to\infty\), the final estimate follows.
\end{proof}

One has \(\log m_k=\psi(k)\), the Chebyshev function, and by entirely elementary means the estimates of Chebyshev give \(\psi(k)\asymp k\)~\cite{Chebyshev1852}.
Along the family the margin of Theorem~\ref{thm:lcm-margin} therefore has the exact order \(\log m/m\).
Lemma~\ref{lem:nair} supplies the one inequality the argument needs, in place of the prime number theorem.
The exponents \(m_k\) are multiples of \(6\) for \(k\geq3\), so this family is a subfamily of the one in Theorem~\ref{thm:mersenne-hits}.
Measured against the height \(c=2^{m_k}\), the margin has order \(\log\log c/\log c\), well beneath the scale \((\log c\,\log\log c)^{-1/2}\) of Bright recalled in Remark~\ref{rem:lower-bound-examples}.

\begin{theorem}[Extremality among the mechanisms free of Wieferich primes]\label{thm:extremal-lcm}
For every \(m\geq2\),
\[
  \Delta_m-\Omega_m
  =
  \log G_m
  \leq
  \log\left(\frac{m}{2^{v_2(m)}}\right)
  \leq
  \log m
\]
while
\[
  \Delta_{m_k}-\Omega_{m_k}
  \geq
  \log m_k-\log\log_2(2m_k)
\]
for every \(k\geq1\).
If a sequence \((m_j)\) satisfies
\[
  \Delta_{m_j}-\log m_j\longrightarrow\infty
\]
then \(\Omega_{m_j}\to\infty\).
The part of the Mersenne defect free of Wieferich primes is confined to the scale \(\log m\), attained by the exponents \(m_k\) within the correction \(\log\log_2(2m_k)\).
\end{theorem}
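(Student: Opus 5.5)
The statement has three parts. The first is an upper bound valid for every \(m\geq2\). The second is a lower bound along the exponents \(m_k\). The third is a divergence implication. Each follows from results already established, and I would treat them in that order.

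For the first chain, Theorem~\ref{thm:exact-defect-law} gives \(\Delta_m-\Omega_m=\log G_m\) directly. Lemma~\ref{lem:order-closed-G}, or equally Corollary~\ref{cor:defect-bounds}, gives \(G_m\leq m/2^{v_2(m)}\), because \(G_m\) divides the odd part of \(m\). The final inequality \(\log(m/2^{v_2(m)})\leq\log m\) is trivial.

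For the lower bound along \(m_k\), Proposition~\ref{prop:lcm-saturated} gives the exact identity
\[
  \Delta_{m_k}-\Omega_{m_k}=\log\bigl(m_k/2^{v_2(m_k)}\bigr)
\]
together with \(2^{v_2(m_k)}\leq k\). Lemma~\ref{lem:nair} gives \(k\leq\log_2(2m_k)\). Combining these yields \(\Delta_{m_k}-\Omega_{m_k}\geq\log m_k-\log\log_2(2m_k)\), which is the computation already carried out in Theorem~\ref{thm:lcm-margin}. The only point needing care is the range of small \(k\). Proposition~\ref{prop:lcm-saturated} is stated for all \(k\geq1\). At \(k=1\) one has \(m_1=1\), and then the right side is \(\log1-\log1=0\). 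The left side involves \(\Delta_1\) for the degenerate exponent \(1\), where \(2^1-1=1\) and the defect is \(0\). So the inequality reads \(0\geq0\) and holds, and I would note this explicitly. For \(k=2\), \(m_2=2\), and the odd part is \(1\). The right side is \(\log2-\log\log_2 4=0\), so this case is also fine. I expect this boundary bookkeeping to be the only real obstacle, and it is minor.

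For the implication, I would apply the first part to get \(\Omega_{m_j}=\Delta_{m_j}-\log G_{m_j}\geq\Delta_{m_j}-\log m_j\). By hypothesis the right side tends to infinity, so \(\Omega_{m_j}\to\infty\).

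The closing sentence of the statement is interpretive. It combines the two displayed bounds: the Wieferich-free part \(\log G_m\) never exceeds \(\log m\), and on \(m_k\) it is at least \(\log m_k\) minus the correction \(\log\log_2(2m_k)\).
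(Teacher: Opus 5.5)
Your proof is correct and follows the paper's own argument: the exact law with Lemma~\ref{lem:order-closed-G} gives the upper chain. Proposition~\ref{prop:lcm-saturated} with \(2^{v_2(m_k)}\leq k\) and Lemma~\ref{lem:nair} gives the lower bound along \(m_k\), and \(\Omega_{m_j}=\Delta_{m_j}-\log G_{m_j}\geq\Delta_{m_j}-\log m_j\) gives the implication. Your explicit check of \(k=1,2\) is bookkeeping the paper leaves implicit; at \(k=1\) the quantities \(\Delta_1,\Omega_1\) must be read through the natural extension \(2^1-1=1\), since the paper defines them only for \(m\geq2\).
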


\begin{proof}
The equality and the first two inequalities are Theorem~\ref{thm:exact-defect-law} and Lemma~\ref{lem:order-closed-G}.
The lower bound along the exponents \(m_k\) is the second display of Theorem~\ref{thm:lcm-margin} together with \(\Delta_{m_k}-\Omega_{m_k}=\log(m_k2^{-v_2(m_k)})\geq\log(m_k/k)\).
For the final claim,
\[
  \Omega_{m_j}
  =
  \Delta_{m_j}-\log G_{m_j}
  \geq
  \Delta_{m_j}-\log m_j
  \longrightarrow
  \infty.
\]
\end{proof}

\begin{corollary}[Quality margins beyond the extremal scale]\label{cor:superlog-wieferich}
Let \((m_j)\) be an increasing sequence with
\[
  \liminf_{j\to\infty}
  \frac{m_j\bigl(q(\mathcal{V}_{m_j})-1\bigr)\log2}{\log m_j}
  >
  1.
\]
Then \(\Omega_{m_j}\to\infty\).
A Mersenne quality margin beyond the extremal scale of Theorem~\ref{thm:lcm-margin} therefore requires an unbounded Wieferich contribution, while the margin of that theorem itself is produced with none.
\end{corollary}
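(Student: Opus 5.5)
The plan is to convert the quality hypothesis into a lower bound on \(\Delta_{m_j}\) and then apply the final claim of Theorem~\ref{thm:extremal-lcm}. The identity recorded in the proof of Theorem~\ref{thm:lcm-margin} is the starting point:
\[
  q(\mathcal{V}_m)-1
  =
  \frac{\Delta_m-\log\bigl(2(1-2^{-m})\bigr)}{\log\rad\bigl(2^m(2^m-1)\bigr)}.
\]
Its denominator is at most \((m+1)\log2\). When \(q>1\) the numerator is positive, so we obtain
\[
  \Delta_m
  \geq
  (q(\mathcal{V}_m)-1)(m+1)\log2+\log\bigl(2(1-2^{-m})\bigr)
  \geq
  (q(\mathcal{V}_m)-1)\,m\log2 .
\]
For the last step we use \(2(1-2^{-m})\geq1\), valid for \(m\geq1\), and drop the nonnegative term \((q-1)\log2\).

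By hypothesis there are \(\lambda>1\) and an index \(j_0\) with \(m_j(q(\mathcal{V}_{m_j})-1)\log2\geq\lambda\log m_j\) for all \(j\geq j_0\). In particular \(q(\mathcal{V}_{m_j})>1\) for these \(j\), so the bound above applies and gives \(\Delta_{m_j}\geq\lambda\log m_j\). Hence
\[
  \Delta_{m_j}-\log m_j\geq(\lambda-1)\log m_j .
\]
The sequence \((m_j)\) is increasing, so \(m_j\to\infty\) and the right side tends to infinity. Theorem~\ref{thm:extremal-lcm} then gives \(\Omega_{m_j}\to\infty\).

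For the interpretive sentence, note that Theorem~\ref{thm:lcm-margin} yields a normalized margin \(m_k(q-1)\log2/\log m_k\) bounded below by \((1-o(1))\cdot m_k/(m_k+1)\). Since this factor tends to one, the strict excess \(\lambda>1\) is exactly what separates the corollary from that family. The family of Theorem~\ref{thm:lcm-margin} itself requires no Wieferich prime: Proposition~\ref{prop:lcm-saturated} gives \(\log G_{m_k}\), which already supplies the margin when \(\Omega_{m_k}=0\).

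The one place that needs care is the positivity of the numerator, which is what allows the denominator to be replaced by its upper bound. This is handled by restricting to \(j\geq j_0\), where \(q>1\). Beyond that point the argument is routine.
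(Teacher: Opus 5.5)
Your first displayed inequality runs the wrong way. Write the identity as \(q(\mathcal{V}_m)-1=N_m/D_m\) with \(N_m=\Delta_m-\ell_m\), \(\ell_m=\log\bigl(2(1-2^{-m})\bigr)\), and \(D_m=(m+1)\log2+\log(1-2^{-m})-\Delta_m=m\log2+\ell_m-\Delta_m>0\).

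The bound \(D_m\leq(m+1)\log2\) gives \(N_m=(q-1)D_m\leq(q-1)(m+1)\log2\). That is an \emph{upper} bound on \(\Delta_m\). It is the direction Theorem~\ref{thm:lcm-margin} uses, where a lower bound on \(\Delta_m\) becomes a lower bound on \(q-1\), and not the converse you need.

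In fact \(\log(1-2^{-m})<0\leq\Delta_m\) makes \(D_m<(m+1)\log2\) strict. So your inequality \(\Delta_m\geq(q-1)(m+1)\log2+\ell_m\) is false for every \(m\) with \(q(\mathcal{V}_m)>1\). The weakened form \(\Delta_m\geq(q-1)m\log2\) also fails in general. It is equivalent to \(\Delta_m(\Delta_m-\ell_m)\leq\ell_m\,m\log2\), which breaks down whenever \(\Delta_m\geq(\sqrt{m}+1)\log2\).

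The missing idea is that a large quality may come from a small denominator, that is, from \(\Delta_m\) itself being large. Turning \(q-1\) into a lower bound on \(\Delta_m\) therefore needs a \emph{lower} bound on \(D_m\), and that is available only when \(\Delta_m\) is not too large. The positivity of the numerator, which you flag as the delicate point, is not where the difficulty lies.

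The paper repairs this with a case split. Fix \(\eta\in(0,1)\) with \(\lambda(1-\eta)>1\).
\begin{itemize}
\item If \(\Delta_m\geq\eta m\log2\), Theorem~\ref{thm:extremal-lcm} gives \(\Omega_m\geq\eta m\log2-\log m\) directly.
\item If \(\Delta_m<\eta m\log2\), then \(\log(1-2^{-m})\geq\log(3/4)\) gives \(D_m>(1-\eta)m\log2\). Hence \(\Delta_m\geq N_m=(q-1)D_m\geq\lambda(1-\eta)\log m\), and so \(\Omega_m\geq\bigl(\lambda(1-\eta)-1\bigr)\log m\).
\end{itemize}
Either way \(\Omega_{m_j}\to\infty\).

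You could instead keep your outline by arguing by cases on \(\Delta_{m_j}\). If \(\Delta_{m_j}<\lambda\log m_j\), then \(D_{m_j}>m_j\log2-\lambda\log m_j\), which for large \(j\) gives \(\Delta_{m_j}\geq\lambda\log m_j-\lambda^2(\log m_j)^2/(m_j\log2)\). So \(\Delta_{m_j}-\log m_j\geq(\lambda-1)\log m_j-o(1)\) does hold, but not by the route you gave. Your treatment of the interpretive sentence is fine.
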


\begin{proof}
Choose \(C\) with \(1<C<\liminf_jm_j(q(\mathcal{V}_{m_j})-1)\log2/\log m_j\) and \(\eta\in(0,1)\) with \(C(1-\eta)>1\).
As in the proof of Theorem~\ref{thm:lcm-margin}, write
\[
  q(\mathcal{V}_m)-1
  =
  \frac{\Delta_m-\log\bigl(2(1-2^{-m})\bigr)}{(m+1)\log2+\log(1-2^{-m})-\Delta_m}.
\]
Let \(j\) be large, and abbreviate \(m=m_j\), so that \((q(\mathcal{V}_m)-1)\,m\log2\geq C\log m\).
Suppose first that \(\Delta_m\geq\eta m\log2\).
Theorem~\ref{thm:extremal-lcm} then gives \(\Omega_m\geq\eta m\log2-\log m\).
Suppose instead that \(\Delta_m<\eta m\log2\).
For \(m\geq2\) one has \(\log(1-2^{-m})\geq\log(3/4)\), so the denominator above exceeds \((1-\eta)m\log2\), whence
\[
  \Delta_m
  \geq
  \bigl(q(\mathcal{V}_m)-1\bigr)(1-\eta)m\log2
  \geq
  C(1-\eta)\log m
\]
and Theorem~\ref{thm:extremal-lcm} gives \(\Omega_m\geq\bigl(C(1-\eta)-1\bigr)\log m\).
In either case \(\Omega_{m_j}\to\infty\).
\end{proof}

\begin{proposition}[A divisor ceiling from primitive prime divisors]\label{prop:primitive-ceiling}
For every \(m\geq2\),
\[
  \rad(2^m-1)
  \geq
  \prod_{\substack{d\mid m\\d\notin\{1,6\}}}
  (d+1)
\]
and hence
\[
  \Omega_m
  \leq
  \Delta_m
  \leq
  m\log2
  -
  \sum_{\substack{d\mid m\\d\notin\{1,6\}}}
  \log(d+1).
\]
The empty product is read as \(1\) and the empty sum as \(0\), which covers the divisor sets reduced by the two exclusions.
\end{proposition}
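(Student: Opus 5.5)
The plan is to use Bang's (Zsigmondy's) primitive divisor theorem for the base \(2\). For every \(d\geq2\) with \(d\neq6\), the number \(2^d-1\) has a primitive prime divisor \(p_d\), that is, a prime with \(\ord_{p_d}(2)=d\). For \(d=1\) we have \(2^1-1=1\), which has no prime divisor, and \(d=6\) is the single exception of the theorem, since \(2^6-1=3^2\cdot7\) with \(d_3=2\) and \(d_7=3\). These are exactly the two excluded values in the product.

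First, fix \(m\geq2\) and a divisor \(d\mid m\) with \(d\notin\{1,6\}\), and choose a primitive prime \(p_d\) of order \(d\). Lemma~\ref{lem:mersenne-lifting} gives \(p_d\mid2^m-1\), since \(d_{p_d}=d\mid m\). Because \(d_{p_d}\) divides \(p_d-1\), we get \(p_d\equiv1\pmod d\), and hence \(p_d\geq d+1\). Second, the primes \(p_d\) for different \(d\) are distinct, since each prime has a single order \(d_p\). Therefore \(\prod_d p_d\) divides \(\rad(2^m-1)\), and this yields
\[
\rad(2^m-1)\geq\prod_{\substack{d\mid m\\d\notin\{1,6\}}}p_d\geq\prod_{\substack{d\mid m\\d\notin\{1,6\}}}(d+1).
\]
The empty product convention covers \(m\) whose only divisors lie in \(\{1,6\}\). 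This happens for no \(m\geq2\), but the convention is harmless. A sharper bound \(p_d\geq2d+1\) holds for \(d\geq2\), because \(p_d\) is odd and \(d\mid p_d-1\) (and \(p_d-1\) is even, while \(d\) need not be). The weaker \(d+1\) bound is all the statement requires.

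For the defect bound, take logarithms in \(\Delta_m=\log(2^m-1)-\log\rad(2^m-1)\). Using \(\log(2^m-1)<m\log2\), this gives
\[
\Delta_m\leq m\log2-\sum_{\substack{d\mid m\\d\notin\{1,6\}}}\log(d+1).
\]
The lower inequality \(\Omega_m\leq\Delta_m\) is Corollary~\ref{cor:defect-bounds}.

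The only substantive input is the primitive divisor theorem, which the paper already cites (Bang, Birkhoff and Vandiver), so I would invoke it directly rather than reprove it. The one point to check carefully is that the exceptional set for base \(2\) is exactly \(\{1,6\}\). The case \(d=2\), where \(2+1=3\), is not exceptional because \(3\) has order \(2\), consistent with Remark~\ref{rem:even-base-no-exceptions}. Once that is confirmed, the rest is the distinctness of orders and the congruence \(p\equiv1\pmod d\).
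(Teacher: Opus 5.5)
Your proof is correct and follows the paper's argument essentially step for step. You take primitive prime divisors \(p_d\) for the divisors \(d\mid m\) outside \(\{1,6\}\), note they are pairwise distinct because each has order exactly \(d\), and use that each satisfies \(p_d\geq d+1\) and divides \(2^m-1\); then \(\log(2^m-1)<m\log2\) together with Corollary~\ref{cor:defect-bounds} gives the defect bound.

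One correction to your unused aside: \(p_d\geq2d+1\) holds only for odd \(d\), where \(2d\mid p_d-1\). It fails for even \(d\), for instance \(p_2=3\), \(p_4=5\) and \(p_{10}=11\), which are the unique primitive divisors of \(2^2-1\), \(2^4-1\) and \(2^{10}-1\).
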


\begin{proof}
By the theorem of Bang, proved independently by Zsigmondy for \(a^n-b^n\) and recovered by Birkhoff and Vandiver, the number \(2^d-1\) possesses for every \(d\notin\{1,6\}\) a \emph{primitive} prime divisor \(p_d\), that is, a prime dividing \(2^d-1\) and no \(2^{d'}-1\) with \(1\leq d'<d\)~\cite{Bang1886,Zsigmondy1892,BirkhoffVandiver1904}. For the base \(2\) such primes are commonly called Zsigmondy primes.
The two excluded divisors are the exceptions of the theorem itself, for \(2^1-1=1\) has no prime divisor at all, while \(2^6-1=63=3^2\cdot7\) has none that is primitive, since \(3\) divides \(2^2-1\) and \(7\) divides \(2^3-1\).
For the base \(2\) these are the only exceptions, \(d=6\) being the sole nontrivial one.

The order of \(2\) modulo \(p_d\) equals \(d\), since a primitive divisor divides no smaller \(2^{d'}-1\).
If \(d\neq d'\) then \(\ord_{p_d}(2)=d\neq d'=\ord_{p_{d'}}(2)\), and since a prime has only one order modulo which \(2\) is considered, the primes \(p_d\) attached to distinct divisors are pairwise distinct.
Their product therefore divides the radical without repetition, and \(d\mid p_d-1\) gives \(p_d\geq d+1\).

For every \(d\mid m\) the prime \(p_d\) divides \(2^d-1\), which divides \(2^m-1\).
Multiplication over the divisors of \(m\) other than \(1\) and \(6\) gives the first inequality.
The second follows from
\[
  \Delta_m
  =
  \log(2^m-1)-\log\rad(2^m-1)
  \leq
  m\log2-\log\rad(2^m-1)
\]
together with Corollary~\ref{cor:defect-bounds}.
\end{proof}

The whole defect, Wieferich part included, is bounded in Proposition~\ref{prop:primitive-ceiling} by a quantity that diminishes as the divisor count \(\tau(m)\) grows.
On a prime exponent \(\ell\) this yields only the weak constraint \(\Delta_{\ell}\leq\ell\log2-\log(\ell+1)\), whereas on highly composite exponents the subtracted divisor sum grows with the number of divisors.
The estimate is unconditional and elementary, and it has no bearing upon the density criterion of Theorem~\ref{thm:wieferich-density-criterion}.
The elementary divisor bound \(\tau(m)=m^{o(1)}\) shows the subtracted sum to be \(m^{o(1)}\log m\), which is \(o(m)\).

\subsection{Order cores and the bounded order-cofactor}\label{subsec:order-cofactor-constraint}

As Remark~\ref{rem:order-divides-m} observes, every prime \(p\mid2^m-1\) has order \(d_p\mid m\), so that the least common multiple of the orders of the Wieferich primes occurring at \(m\) divides the exponent.
This divisibility singles out a canonical core within each transgressive exponent, bounds the quotient by that core by a finite range of integers, and lets every infinite transgressive family be taken down to the exponents that coincide with their own cores.

\begin{definition}[Order core and order-cofactor]\label{def:order-cofactor}
For \(m\geq2\), put
\[
  D_m
  \coloneqq
  D(\mathcal{W}(m))
  =
  \operatorname{lcm}_{p\in\mathcal{W}(m)}d_p
  \qquad\text{and}\qquad
  B_m
  \coloneqq
  B(\mathcal{W}(m))
  =
  \prod_{p\in\mathcal{W}(m)}p^{A_p-1}.
\]
The conventions of Definition~\ref{def:order-defect-data} give \(D_m=B_m=1\) when \(\mathcal{W}(m)=\varnothing\).
Since \(D_m\mid m\), define the \emph{order-cofactor}
\[
  f_m
  \coloneqq
  \frac{m}{D_m}
  \in\mathbb{Z}_{\geq1}.
\]
An exponent \(m\) is called \emph{Wieferich-supported} when \(\mathcal{W}(m)\neq\varnothing\), and \emph{order-saturated} when \(D_m=m\), equivalently when \(f_m=1\).
\end{definition}

If \(\mathcal{W}(m)\) is empty, the conventions give \(D_m=1\) and hence \(f_m=m\).
The value is a convention, adopted so that \(m=f_mD_m\) holds for every \(m\). It carries no arithmetic information.

\begin{lemma}[Idempotence of the order core]\label{lem:order-core-idempotence}
If \(m\) is Wieferich-supported, then
\[
  \mathcal{W}(D_m)=\mathcal{W}(m)
  \qquad
  D_{D_m}=D_m
  \qquad
  B_{D_m}=B_m.
\]
In particular, \(D_m\) is order-saturated.
\end{lemma}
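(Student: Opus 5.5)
We prove the two inclusions \(\mathcal{W}(D_m)\subseteq\mathcal{W}(m)\) and \(\mathcal{W}(m)\subseteq\mathcal{W}(D_m)\) directly from the order criterion of Lemma~\ref{lem:mersenne-lifting}. That lemma says a prime \(p\) divides \(2^n-1\) precisely when \(d_p\mid n\). Hence, for every \(n\geq2\),
\[
  \mathcal{W}(n)=\{p\in\mathcal{W}:d_p\mid n\}.
\]
The identity \(\mathcal{W}(D_m)=\mathcal{W}(m)\) therefore reduces to a statement about divisibility of orders.

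First, \(D_m\geq2\) is needed so that \(\mathcal{W}(D_m)\) is defined in the range \(n\geq2\) of Theorem~\ref{thm:exact-defect-law}. Since \(m\) is Wieferich-supported, some \(p\in\mathcal{W}(m)\) exists. This \(p\) divides \(2^{d_p}-1\), so \(d_p\geq2\), because \(2^1-1=1\) has no prime divisor. It follows that \(D_m\geq d_p\geq2\).

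Next come the two inclusions. If \(p\in\mathcal{W}(m)\), then \(d_p\) divides the least common multiple \(D_m\), so \(p\in\mathcal{W}(D_m)\). Conversely, if \(p\in\mathcal{W}(D_m)\), then \(d_p\mid D_m\). Remark~\ref{rem:order-divides-m} gives \(D_m\mid m\), so \(d_p\mid m\) and \(p\in\mathcal{W}(m)\).

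The other two identities follow at once. Since \(D(\cdot)\) and \(B(\cdot)\) depend only on the set of primes,
\[
  D_{D_m}=D(\mathcal{W}(D_m))=D(\mathcal{W}(m))=D_m,
\]
and likewise \(B_{D_m}=B_m\). Order-saturation of \(D_m\) is the equality \(D_{D_m}=D_m\) itself.

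No step is a real obstacle. The only point needing care is confirming that \(D_m\geq2\), so that \(D_m\) is an admissible index for \(\mathcal{W}(\cdot)\). That is where the Wieferich-supported hypothesis is used.
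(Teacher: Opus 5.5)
Your proof is correct and matches the paper's argument: both inclusions come from \(d_p\mid D_m\) and \(D_m\mid m\), and the identities for \(D\) and \(B\) follow because both depend only on the Wieferich set. Your extra check that \(D_m\geq2\), which makes \(D_m\) an admissible index for \(\mathcal{W}(\cdot)\), is a small point the paper leaves implicit.
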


\begin{proof}
Let \(p\in\mathcal{W}(m)\).
By the definition of \(D_m\), its order \(d_p\) divides \(D_m\), so that \(p\in\mathcal{W}(D_m)\).
Conversely, if \(p\in\mathcal{W}(D_m)\), then
\[
  d_p\mid D_m\mid m
\]
and hence \(p\in\mathcal{W}(m)\).
Thus the two Wieferich sets are equal.
Taking the least common multiple of their orders and the product of their intrinsic defect contributions gives the remaining identities.
\end{proof}

\begin{proposition}[Defect and radical bounds at the order core]\label{prop:order-core-bounds}
Let \(\varepsilon>0\), and suppose that \(m\) is Wieferich-supported and
\[
  E_{\varepsilon}(\mathcal{V}_m)\geq0.
\]
Write
\[
  D=D_m
  \qquad
  f=f_m
  \qquad
  m=fD.
\]
Then
\[
  B_m
  \geq
  \frac{2(1-2^{-fD})}{fD}\,
  2^{\theta_{\varepsilon}fD}
  \qquad\text{and}\qquad
  B_m\leq L_D.
\]
From these one obtains
\[
  \frac{\log B_m}{D\log2}
  \geq
  f\theta_{\varepsilon}
  -
  \frac{
    \log_2(fD)
    -
    \log_2\bigl(2(1-2^{-fD})\bigr)
  }{D}
\]
and
\[
  \rad(2^D-1)
  \leq
  \frac{fD}{2}\,
  \frac{1-2^{-D}}{1-2^{-fD}}\,
  2^{(1-f\theta_{\varepsilon})D}.
\]
If \(f\theta_{\varepsilon}=1\), then \(f>1\) and the last estimate becomes
\[
  \rad(2^D-1)<\frac{fD}{2}.
\]
\end{proposition}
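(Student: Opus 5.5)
The plan is to reuse the argument of Proposition~\ref{prop:order-obstruction} with \(m=fD\) substituted, and then to pass to the exponent \(D\) by means of Lemma~\ref{lem:order-core-idempotence}.

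\emph{Lower bound for \(B_m\).} Transgression gives, by Proposition~\ref{prop:mersenne-exact},
\[
  \Delta_m\geq\theta_\varepsilon m\log2+\log\bigl(2(1-2^{-m})\bigr).
\]
Corollary~\ref{cor:defect-bounds} gives \(\Omega_m\geq\Delta_m-\log m\), and \(\Omega_m=\log B_m\) by definition. Exponentiating and writing \(m=fD\) yields the first inequality.

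\emph{Upper bound \(B_m\leq L_D\).} By Lemma~\ref{lem:order-core-idempotence}, \(\mathcal{W}(D)=\mathcal{W}(m)\) and \(B_D=B_m\). Corollary~\ref{cor:defect-bounds} applied at the exponent \(D\) then gives \(B_m=B_D\leq L_D\). This uses \(D\geq2\), which holds because \(m\) is Wieferich-supported, so \(D=d_p\) for some \(p\mid2^m-1\), and \(d_p>1\).

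\emph{The normalized inequality.} Take \(\log_2\) of the lower bound and divide by \(D\). This gives \(f\theta_\varepsilon-\bigl(\log_2(fD)-\log_2(2(1-2^{-fD}))\bigr)/D\) directly.

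\emph{The radical bound.} Write
\[
  \rad(2^D-1)=\frac{2^D-1}{L_D}\leq\frac{2^D(1-2^{-D})}{B_m}.
\]
Substituting the lower bound for \(B_m\) gives
\[
  \rad(2^D-1)\leq 2^D(1-2^{-D})\cdot\frac{fD}{2(1-2^{-fD})}\,2^{-\theta_\varepsilon fD},
\]
which is exactly the stated expression.

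\emph{The case \(f\theta_\varepsilon=1\).} Since \(\theta_\varepsilon<1\), the condition \(f\theta_\varepsilon=1\) forces \(f>1\). The exponential factor \(2^{(1-f\theta_\varepsilon)D}\) then equals \(1\). Because \(f>1\), we have \(fD>D\), hence \(1-2^{-D}<1-2^{-fD}\), so the middle ratio is strictly less than \(1\). This gives the strict inequality \(\rad(2^D-1)<fD/2\).

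The only step needing care is the transfer of \(B_m\) to the exponent \(D\). The idempotence lemma makes that transfer exact, so I expect no real obstacle; everything else is bookkeeping of the constants \(1-2^{-fD}\).
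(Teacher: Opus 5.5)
Your proof is correct and follows the paper's argument. The one variation is that you obtain \(B_m\leq L_D\) from Lemma~\ref{lem:order-core-idempotence} together with Corollary~\ref{cor:defect-bounds} at the exponent \(D\), whereas the paper applies Lemma~\ref{lem:mersenne-lifting} directly to get \(v_p(2^D-1)\geq A_p\) for each \(p\in\mathcal{W}(m)\); this is the same computation packaged differently.

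There is one small slip. \(D\) is the least common multiple of the orders and need not equal any single \(d_p\), so the justification of \(D\geq2\) should read \(d_p\mid D\) with \(d_p\geq2\), not \(D=d_p\).
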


\begin{proof}
The first inequality is Proposition~\ref{prop:order-obstruction}, rewritten with \(m=fD\).

For every \(p\in\mathcal{W}(m)\) one has \(d_p\mid D\), and Lemma~\ref{lem:mersenne-lifting} gives
\[
  v_p(2^D-1)
  =
  A_p+v_p\left(\frac{D}{d_p}\right)
  \geq
  A_p.
\]
It follows that
\[
  \prod_{p\in\mathcal{W}(m)}p^{A_p-1}
  \quad\text{divides}\quad
  \frac{2^D-1}{\rad(2^D-1)}
\]
and hence \(B_m\leq L_D\).
Taking logarithms in the first inequality gives the normalized defect estimate.

Combining the lower bound for \(B_m\) with
\[
  B_m
  \leq
  L_D
  =
  \frac{2^D-1}{\rad(2^D-1)}
\]
gives
\[
\begin{aligned}
  \rad(2^D-1)
  &\leq
  \frac{
    (2^D-1)fD
  }{
    2(1-2^{-fD})2^{\theta_{\varepsilon}fD}
  }
  \\
  &=
  \frac{fD}{2}\,
  \frac{1-2^{-D}}{1-2^{-fD}}\,
  2^{(1-f\theta_{\varepsilon})D}.
\end{aligned}
\]
If \(f\theta_{\varepsilon}=1\), then \(f>1\), since \(0<\theta_{\varepsilon}<1\).
In this case \(fD>D\), so that
\[
  1-2^{-D}<1-2^{-fD}
\]
which gives the strict final bound.
\end{proof}

\begin{theorem}[Reduction to the order core]\label{thm:order-core-descent}
Let \(\varepsilon>0\), and suppose that \(m\) is Wieferich-supported and
\[
  E_{\varepsilon}(\mathcal{V}_m)\geq0.
\]
Put \(D=D_m\).
If \(D=m\), then the exponent is order-saturated and no reduction is required.
If \(D<m\) and
\[
  2^{\theta_{\varepsilon}(m-D)}
  \geq
  m\,
  \frac{1-2^{-D}}{1-2^{-m}}
\]
then
\[
  E_{\varepsilon}(\mathcal{V}_D)\geq0.
\]
In particular, the simpler condition
\[
  2^{\theta_{\varepsilon}(m-D)}\geq m
\]
is sufficient.

The two statements below are then equivalent, for every fixed \(\varepsilon>0\).
\begin{enumerate}[label=\textup{(\arabic*)}]
\item There are infinitely many \(m\) such that
\[
  E_{\varepsilon}(\mathcal{V}_m)\geq0.
\]
\item There are infinitely many order-saturated \(n\) such that
\[
  E_{\varepsilon}(\mathcal{V}_n)\geq0.
\]
\end{enumerate}
Thus any infinite transgressive Mersenne family may be replaced, at the same exponent \(\varepsilon\), by an infinite family satisfying \(D_n=n\).
\end{theorem}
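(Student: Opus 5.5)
The reduction step compares the two exact thresholds of Proposition~\ref{prop:mersenne-exact} at the exponents \(m\) and \(D=D_m\), using that the Wieferich contribution is the same at both. By Lemma~\ref{lem:order-core-idempotence}, \(\mathcal{W}(D)=\mathcal{W}(m)\), so \(\Omega_D=\Omega_m\) and \(B_D=B_m\). Corollary~\ref{cor:defect-bounds} then gives
\[
  \Delta_D\geq\Omega_D=\Omega_m\geq\Delta_m-\log m .
\]
Transgression at \(m\) supplies \(\Delta_m\geq\theta_{\varepsilon}m\log2+\log\bigl(2(1-2^{-m})\bigr)\). Combining the two,
\[
  \Delta_D
  \geq
  \theta_{\varepsilon}D\log2+\log\bigl(2(1-2^{-D})\bigr)
  +\Bigl[\theta_{\varepsilon}(m-D)\log2-\log m+\log\tfrac{1-2^{-m}}{1-2^{-D}}\Bigr].
\]
The bracket is non-negative exactly under the displayed hypothesis, and Proposition~\ref{prop:mersenne-exact} then gives \(E_{\varepsilon}(\mathcal{V}_D)\geq0\). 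For the simpler condition, \(D<m\) gives \((1-2^{-D})/(1-2^{-m})<1\), so \(2^{\theta_{\varepsilon}(m-D)}\geq m\) implies the sharper hypothesis. The case \(D=m\) needs no argument.

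For the equivalence, (2)\(\Rightarrow\)(1) is trivial. For (1)\(\Rightarrow\)(2), take an infinite transgressive sequence \((m_j)\) at exponent \(\varepsilon\).

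First, all but finitely many \(m_j\) are Wieferich-supported. Otherwise \(\Omega_{m_j}=0\), so \(\Delta_{m_j}\leq\log m_j\), and this falls below the linear threshold for large \(m_j\). By Proposition~\ref{prop:order-obstruction}, \(D_{m_j}\geq(\theta_{\varepsilon}-o(1))m_j\), so \(D_{m_j}\to\infty\). Write \(f_j=m_j/D_{m_j}\); these are bounded, since \(f_j\leq 1/(\theta_\varepsilon-o(1))\).

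Next, split into the two cases \(f_j=1\) and \(f_j\geq2\).
\begin{itemize}
\item If \(f_j=1\), then \(m_j\) is already order-saturated.
\item If \(f_j\geq2\), then \(m_j-D_{m_j}\geq m_j/2\). Hence \(2^{\theta_{\varepsilon}(m_j-D_{m_j})}\geq2^{\theta_{\varepsilon}m_j/2}\geq m_j\) for large \(j\). The reduction then gives transgression at \(D_{m_j}\), which is order-saturated by Lemma~\ref{lem:order-core-idempotence}.
\end{itemize}

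In either case we obtain an order-saturated transgressive exponent \(n_j\in\{m_j,D_{m_j}\}\) with \(n_j\geq D_{m_j}\to\infty\). Hence infinitely many distinct \(n_j\) occur.

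The step I expect to need the most care is ensuring \(D_{m_j}\to\infty\), so that the reduced family stays infinite. Proposition~\ref{prop:order-obstruction} settles this directly. Checking that the exponential gap \(2^{\theta_\varepsilon m_j/2}\) beats \(m_j\) is routine.
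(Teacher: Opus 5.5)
Your proof is correct and follows the paper's argument. The paper obtains the key inequality as $L_D\geq B_m\geq 2(1-2^{-m})2^{\theta_\varepsilon m}/m$ via Proposition~\ref{prop:order-core-bounds}, which is your chain $\Delta_D\geq\Omega_D=\Omega_m\geq\Delta_m-\log m$ in exponentiated form. It then proves the equivalence by the same split into $f_m=1$ and $f_m\geq2$, with Proposition~\ref{prop:order-obstruction} forcing $D_m\to\infty$; your explicit remark that $n_j\geq D_{m_j}\to\infty$ in both cases is slightly more careful than the paper's closing appeal to the unboundedness of the cores.
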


\begin{proof}
Proposition~\ref{prop:order-core-bounds} gives
\[
  L_D
  \geq
  B_m
  \geq
  \frac{2(1-2^{-m})}{m}\,
  2^{\theta_{\varepsilon}m}.
\]
Under the displayed hypothesis,
\[
\begin{aligned}
  L_D
  &\geq
  2(1-2^{-m})2^{\theta_{\varepsilon}D}
  \frac{2^{\theta_{\varepsilon}(m-D)}}{m}
  \\
  &\geq
  2(1-2^{-D})2^{\theta_{\varepsilon}D}.
\end{aligned}
\]
Proposition~\ref{prop:mersenne-exact} therefore gives \(E_{\varepsilon}(\mathcal{V}_D)\geq0\).
Since \(D<m\) gives
\[
  \frac{1-2^{-D}}{1-2^{-m}}<1
\]
the simpler condition is sufficient.

Suppose now that infinitely many indices \(m\) are transgressive at exponent \(\varepsilon\).
These indices are unbounded, and Proposition~\ref{prop:order-obstruction} gives
\[
  D_m
  >
  \theta_{\varepsilon}m
  -
  \log_2m
  +
  \log_2\bigl(2(1-2^{-m})\bigr)
\]
so that \(D_m\to\infty\) along the transgressive indices.
In particular, all sufficiently large such indices are Wieferich-supported.

If \(f_m=1\), then \(D_m=m\) and no reduction is needed.
If \(f_m\geq2\), then
\[
  D_m\leq\frac{m}{2}
  \qquad\text{and hence}\qquad
  m-D_m\geq\frac{m}{2}.
\]
For all sufficiently large \(m\) one has
\[
  2^{\theta_{\varepsilon}m/2}\geq m
\]
so that the sufficient condition applies and \(\mathcal{V}_{D_m}\) is transgressive at the same exponent.
The values \(D_m\) are unbounded, and Lemma~\ref{lem:order-core-idempotence} shows every resulting core to be order-saturated.
This proves \textup{(1)}$\Rightarrow$\textup{(2)}, while the reverse implication is immediate.
\end{proof}

From an infinite family the endpoint \(f\theta_{\varepsilon}=1\) may be excluded by an elementary radical estimate.

\begin{lemma}[Superlinear radical growth for Mersenne numbers]\label{lem:mersenne-radical-superlinear}
For every integer \(n>36\),
\[
  \rad(2^n-1)>n^{3/2}.
\]
\end{lemma}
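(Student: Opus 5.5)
The plan is to split according to whether \(n\) is composite or prime. In both cases the main tool is Proposition~\ref{prop:primitive-ceiling}: its proof attaches to each divisor \(d\mid n\) with \(d\notin\{1,6\}\) a primitive prime \(p_d\) of order exactly \(d\). These primes are pairwise distinct and satisfy \(p_d\equiv1\pmod d\). When \(d\) is odd we also have \(p_d\equiv1\pmod{2d}\), since \(p_d\) is odd and \(d\mid p_d-1\).

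\emph{Composite \(n\).} Let \(p\) be the least prime factor of \(n\) and put \(d=n/p\), so that \(d\geq\sqrt n\). We check that neither \(n\) nor \(d\) lies in \(\{1,6\}\). Since \(n>36\), we have \(n\neq6\). If \(d=6\), then \(n=6p\) with \(p\) the least prime factor of \(n\), which forces \(p=2\) and \(n=12\), contradicting \(n>36\). Also \(d\neq n\) and \(d>1\). Proposition~\ref{prop:primitive-ceiling}, restricted to the two divisors \(d\) and \(n\), then gives
\[
  \rad(2^n-1)\geq(n+1)(d+1)>n\sqrt n=n^{3/2}.
\]

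\emph{Prime \(n=\ell\).} Here the divisor ceiling yields only \(\ell+1\), so a second idea is needed. By Theorem~\ref{thm:prime-subline}, every prime dividing \(2^\ell-1\) is congruent to \(1\) modulo \(2\ell\). If \(2^\ell-1\) has at least two distinct prime factors, they are distinct members of that progression, so
\[
  \rad(2^\ell-1)\geq(2\ell+1)(4\ell+1)>\ell^{3/2}.
\]
It remains to treat \(2^\ell-1=p^k\) with \(k\geq1\). We show \(k=1\) by an elementary argument.
\begin{itemize}
\item If \(k\) is even, then \(p^k\) is an odd square and so \(p^k\equiv1\pmod8\), while \(2^\ell-1\equiv7\pmod8\) because \(\ell\geq3\). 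This is impossible.
\item If \(k\geq3\) is odd, factor \(2^\ell=p^k+1=(p+1)\bigl(p^{k-1}-p^{k-2}+\dots+1\bigr)\). The second factor is a sum of \(k\) odd terms, hence odd, and it exceeds \(1\). It would then be an odd divisor of \(2^\ell\) larger than \(1\), which is absurd.
\end{itemize}
Therefore \(2^\ell-1\) is prime, and \(\rad(2^\ell-1)=2^\ell-1>\ell^{3/2}\) for every \(\ell\geq3\).

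The only real obstacle is the prime case, where primitive divisors alone are too weak. The step that carries it is ruling out \(2^\ell-1\) being a proper prime power. I expect the mod-\(8\) and odd-cofactor argument above to settle this without appeal to Catalan's theorem. Note that the hypothesis \(n>36\) is only used to exclude the small exceptional configurations \(n=6\) and \(n=12\) in the composite case, so it is far from tight.
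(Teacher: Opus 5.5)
Your proposal is correct and is essentially the paper's proof. It uses the same composite/prime split. In the composite case you take the two primitive divisors \(p_n\geq n+1\) and \(p_d\geq d+1\) with \(d=n/q\geq\sqrt n\). In the prime case you use the congruence \(p\equiv1\pmod{2n}\) together with the same mod-\(8\) and odd-cofactor argument ruling out a proper prime power.

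The one cosmetic difference is how \(d=6\) is excluded. You force \(n=12\), whereas the paper notes \(d\geq\sqrt n>6\). Your closing remark slightly undersells the hypothesis: it also excludes \(n\leq2\), where your case split or the mod-\(8\) step would not apply.
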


\begin{proof}
Suppose first that \(n\) is composite.
Let \(q\) be its least prime divisor and put
\[
  d=\frac{n}{q}.
\]
Then \(q\leq\sqrt n\), and therefore
\[
  \sqrt n\leq d<n.
\]
Since \(n>36\), one has \(d>6\), so that both \(n\) and \(d\) lie outside the exceptional set \(\{1,6\}\) of the primitive-divisor theorem employed in Proposition~\ref{prop:primitive-ceiling}.
Hence \(2^n-1\) and \(2^d-1\) possess primitive prime divisors \(p_n\) and \(p_d\), whose orders are \(n\) and \(d\) respectively, so that the two primes are distinct and
\[
  p_n\geq n+1
  \qquad\text{and}\qquad
  p_d\geq d+1.
\]
Both divide \(2^n-1\), since \(2^d-1\) divides \(2^n-1\), and therefore
\[
  \rad(2^n-1)
  \geq
  (n+1)(d+1)
  >
  n^{3/2}.
\]

Suppose next that \(n\) is prime.
By Theorem~\ref{thm:prime-subline}, every prime \(p\mid2^n-1\) has order exactly \(n\), and therefore
\[
  p\equiv1\pmod{2n}.
\]
Observe first that \(2^n-1\) cannot be a proper prime power.
Suppose that
\[
  2^n-1=p^a
  \qquad(a\geq2).
\]
If \(a\) is even, then \(p^a\) is an odd square and hence congruent to \(1\) modulo \(8\), whereas \(2^n-1\equiv7\pmod8\).
Thus \(a\) is odd and at least three.
Then
\[
  2^n
  =
  p^a+1
  =
  (p+1)
  \bigl(
    p^{a-1}-p^{a-2}+\cdots-p+1
  \bigr).
\]
The second factor is a sum of an odd number of odd terms, hence odd, and it exceeds one.
This contradicts the fact that the product is a power of two.
The two congruence arguments above settle the case without appeal to Catalan's theorem.

If \(2^n-1\) is prime, then
\[
  \rad(2^n-1)=2^n-1>n^2.
\]
If it is composite, then it carries two distinct prime divisors \(p_1<p_2\).
Each is congruent to \(1\) modulo \(2n\), hence of the form \(1+2nk\) with \(k\geq1\), and distinctness forces distinct values of \(k\), so that \(p_1\geq2n+1\) and \(p_2\geq4n+1\).
It follows that
\[
  \rad(2^n-1)
  \geq
  p_1p_2
  \geq
  (2n+1)(4n+1)
  >
  8n^2.
\]
In either case the asserted estimate follows.
\end{proof}

\begin{corollary}[Exclusion of the critical cofactor]\label{cor:critical-cofactor-exclusion}
Fix \(\varepsilon>0\) and an integer \(f\geq2\) satisfying
\[
  f\theta_{\varepsilon}=1.
\]
If \(m=fD\) is transgressive at exponent \(\varepsilon\) and \(D=D_m\), then
\[
  D\leq36
  \qquad\text{or}\qquad
  D<\frac{f^2}{4}.
\]
In particular, no infinite transgressive sequence can carry a fixed cofactor \(f\) satisfying \(f\theta_{\varepsilon}=1\).
\end{corollary}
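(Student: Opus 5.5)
The plan is to combine the strict radical bound at the critical cofactor, from Proposition~\ref{prop:order-core-bounds}, with the superlinear radical growth of Lemma~\ref{lem:mersenne-radical-superlinear}.

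First, I need \(m\) to be Wieferich-supported. If \(\mathcal{W}(m)=\varnothing\), then \(D=1\leq36\) and the conclusion holds trivially. So assume \(\mathcal{W}(m)\neq\varnothing\). Proposition~\ref{prop:order-core-bounds} then applies with \(f\theta_{\varepsilon}=1\). The hypothesis \(D=D_m\) makes \(f=f_m\) equal to the given cofactor, so the proposition yields
\[
  \rad(2^D-1)<\frac{fD}{2}.
\]

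Next, suppose \(D>36\). Lemma~\ref{lem:mersenne-radical-superlinear} gives \(\rad(2^D-1)>D^{3/2}\), and combined with the previous display this gives \(D^{3/2}<fD/2\). Hence \(D^{1/2}<f/2\), that is, \(D<f^2/4\). So either \(D\leq36\) or \(D<f^2/4\), which is the asserted dichotomy.

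For the final sentence, take an infinite transgressive sequence \((m_j)\) with fixed cofactor \(f=f_{m_j}\) satisfying \(f\theta_{\varepsilon}=1\). Each \(D_{m_j}\) is then bounded by \(\max\{36,f^2/4\}\). Since \(m_j=fD_{m_j}\), the exponents \(m_j\) are bounded, which contradicts infinitude. One could also invoke Proposition~\ref{prop:order-obstruction}, which forces \(D_m\to\infty\); the direct bound above already suffices.

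I expect no serious obstacle. The only delicate point is the bookkeeping: checking that the strict inequality of Proposition~\ref{prop:order-core-bounds} requires exactly \(f\theta_{\varepsilon}=1\) and Wieferich support, and handling the degenerate case \(\mathcal{W}(m)=\varnothing\) separately as above.
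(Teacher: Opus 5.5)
Your proposal is correct and follows the paper's own argument: Proposition~\ref{prop:order-core-bounds} gives the strict bound \(\rad(2^D-1)<fD/2\) at the critical cofactor, and for \(D>36\) Lemma~\ref{lem:mersenne-radical-superlinear} turns this into \(D<f^2/4\), which bounds \(m=fD\) in terms of \(f\) alone. Your separate handling of the case \(\mathcal{W}(m)=\varnothing\), where \(D=1\leq36\), checks the Wieferich-support hypothesis of that proposition, a check the paper's proof does not make explicitly.
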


\begin{proof}
Proposition~\ref{prop:order-core-bounds} gives
\[
  \rad(2^D-1)<\frac{fD}{2}.
\]
If \(D>36\), then Lemma~\ref{lem:mersenne-radical-superlinear} gives
\[
  D^{3/2}
  <
  \frac{fD}{2}.
\]
Division by \(D>0\) leaves
\[
  \sqrt{D}<\frac{f}{2}
\]
and squaring gives \(D<f^2/4\).
The two alternatives confine \(D\), and therefore confine \(m=fD\), to a finite range depending upon \(f\) alone.
\end{proof}

\begin{theorem}[Rigid bound on the order-cofactor]\label{thm:bounded-order-cofactor}
Fix \(\varepsilon>0\), and recall from Proposition~\ref{prop:order-obstruction} the base-\(2\) logarithm \(\log_2\).
For a transgressive index \(m\), define
\[
  r_{\varepsilon}(m)
  \coloneqq
  \theta_{\varepsilon}m
  -
  \log_2m
  +
  \log_2\bigl(2(1-2^{-m})\bigr).
\]
Whenever \(r_{\varepsilon}(m)>0\), one has the exact bound
\[
  f_m
  <
  \frac{m}{r_{\varepsilon}(m)}
\]
and therefore
\[
  f_m
  \leq
  \left\lceil
    \frac{m}{r_{\varepsilon}(m)}
  \right\rceil-1.
\]

More explicitly, put
\[
  N_{\varepsilon}
  \coloneqq
  \left\lfloor\frac{1}{\theta_{\varepsilon}}\right\rfloor
  \qquad\text{and}\qquad
  \delta_{\varepsilon}
  \coloneqq
  \theta_{\varepsilon}-\frac{1}{N_{\varepsilon}+1}>0
\]
together with
\[
  M_{\varepsilon}^{(0)}
  \coloneqq
  \left\lceil
    \max\left\{
      4,\,
      N_{\varepsilon}+1,\,
      \frac{2}{\delta_{\varepsilon}}
      \log_2\left(\frac{2}{\delta_{\varepsilon}}\right)
    \right\}
  \right\rceil.
\]
Every transgressive index \(m\geq M_{\varepsilon}^{(0)}\) satisfies \(f_m\leq N_{\varepsilon}\).

The endpoint may be removed.
Define
\[
  F_{\varepsilon}
  \coloneqq
  \left\lceil\frac{1}{\theta_{\varepsilon}}\right\rceil-1
  =
  \left\lceil1+\frac{1}{\varepsilon}\right\rceil-1.
\]
If \(1/\theta_{\varepsilon}\notin\mathbb{Z}\), then put \(M_{\varepsilon}=M_{\varepsilon}^{(0)}\).
If \(1/\theta_{\varepsilon}=N_{\varepsilon}\in\mathbb{Z}\), then put
\[
  C_{\varepsilon}
  \coloneqq
  \max\left\{
    36,\,
    \frac{N_{\varepsilon}^2}{4}
  \right\}
  \qquad\text{and}\qquad
  M_{\varepsilon}
  \coloneqq
  \max\left\{
    M_{\varepsilon}^{(0)},\,
    \left\lfloor
      N_{\varepsilon}C_{\varepsilon}
    \right\rfloor+1
  \right\}.
\]
Then every \(m\geq M_{\varepsilon}\) satisfying
\[
  E_{\varepsilon}(\mathcal{V}_m)\geq0
\]
obeys the strict integral constraint
\[
  1\leq f_m\leq F_{\varepsilon}
  <
  \frac{1}{\theta_{\varepsilon}}.
\]
In particular, every such \(m\) is Wieferich-supported.
\end{theorem}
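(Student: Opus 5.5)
The plan is to read everything off Proposition~\ref{prop:order-obstruction}. At a transgressive index \(m\), that proposition gives \(D_m>r_{\varepsilon}(m)\). This bound requires \(\mathcal{W}(m)\neq\varnothing\), but that is forced whenever \(r_{\varepsilon}(m)>0\): if \(\mathcal{W}(m)\) were empty, the conventions would give \(B_m=1\), while the first display of the proposition gives \(B_m\geq 2^{r_{\varepsilon}(m)}>1\). So \(D_m>r_{\varepsilon}(m)>0\), and dividing \(m\) by this yields \(f_m=m/D_m<m/r_{\varepsilon}(m)\). Since \(f_m\) is an integer strictly below \(m/r_{\varepsilon}(m)\), it is at most \(\lceil m/r_{\varepsilon}(m)\rceil-1\), which is the exact bound.

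For the explicit form, I will show that \(m\geq M^{(0)}_{\varepsilon}\) forces \(m/r_{\varepsilon}(m)\leq N_{\varepsilon}+1\), which gives \(f_m\leq N_{\varepsilon}\). Since \(\log_2(2(1-2^{-m}))\geq0\) for \(m\geq1\), we have \(r_{\varepsilon}(m)\geq\theta_{\varepsilon}m-\log_2 m\). It therefore suffices that
\[
  \theta_{\varepsilon}m-\log_2m\geq\frac{m}{N_{\varepsilon}+1},
\]
that is, \(\delta_{\varepsilon}m\geq\log_2 m\). This is the standard estimate \(x\geq(2/\delta)\log_2(2/\delta)\Rightarrow x\geq\log_2(x)/\delta\). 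I would prove it by monotonicity of \(x/\log_2 x\) for \(x\geq e\), using \(m\geq4\), and checking the value at \(x_0=(2/\delta)\log_2(2/\delta)\). There one needs \(2\log_2(2/\delta)\geq\log_2\bigl((2/\delta)\log_2(2/\delta)\bigr)\), i.e.\ \(2/\delta\geq\log_2(2/\delta)\), which always holds. Positivity of \(\delta_{\varepsilon}\) follows from \(N_{\varepsilon}+1>1/\theta_{\varepsilon}\). This step, a careful treatment of the small-\(\delta\) and small-\(x\) edge cases of the \(x\log x\) inversion, is where I expect the only real fiddliness. The \(\max\) with \(4\) and \(N_{\varepsilon}+1\) in \(M^{(0)}_{\varepsilon}\) is there to keep the monotonicity range valid and \(r_{\varepsilon}(m)>0\).

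For the endpoint, if \(1/\theta_{\varepsilon}\notin\Z\) then \(N_{\varepsilon}=\lceil1/\theta_{\varepsilon}\rceil-1=F_{\varepsilon}\), and we are done. If \(1/\theta_{\varepsilon}=N_{\varepsilon}\in\Z\), then \(F_{\varepsilon}=N_{\varepsilon}-1\), and it remains to exclude \(f_m=N_{\varepsilon}\), i.e.\ the critical case \(f_m\theta_{\varepsilon}=1\). I apply Corollary~\ref{cor:critical-cofactor-exclusion} with \(f=N_{\varepsilon}\geq2\) (note \(\theta_{\varepsilon}<1\)). It gives \(D_m\leq36\) or \(D_m<N_{\varepsilon}^2/4\), hence \(D_m\leq C_{\varepsilon}\) and \(m=N_{\varepsilon}D_m\leq N_{\varepsilon}C_{\varepsilon}\). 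This contradicts \(m\geq\lfloor N_{\varepsilon}C_{\varepsilon}\rfloor+1\). Finally, \(f_m\leq F_{\varepsilon}<m\) shows \(D_m>1\), so \(\mathcal{W}(m)\neq\varnothing\) and every such \(m\) is Wieferich-supported, consistent with the first paragraph.
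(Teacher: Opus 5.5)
Your proposal is correct and follows the paper's proof essentially step for step. You obtain the exact bound from \(D_m>r_{\varepsilon}(m)\) in Proposition~\ref{prop:order-obstruction}, get \(\log_2 m/m\leq\delta_{\varepsilon}\) for \(m\geq M_{\varepsilon}^{(0)}\) by monotonicity checked at \(x_0=(2/\delta_{\varepsilon})\log_2(2/\delta_{\varepsilon})\), and remove \(f_m=N_{\varepsilon}\) through Corollary~\ref{cor:critical-cofactor-exclusion}. The edge case you flag is closed exactly as in the paper by \(\delta_{\varepsilon}\leq 1/N_{\varepsilon}-1/(N_{\varepsilon}+1)\leq 1/2\), giving \(x_0\geq 8>e\); your up-front Wieferich-support check via \(B_m\geq 2^{r_{\varepsilon}(m)}>1\) is a harmless variant of the paper's closing argument that an empty \(\mathcal{W}(m)\) would force \(f_m=m\geq M_{\varepsilon}>F_{\varepsilon}\).
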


\begin{proof}
Throughout, \(m\) is transgressive at exponent \(\varepsilon\), and \(f_m=m/D_m\) is the order-cofactor of Definition~\ref{def:order-cofactor}, so that \(m=f_mD_m\) holds whether or not \(\mathcal{W}(m)\) is empty.

\medskip
\noindent\emph{The exact bound.}\quad
Proposition~\ref{prop:order-obstruction} gives \(D_m>r_{\varepsilon}(m)\).
If \(r_{\varepsilon}(m)>0\), then dividing by \(m\) yields
\[
  \frac{1}{f_m}
  =
  \frac{D_m}{m}
  >
  \frac{r_{\varepsilon}(m)}{m}
  >
  0.
\]
Both sides being positive, taking reciprocals reverses the inequality and gives \(f_m<m/r_{\varepsilon}(m)\).
Since \(f_m\) is an integer, it is at most the largest integer strictly below \(m/r_{\varepsilon}(m)\), which is \(\lceil m/r_{\varepsilon}(m)\rceil-1\).

\medskip
\noindent\emph{The bound by \(N_{\varepsilon}\).}\quad
Since \(\theta_{\varepsilon}=\varepsilon/(1+\varepsilon)<1\), one has \(1/\theta_{\varepsilon}>1\) and therefore \(N_{\varepsilon}\geq1\).
The floor in the definition of \(N_{\varepsilon}\) places \(\theta_{\varepsilon}\) strictly above \(1/(N_{\varepsilon}+1)\), so that \(\delta_{\varepsilon}>0\), and from \(\theta_{\varepsilon}\leq1/N_{\varepsilon}\) it follows that
\[
  0 < \delta_{\varepsilon} \leq \frac{1}{N_{\varepsilon}} - \frac{1}{N_{\varepsilon}+1} \leq \frac12.
\]

Put
\[
  y=\frac{2}{\delta_{\varepsilon}}
  \qquad\text{and}\qquad
  x_0=y\log_2y.
\]
Then \(\delta_{\varepsilon}\leq1/2\) gives \(y\geq4\), hence \(\log_2y\geq2\) and \(x_0\geq8\).
Since \(\log_2y\geq1\),
\[
\begin{aligned}
  \log_2x_0
  &=
  \log_2y+\log_2\log_2y
  \\
  &\leq
  2\log_2y
  =
  \delta_{\varepsilon}x_0
\end{aligned}
\]
the middle step using \(\log_2\log_2y\leq\log_2y\), valid since \(y\geq4\).

The map \(x\mapsto(\log_2x)/x\) is decreasing for \(x\geq e\), and the definition of \(M_{\varepsilon}^{(0)}\) gives \(M_{\varepsilon}^{(0)}\geq x_0\geq8\).
Hence, for every \(m\geq M_{\varepsilon}^{(0)}\),
\[
  \frac{\log_2m}{m}
  \leq
  \frac{\log_2x_0}{x_0}
  \leq
  \delta_{\varepsilon}
\]
the second inequality being the display above divided by \(x_0\).
Moreover
\[
  \log_2\bigl(2(1-2^{-m})\bigr)>0
\]
for every \(m\geq2\), since \(2(1-2^{-m})\geq3/2>1\) there.
Dividing the second assertion of Proposition~\ref{prop:order-obstruction} by \(m\) and discarding this positive term,
\[
\begin{aligned}
  \frac{1}{f_m}
  =
  \frac{D_m}{m}
  &>
  \theta_{\varepsilon}-\frac{\log_2m}{m}
  \\
  &\geq
  \theta_{\varepsilon}-\delta_{\varepsilon}
  =
  \frac{1}{N_{\varepsilon}+1}.
\end{aligned}
\]
It follows that \(f_m<N_{\varepsilon}+1\), and the integrality of \(f_m\) gives \(f_m\leq N_{\varepsilon}\).

\medskip
\noindent\emph{Removal of the endpoint.}\quad
If \(1/\theta_{\varepsilon}\) is not an integer, then \(F_{\varepsilon}=\lceil1/\theta_{\varepsilon}\rceil-1=N_{\varepsilon}\) and \(M_{\varepsilon}=M_{\varepsilon}^{(0)}\), so the preceding estimate is itself the assertion.
Suppose instead that
\[
  \frac{1}{\theta_{\varepsilon}}=N_{\varepsilon}\in\mathbb{Z}
\]
in which case \(F_{\varepsilon}=N_{\varepsilon}-1\) and the one value remaining to exclude is \(f_m=N_{\varepsilon}\).
Here \(\theta_{\varepsilon}<1\) forces \(N_{\varepsilon}\geq2\).

Suppose that \(f_m=N_{\varepsilon}\) for some \(m\geq M_{\varepsilon}\) satisfying \(E_{\varepsilon}(\mathcal{V}_m)\geq0\).
Then
\[
  f_m\theta_{\varepsilon}=1
  \qquad\text{and}\qquad
  D_m=\frac{m}{N_{\varepsilon}}.
\]
The definition of \(M_{\varepsilon}\) gives \(M_{\varepsilon}\geq\lfloor N_{\varepsilon}C_{\varepsilon}\rfloor+1>N_{\varepsilon}C_{\varepsilon}\), and therefore
\[
  D_m
  =
  \frac{m}{N_{\varepsilon}}
  \geq
  \frac{M_{\varepsilon}}{N_{\varepsilon}}
  >
  C_{\varepsilon}
  =
  \max\left\{
    36,\,
    \frac{N_{\varepsilon}^2}{4}
  \right\}.
\]
Corollary~\ref{cor:critical-cofactor-exclusion}, applied with \(f=N_{\varepsilon}\geq2\) and \(D=D_m\), gives \(D_m\leq36\) or \(D_m<N_{\varepsilon}^2/4\), and each alternative contradicts the display above.
Therefore \(f_m\neq N_{\varepsilon}\), and with the bound of the preceding part,
\[
  f_m\leq N_{\varepsilon}-1=F_{\varepsilon}.
\]

\medskip
\noindent\emph{Wieferich support and the strict inequality.}\quad
In both cases \(M_{\varepsilon}\geq M_{\varepsilon}^{(0)}\geq N_{\varepsilon}+1>F_{\varepsilon}\).
If \(\mathcal{W}(m)\) were empty for some transgressive \(m\geq M_{\varepsilon}\), then Definition~\ref{def:order-cofactor} would give \(D_m=1\) and \(f_m=m\geq M_{\varepsilon}>F_{\varepsilon}\), against the bound just proved.
Hence \(\mathcal{W}(m)\neq\varnothing\).
The lower bound \(f_m\geq1\) is part of Definition~\ref{def:order-cofactor}, since \(D_m\mid m\).
Finally \(F_{\varepsilon}=\lceil1/\theta_{\varepsilon}\rceil-1<1/\theta_{\varepsilon}\), since \(\lceil x\rceil-1<x\) for every \(x>0\), which is the strict inequality closing the displayed chain.
\end{proof}

\begin{corollary}[The order-cofactor and exponent amplification]\label{cor:lattice-confinement}
Fix \(\varepsilon>0\), and let \((m_j)\) be an increasing sequence satisfying
\[
  E_{\varepsilon}(\mathcal{V}_{m_j})\geq0.
\]
After finitely many terms have been discarded,
\[
  f_{m_j}
  \in
  \{1,2,\dots,F_{\varepsilon}\}.
\]
There are then an integer \(f\in\{1,2,\dots,F_{\varepsilon}\}\) and an infinite subsequence, again written \((m_j)\), such that
\[
  m_j=fD_j
  \qquad\text{and}\qquad
  D_j=D_{m_j}\longrightarrow\infty.
\]
Along this subsequence,
\[
  \mathcal{W}(D_j)=\mathcal{W}(m_j)
  \qquad\text{and}\qquad
  \liminf_{j\to\infty}
  \frac{\log B_{m_j}}{D_j\log2}
  \geq
  f\theta_{\varepsilon}
\]
with \(f\theta_{\varepsilon}<1\).
For every \(\varepsilon'>0\) satisfying
\[
  \theta_{\varepsilon'}
  <
  f\theta_{\varepsilon}
\]
the core triples \(\mathcal{V}_{D_j}\) are transgressive at exponent \(\varepsilon'\) for all sufficiently large \(j\).
In particular,
\[
  \liminf_{j\to\infty}q(\mathcal{V}_{D_j})
  \geq
  \frac{1}{1-f\theta_{\varepsilon}}.
\]
If \(f>1\), then the core reduction amplifies the original exponent, since one may choose \(\varepsilon'>\varepsilon\).
\end{corollary}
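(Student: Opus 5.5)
The proof assembles results already in hand, in four steps.

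First, since \((m_j)\) is increasing and transgressive, eventually \(m_j\geq M_{\varepsilon}\). Theorem~\ref{thm:bounded-order-cofactor} then gives \(1\leq f_{m_j}\leq F_{\varepsilon}<1/\theta_{\varepsilon}\). It also shows that every such \(m_j\) is Wieferich-supported. The cofactors therefore take finitely many values, and the pigeonhole principle yields an \(f\in\{1,\dots,F_{\varepsilon}\}\) attained infinitely often. We pass to that subsequence, so that \(m_j=fD_j\) and \(D_j=m_j/f\to\infty\). The inequality \(f\theta_{\varepsilon}<1\) is immediate from \(f\leq F_{\varepsilon}<1/\theta_{\varepsilon}\). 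The identity \(\mathcal{W}(D_j)=\mathcal{W}(m_j)\), together with \(B_{D_j}=B_{m_j}\), is Lemma~\ref{lem:order-core-idempotence}.

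Second, for the liminf we invoke the normalized estimate of Proposition~\ref{prop:order-core-bounds}:
\[
  \frac{\log B_{m_j}}{D_j\log2}\geq f\theta_{\varepsilon}-\frac{\log_2(fD_j)-\log_2\bigl(2(1-2^{-fD_j})\bigr)}{D_j}.
\]
With \(f\) fixed and \(D_j\to\infty\), the correction is \(O(\log D_j/D_j)\to0\), which gives the liminf bound.

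Third, we transfer the bound to the core triples. Fix \(\varepsilon'\) with \(\theta_{\varepsilon'}<f\theta_{\varepsilon}\), and put \(\eta=(f\theta_{\varepsilon}-\theta_{\varepsilon'})/2\). For large \(j\), the previous step gives \(\log B_{m_j}\geq(\theta_{\varepsilon'}+\eta)D_j\log2\). Since \(\Delta_{D_j}\geq\Omega_{D_j}=\log B_{D_j}=\log B_{m_j}\) by Corollary~\ref{cor:defect-bounds} and the idempotence lemma, we get \(\Delta_{D_j}\geq\theta_{\varepsilon'}D_j\log2+\eta D_j\log2\). The term \(\eta D_j\log2\) eventually exceeds \(\log\bigl(2(1-2^{-D_j})\bigr)\leq\log2\), so Proposition~\ref{prop:mersenne-exact} gives \(E_{\varepsilon'}(\mathcal{V}_{D_j})\geq0\).

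Fourth, for the quality bound, transgression at \(\varepsilon'\) means \(q(\mathcal{V}_{D_j})\geq1+\varepsilon'=1/(1-\theta_{\varepsilon'})\) for large \(j\). Letting \(\theta_{\varepsilon'}\uparrow f\theta_{\varepsilon}\) gives \(\liminf q\geq1/(1-f\theta_{\varepsilon})\). Since \(\varepsilon\mapsto\theta_{\varepsilon}\) is an increasing bijection of \((0,\infty)\) onto \((0,1)\), when \(f>1\) we have \(\theta_{\varepsilon}<f\theta_{\varepsilon}<1\). Hence some \(\theta_{\varepsilon'}\) lies strictly between them, and any such \(\varepsilon'\) satisfies \(\varepsilon'>\varepsilon\).

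No step is a serious obstacle. The only point needing care is the order of operations: discard the finitely many terms with \(m_j<M_{\varepsilon}\) first, so that Wieferich support and the cofactor bound hold before the pigeonhole step. One should also observe that the liminf is taken along the refined subsequence, on which \(f\) is constant, so the error term in the second step is uniform.
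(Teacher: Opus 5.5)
Your proposal is correct and follows the paper's proof essentially step for step: the bounded cofactor from Theorem~\ref{thm:bounded-order-cofactor}, then pigeonhole and idempotence, then the normalized estimate of Proposition~\ref{prop:order-core-bounds}, and finally the transfer to the cores through Proposition~\ref{prop:mersenne-exact}. The differences are cosmetic: you pass through \(\Delta_{D_j}\geq\Omega_{D_j}=\log B_{D_j}=\log B_{m_j}\) where the paper uses \(L_{D_j}\geq B_{m_j}\), and you state explicitly that \(m_j\) is Wieferich-supported once \(m_j\geq M_{\varepsilon}\), a hypothesis that Lemma~\ref{lem:order-core-idempotence} and Proposition~\ref{prop:order-core-bounds} require and that the paper leaves implicit.
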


\begin{proof}
The finite range for \(f_{m_j}\) is Theorem~\ref{thm:bounded-order-cofactor}, and an infinite subsequence with constant cofactor follows from the pigeonhole principle.
Since \(m_j\to\infty\) and \(m_j=fD_j\), one has \(D_j\to\infty\), and Lemma~\ref{lem:order-core-idempotence} gives the equality of the Wieferich sets.

Proposition~\ref{prop:order-core-bounds} gives
\[
  \frac{\log B_{m_j}}{D_j\log2}
  \geq
  f\theta_{\varepsilon}
  -
  \frac{
    \log_2(fD_j)
    -
    \log_2\bigl(2(1-2^{-fD_j})\bigr)
  }{D_j}
\]
whose final quotient tends to zero.
Moreover,
\[
  f
  \leq
  F_{\varepsilon}
  <
  \frac{1}{\theta_{\varepsilon}}
\]
so that \(f\theta_{\varepsilon}<1\).

Now fix \(\varepsilon'\) with \(\theta_{\varepsilon'}<f\theta_{\varepsilon}\).
The positive linear gap between these two quantities dominates the logarithmic correction above, so that for all sufficiently large \(j\),
\[
  \log B_{m_j}
  \geq
  \theta_{\varepsilon'}D_j\log2
  +
  \log\bigl(2(1-2^{-D_j})\bigr).
\]
Since \(L_{D_j}\geq B_{m_j}\), Proposition~\ref{prop:mersenne-exact} gives \(E_{\varepsilon'}(\mathcal{V}_{D_j})\geq0\).
Letting \(\theta_{\varepsilon'}\) increase to \(f\theta_{\varepsilon}\) proves the quality estimate.
If \(f>1\), then
\[
  \theta_{\varepsilon}<f\theta_{\varepsilon}<1
\]
so that the monotonicity of \(\varepsilon\mapsto\theta_{\varepsilon}\) permits a choice \(\varepsilon'>\varepsilon\).
\end{proof}

The amplification of Corollary~\ref{cor:lattice-confinement} cannot be repeated indefinitely.
Its limits give the cofactor at once on the upper half of the transgressive range.

\begin{corollary}[Saturation near the critical exponent]\label{cor:saturation-near-critical}
Fix \(\varepsilon>0\) satisfying
\[
  \theta_{\varepsilon}>\frac{\rho_{\mathcal{W}}}{2}.
\]
Then all but finitely many indices \(m\) with \(E_{\varepsilon}(\mathcal{V}_m)\geq0\) are order-saturated.
\end{corollary}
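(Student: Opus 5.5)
Suppose, to the contrary, that infinitely many transgressive indices \(m\) at exponent \(\varepsilon\) fail to be order-saturated, and list them as an increasing sequence \((m_j)\). The plan is to feed this sequence into Corollary~\ref{cor:lattice-confinement} and contradict the definition of \(\rho_{\mathcal{W}}\) as a limsup.

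\textbf{Step 1: fix a cofactor \(f\geq2\).} By Theorem~\ref{thm:bounded-order-cofactor}, after discarding finitely many terms every \(m_j\) is Wieferich-supported and has \(f_{m_j}\in\{1,\dots,F_{\varepsilon}\}\). Non-saturation means \(f_{m_j}\neq1\). The pigeonhole step of Corollary~\ref{cor:lattice-confinement} then gives a single value \(f\in\{2,\dots,F_{\varepsilon}\}\) and an infinite subsequence with \(m_j=fD_j\) and \(D_j\to\infty\).

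\textbf{Step 2: lower bound along the cores.} Corollary~\ref{cor:lattice-confinement} yields
\[
  \liminf_{j\to\infty}\frac{\log B_{m_j}}{D_j\log2}\geq f\theta_{\varepsilon}\geq2\theta_{\varepsilon}.
\]
By Lemma~\ref{lem:order-core-idempotence} we have \(\mathcal{W}(D_j)=\mathcal{W}(m_j)\). Hence \(\Omega_{D_j}=\sum_{p\in\mathcal{W}(D_j)}(A_p-1)\log p=\log B_{m_j}\), and this is an exact identity along the distinct, unbounded indices \(D_j\).

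\textbf{Step 3: compare with \(\rho_{\mathcal{W}}\).} Since \(\rho_{\mathcal{W}}\) is the limsup of \(\Omega_m/(m\log2)\), Step 2 gives
\[
  \rho_{\mathcal{W}}\geq\limsup_{j}\frac{\Omega_{D_j}}{D_j\log2}\geq2\theta_{\varepsilon}.
\]
This contradicts the hypothesis \(\theta_{\varepsilon}>\rho_{\mathcal{W}}/2\), so all but finitely many transgressive indices satisfy \(f_m=1\), that is, \(D_m=m\).

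\textbf{Expected obstacle.} The only delicate point is Step 2. One must check that the liminf bound of Corollary~\ref{cor:lattice-confinement} is stated for \(\log B_{m_j}\) normalized by \(D_j\), and not merely as transgression of \(\mathcal{V}_{D_j}\) at an amplified exponent. Transgression alone would give only \(\Delta_{D_j}\) large, and converting that to \(\Omega_{D_j}\) costs the term \(\log G_{D_j}\leq\log D_j\). This loss is harmless as well, since it is \(o(D_j)\), so either route closes the argument.
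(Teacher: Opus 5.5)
Your proof is correct and follows the paper's argument: you use Theorem~\ref{thm:bounded-order-cofactor} and pigeonhole to fix a recurring cofactor \(f\geq2\), and then Corollary~\ref{cor:lattice-confinement} to force \(f\theta_{\varepsilon}\leq\rho_{\mathcal{W}}\). The only difference is in the last step. The paper passes through transgression of the cores at amplified exponents \(\varepsilon'\) and Theorem~\ref{thm:wieferich-density-criterion}, then takes a supremum over \(\varepsilon'\); you instead read \(\Omega_{D_j}=\log B_{m_j}\) off Lemma~\ref{lem:order-core-idempotence} and compare directly with the limsup defining \(\rho_{\mathcal{W}}\), which is slightly shorter.
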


\begin{proof}
Suppose that infinitely many transgressive indices satisfy \(f_m\geq2\).
Theorem~\ref{thm:bounded-order-cofactor} confines their cofactors to \(\{2,\dots,F_{\varepsilon}\}\) once finitely many indices have been discarded, so that one value \(f\geq2\) recurs infinitely often.
Corollary~\ref{cor:lattice-confinement} then furnishes, for every \(\varepsilon'\) with \(\theta_{\varepsilon'}<f\theta_{\varepsilon}\), infinitely many distinct indices transgressive at exponent \(\varepsilon'\), and Theorem~\ref{thm:wieferich-density-criterion} gives \(\theta_{\varepsilon'}\leq\rho_{\mathcal{W}}\).
Passage to the supremum over these exponents gives
\[
  f\theta_{\varepsilon}\leq\rho_{\mathcal{W}}
\]
whence \(\theta_{\varepsilon}\leq\rho_{\mathcal{W}}/2\), against the hypothesis.
\end{proof}

The range of the hypothesis is the window
\[
  \frac{\rho_{\mathcal{W}}}{2}
  <
  \theta_{\varepsilon}
  \leq
  \rho_{\mathcal{W}}
\]
together with everything above it.
The transgressive indices may be infinite in number on the window itself, and the corollary makes them order-saturated from some point onward.
Above the window, and in particular at every \(\varepsilon>0\) when \(\rho_{\mathcal{W}}=0\), Theorem~\ref{thm:wieferich-density-criterion} leaves only finitely many transgressive indices, so the saturation concerns a finite set.

\begin{remark}[Significance of the cofactor bound]\label{rem:cofactor-meaning}
Theorem~\ref{thm:bounded-order-cofactor} refines the asymptotic inequality \(D_m \geq (\theta_\varepsilon - o(1))m\) of Proposition~\ref{prop:order-obstruction} into the exact finite constraint \(f_m \in \{1, 2, \dots, F_\varepsilon\}\).
The critical value \(f_m = 1/\theta_\varepsilon\) is excluded because it would force \(\rad(2^D-1) < fD/2\), contradicting the superlinear radical growth of Lemma~\ref{lem:mersenne-radical-superlinear}.
For instance:
\begin{itemize}
\item At \(\varepsilon=1\), one has \(\theta_1 = 1/2\) and \(F_1 = \lceil 2\rceil - 1 = 1\), so every sufficiently large transgressive index is order-saturated (\(m = D_m\)).
\item At \(\varepsilon=1/40\), one has \(\theta_{1/40} = 1/41\) and \(F_{1/40} = 40\), giving \(f_m \le 40\). The transgressive index \(m=364\) in Proposition~\ref{prop:wieferich-explicit-hit} satisfies \(D_{364}=364\) and thus belongs to the saturated stratum \(f=1\).
\end{itemize}
On the prime subline (Theorem~\ref{thm:prime-subline}), every Wieferich-supported exponent \(\ell\) satisfies \(D_\ell = \ell\) and \(f_\ell = 1\).
\end{remark}

\subsection{The finite case}\label{subsec:wieferich-finite}

If \(\mathcal{W}\) is finite, the intrinsic defect \(\Omega_m\) is uniformly bounded, which precludes transgressive Mersenne triples for any fixed \(\varepsilon>0\).

\begin{theorem}[Finiteness of \(\mathcal{W}\) implies finiteness of transgressive triples]\label{thm:wieferich-finite}
Suppose that \(\mathcal{W}\) is finite, and put
\[
  C_{\mathcal{W}}
  \coloneqq
  \sum_{p\in\mathcal{W}}(A_p-1)\log p.
\]
Then \(\Delta_m\leq\log m+C_{\mathcal{W}}\) and \(L_m\leq e^{C_{\mathcal{W}}}m\) for every \(m\geq2\). Thus, for every \(\varepsilon>0\), the set of integers \(m\) satisfying \(E_{\varepsilon}(\mathcal{V}_m)\geq0\) is finite.
\end{theorem}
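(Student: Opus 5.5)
The plan is to read the bound directly off the exact defect law and then compare it with the exponential threshold of Proposition~\ref{prop:mersenne-exact}. First, Theorem~\ref{thm:exact-defect-law} gives \(\Delta_m=\Omega_m+\log G_m\), and Corollary~\ref{cor:defect-bounds} gives \(\log G_m\leq\log m\). Since \(\mathcal{W}(m)\subseteq\mathcal{W}\), every term \((A_p-1)\log p\) is non-negative (indeed \(A_p\geq2\) on \(\mathcal{W}\) by Lemma~\ref{lem:wieferich-lifting}). Extending the sum defining \(\Omega_m\) from \(\mathcal{W}(m)\) to the finite set \(\mathcal{W}\) can therefore only increase it, so \(\Omega_m\leq C_{\mathcal{W}}\). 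This yields \(\Delta_m\leq\log m+C_{\mathcal{W}}\), and exponentiating gives \(L_m\leq e^{C_{\mathcal{W}}}m\). Alternatively, one could invoke Proposition~\ref{prop:finite-prime-bound} with \(S=\mathcal{W}\), but that gives only \(\#\mathcal{W}\log m\), so the exact law is the sharper route.

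For finiteness, fix \(\varepsilon>0\) and suppose \(E_{\varepsilon}(\mathcal{V}_m)\geq0\). Proposition~\ref{prop:mersenne-exact} then gives
\[
  \theta_{\varepsilon}m\log2+\log\bigl(2(1-2^{-m})\bigr)\leq\Delta_m\leq\log m+C_{\mathcal{W}}.
\]
Since \(\log(2(1-2^{-m}))>0\) for \(m\geq2\), this forces \(\theta_{\varepsilon}m\log2\leq\log m+C_{\mathcal{W}}\). Because \(\theta_{\varepsilon}>0\) and \((\log m)/m\to0\), this inequality fails for all sufficiently large \(m\). The set of transgressive \(m\) is therefore bounded, hence finite.

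No step is a real obstacle. The only points needing care are the non-negativity used to enlarge the sum to all of \(\mathcal{W}\), and the sign of the boundary term. If an explicit ceiling were wanted, one would solve \(\theta_{\varepsilon}m\log2>\log m+C_{\mathcal{W}}\) by the same \(y\log_2 y\) device used in the proof of Theorem~\ref{thm:bounded-order-cofactor}. That refinement is deferred to the effective statement announced in the introduction.
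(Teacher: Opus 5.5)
Your proof is correct and follows essentially the same route as the paper: you bound $\Omega_m\leq C_{\mathcal{W}}$ via $\mathcal{W}(m)\subseteq\mathcal{W}$ and non-negativity, apply Corollary~\ref{cor:defect-bounds}, and compare with the threshold of Proposition~\ref{prop:mersenne-exact}. The only difference is cosmetic: you compare $\theta_{\varepsilon}m\log2$ against $\log m+C_{\mathcal{W}}$ additively, while the paper works multiplicatively with $e^{C_{\mathcal{W}}}m\geq L_m\geq2^{\theta_{\varepsilon}m}$.
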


\begin{proof}
Since \(\mathcal{W}(m)\subseteq\mathcal{W}\), one has \(\Omega_m\leq C_{\mathcal{W}}\).
Corollary~\ref{cor:defect-bounds} gives the two displayed bounds.
Fix \(\varepsilon>0\).
Proposition~\ref{prop:mersenne-exact} shows that \(E_{\varepsilon}(\mathcal{V}_m)\geq0\) forces
\[
  e^{C_{\mathcal{W}}}m
  \geq
  L_m
  \geq
  2(1-2^{-m})\,2^{\theta_{\varepsilon}m}
  \geq
  2^{\theta_{\varepsilon}m}
\]
for \(m\geq2\).
Since \(\theta_{\varepsilon}>0\), the right-hand side grows exponentially while the left-hand side grows linearly.
The inequality can therefore hold for at most finitely many \(m\).
\end{proof}

A prime dividing \(2^m-1\) contributes to the defect quotient \(L_m\) in two ways.
There is the lifting constant \(A_p\), intrinsic to the prime and indifferent to \(m\), and there is \(v_p(m/d_p)\), which depends upon \(m\) and whose aggregate is bounded by \(m\).
A lifting constant greater than one belongs to the Wieferich primes alone.
Suppose these to be finitely many.
An absolute constant then bounds the intrinsic contribution, and the whole growth of \(L_m\) falls to the second factor, which can never overtake the exponential threshold \(\mathcal{A}_{\varepsilon}(m)\).

\begin{remark}[The two known Wieferich primes]\label{rem:known-wieferich}
For \(p=1093\) one has \(d_p=364\) and \(A_p=2\).
For \(p=3511\) one has \(d_p=1755\) and \(A_p=2\).
Suppose that these are the only Wieferich primes.
Then Theorem~\ref{thm:wieferich-finite} gives
\[
  L_m
  \leq
  1093\cdot3511\cdot m
  =
  3\,837\,523\,m
\]
for every \(m\geq2\).
Unconditionally nothing of the kind can be asserted, the finiteness of \(\mathcal{W}\) being unknown.
\end{remark}

The closure asserted in Theorem~\ref{thm:wieferich-finite} may be made effective in quantity.

\begin{theorem}[Effective bound from a bounded intrinsic defect]\label{thm:effective-wieferich-bound}
Fix \(\varepsilon>0\) and put
\[
  \lambda_{\varepsilon}
  \coloneqq
  \theta_{\varepsilon}\log2.
\]
Let \(C\geq0\).
If
\[
  E_{\varepsilon}(\mathcal{V}_m)\geq0
  \qquad\text{and}\qquad
  \Omega_m\leq C
\]
then
\[
  m
  \leq
  \frac{2}{\lambda_{\varepsilon}}
  \left(
    C+\log\frac{2}{\lambda_{\varepsilon}}
  \right).
\]
In particular, if \(\mathcal{W}\) is finite, then every transgressive Mersenne index satisfies
\[
  m
  \leq
  \frac{2}{\theta_{\varepsilon}\log2}
  \left(
    C_{\mathcal W}
    +
    \log\frac{2}{\theta_{\varepsilon}\log2}
  \right)
\]
with \(C_{\mathcal W}\) as in Theorem~\ref{thm:wieferich-finite}.
\end{theorem}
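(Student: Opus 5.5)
Combine the exact Mersenne criterion with the two-sided Wieferich decomposition, so that the hypotheses produce an inequality of the form $\lambda_{\varepsilon}m\le C+\log m$. Then solve this transcendental inequality explicitly. Concretely, Proposition~\ref{prop:mersenne-exact} turns $E_{\varepsilon}(\mathcal{V}_m)\geq0$ into
\[
  \Delta_m\geq\theta_{\varepsilon}m\log2+\log\bigl(2(1-2^{-m})\bigr)\geq\lambda_{\varepsilon}m,
\]
where the last step drops the term $\log(2(1-2^{-m}))>0$, valid for $m\geq2$. Corollary~\ref{cor:defect-bounds} gives $\Delta_m\leq\Omega_m+\log m\leq C+\log m$. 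Together these yield
\[
  \lambda_{\varepsilon}m\leq C+\log m .
\]

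The key step is to invert this inequality. Put $y=2/\lambda_{\varepsilon}$ and $B=y\bigl(C+\log y\bigr)$, the claimed bound. I will argue by contradiction: suppose $m>B$. The function $h(x)=\lambda_{\varepsilon}x-\log x-C$ is increasing for $x>1/\lambda_{\varepsilon}=y/2$. Since $\lambda_{\varepsilon}<\log2<1$, we have $y>2$, so $\log y>0$ and $B\geq y\log y$. It therefore suffices to check $h(B)\geq0$ once $B\geq y/2$ is confirmed; the latter holds because $\log y\geq\log 2>1/2$.

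Now evaluate $h(B)$. We have $\lambda_{\varepsilon}B=2(C+\log y)$, so
\[
  h(B)=C+2\log y-\log B=C+\log y-\log\bigl(C+\log y\bigr),
\]
which is positive because $t>\log t$ for every $t>0$. Then $m>B$ gives $h(m)>h(B)>0$, contradicting $\lambda_{\varepsilon}m\leq C+\log m$. This establishes the bound, with the strict positivity of $h(B)$ even giving room to spare.

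The finite-$\mathcal{W}$ statement follows by taking $C=C_{\mathcal{W}}$, since $\Omega_m\leq C_{\mathcal{W}}$ by Theorem~\ref{thm:wieferich-finite}, and substituting $\lambda_{\varepsilon}=\theta_{\varepsilon}\log2$.

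The only delicate point is the monotonicity range. One must ensure that $B$ lies to the right of the minimum of $h$, so that passing from $B$ to $m>B$ preserves positivity; this is where $y>2$ and $\log y\geq\log2$ are used. Everything else is a direct substitution of earlier results.
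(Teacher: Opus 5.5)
Your proof is correct and follows the paper's route. Both arguments combine Proposition~\ref{prop:mersenne-exact} with Corollary~\ref{cor:defect-bounds} to reach $\lambda_{\varepsilon}m\le C+\log m$, and both then invert this using $\log t<t$. The only difference is cosmetic: the paper sets $H=C+\log(2/\lambda_{\varepsilon})$ and $x=\lambda_{\varepsilon}m/2$, and for every $m>2H/\lambda_{\varepsilon}$ it bounds $C+\log m=H+\log x<2x=\lambda_{\varepsilon}m$ directly. This avoids your monotonicity-range check.
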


\begin{proof}
Corollary~\ref{cor:defect-bounds} and the hypothesis give
\[
  \Delta_m\leq C+\log m.
\]
On the other hand, Proposition~\ref{prop:mersenne-exact} gives
\[
  \Delta_m
  \geq
  \lambda_{\varepsilon}m
  +
  \log\bigl(2(1-2^{-m})\bigr).
\]
The final logarithm is positive for \(m\geq2\), and hence
\[
  \lambda_{\varepsilon}m
  <
  C+\log m.
\]

Put
\[
  H
  =
  C+\log\frac{2}{\lambda_{\varepsilon}}.
\]
Since \(\theta_{\varepsilon}<1\), one has \(\lambda_{\varepsilon}<\log2<2\), so that \(H>0\).
Suppose that \(m>2H/\lambda_{\varepsilon}\), and set
\[
  x=\frac{\lambda_{\varepsilon}m}{2}.
\]
Then \(x>H\), while
\[
\begin{aligned}
  C+\log m
  &=
  C+\log\frac{2x}{\lambda_{\varepsilon}}
  \\
  &=
  H+\log x
  \\
  &<
  x+x
  =
  \lambda_{\varepsilon}m
\end{aligned}
\]
where \(\log x<x\) for every \(x>0\).
This contradicts the necessary inequality established above.
\end{proof}

\begin{corollary}[Explicit form under the two-known-primes hypothesis]\label{cor:two-known-effective-bound}
Assume that
\[
  \mathcal{W}=\{1093,3511\}
\]
and that both primes have lifting constant two, as in Remark~\ref{rem:known-wieferich}.
Then every \(m\) satisfying \(E_{\varepsilon}(\mathcal{V}_m)\geq0\) obeys
\[
  m
  \leq
  \frac{2}{\theta_{\varepsilon}\log2}
  \log\left(
    \frac{7\,675\,046}
         {\theta_{\varepsilon}\log2}
  \right).
\]
\end{corollary}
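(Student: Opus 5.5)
The plan is to specialize Theorem~\ref{thm:effective-wieferich-bound} with the explicit constant supplied by the hypothesis. Under the assumption \(\mathcal{W}=\{1093,3511\}\) with \(A_{1093}=A_{3511}=2\), the constant of Theorem~\ref{thm:wieferich-finite} is
\[
  C_{\mathcal{W}}
  =
  (2-1)\log1093+(2-1)\log3511
  =
  \log(1093\cdot3511)
  =
  \log3\,837\,523,
\]
which is exactly the multiplicative constant recorded in Remark~\ref{rem:known-wieferich}. Since \(\mathcal{W}(m)\subseteq\mathcal{W}\), one has \(\Omega_m\leq C_{\mathcal{W}}\) for every \(m\geq2\). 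Theorem~\ref{thm:effective-wieferich-bound} therefore applies with \(C=C_{\mathcal{W}}\) to every index transgressive at exponent \(\varepsilon\).

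The second step combines the two logarithms in the bound of that theorem. With \(\lambda_{\varepsilon}=\theta_{\varepsilon}\log2\), one has
\[
  C_{\mathcal{W}}+\log\frac{2}{\lambda_{\varepsilon}}
  =
  \log\left(\frac{2\cdot3\,837\,523}{\theta_{\varepsilon}\log2}\right)
  =
  \log\left(\frac{7\,675\,046}{\theta_{\varepsilon}\log2}\right).
\]
The prefactor \(2/\lambda_{\varepsilon}\) equals \(2/(\theta_{\varepsilon}\log2)\). Substituting both expressions gives the displayed inequality verbatim.

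The only points needing care are arithmetic ones. The first is to confirm \(1093\cdot3511=3\,837\,523\), so that doubling gives \(7\,675\,046\). The second is to ensure that the hypothesis on the lifting constants is used as stated, so that each prime contributes \(\log p\) to the sum. No further obstacle arises, because positivity of \(H\) and the case analysis are already handled inside Theorem~\ref{thm:effective-wieferich-bound}.
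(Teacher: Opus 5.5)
Your proposal is correct and matches the paper's proof: it likewise sets \(C_{\mathcal W}=\log(1093\cdot3511)=\log(3\,837\,523)\), substitutes this into Theorem~\ref{thm:effective-wieferich-bound}, and combines \(C_{\mathcal W}+\log\bigl(2/(\theta_{\varepsilon}\log2)\bigr)\) into the single logarithm using \(2\cdot3\,837\,523=7\,675\,046\). Your check that \(\Omega_m\leq C_{\mathcal W}\) via \(\mathcal{W}(m)\subseteq\mathcal{W}\) is the step the paper handles inside Theorem~\ref{thm:wieferich-finite}.
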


\begin{proof}
Under the stated hypothesis,
\[
  C_{\mathcal W}
  =
  \log(1093\cdot3511)
  =
  \log(3\,837\,523).
\]
Substitution of this value into Theorem~\ref{thm:effective-wieferich-bound}, together with \(2\cdot3\,837\,523=7\,675\,046\), gives the bound.
\end{proof}

\begin{remark}[The probabilistic model]\label{rem:wieferich-heuristic}
Were \(\mathcal{W}\) finite, the standard probabilistic model of the Fermat quotient would be contradicted.
Under the heuristic that
\[
  \frac{2^{p-1}-1}{p}\bmod p
\]
is distributed as a random residue, the probability that a given prime \(p\) lies in \(\mathcal{W}\) is approximately \(1/p\).
Mertens' theorem then predicts
\[
  \#\bigl\{p\in\mathcal{W}:p\leq x\bigr\}
  \sim
  \log\log x.
\]
One expects \(\mathcal{W}\) to be infinite, then, and extremely sparse.
A proof of finiteness would show this probabilistic model to fail for Fermat quotients.
\end{remark}

\begin{remark}[The first case of Fermat's last theorem]\label{rem:wieferich-flt}
Wieferich's original theorem states that if the first case of Fermat's last theorem fails for a prime exponent \(p\), then \(p\) lies in \(\mathcal{W}\)~\cite{Wieferich1909}.
Were \(\mathcal{W}\) finite, the first case could have failed for finitely many exceptional primes only.
The consequence keeps a historical interest of its own, and it shows once more how the condition \(p^2\mid2^{p-1}-1\) returns across several Diophantine questions.
\end{remark}

\subsection{The infinite case}\label{subsec:wieferich-infinite}

If \(\mathcal{W}\) is infinite, the Mersenne criterion of Proposition~\ref{prop:mersenne-exact} remains applicable.
Theorem~\ref{thm:order-defect-density} requires positive exponential order--defect growth.

\begin{proposition}[The Wieferich obstruction along a transgressive sequence]\label{prop:wieferich-necessary}
Fix \(\varepsilon>0\) and let \(m\geq2\) satisfy
\[
  E_{\varepsilon}(\mathcal{V}_m)\geq0.
\]
Then
\[
  \prod_{p\in\mathcal{W}(m)}p^{A_p-1}
  \geq
  \frac{2(1-2^{-m})}{m}\,
  2^{\theta_{\varepsilon}m}.
\]
In particular, \(\mathcal{W}(m)\) is non-empty for every sufficiently large such \(m\).
If the set of such \(m\) is infinite, then \(\mathcal{W}\) is infinite.
\end{proposition}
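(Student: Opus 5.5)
The displayed lower bound is the first assertion of Proposition~\ref{prop:order-obstruction}, and I would re-derive it directly. Transgression gives, by Proposition~\ref{prop:mersenne-exact},
\[
  \Delta_m\geq\theta_{\varepsilon}m\log2+\log\bigl(2(1-2^{-m})\bigr).
\]
Corollary~\ref{cor:defect-bounds} gives the upper bound \(\Delta_m\leq\Omega_m+\log m\). Combining the two yields
\[
  \Omega_m\geq\theta_{\varepsilon}m\log2+\log\bigl(2(1-2^{-m})\bigr)-\log m.
\]
Since \(\Omega_m=\log\prod_{p\in\mathcal{W}(m)}p^{A_p-1}\) by definition, exponentiating gives the stated inequality.

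For non-emptiness, observe that if \(\mathcal{W}(m)=\varnothing\) the left side equals \(1\). The right side is at least \(2^{\theta_{\varepsilon}m}/m\), because \(2(1-2^{-m})\geq1\). This quantity exceeds \(1\) once \(\theta_{\varepsilon}m>\log_2m\), which holds for all \(m\) beyond an explicit bound depending on \(\varepsilon\). So \(\mathcal{W}(m)\neq\varnothing\) for every sufficiently large transgressive \(m\).

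For infinitude, argue by contraposition using Theorem~\ref{thm:wieferich-finite}. If \(\mathcal{W}\) were finite, that theorem shows the set of transgressive \(m\) is finite. Alternatively, the left side would be bounded by \(e^{C_{\mathcal W}}\) while the right side is unbounded.

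No step is a real obstacle. The only care needed is the threshold argument in the middle step, namely that \(2^{\theta_{\varepsilon}m}/m\to\infty\).
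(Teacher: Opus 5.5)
Your proposal is correct and follows the paper's proof essentially step for step. You combine Proposition~\ref{prop:mersenne-exact} with the upper bound of Corollary~\ref{cor:defect-bounds}, working additively with \(\Omega_m\) where the paper works multiplicatively with \(L_m\); you compare with the empty product to get non-emptiness; and you invoke Theorem~\ref{thm:wieferich-finite} for the infinitude of \(\mathcal{W}\).
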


\begin{proof}
Proposition~\ref{prop:mersenne-exact} gives
\[
  L_m
  \geq
  2(1-2^{-m})\,2^{\theta_{\varepsilon}m}.
\]
Corollary~\ref{cor:defect-bounds} gives
\[
  L_m
  \leq
  m\prod_{p\in\mathcal{W}(m)}p^{A_p-1}.
\]
Combining the two proves the inequality.

For the non-emptiness, note that \(\theta_{\varepsilon}>0\), so the displayed right-hand side grows exponentially and exceeds \(1\) for every \(m\) beyond some \(m_0(\varepsilon)\).
The product over \(\mathcal{W}(m)\) then exceeds \(1\), while an empty product would equal \(1\), so \(\mathcal{W}(m)\) is non-empty.

Suppose that \(\mathcal{W}\) is finite.
Then Theorem~\ref{thm:wieferich-finite} permits only finitely many such \(m\).
\end{proof}

Let \(p_1,\dots,p_k\) be Wieferich primes, each with lifting constant equal to \(2\), as happens for the two known examples.
Their combined intrinsic contribution to \(L_m\) is at most \(p_1\cdots p_k\), and it is present only when every \(p_i\) divides \(2^m-1\), which requires
\[
  \operatorname{lcm}(d_{p_1},\dots,d_{p_k})
  \mid m.
\]
Proposition~\ref{prop:wieferich-necessary} then asks the product \(p_1\cdots p_k\) to be exponentially large in \(m\), the factor \(m\) apart, while \(m\) must at the same time be a multiple of the least common multiple of the orders.
No mechanism is known that would keep this least common multiple small against the logarithm of the product of the primes.

\begin{proposition}[A transgressive triple from \(1093\)]\label{prop:wieferich-explicit-hit}
The Mersenne triple
\[
  \mathcal{V}_{364}
  =
  \bigl(1,2^{364}-1,2^{364}\bigr)
\]
satisfies
\[
  E_{1/40}(\mathcal{V}_{364})>0.
\]
Equivalently,
\[
  q(\mathcal{V}_{364})>\frac{41}{40}.
\]
\end{proposition}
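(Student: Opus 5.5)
The plan is to apply the exact Mersenne criterion of Proposition~\ref{prop:mersenne-exact} with \(m=364\) and \(\varepsilon=1/40\). Then \(\theta_{1/40}=1/41\), and transgression with strict inequality amounts to
\[
  L_{364}>2(1-2^{-364})\,2^{364/41}.
\]
The strict form follows because the computation in the proof of that proposition expresses \(E_{\varepsilon}(\mathcal{V}_m)\) as \((1+\varepsilon)\) times the difference \(\Delta_m-\theta_{\varepsilon}m\log2-\log\bigl(2(1-2^{-m})\bigr)\). By Definition~\ref{def:energy-quality}, the positivity \(E_{1/40}>0\) is the same statement as \(q(\mathcal{V}_{364})>41/40\), so only the first claim needs proof.

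For the lower bound on \(L_{364}\) I would use Corollary~\ref{cor:defect-bounds}, which gives \(L_m\geq\prod_{p\in\mathcal{W}(m)}p^{A_p-1}\). Remark~\ref{rem:known-wieferich} records \(d_{1093}=364\) and \(A_{1093}=2\). Hence \(1093\in\mathcal{W}(364)\) by Lemma~\ref{lem:mersenne-lifting}, and so \(L_{364}\geq1093\). The divisor \(G_{364}\) need not be computed, since only a lower bound is required. For the record, it is trivial: \(7\) and \(13\) do not divide \(2^{364}-1\), because \(d_7=3\) and \(d_{13}=12\) do not divide \(364\).

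The remaining step is numerical. I need \(2\cdot2^{364/41}<1093\). Write \(364/41=8+36/41\) and use \(36/41<0.9\). Then \(2^{0.9}<1.87\) gives \(2^{364/41}<256\cdot1.87<479\). Therefore
\[
  2(1-2^{-364})\,2^{364/41}<958<1093\leq L_{364},
\]
which proves the claim.

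The only step needing care is verifying \(2^{0.9}<1.87\) cleanly. The cleanest route is \(1.87^{10}>2^{9}=512\): already \(1.87^{5}>22.8\), and \(22.8^{2}=519.84>512\). Beyond this, the argument rests on the cited order and lifting constant of \(1093\), which the paper takes as known.
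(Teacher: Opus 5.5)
Your proof is correct and follows the paper's own argument: $d_{1093}=364$ and $A_{1093}=2$ give $L_{364}\geq1093$, and the identity behind Proposition~\ref{prop:mersenne-exact} turns this into strict transgression, as you point out. The only difference is the numerical step, where the paper avoids estimating $2^{0.9}$ by noting $2(1-2^{-364})\,2^{364/41}<2^{405/41}<2^{10}=1024<1093$.
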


\begin{proof}
As stated in Remark~\ref{rem:known-wieferich}, one has
\[
  d_{1093}=364
  \qquad\text{and}\qquad
  A_{1093}=2.
\]
Thus
\[
  1093^2\mid2^{364}-1
\]
and therefore
\[
  L_{364}\geq1093.
\]
For \(\varepsilon=1/40\), one has
\[
  \theta_{\varepsilon}=\frac{1}{41}.
\]
The Mersenne threshold satisfies
\[
\begin{aligned}
  \mathcal{A}_{1/40}(364)
  &=
  2(1-2^{-364})\,2^{364/41}
  \\
  &<
  2^{1+364/41}
  =
  2^{405/41}
  \\
  &<
  2^{10}
  =
  1024
  <
  1093.
\end{aligned}
\]
Proposition~\ref{prop:mersenne-exact} gives the result.
\end{proof}

Proposition~\ref{prop:wieferich-explicit-hit} gives a single transgressive triple at a fixed exponent.
An infinite family at one fixed exponent would require \(\rho_{\mathcal{W}}>0\), and whether this density vanishes is not known.

\begin{remark}[Relation to Silverman's theorem]\label{rem:silverman}
The infinitude of Wieferich primes is consistent with the \(abc\) conjecture. Silverman proved that the \(abc\) conjecture implies the existence of infinitely many non-Wieferich primes, at least \(c\log x\) of them up to \(x\)~\cite{Silverman1988}. Under the heuristic of Remark~\ref{rem:wieferich-heuristic}, non-Wieferich primes have natural density one, while \(\mathcal{W}\) has density zero despite being infinite.

Silverman's theorem bounds the non-Wieferich set from below, while an \(abc\) violation on the Mersenne line would bear on the multiplicative orders within \(\mathcal{W}\).
Individual Wieferich primes yield isolated low-radical triples, as in the construction of Granville and Tucker~\cite{GranvilleTucker2002}. Theorems~\ref{thm:wieferich-density-criterion} and~\ref{thm:order-defect-density} give a two-sided equivalence -- an infinite transgressive family at a fixed exponent \(\varepsilon>0\) exists on the Mersenne line if and only if the order--defect density \(\sigma_{\mathcal{W}}\) is strictly positive.
\end{remark}

\begin{remark}[Heuristic expectations versus positive density]\label{rem:heuristic-gap}
The standard probabilistic model of Remark~\ref{rem:wieferich-heuristic} predicts that the number of Wieferich primes up to \(x\) is of order \(\log\log x\).
For the Mersenne number \(2^m-1\), taking \(x = 2^m\) indicates that
\[
  \#\mathcal{W}(m) \le \#\{p\in\mathcal{W} : p \le 2^m\} \ll \log m
\]
meaning that the intrinsic defect \(\Omega_m\) is distributed over at most \(O(\log m)\) primes.

The condition in Theorem~\ref{thm:order-defect-density}, however, requires the stronger bound \(B(S) \geq 2^{\kappa D(S)}\) for some fixed \(\kappa>0\).
If every Wieferich prime satisfies \(A_p = 2\) (as is the case for \(1093\) and \(3511\)), and if the set \(S\) contains at most \(O(\log D)\) elements, then some prime \(p \in S\) must satisfy
\[
  \log p \geq \frac{\kappa D\log 2}{\#S} \gg \frac{D}{\log D}.
\]
Because \(d_p \mid D\) and \(p \mid 2^{d_p}-1\), this implies \(d_p \gg D / \log D\).
Under the additional assumption that the multiplicative order \(d_p\) typically has size \(p^{1-o(1)}\), one would obtain \(D(S) \ge \max_{p\in S} d_p \ge P^{1-o(1)}\) (where \(P = \max S\)), whereas \(\log B(S) \le (\log\log P)(\log P)\).
The ratio \(\log B(S) / D(S)\) would then tend to zero as \(P \to \infty\), yielding \(\sigma_{\mathcal{W}} = 0\) and \(\rho_{\mathcal{W}} = 0\).

This heuristic deduction relies on the hypothesis that the lifting constants \(A_p\) remain bounded, or grow more slowly than \(\log p\). If there exist Wieferich primes with large valuations \(A_p\), or with anomalously small multiplicative orders \(d_p \ll \log p\), the density \(\sigma_{\mathcal{W}}\) could be strictly positive.
\end{remark}

\begin{remark}[The Mersenne reduction]\label{rem:wieferich-remains}
Non-Wieferich mechanisms account for quality margins up to \(\log m / (m\log 2)\) (Theorem~\ref{thm:extremal-lcm}). A uniform excess at a fixed exponent \(\varepsilon>0\) would require an infinite, exponentially dense family of Wieferich primes, and by Theorem~\ref{thm:order-core-descent} the exponents in any such family may be taken to be order-saturated.
\end{remark}

\section{Exceptional sets and current benchmarks}\label{sec:benchmarks}

\begin{definition}[Exceptional counting quantity]\label{def:exceptional-count}
For \(0<\lambda<1\) and \(X\geq1\), let \(N_{\lambda}(X)\) be the number of pairwise coprime positive triples \((a,b,c)\in[1,X]^3\) satisfying
\[
  a+b=c
\]
and
\[
  \rad(abc)<c^\lambda.
\]
Let \(N_{\lambda}^{\mathrm{par}}(X)\) count only the parity-class triples among them.
\end{definition}

\begin{theorem}[Classical exceptional-set bound]\label{thm:classical-exceptional}
For every fixed \(0<\lambda<1\) and every \(\eta>0\),
\[
  N_{\lambda}(X)
  \ll_{\lambda,\eta}
  X^{2\lambda/3+\eta}.
\]
\end{theorem}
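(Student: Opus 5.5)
The plan is the de Bruijn counting argument: classify triples by the squarefull--squarefree decomposition of each entry and count the decompositions compatible with a small radical. For each positive integer \(n\) write \(n=n_1n_2\), where \(n_1\) is squarefree, \(n_2\) is squarefull, and \(\gcd(n_1,n_2)=1\). Then \(\rad(n)=n_1\rad(n_2)\geq n_1\), and also \(\rad(n_2)\geq n_2^{0}\). The key input is the classical squarefull count, due to Erd\H{o}s, Szekeres and de Bruijn: the number of squarefull \(n_2\leq Y\) is \(O(Y^{1/2})\), and, for a fixed radical bound \(\rad(n_2)\leq R\), the number of squarefull \(n_2\leq Y\) is \(\ll Y^{\eta}R\). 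This last bound follows because the squarefull integers with a given radical \(r\) number \(\ll_\eta Y^{\eta}\) up to \(Y\), via a Rankin-type estimate for the Dirichlet series \(\prod_{p\mid r}\sum_{k\geq2}p^{-ks}\).

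Given a triple with \(\rad(abc)<c^{\lambda}\) and \(c\leq X\), write \(a=a_1a_2\), \(b=b_1b_2\), \(c=c_1c_2\) and set \(r_a=\rad(a)\), \(r_b=\rad(b)\), \(r_c=\rad(c)\). Coprimality gives \(r_ar_br_c<X^{\lambda}\). The count then splits over dyadic boxes \(r_a\sim R_1\), \(r_b\sim R_2\), \(r_c\sim R_3\) with \(R_1R_2R_3\ll X^{\lambda}\). There are only \(O((\log X)^3)\) such boxes, which is absorbed into \(X^{\eta}\).

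Inside a fixed box, the count of integers \(n\leq X\) with \(\rad(n)\sim R\) is \(\ll X^{\eta}R\). To see this, choose the squarefree radical \(r\sim R\) in \(O(R)\) ways, and then note that the integers \(n\leq X\) with \(\rad(n)=r\) number \(\ll_\eta X^{\eta}\), by Rankin's trick at \(s=\eta\), giving \(\prod_{p\mid r}(1-p^{-\eta})^{-1}\ll X^{o(1)}\) since \(\omega(r)\ll\log X/\log\log X\). This count is the heart of the argument. Next, since \(c=a+b\), a triple is determined by any two of its entries. Choosing the two entries with the smallest radicals, the box contributes at most \(X^{2\eta}\min_{i\neq j}R_iR_j\). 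Under \(R_1R_2R_3\leq X^{\lambda}\), the product of the two smallest among three numbers is at most \((R_1R_2R_3)^{2/3}\leq X^{2\lambda/3}\). Summing over the boxes gives \(N_\lambda(X)\ll_{\lambda,\eta}X^{2\lambda/3+3\eta}\), and renaming \(\eta\) yields the claim.

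The main obstacle is the uniform bound \(\#\{n\leq X:\rad(n)=r\}\ll_\eta X^{\eta}\), uniform in \(r\). I would prove it by Rankin: the count is at most \(X^{\eta}\sum_{\rad(n)=r}n^{-\eta}=X^{\eta}\prod_{p\mid r}p^{-\eta}(1-p^{-\eta})^{-1}\leq X^{\eta}\prod_{p\mid r}(1-p^{-\eta})^{-1}\). The product is at most \(C_\eta^{\omega(r)}=X^{o(1)}\). Every other step is elementary bookkeeping, and the coprimality hypothesis enters only in making \(\rad(abc)\) the product of the three individual radicals.
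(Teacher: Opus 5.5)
Your argument is correct and is essentially the paper's proof. The paper cites de Bruijn's radical-counting estimate together with the observation that one of $\rad(ab)$, $\rad(ac)$, $\rad(bc)$ is at most $\rad(abc)^{2/3}$; this is exactly your choice of the two entries with the smallest radicals, and you merely prove the needed count $\#\{n\le X:\rad(n)=r\}\le X^{\eta+o(1)}$ yourself by Rankin's trick instead of citing it.

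Two small tidying points. The squarefull--squarefree decomposition in your first paragraph is never used and can be deleted. Also, the Rankin product $\prod_{p\mid r}(p^{\eta}-1)^{-1}$ is actually $O_\eta(1)$, because only the primes $p<2^{1/\eta}$ contribute factors larger than $1$; this gives the uniform bound $\ll_\eta X^{\eta}$ directly, with no appeal to the size of $\omega(r)$.
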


\begin{proof}
This follows from the radical-counting estimate of de Bruijn and the observation that at least one of \(\rad(ab)\), \(\rad(ac)\), and \(\rad(bc)\) is at most \(\rad(abc)^{2/3}\).
See~\cite{deBruijn1962,Lichtman2025}.
\end{proof}

\begin{theorem}[Recent power saving]\label{thm:recent-exceptional}
Let \(0<\lambda<1\) be fixed.
For every \(\eta>0\),
\[
  N_{\lambda}(X)
  \ll_{\lambda,\eta}
  X^{3/5+\eta}.
\]
The same bound holds for
\[
  N_{\lambda}^{\mathrm{par}}(X)
  \ll_{\lambda,\eta}
  X^{3/5+\eta}.
\]
\end{theorem}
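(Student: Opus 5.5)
The first bound is the power-saving estimate of Bernert, Browning, Lichtman and Ter\"av\"ainen, already quoted in the introduction. I would not reprove it. The plan is to check that it applies in the form stated and then pass to the parity-class count by inclusion.

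For the first display, I would invoke \cite[Theorem~1.3]{BernertEtAl2026}, which states \(N_{\lambda}(X)\ll_{\lambda,\eta}X^{3/5+\eta}\) for every fixed \(0<\lambda<1\) and every \(\eta>0\). The counting convention there has to agree with Definition~\ref{def:exceptional-count}: pairwise coprime positive triples in \([1,X]^3\) with \(a+b=c\) and \(\rad(abc)<c^{\lambda}\). If that paper orders the summands or counts unordered pairs \(\{a,b\}\), the count differs from ours by at most a factor \(2\). Such a factor is absorbed into the implied constant, so no exponent changes.

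For the second display, every parity-class triple is in particular a pairwise coprime positive solution of \(a+b=c\), by Definition~\ref{def:parity-triple}. The defining inequality \(\rad(abc)<c^{\lambda}\) and the box condition are the same for both counts. Hence \(N_{\lambda}^{\mathrm{par}}(X)\leq N_{\lambda}(X)\) for every \(X\geq1\), and the bound transfers with the same implied constant.

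The only point needing care is the bookkeeping of the comparison with the cited theorem. One must confirm its hypothesis range, which is \(0<\lambda<1\) for the \(3/5\) exponent (the \((23\lambda+3)/40\) bound covers \(0<\lambda\leq2\) separately). One must also confirm that no extra normalization, such as \(c\leq X\) in place of the box \([1,X]^3\), alters the count. Since \(a,b<c\), the conditions \(c\leq X\) and \((a,b,c)\in[1,X]^3\) coincide, so this causes no difficulty. Nothing from the defect identities of the earlier sections is needed; the statement is a benchmark imported for comparison in this section.
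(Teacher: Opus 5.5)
Your proposal is correct and matches the paper's proof: you quote the first bound directly from Theorem~1.3 of Bernert, Browning, Lichtman, and Ter\"av\"ainen, and you get the parity-class bound because parity-class triples form a subset of those counted by \(N_{\lambda}(X)\). Your extra checks on counting conventions, including that \(c\leq X\) is equivalent to \((a,b,c)\in[1,X]^3\), are harmless additions that the paper leaves implicit.
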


\begin{proof}
The first assertion is Theorem~1.3 of Bernert, Browning, Lichtman, and Ter\"av\"ainen~\cite{BernertEtAl2026}, which gives \(N_{\lambda}(X)\ll_{\lambda,\eta}X^{0.6+\eta}=X^{3/5+\eta}\) for every fixed \(0<\lambda<1\).
Their Theorem~1.2 gives \(N_{\lambda}(X)\ll_{\lambda,\eta}X^{(23\lambda+3)/40+\eta}\) on the wider range \(0<\lambda\leq2\).
The bound improves on Theorem~\ref{thm:classical-exceptional} precisely for \(\lambda>9/10\).
The second assertion follows because the parity-class triples form a subset of the triples counted by \(N_{\lambda}(X)\).
\end{proof}

\begin{remark}[Pointwise constraints and counting bounds]\label{rem:exceptional-relation}
Theorem~\ref{thm:recent-exceptional} bounds the spatial density of exceptional triples across large boxes \([1,X]^3\) by analytic and counting means, whereas the defect identities of Sections~\ref{sec:defects} and~\ref{sec:boundary-free} constrain each transgressive triple taken singly.
The two kinds of statement meet at the prime support.

Theorem~\ref{thm:total-concentration} produces a prime \(p\mid abc\) with
\[
  v_p(N_p)-1
  \geq
  \frac{
    \log(2K-1)+\theta_{\varepsilon}\log(2K)
  }{
    \omega_{\tot}(\mathcal{T})\log P_{\tot}(\mathcal{T})
  }
\]
so that a triple can keep every multiplicity small only by spreading its defect over a support with \(\omega(abc)\) large.
The counting estimates draw their strength from that same regime, a radical small against \(c\) being most easily arranged by numerous primes each entering to a modest power, which is the configuration the smooth-number estimates of de Bruijn behind Theorem~\ref{thm:classical-exceptional} are designed to count.
A triple whose support stays bounded falls outside that regime, and Corollary~\ref{cor:total-floor} then forces upon it a multiplicity of at least \((1+\theta_{\varepsilon})/(\delta_0k)\), independently of the height.

An infinite transgressive family could take either of two forms, and neither has been excluded.
Along such a family \(\omega(abc)\) may grow, in which case the family lies inside a set the spatial estimates show to be sparse.
Or the support may stay bounded, and then the individual multiplicities diverge at the rates of Theorem~\ref{thm:counterexample-portrait}.
The counting theorems bound the exceptions in aggregate, whereas the defect identities constrain each exception individually.
\end{remark}

\begin{remark}[Lower-bound benchmarks]\label{rem:lower-bound-examples}
The Mersenne family of Theorem~\ref{thm:mersenne-hits} is explicit and lies within the parity class, but its excess \(q-1\) decays to zero as \(m\to\infty\). Among unrestricted triples, Bright proved the existence of infinitely many coprime triples satisfying
\[
  \log\left(\frac{c}{\rad(abc)}\right)
  >
  6.563\sqrt{\frac{\log c}{\log\log c}}.
\]
Earlier subexponential bounds of the form \(c > \rad(abc)\exp(C\sqrt{\log c/\log\log c})\), with smaller constants \(C\), were established by Stewart and Tijdeman and by van Frankenhuysen~\cite{StewartTijdeman1986,vanFrankenhuysen2000}. While these lower bounds grow faster than any power of \(\log c\), they remain \(o(\log c)\) and do not produce a fixed positive margin in quality.
\end{remark}

\section{Explicit parity descent curves}\label{sec:descent}

The square extraction of Subsection~\ref{subsec:square-extraction} yields two elliptic constructions, and they retain different parts of the arithmetic.
The first is an oriented Frey curve, whose minimal discriminant retains the whole multiplicity discarded by the radical.
The second is the intersection of two quadrics formed from the squarefree kernels.
Its Jacobian depends only upon the square class of \(\Gamma_{\mathcal{T}}\), so that every square divisor of that parameter is lost, while the point on the intersection retains the large square divisors forced by transgression.
Remark~\ref{rem:descent-jacobian-information} returns to this division.

\subsection{The oriented parity Frey curve}
\label{subsec:descent-frey}

\begin{definition}[Oriented parity Frey curve]
\label{def:descent-frey}
Let \(\mathcal{T}=(a,b,c)\) be a parity-class triple.
When \(4\mid c\), let \(\alpha\) be the unique member of \(\{a,b\}\) satisfying
\[
  \alpha\equiv1\pmod4
\]
and let \(\beta\) be the other member.
When \(4\nmid c\), retain the given ordering and put
\[
  (\alpha,\beta)=(a,b).
\]
Thus
\[
  \alpha+\beta=c
  \qquad\text{and}\qquad
  \alpha\beta=ab.
\]
Define the \emph{oriented parity Frey curve} by
\[
  E_{\mathcal{T}}:
  \quad
  y^2=x(x-\alpha)(x+\beta).
\]
\end{definition}

The orientation has no effect at odd primes.
Its role is confined to the integral model at \(2\).

\begin{proposition}[Invariants of the oriented Frey curve]
\label{prop:descent-frey-invariants}
The model of Definition~\ref{def:descent-frey} has invariants
\[
  c_4
  =
  16\bigl(\alpha^2+\alpha\beta+\beta^2\bigr)
\]
and
\[
  c_6
  =
  -32(\beta-\alpha)\bigl(2(\beta-\alpha)^2+9\alpha\beta\bigr).
\]
Its discriminant is
\[
  \Delta_0(E_{\mathcal{T}})
  =
  16(\alpha\beta c)^2
  =
  16(abc)^2.
\]
The curve has full rational \(2\)-torsion, with nonzero points
\[
  (0,0)
  \qquad
  (\alpha,0)
  \qquad
  (-\beta,0).
\]
\end{proposition}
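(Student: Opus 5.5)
The plan is to compute everything directly from the standard Tate formulas for a curve in the form \(y^2=x^3+a_2x^2+a_4x\), where \(a_1=a_3=a_6=0\). Expanding \(x(x-\alpha)(x+\beta)\) gives
\[
  a_2=\beta-\alpha
  \qquad\text{and}\qquad
  a_4=-\alpha\beta.
\]
From these, \(b_2=4a_2\), \(b_4=2a_4\), \(b_6=0\), and \(b_8=-a_4^2\). The formulas \(c_4=b_2^2-24b_4\) and \(c_6=-b_2^3+36b_2b_4-216b_6\) then become
\[
  c_4=16\bigl(a_2^2-3a_4\bigr)
  \qquad\text{and}\qquad
  c_6=-64a_2^3+288a_2a_4 .
\]

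For \(c_4\), substitute \(a_2^2-3a_4=(\beta-\alpha)^2+3\alpha\beta=\alpha^2+\alpha\beta+\beta^2\), which is the stated value. For \(c_6\), factor out \(-32a_2\) to get \(-32a_2\bigl(2a_2^2-9a_4\bigr)\). Since \(-9a_4=9\alpha\beta\), this is exactly \(-32(\beta-\alpha)\bigl(2(\beta-\alpha)^2+9\alpha\beta\bigr)\).

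For the discriminant, I would avoid the general formula and use the fact that a model \(y^2=f(x)\) with monic cubic \(f\) has \(\Delta=16\operatorname{disc}(f)\). The roots of \(f\) are \(0\), \(\alpha\) and \(-\beta\), so
\[
  \operatorname{disc}(f)=\bigl(\alpha\cdot\beta\cdot(\alpha+\beta)\bigr)^2=(\alpha\beta c)^2,
\]
using the three root differences \(\alpha-0\), \(0-(-\beta)\) and \(\alpha+\beta=c\). Definition~\ref{def:descent-frey} gives \(\alpha\beta=ab\), so \(\Delta_0=16(abc)^2\). As a consistency check, \((c_4^3-c_6^2)/1728\) should give the same value; the \(16^3\) and \(32^2\) prefactors should collapse to \(16\) after the symmetric-function identity, and I would verify this only as a sanity check. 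The discriminant is nonzero because \(a,b,c\ge1\), so the model is nonsingular and defines an elliptic curve.

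The \(2\)-torsion statement follows because the three roots \(0,\alpha,-\beta\) of the cubic are rational and distinct (\(\alpha,\beta>0\)). The points \((x_0,0)\) are therefore the three nonzero points of order \(2\). There is no real obstacle here. The only place needing care is keeping the sign conventions of \(a_2\) and \(a_4\) straight in the \(c_6\) factorization.
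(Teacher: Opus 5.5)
Your proposal is correct and follows the paper's proof: the same expansion giving $a_2=\beta-\alpha$ and $a_4=-\alpha\beta$, the standard formulas for $c_4$ and $c_6$, and the discriminant read off from the root differences $\alpha$, $\beta$, $c$ via $\Delta=16\operatorname{disc}(f)$. The only difference is that you write out the intermediate forms $c_4=16(a_2^2-3a_4)$ and $c_6=-32a_2(2a_2^2-9a_4)$, which the paper leaves to the reader; both are correct.
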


\begin{proof}
Expansion gives
\[
  y^2=x^3+(\beta-\alpha)x^2-\alpha\beta x
\]
so that
\[
  a_1=a_3=a_6=0
  \qquad
  a_2=\beta-\alpha
  \qquad
  a_4=-\alpha\beta.
\]
The standard polynomial expressions for \(c_4\) and \(c_6\) give the first two identities.
The roots of the cubic are \(0\), \(\alpha\), and \(-\beta\), with pairwise differences \(\alpha\), \(\beta\), and \(c=\alpha+\beta\).
The discriminant identity follows, and the three roots give the displayed points of order two.
\end{proof}

\begin{proposition}[Minimal discriminant and conductor]
\label{prop:descent-minimal}
Let
\[
  n=v_2(c)
\]
and write
\[
  f_2(\mathcal{T})
  \coloneqq
  v_2(N_{E_{\mathcal{T}}}).
\]
Then the following assertions hold:
\begin{enumerate}[label=\textup{(\arabic*)}]
\item At every odd prime \(p\mid abc\), the model of Definition~\ref{def:descent-frey} is minimal and has multiplicative reduction of type \(I_{2v_p(abc)}\), so that
\[
  v_p(N_{E_{\mathcal{T}}})=1.
\]
\item If \(1\leq n\leq3\), the displayed model is minimal at \(2\) and has additive reduction there, with
\[
  \Delta_{\min}(E_{\mathcal{T}})
  =
  16(abc)^2
  \qquad\text{and}\qquad
  2\leq f_2(\mathcal{T})\leq8.
\]
\item If \(n\geq4\), the change of variables
\[
  x=4X+\alpha
  \qquad
  y=8Y+4X
\]
gives the integral model
\[
  Y^2+XY
  =
  X^3
  +\frac{\alpha+c-1}{4}X^2
  +\frac{\alpha c}{16}X
\]
which is minimal and has discriminant
\[
  \Delta_{\min}(E_{\mathcal{T}})
  =
  \frac{(abc)^2}{2^8}.
\]
\item If \(n=4\), the reduction at \(2\) is good and \(f_2(\mathcal{T})=0\).
If \(n\geq5\), it is multiplicative and \(f_2(\mathcal{T})=1\).
\end{enumerate}
Consequently,
\[
  N_{E_{\mathcal{T}}}
  =
  2^{f_2(\mathcal{T})-1}\rad(abc)
\]
and
\[
  \tfrac12\rad(abc)
  \leq
  N_{E_{\mathcal{T}}}
  \leq
  2^7\rad(abc).
\]
\end{proposition}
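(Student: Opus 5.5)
I will work prime by prime, using Tate's algorithm and the standard criteria for a Weierstrass model with integral invariants. The odd primes are handled uniformly. The prime \(2\) is split according to \(n=v_2(c)\).

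\emph{Odd primes.} Fix an odd prime \(p\mid abc\). By Lemma~\ref{lem:disjoint-supports}, exactly one of \(\alpha\), \(\beta\), \(c\) is divisible by \(p\).
\begin{itemize}
\item From Proposition~\ref{prop:descent-frey-invariants}, \(c_4=16(\alpha^2+\alpha\beta+\beta^2)\). If \(p\mid\alpha\), then \(c_4\equiv16\beta^2\not\equiv0\pmod p\), and the cases \(p\mid\beta\) and \(p\mid c\) are symmetric (when \(p\mid c\), use \(\alpha^2+\alpha\beta+\beta^2\equiv\alpha^2\)).
\item Hence \(v_p(c_4)=0\) and \(v_p(\Delta_0)=2v_p(abc)>0\). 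The model is then minimal at \(p\) with multiplicative reduction of type \(I_{2v_p(abc)}\), and the conductor exponent is \(1\).
\item At odd primes not dividing \(abc\) the reduction is good, since \(v_p(\Delta_0)=0\).
\end{itemize}
This proves (1).

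\emph{The prime \(2\) when \(n\geq4\).} I will do the substitution of (3) explicitly.
\begin{itemize}
\item Put \(x=4X+\alpha\) and \(y=8Y+4X\), expand, and divide by \(64\). This should yield \(a_1=1\), \(a_2=(\alpha+c-1)/4\) (using \(\beta-\alpha+3\alpha=c+\alpha\) after collecting terms) and \(a_4=\alpha c/16\).
\item Integrality of \(a_2\): the orientation \(\alpha\equiv1\pmod4\) together with \(4\mid c\) gives \(\alpha+c-1\equiv0\pmod4\).
\item Integrality of \(a_4\): this is \(16\mid c\), i.e.\ \(n\geq4\).
\item Since the scaling factor is \(u=2\), the discriminant becomes \(\Delta_0/2^{12}=16(abc)^2/2^{12}=(abc)^2/2^8\).
\end{itemize}
For (4), I will compute \(v_2\) of the new discriminant, which is \(2n-8\).
\begin{itemize}
\item When \(n=4\) this is \(0\), so the reduction is good.
\item When \(n\geq5\) it is positive. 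The new \(c_4=(\alpha^2+\alpha\beta+\beta^2)\cdot16/2^4\) is odd, because \(\alpha\beta\) is odd and \(\alpha^2+\beta^2\) is even. Hence the model is minimal with multiplicative reduction and \(f_2=1\).
\item Minimality when \(n=4\) is automatic because the discriminant is a unit.
\end{itemize}

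\emph{The prime \(2\) when \(1\leq n\leq3\) (main obstacle).} Here I must show that no change of variables with \(u=2\) makes the model integral, and that the reduction is additive.
\begin{itemize}
\item Additivity: \(v_2(c_4)=4\) exactly (since \(\alpha^2+\alpha\beta+\beta^2\) is odd) while \(v_2(\Delta_0)=4+2n>0\). So the reduction is not multiplicative, and as the model is singular mod \(2\), it is additive once minimality is known.
\item Minimality: a model with \(u=2\) would need \(v_2(c_4)\geq4\), \(v_2(c_6)\geq6\) and \(v_2(\Delta)\geq12\). The last requires \(4+2n\geq12\), i.e.\ \(n\geq4\). So for \(n\leq3\) we have \(v_2(\Delta_0)\leq10<12\), and minimality follows immediately.
\item Bounds on \(f_2\): additive reduction gives \(f_2\geq2\), and the general bound \(f_2\leq8\) at \(p=2\) (Brumer–Kramer/Lockhart–Rosen–Silverman) gives the upper bound.
\end{itemize}
I expect the fiddly part to be this \(2\)-adic bookkeeping, and I would double-check \(v_2(c_6)\) only if the discriminant argument seemed insufficient; it does suffice.

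\emph{Conductor formula.} Multiply the local conductor exponents. By (1), each odd prime dividing \(abc\) contributes exponent \(1\); since \(2\mid\rad(abc)\), this gives
\[
  N_{E_{\mathcal{T}}}=2^{f_2}\rad(abc)/2=2^{f_2(\mathcal{T})-1}\rad(abc).
\]
Since \(0\leq f_2\leq8\), this yields \(\tfrac12\rad(abc)\leq N_{E_{\mathcal{T}}}\leq2^7\rad(abc)\).
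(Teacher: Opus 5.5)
Your proposal is correct and takes essentially the same route as the paper: \(c_4\) is a unit at each odd prime, \(v_2(\Delta_0)=4+2n<12\) gives minimality and additive reduction when \(n\leq3\), the explicit \(u=2\) change of variables with odd new \(c_4\) handles \(n\geq4\), and the local conductor exponents are then multiplied together. The differences are cosmetic: you check integrality through the standard transformation formulas, and you cite the Brumer--Kramer bound \(f_2\leq8\) where the paper appeals to tabulated branches of Tate's algorithm.
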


\begin{proof}
Let \(p\) be an odd prime dividing \(abc\).
Pairwise coprimality shows that \(p\) divides exactly one of \(\alpha\), \(\beta\), and \(c\).
If \(p\mid\alpha\), then
\[
  \alpha^2+\alpha\beta+\beta^2
  \equiv
  \beta^2
  \not\equiv0
  \pmod p
\]
and the cases \(p\mid\beta\) and \(p\mid c\) are the same.
Hence \(v_p(c_4)=0\), the model is minimal, its reduction is multiplicative, and
\[
  v_p(\Delta_{\min})=2v_p(abc).
\]
This proves the first assertion.

Since \(\alpha\) and \(\beta\) are odd, the quantity \(\alpha^2+\alpha\beta+\beta^2\) is odd, whence
\[
  v_2(c_4)=4
  \qquad\text{and}\qquad
  v_2(\Delta_0)=4+2n.
\]
For \(1\leq n\leq3\) this last valuation is smaller than \(12\).
An integral change lowering the discriminant would lower its valuation by a positive multiple of \(12\), which is impossible, so the model is minimal.
Neither its discriminant nor \(c_4\) is a unit, so the reduction is additive, and the finite branches of Tate's algorithm give
\[
  2\leq f_2(\mathcal{T})\leq8
\]
as tabulated, for instance, in~\cite{Cohen1993}.

Suppose now that \(n\geq4\).
Then \(16\mid c\), and the orientation of Definition~\ref{def:descent-frey} gives \(\alpha\equiv1\pmod4\).
Substitution of
\[
  x=4X+\alpha
  \qquad
  y=8Y+4X
\]
into the defining equation gives
\[
  (8Y+4X)^2
  =
  (4X+\alpha)(4X)(4X+c)
\]
and division by \(64\) gives the displayed integral model.
Its discriminant is the original discriminant divided by \(2^{12}\), that is,
\[
  \Delta
  =
  \frac{16(abc)^2}{2^{12}}
  =
  \frac{(abc)^2}{2^8}
\]
and its \(c_4\)-invariant is the odd quantity \(\alpha^2+\alpha\beta+\beta^2\).
The model is therefore minimal at \(2\).
When \(n=4\) its discriminant is a \(2\)-adic unit, giving good reduction.
When \(n\geq5\) the discriminant has positive valuation while \(c_4\) remains a unit, giving multiplicative reduction.
The conductor formula follows from the odd-prime calculation together with the single occurrence of \(2\) in \(\rad(abc)\).
\end{proof}

\begin{definition}[Szpiro quotient]
\label{def:descent-szpiro}
For an elliptic curve \(E/\Q\) with minimal discriminant \(\Delta_{\min}(E)\) and conductor \(N_E>1\), define
\[
  \operatorname{Szp}(E)
  \coloneqq
  \frac{\log\lvert\Delta_{\min}(E)\rvert}{\log N_E}.
\]
\end{definition}

\begin{corollary}[Total defect in the Szpiro quotient]
\label{cor:descent-szpiro}
Put
\[
  \mathcal{R}(\mathcal{T})
  \coloneqq
  \rad(abc)
\]
and define
\[
  \kappa_2(\mathcal{T})
  \coloneqq
  \begin{cases}
    4 & 1\leq v_2(c)\leq3 \\[0.3em]
    -8 & v_2(c)\geq4.
  \end{cases}
\]
Then
\[
  \log\lvert\Delta_{\min}(E_{\mathcal{T}})\rvert
  =
  2\delta_{\tot}(\mathcal{T})
  +2\log\mathcal{R}(\mathcal{T})
  +\kappa_2(\mathcal{T})\log2
\]
and
\[
  \log N_{E_{\mathcal{T}}}
  =
  \log\mathcal{R}(\mathcal{T})
  +\bigl(f_2(\mathcal{T})-1\bigr)\log2
\]
so that
\[
  \operatorname{Szp}(E_{\mathcal{T}})
  =
  \frac{
    2\delta_{\tot}(\mathcal{T})
    +2\log\mathcal{R}(\mathcal{T})
    +\kappa_2(\mathcal{T})\log2
  }{
    \log\mathcal{R}(\mathcal{T})
    +\bigl(f_2(\mathcal{T})-1\bigr)\log2
  }.
\]
If \(E_\varepsilon(\mathcal{T})\geq0\), then
\[
  \delta_{\tot}(\mathcal{T})
  \geq
  \log(ab)+\varepsilon\log\mathcal{R}(\mathcal{T})
\]
and therefore
\[
  \operatorname{Szp}(E_{\mathcal{T}})
  \geq
  \frac{
    2(1+\varepsilon)\log\mathcal{R}(\mathcal{T})
    +2\log(ab)
    +\kappa_2(\mathcal{T})\log2
  }{
    \log\mathcal{R}(\mathcal{T})
    +\bigl(f_2(\mathcal{T})-1\bigr)\log2
  }.
\]
Along a transgressive sequence with \(\mathcal{R}(\mathcal{T})\to\infty\), this gives
\[
  \operatorname{Szp}(E_{\mathcal{T}})
  \geq
  2+2\varepsilon
  +O\left(\frac{1}{\log\mathcal{R}(\mathcal{T})}\right)
\]
with an absolute implied constant.
\end{corollary}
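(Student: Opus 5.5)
The plan is to read everything off Proposition~\ref{prop:descent-minimal} and Lemma~\ref{lem:total-defect-identity}. By that lemma, \(\log(abc)=\delta_{\tot}(\mathcal{T})+\log\mathcal{R}(\mathcal{T})\).

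\emph{Discriminant.} In the range \(1\leq v_2(c)\leq3\), Proposition~\ref{prop:descent-minimal}(2) gives \(\Delta_{\min}=16(abc)^2\). Hence \(\log|\Delta_{\min}|=2\log(abc)+4\log2\). In the range \(v_2(c)\geq4\), part~(3) gives \(\Delta_{\min}=(abc)^2/2^8\), hence \(\log|\Delta_{\min}|=2\log(abc)-8\log2\). Substituting \(\log(abc)=\delta_{\tot}+\log\mathcal{R}\) yields the first identity, with \(\kappa_2\) taking the stated values \(4\) and \(-8\).

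\emph{Conductor and quotient.} The conductor identity is the logarithm of \(N_{E_{\mathcal{T}}}=2^{f_2-1}\rad(abc)\) from the same proposition. Dividing the two identities gives the formula for \(\operatorname{Szp}\). One should note that \(N_E>1\), which holds because \(\rad(abc)\geq6\) and \(f_2\geq0\) give \(N_E\geq3\). So Definition~\ref{def:descent-szpiro} applies.

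\emph{Transgressive lower bound.} Assume \(E_\varepsilon(\mathcal{T})\geq0\), that is, \(\log c\geq(1+\varepsilon)\log\mathcal{R}\). Proposition~\ref{prop:total-energy} at exponent zero gives
\[
  \delta_{\tot}-\log(ab)=\log c-\log\mathcal{R}\geq\varepsilon\log\mathcal{R}.
\]
This is the claimed inequality; it uses the form in \(\mathcal{R}\) directly rather than Theorem~\ref{thm:total-criterion}. Inserting it into the numerator, whose denominator is positive, gives the displayed lower bound for \(\operatorname{Szp}\).

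\emph{Asymptotic form.} This step needs the most care, because the numerator term \(2\log(ab)\) is non-negative but not small. It therefore only helps the lower bound and may simply be dropped. We then have numerator at least \(2(1+\varepsilon)\log\mathcal{R}-8\log2\), and the denominator lies between \(\log\mathcal{R}-\log2\) and \(\log\mathcal{R}+7\log2\), since \(0\leq f_2\leq8\). Write \(\mathcal{R}\) for \(\log\mathcal{R}\) briefly. The quotient \((2(1+\varepsilon)\mathcal{R}-8\log2)/(\mathcal{R}+7\log2)\) equals \(2+2\varepsilon\) minus \((14(1+\varepsilon)+8)\log2/(\mathcal{R}+7\log2)\). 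The same bound holds with the denominator \(\log\mathcal{R}-\log2\) when the numerator is positive, since a smaller denominator only increases the quotient.

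The residual constant depends on \(\varepsilon\) through \(14(1+\varepsilon)\). So "absolute" has to be read for bounded \(\varepsilon\), or else the error is written as \((2+2\varepsilon)\bigl(1-O(1/\log\mathcal{R})\bigr)\). Along a transgressive sequence at fixed \(\varepsilon\), this dependence is harmless: it yields \(2+2\varepsilon+O(1/\log\mathcal{R})\), with the implied constant uniform once \(\varepsilon\) is bounded.
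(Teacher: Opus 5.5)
Your route is the paper's. Substitute \(\log(abc)=\delta_{\tot}(\mathcal{T})+\log\mathcal{R}(\mathcal{T})\) into the two cases of Proposition~\ref{prop:descent-minimal}, take the conductor from its last assertion, and obtain the transgressive inequality from \(\delta_{\tot}=\log(ab)+\log c-\log\mathcal{R}\geq\log(ab)+\varepsilon\log\mathcal{R}\). Everything through the displayed lower bound for \(\operatorname{Szp}(E_{\mathcal{T}})\) is correct.

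The point to correct is your final paragraph. The claim that the constant depends on \(\varepsilon\) through \(14(1+\varepsilon)\), so that ``absolute'' holds only for bounded \(\varepsilon\), is wrong. That dependence is an artifact of discarding \(2\log(ab)\); the term is ``not small'' in exactly the way that cancels the \(\varepsilon\)-dependence. Since \(a+b=c\) gives \(ab\geq c-1\geq c/2\), transgression yields \(2\log(ab)\geq2(1+\varepsilon)\log\mathcal{R}-2\log2\). With \(D=\log\mathcal{R}+(f_2-1)\log2\), subtracting \(2+2\varepsilon\) from your displayed lower bound leaves a numerator of
\[
  2\log(ab)+\kappa_2\log2-(2+2\varepsilon)(f_2-1)\log2
  \geq
  2(1+\varepsilon)\bigl(\log\mathcal{R}-7\log2\bigr)-10\log2 ,
\]
using \(\kappa_2\geq-8\) and \(f_2-1\leq7\). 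This is at least \(-10\log2\) once \(\log\mathcal{R}\geq7\log2\), and it is non-negative beyond an absolute bound on \(\log\mathcal{R}\). Dividing by \(D\geq\log\mathcal{R}-\log2\) gives the \(O(1/\log\mathcal{R})\) error with a constant independent of \(\varepsilon\) and of the triple. So the statement holds as written, with no proviso. The paper's one-line ``by substitution'' leaves this step implicit, and retaining \(\log(ab)\) is what makes it work.

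A minor slip: your justification \(\rad(abc)\geq6\) fails for \((1,1,2)\). There \(v_2(c)=1\) puts you in the additive case with \(f_2\geq2\), so \(N_{E_{\mathcal{T}}}\geq4>1\) still holds.
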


\begin{proof}
Lemma~\ref{lem:total-defect-identity} gives
\[
  \log(abc)
  =
  \delta_{\tot}(\mathcal{T})+\log\mathcal{R}(\mathcal{T}).
\]
Substitution into the two cases of Proposition~\ref{prop:descent-minimal} gives the discriminant identity, and the conductor identity is the final assertion of that proposition.
If the triple is transgressive, then
\[
  \log c\geq(1+\varepsilon)\log\mathcal{R}(\mathcal{T})
\]
whence
\[
\begin{aligned}
  \delta_{\tot}(\mathcal{T})
  &=
  \log(ab)+\log c-\log\mathcal{R}(\mathcal{T})
  \\
  &\geq
  \log(ab)+\varepsilon\log\mathcal{R}(\mathcal{T}).
\end{aligned}
\]
The remaining assertions follow by substitution, the \(2\)-adic terms being bounded independently of the triple.
\end{proof}

\begin{corollary}[The Mersenne Frey curve]
\label{cor:descent-mersenne}
Let \(m\geq5\).
The oriented Frey curve attached to \(\mathcal{V}_m\) has conductor
\[
  N_{E_{\mathcal{V}_m}}
  =
  2\rad(2^m-1)
\]
and minimal discriminant
\[
  \Delta_{\min}(E_{\mathcal{V}_m})
  =
  2^{2m-8}(2^m-1)^2.
\]
Its Szpiro quotient satisfies the exact identity
\[
  \operatorname{Szp}(E_{\mathcal{V}_m})
  =
  2
  +\frac{
    2(m-1)\log2+2\Omega_m+2\log G_m-8\log2
  }{
    \log\bigl(2\rad(2^m-1)\bigr)
  }.
\]
\end{corollary}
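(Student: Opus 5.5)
We specialize Proposition~\ref{prop:descent-minimal} and Corollary~\ref{cor:descent-szpiro} to the triple \(\mathcal{V}_m=(1,2^m-1,2^m)\), for which \(n=v_2(c)=m\geq5\). Case (3) of Proposition~\ref{prop:descent-minimal} then applies, and case (4) puts us in the multiplicative branch, so \(f_2(\mathcal{T})=1\). The conductor formula \(N_{E_{\mathcal{T}}}=2^{f_2-1}\rad(abc)\) gives
\[
N_{E_{\mathcal{V}_m}}=\rad\bigl(2^m(2^m-1)\bigr)=2\rad(2^m-1),
\]
because \(2^m-1\) is odd. For the discriminant, the same case gives \(\Delta_{\min}=(abc)^2/2^8\). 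Substituting \(abc=2^m(2^m-1)\) yields \(2^{2m-8}(2^m-1)^2\), which is positive and integral since \(m\geq5\).

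For the Szpiro quotient we could use the general identity of Corollary~\ref{cor:descent-szpiro} with \(\kappa_2=-8\), but a direct computation is cleaner. We have
\[
\log\Delta_{\min}=(2m-8)\log2+2\log(2^m-1).
\]
Writing \(\log(2^m-1)=\log\rad(2^m-1)+\Delta_m\) (Definition~\ref{def:mersenne-triple}) turns this into
\[
\log\Delta_{\min}=2\log\bigl(2\rad(2^m-1)\bigr)-2\log2+2\Delta_m+(2m-8)\log2 .
\]
The leftover constants combine to \(2(m-1)\log2-8\log2\). Dividing by \(\log N=\log(2\rad(2^m-1))\) gives \(2\) plus
\[
\frac{2(m-1)\log2+2\Delta_m-8\log2}{\log\bigl(2\rad(2^m-1)\bigr)}.
\]
The exact defect law of Theorem~\ref{thm:exact-defect-law}, \(\Delta_m=\Omega_m+\log G_m\), then gives the stated form. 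As a cross-check, the same result comes from Corollary~\ref{cor:descent-szpiro} using \(\delta_{\tot}(\mathcal{V}_m)=(m-1)\log2+\Delta_m\) and \(\mathcal{R}=2\rad(2^m-1)\).

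The only delicate point is that case (3) relies on the orientation of Definition~\ref{def:descent-frey}: since \(4\mid c\), we must take \(\alpha\equiv1\pmod4\). Here \(\alpha=1\) and \(\beta=2^m-1\), so this holds. The denominator is positive because \(N\geq2\), so the quotient is well defined.
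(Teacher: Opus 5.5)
Your proof is correct and follows the paper's route. You specialize Proposition~\ref{prop:descent-minimal} (cases (3) and (4), with the orientation \(\alpha=1\), \(\beta=2^m-1\)) to get the conductor and the minimal discriminant, and then rewrite the Szpiro quotient through the exact law \(\Delta_m=\Omega_m+\log G_m\). The only difference is cosmetic: you expand \(\log\Delta_{\min}\) directly via the definition of \(\Delta_m\), whereas the paper substitutes \(\delta_{\tot}(\mathcal{V}_m)=(m-1)\log2+\Delta_m\) into Corollary~\ref{cor:descent-szpiro}, which is exactly your cross-check.
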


\begin{proof}
For \(\mathcal{V}_m\) the oriented summands are
\[
  \alpha=1
  \qquad
  \beta=2^m-1.
\]
Since \(m\geq5\), Proposition~\ref{prop:descent-minimal} gives the conductor and the minimal discriminant.
Moreover
\[
  \delta_{\tot}(\mathcal{V}_m)
  =
  (m-1)\log2+\Delta_m
\]
and Theorem~\ref{thm:exact-defect-law} gives \(\Delta_m=\Omega_m+\log G_m\).
Substitution into Corollary~\ref{cor:descent-szpiro} proves the identity.
\end{proof}

\subsection{The genus-one intersection}
\label{subsec:descent-curve}

Retain the square extraction
\[
  a=d_au_a^2
  \qquad
  b=d_bu_b^2
\]
where \(d_a\) and \(d_b\) are positive odd squarefree integers.
The triple \((1,1,2)\) is the sole parity-class triple with \(M=0\), and it is excluded from the construction below.

\begin{definition}[Parity descent curve]
\label{def:descent-curve}
Let \(\mathcal{T}\neq(1,1,2)\) be a parity-class triple.
Define \(\mathscr{D}_{\mathcal{T}}\subset\mathbb{P}^3_{(U:V:W:T)}\) by
\[
  \mathscr{D}_{\mathcal{T}}:
  \quad
  \begin{cases}
    d_aU^2+d_bV^2=2KW^2 \\[0.3em]
    d_bV^2-d_aU^2=2MT^2.
  \end{cases}
\]
Put
\[
  \Gamma_{\mathcal{T}}
  \coloneqq
  d_ad_bKM.
\]
\end{definition}

\begin{lemma}[The pencil of quadrics]
\label{lem:descent-pencil}
Let \(Q_0\) and \(Q_1\) be the two diagonal quadrics defining \(\mathscr{D}_{\mathcal{T}}\).
Then
\[
  \det(Q_0+\lambda Q_1)
  =
  4\Gamma_{\mathcal{T}}\lambda(1-\lambda^2).
\]
The roots of the determinant polynomial are
\[
  0
  \qquad
  1
  \qquad
  -1
  \qquad
  \infty
\]
and they are distinct.
Hence \(\mathscr{D}_{\mathcal{T}}\) is a smooth complete intersection of two quadrics and has genus one.
\end{lemma}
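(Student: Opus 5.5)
The plan is to write both quadrics as diagonal forms in the coordinates \((U,V,W,T)\):
\[
  Q_0=\operatorname{diag}(d_a,\,d_b,\,-2K,\,0),
  \qquad
  Q_1=\operatorname{diag}(-d_a,\,d_b,\,0,\,-2M).
\]
Here the second equation has been rewritten as \(d_bV^2-d_aU^2-2MT^2=0\). The pencil member \(Q_0+\lambda Q_1\) is then the diagonal matrix with entries \(d_a(1-\lambda)\), \(d_b(1+\lambda)\), \(-2K\), and \(-2M\lambda\). Its determinant is the product of these entries:
\[
  d_a d_b(1-\lambda)(1+\lambda)\cdot(-2K)(-2M\lambda)
  =
  4\,d_ad_bKM\,\lambda(1-\lambda^2).
\]
This equals \(4\Gamma_{\mathcal{T}}\lambda(1-\lambda^2)\). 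Some care is needed with the sign convention of \(Q_1\), but the factor is unchanged up to sign, and the normalization above matches the stated formula.

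For the roots, I will homogenize the pencil as \(\mu Q_0+\lambda Q_1\). The determinant is a binary quartic in \((\mu:\lambda)\), namely \(4\Gamma_{\mathcal{T}}\,\mu\lambda(\mu^2-\lambda^2)\). Its roots are \(\lambda/\mu=0,1,-1\) together with the root \(\mu=0\), which is the point \(\infty\). I will check that the quartic is not identically zero: \(\Gamma_{\mathcal{T}}\neq0\) because \(d_a,d_b,K\geq1\), and \(M\neq0\) because \(\mathcal{T}\neq(1,1,2)\), this being the only parity-class triple with \(a=b\) given coprimality. So the quartic has degree exactly \(4\) with four distinct roots.

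Smoothness then follows from the classical criterion. The intersection of two quadrics in \(\mathbb{P}^3\), over a field of characteristic \(\neq2\), is a smooth complete intersection precisely when \(\det(\mu Q_0+\lambda Q_1)\) is a nonzero binary quartic with four distinct roots. Since the forms are simultaneously diagonal, I can check this directly by the Jacobian criterion rather than cite it. At a singular point the gradients \((\ldots,q_i x_i,\ldots)\) and \((\ldots,r_i x_i,\ldots)\) are dependent, so some pencil member \(\mu Q_0+\lambda Q_1\) kills the point's gradient. That forces \(x_i=0\) except on coordinates where \(\mu q_i+\lambda r_i=0\). Distinctness of the roots means each degenerate member is degenerate in exactly one coordinate, so the point is a coordinate vertex. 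No coordinate vertex lies on both quadrics, because each vertex has a nonzero entry in one of the forms. For example, \((0:0:1:0)\) gives \(-2K\neq0\) in \(Q_0\).

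A smooth complete intersection of two quadrics in \(\mathbb{P}^3\) has genus one by adjunction: \(\omega=\mathcal{O}(2+2-4)=\mathcal{O}\). The computation is routine, and the main point needing care is the nonvanishing of \(M\), which is exactly why the triple \((1,1,2)\) was excluded.
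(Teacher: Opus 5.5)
Your proposal is correct and follows the paper's proof: the same diagonal entries \(d_a(1-\lambda)\), \(d_b(1+\lambda)\), \(-2K\), \(-2\lambda M\) whose product gives \(4\Gamma_{\mathcal{T}}\lambda(1-\lambda^2)\), the same use of \(M\neq0\) (from excluding \((1,1,2)\)) to get four distinct roots, and the same adjunction argument for genus one. The only difference is that the paper simply invokes the classical equivalence between distinct roots of the pencil discriminant and smoothness, whereas you homogenize to make the root at \(\infty\) explicit and verify smoothness directly with the Jacobian criterion on the diagonal forms. That direct check is a valid, self-contained substitute for the citation.
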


\begin{proof}
The diagonal entries of \(Q_0+\lambda Q_1\) are
\[
  d_a(1-\lambda)
  \qquad
  d_b(1+\lambda)
  \qquad
  -2K
  \qquad
  -2\lambda M
\]
and their product gives the determinant.
Since \(\mathcal{T}\neq(1,1,2)\), one has \(M\neq0\) and therefore \(\Gamma_{\mathcal{T}}\neq0\), so the determinant has four distinct roots.
For a pencil of two quadrics in \(\mathbb{P}^3\), this separability is equivalent to smoothness of the intersection.
A smooth complete intersection of two quadrics in \(\mathbb{P}^3\) has degree four and trivial canonical sheaf, hence genus one.
\end{proof}

\begin{proposition}[The distinguished rational point]
\label{prop:descent-basic}
The point
\[
  P_{\mathcal{T}}
  \coloneqq
  (u_a:u_b:1:1)
\]
lies on \(\mathscr{D}_{\mathcal{T}}(\Q)\).
The defining equations may equivalently be written
\[
  d_aU^2=KW^2-MT^2
  \qquad\text{and}\qquad
  d_bV^2=KW^2+MT^2
\]
and at \(P_{\mathcal{T}}\) these reduce to
\[
  d_au_a^2=K-M=a
  \qquad\text{and}\qquad
  d_bu_b^2=K+M=b.
\]
\end{proposition}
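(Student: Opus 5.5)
The plan is a direct verification in three short steps, using only the definitions of the square extraction and of the class coordinates.

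First, I would establish the equivalent form of the defining equations. Add and subtract the two quadrics of Definition~\ref{def:descent-curve}. Adding \(d_aU^2+d_bV^2=2KW^2\) to \(d_bV^2-d_aU^2=2MT^2\) gives \(2d_bV^2=2KW^2+2MT^2\). Subtracting gives \(2d_aU^2=2KW^2-2MT^2\). Dividing by \(2\) yields the displayed pair. The step reverses over \(\Q\), since the two systems are related by an invertible linear change with determinant \(\pm4\). So the two descriptions define the same subscheme of \(\mathbb{P}^3\), and membership may be tested on either.

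Second, I would substitute \(P_{\mathcal{T}}=(u_a:u_b:1:1)\), that is, \(W=T=1\), into the rewritten equations. They become \(d_au_a^2=K-M\) and \(d_bu_b^2=K+M\). By Definition~\ref{def:frame-coordinates} the right sides are \(a\) and \(b\). The left sides are \(a\) and \(b\) by the square extraction \(a=d_au_a^2\), \(b=d_bu_b^2\) fixed before Corollary~\ref{cor:residual-equation}. Both identities therefore hold.

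Third, I would confirm that this is a genuine rational point of \(\mathbb{P}^3\). Its coordinates are integers, and they are not all zero because \(W=1\). Hence \(P_{\mathcal{T}}\in\mathscr{D}_{\mathcal{T}}(\Q)\). Because the equations are homogeneous, the choice of representative does not matter.

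There is no real obstacle here. The only point needing care is the bookkeeping in the first step: the sign convention \(M=(b-a)/2\) must match the second quadric \(d_bV^2-d_aU^2=2MT^2\), so that \(d_a\) pairs with \(K-M=a\) rather than with \(K+M\). I would check this against Definition~\ref{def:frame-coordinates} before writing the substitution.
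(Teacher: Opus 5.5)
Your proof is correct and is essentially the paper's own verification. The paper substitutes \(P_{\mathcal{T}}\) directly into the two original quadrics using \(a=K-M\) and \(b=K+M\), then remarks that adding and subtracting the equations gives the diagonal form; you derive the diagonal form first and substitute into it, which is the same computation in the other order.

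One harmless slip: the linear change taking the original pair of equations to their sum and difference has determinant \(\pm2\) (or \(\pm\tfrac12\) after dividing by \(2\)), not \(\pm4\). Since you only use that this change is invertible, the argument is unaffected.
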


\begin{proof}
The identities \(a=K-M\) and \(b=K+M\) give
\[
  d_au_a^2+d_bu_b^2=a+b=2K
  \qquad\text{and}\qquad
  d_bu_b^2-d_au_a^2=b-a=2M.
\]
Thus \(P_{\mathcal{T}}\) lies on both quadrics, and adding and subtracting the two defining equations gives the equivalent diagonal form.
\end{proof}

\begin{corollary}[Witnessed local solubility]
\label{cor:descent-witnessed-local}
For every prime \(p\), the point \(P_{\mathcal{T}}\) gives an explicitly displayed point in \(\mathscr{D}_{\mathcal{T}}(\Q_p)\), and it gives a real point.
The local solubility of the descent curve attached to an actual parity-class triple therefore requires no local--global argument.
\end{corollary}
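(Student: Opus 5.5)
The plan is to read the rational point \(P_{\mathcal{T}}\) of Proposition~\ref{prop:descent-basic} as a point of \(\mathscr{D}_{\mathcal{T}}\) over every completion of \(\Q\). The inclusions \(\Q\hookrightarrow\Q_p\) and \(\Q\hookrightarrow\mathbb{R}\) induce maps \(\mathscr{D}_{\mathcal{T}}(\Q)\to\mathscr{D}_{\mathcal{T}}(\Q_p)\) and \(\mathscr{D}_{\mathcal{T}}(\Q)\to\mathscr{D}_{\mathcal{T}}(\mathbb{R})\). The equations of Definition~\ref{def:descent-curve} have integer coefficients, so any rational solution remains a solution after extension of scalars.

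The steps run in this order. First I check that \((u_a:u_b:1:1)\) is a genuine point of \(\mathbb{P}^3\). Its last two coordinates equal \(1\), so the tuple is nonzero and defines a projective point over any field of characteristic zero. Second, I invoke Proposition~\ref{prop:descent-basic}, which gives the identities
\[
  d_au_a^2+d_bu_b^2=2K
  \qquad\text{and}\qquad
  d_bu_b^2-d_au_a^2=2M.
\]
These are identities among rational integers, so they hold verbatim in \(\Q_p\) for every prime \(p\) and in \(\mathbb{R}\). Third, I record the integrality of the witness. Since \(u_a,u_b\in\Z\subset\Z_p\) and the coordinates \(W=T=1\) are units, the point is already a primitive \(\Z_p\)-point, and no rescaling is needed at any prime. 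This is what makes the point \emph{explicitly displayed}: it is the same integral tuple for every \(p\), with no Hensel lifting required. Hensel lifting would otherwise be the input at \(p=2\), compare Proposition~\ref{prop:two-adic-complete}.

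The closing sentence is then a remark rather than a deduction. Local solubility at every place is established directly by the global point, so no appeal to a local–global principle or to Hasse-type arguments for intersections of two quadrics enters. I see no genuine obstacle. The only point that needs care is the nondegeneracy of the curve, which is required for \(\mathscr{D}_{\mathcal{T}}\) to be the smooth genus-one curve under discussion. That condition is the exclusion \(\mathcal{T}\neq(1,1,2)\), already secured by Lemma~\ref{lem:descent-pencil}, and I will cite it so that the \(\Q_p\)-points are points on the curve intended.
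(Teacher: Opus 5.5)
Your proposal is correct and is the same argument the paper relies on. The corollary is stated without a separate proof because it is simply the image of the rational point \(P_{\mathcal{T}}\) of Proposition~\ref{prop:descent-basic} under the embeddings \(\Q\hookrightarrow\Q_p\) and \(\Q\hookrightarrow\mathbb{R}\); your remarks on the primitive integrality of the witness and on the exclusion \(\mathcal{T}\neq(1,1,2)\) are accurate additions.
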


\subsection{The Jacobian and its square-class reduction}
\label{subsec:descent-jacobian}

\begin{theorem}[Jacobian of the parity descent curve]
\label{thm:descent-jacobian}
The Jacobian of \(\mathscr{D}_{\mathcal{T}}\) has the model
\[
  J_{\mathcal{T}}:
  \quad
  y^2=x^3-16\Gamma_{\mathcal{T}}^2x.
\]
Let
\[
  h_{\mathcal{T}}
  \coloneqq
  \sq\bigl(\lvert\Gamma_{\mathcal{T}}\rvert\bigr)
\]
and define the positive squarefree integer
\[
  \gamma_{\mathcal{T}}
  \coloneqq
  \frac{\lvert\Gamma_{\mathcal{T}}\rvert}{h_{\mathcal{T}}^2}.
\]
Then \(J_{\mathcal{T}}\) is isomorphic over \(\Q\) to
\[
  C_{\gamma_{\mathcal{T}}}:
  \quad
  Y^2=X^3-\gamma_{\mathcal{T}}^2X
\]
so that \(j(J_{\mathcal{T}})=1728\).
The curve has full rational \(2\)-torsion, and over \(\overline{\Q}\) it has complex multiplication by \(\Z[i]\).
\end{theorem}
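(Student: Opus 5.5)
The plan is to compute the Jacobian from the discriminant quartic of the pencil, and then rescale it into the stated forms.

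First, I would invoke the classical description of the Jacobian of a smooth complete intersection of two quadrics \(Q_0=Q_1=0\) in \(\mathbb{P}^3\). That Jacobian is the Jacobian of the genus-one double cover
\[
  y^2=\det(Q_0+\lambda Q_1),
\]
with the usual homogenisation in \((\lambda:\mu)\) (see An, Kim, Marshall, Marshall, McCallum and Perlis, or the classical invariant-theoretic treatment). Lemma~\ref{lem:descent-pencil} already supplies the binary form
\[
  4\Gamma_{\mathcal{T}}\lambda(1-\lambda^2).
\]
This form has a root at \(\lambda=\infty\), so the double cover is a plane cubic
\[
  y^2=-4\Gamma_{\mathcal{T}}\lambda^3+4\Gamma_{\mathcal{T}}\lambda,
\]
whose point at infinity supplies an origin.

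The substitution \(\lambda=-X/(4\Gamma_{\mathcal{T}})\) gives
\[
  y^2=\frac{X^3}{16\Gamma_{\mathcal{T}}^2}-X.
\]
Multiplying by \(16\Gamma_{\mathcal{T}}^2\) and putting \(Y=4\Gamma_{\mathcal{T}}y\) yields \(Y^2=X^3-16\Gamma_{\mathcal{T}}^2X\), which is the model \(J_{\mathcal{T}}\).

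Next comes the square-class reduction. I would write \(16\Gamma_{\mathcal{T}}^2=16h_{\mathcal{T}}^4\gamma_{\mathcal{T}}^2\) and apply the admissible change of variables
\[
  x=4h_{\mathcal{T}}^2X,\qquad y=8h_{\mathcal{T}}^3Y,
\]
which is the change \(x=u^2X\), \(y=u^3Y\) with \(u=2h_{\mathcal{T}}\). This turns \(J_{\mathcal{T}}\) into \(Y^2=X^3-\gamma_{\mathcal{T}}^2X\), which is an isomorphism over \(\Q\). The sign of \(\Gamma_{\mathcal{T}}\) plays no role, because only its square enters. The remaining properties follow from this model:
\begin{itemize}
  \item \(a_6=0\) with \(a_4\neq0\) forces \(c_6=0\), and hence \(j=1728\);
  \item the cubic factors as \(X(X-\gamma_{\mathcal{T}})(X+\gamma_{\mathcal{T}})\), so the \(2\)-torsion is rational;
  \item the automorphism \((X,Y)\mapsto(-X,iY)\) is defined over \(\Q(i)\) and has square \(-1\), so it generates an action of \(\Z[i]\), which is the full endomorphism ring of a curve with \(j=1728\).
\end{itemize}

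The main obstacle is the first step. The normalisation of the Jacobian formula, meaning the sign and scalar factor in front of the determinant, only determines the Jacobian up to a quadratic twist. A wrong constant would produce \(Y^2=X^3-16k^2\Gamma_{\mathcal{T}}^2X\) twisted by a non-square, for example by \(-1\) or \(2\). I would first pin down the convention by tracking the classical proof on a diagonal pencil.

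As an independent check, I would use the rational point \(P_{\mathcal{T}}\) of Proposition~\ref{prop:descent-basic}. Since \(\mathscr{D}_{\mathcal{T}}\) has a rational point, it is isomorphic over \(\Q\) to its Jacobian. Projecting from the tangent line at \(P_{\mathcal{T}}\), or equivalently eliminating variables via the diagonal form
\[
  d_aU^2=KW^2-MT^2,\qquad d_bV^2=KW^2+MT^2,
\]
gives an explicit cubic model. I would then verify that it is \(\Q\)-isomorphic to \(y^2=x^3-16\Gamma_{\mathcal{T}}^2x\). If the direct elimination proves messy, it suffices to compare the twist class: the product \((KW^2-MT^2)(KW^2+MT^2)=d_ad_b(UV)^2\) exhibits the quartic \(d_ad_b(K^2w^4-M^2t^4)\), and that quartic visibly has invariants \(I=0\) and \(J\) proportional to \((d_ad_bKM)^2\). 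This confirms the \(j=1728\) twist determined by \(\Gamma_{\mathcal{T}}\).
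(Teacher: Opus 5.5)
Your main line is the paper's own argument, and it is correct. You use the same pencil quartic \(4\Gamma_{\mathcal T}\lambda(1-\lambda^2)\) from Lemma~\ref{lem:descent-pencil}, the same linear rescaling to \(y^2=x^3-16\Gamma_{\mathcal T}^2x\), and the same change \(x=4h_{\mathcal T}^2X\), \(y=8h_{\mathcal T}^3Y\). Your normalisation worry is also milder than you fear. Multiplying the \(4\times4\) determinant by \(\pm\) a square changes nothing here, because the twist by \(-1\) of \(y^2=x^3+Ax\) is \(\Q\)-isomorphic to it via \(x\mapsto -x\).

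Your fallback check, however, would not work as written.

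\emph{The invariants are swapped.} The quartic \(d_ad_b(K^2w^4-M^2t^4)\) has \(J=0\) and \(I=-12\Gamma_{\mathcal T}^2\). You have \(I=0\) instead, which would mean \(j=0\).

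\emph{The quartic is not \(\mathscr{D}_{\mathcal T}\) itself.} It is the quotient of \(\mathscr{D}_{\mathcal T}\) by the fixed-point-free involution \((U:V:W:T)\mapsto(-U:-V:W:T)\). Its Jacobian, \(y^2=x^3+4\Gamma_{\mathcal T}^2x\), is therefore only \(2\)-isogenous to \(J_{\mathcal T}\), not isomorphic to it. For \(\gamma_{\mathcal T}=1\), the curve \(y^2=x^3+4x\) has the rational point \((2,4)\) of order \(4\), while \(y^2=x^3-x\) has no such point.

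\emph{A check that does pin down the twist.} Put \(x=KW^2/T^2\) and \(z_1=U/T\), \(z_2=W/T\), \(z_3=V/T\). The equations then read \(d_az_1^2=x-M\), \(Kz_2^2=x\), \(d_bz_3^2=x+M\). These exhibit \(\mathscr{D}_{\mathcal T}\) as a \(2\)-covering of the quadratic twist of \(y^2=x^3-M^2x\) by \(d_ad_bK\). That twist is \(y^2=x^3-\Gamma_{\mathcal T}^2x\cong J_{\mathcal T}\), which confirms the determinant formula without any normalisation question.
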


\begin{proof}
The determinant construction for a smooth intersection of two quadrics gives the Jacobian as the smooth projective model of
\[
  Z^2=4\Gamma_{\mathcal{T}}\lambda(1-\lambda^2).
\]
Writing \(c_{\mathcal{T}}=-4\Gamma_{\mathcal{T}}\) this becomes
\[
  Z^2=c_{\mathcal{T}}(\lambda^3-\lambda)
\]
and the change
\[
  x=c_{\mathcal{T}}\lambda
  \qquad
  y=c_{\mathcal{T}}Z
\]
gives
\[
  y^2=x^3-c_{\mathcal{T}}^2x=x^3-16\Gamma_{\mathcal{T}}^2x.
\]
Now write \(\lvert\Gamma_{\mathcal{T}}\rvert=h_{\mathcal{T}}^2\gamma_{\mathcal{T}}\).
The change
\[
  x=4h_{\mathcal{T}}^2X
  \qquad
  y=8h_{\mathcal{T}}^3Y
\]
gives
\[
  Y^2=X^3-\gamma_{\mathcal{T}}^2X
\]
whose nonzero rational points of order two are
\[
  (0,0)
  \qquad
  (\gamma_{\mathcal{T}},0)
  \qquad
  (-\gamma_{\mathcal{T}},0).
\]
The displayed equation has \(j\)-invariant \(1728\), and the map
\[
  (X,Y)\longmapsto(-X,iY)
\]
over \(\Q(i)\) gives the stated complex multiplication.
\end{proof}

\begin{proposition}[Conductor of the descent Jacobian]
\label{prop:descent-jacobian-conductor}
Let \(\gamma\) be a positive squarefree integer and put
\[
  C_\gamma:
  \quad
  Y^2=X^3-\gamma^2X.
\]
Then the displayed equation is globally minimal, with
\[
  \Delta_{\min}(C_\gamma)=2^6\gamma^6
\]
and conductor
\[
  N_{C_\gamma}
  =
  \begin{cases}
    2^5\gamma^2 & \gamma\ \text{odd} \\[0.4em]
    2^4\gamma^2 & \gamma\ \text{even}.
  \end{cases}
\]
At every odd prime \(p\mid\gamma\) one has \(v_p(N_{C_\gamma})=2\), while at \(2\),
\[
  v_2(N_{C_\gamma})
  =
  \begin{cases}
    5 & \gamma\ \text{odd} \\[0.3em]
    6 & \gamma\ \text{even}.
  \end{cases}
\]
\end{proposition}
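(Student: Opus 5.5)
The plan is to compute the standard invariants of the displayed model, settle minimality prime by prime, and then find the conductor exponent at each bad prime. Odd primes are handled by Ogg's formula. The prime \(2\) is handled by a quadratic-twist reduction to the base curve \(C_1\), backed by Tate's algorithm.

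\emph{Invariants and minimality.} For \(Y^2=X^3-\gamma^2X\) one has \(a_1=a_2=a_3=a_6=0\) and \(a_4=-\gamma^2\). Hence
\[
  c_4=48\gamma^2,
  \qquad
  c_6=0,
  \qquad
  \Delta=-16\bigl(4a_4^3\bigr)=2^6\gamma^6 .
\]
At an odd prime \(p\) one has \(v_p(\Delta)=6v_p(\gamma)\leq6<12\), so the model is minimal at \(p\).

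At \(2\) with \(\gamma\) odd, \(v_2(\Delta)=6<12\) gives minimality at once. With \(\gamma\) even, \(v_2(\Delta)=12\), and I would argue directly. A non-minimal model would admit a change \(X=4X'+r\), \(Y=8Y'+\dots\) producing an integral model. Since \(c_6=0\) and \(v_2(c_4)=4+2=6\), Kraus's conditions at \(2\) (or simply running the first step of Tate's algorithm) should show that no such integral model exists. The point to exhibit is that \(a_4/16=-\gamma^2/16\) has \(v_2=-2\), and no translation can repair this. The discriminant identity \(\Delta_{\min}=2^6\gamma^6\) follows, so the model is globally minimal.

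\emph{Odd primes.} Primes \(p\nmid2\gamma\) have good reduction. For an odd prime \(p\mid\gamma\) one has \(v_p(c_4)=2\) and \(v_p(\Delta)=6\), so the reduction is additive and potentially good, of Kodaira type \(I_0^*\) with \(m=5\) components. Ogg's formula \(f_p=v_p(\Delta)-m+1\) then gives \(f_p=2\), including at \(p=3\) where wild ramification could in principle intervene. Alternatively, for \(p\geq5\) tameness gives \(f_p=2\) directly for additive reduction.

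\emph{The prime \(2\).} This is the main obstacle. I would first treat \(C_1:Y^2=X^3-X\), which is the curve 32a2 with \(v_2(N)=5\). The curve \(C_\gamma\) is the quadratic twist of \(C_1\) by \(\gamma\). Because \(j=1728\), the twist by \(-1\) returns \(C_1\), so \(C_\gamma\cong C_{-\gamma}\) as twists.

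For \(\gamma\) odd, choose the sign \(\pm\gamma\equiv1\pmod4\). The twisting field is then unramified at \(2\), and the conductor exponent at \(2\) is unchanged, so \(v_2=5\).

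For \(\gamma\) even, write \(\gamma=2\gamma'\). The twist now passes through \(\Q(\sqrt{\pm2})\), which has conductor \(8\), and the effect of such a twist on the conductor exponent is not formal. Here I would run Tate's algorithm on the minimal model, expecting it to end at a type with \(f_2=6\). As a cross-check, I would compare with Ogg's formula using \(v_2(\Delta)=12\) and the Kodaira type found, and with the tabulated curve \(Y^2=X^3-4X\) of conductor \(64\).

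Combining the exponents gives \(N_{C_\gamma}=2^5\gamma^2\) for \(\gamma\) odd and \(2^6(\gamma/2)^2=2^4\gamma^2\) for \(\gamma\) even.
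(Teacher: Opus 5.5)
\textbf{Comparison with the paper.} Your argument is correct. It coincides with the paper everywhere except at the prime $2$ for odd $\gamma$. The paper settles every local factor by citing the branches of Tate's algorithm (via Cohen) and only records their outcomes. You instead note that $C_\gamma$ is the quadratic twist of $C_1:Y^2=X^3-X$ by $\pm\gamma$. Choosing the sign with $\pm\gamma\equiv1\pmod 4$ makes the twist unramified at $2$, so you can transport $f_2(C_1)=5$. The conductor exponent depends only on the inertia action, so this step is rigorous. It also explains why $f_2$ does not depend on $\gamma$, and it leaves a single curve to examine: there Tate's algorithm stops quickly at type $\mathrm{III}$ with $m=2$, and Ogg gives $6-2+1=5$. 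The paper's route is uniform and elementary in principle, but the computation for each $\gamma$ is left to the reader.

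\textbf{Points to tighten.}

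(i) For even $\gamma$ you stop at an intention to run Tate's algorithm. The same twist device finishes it. Write $\gamma=2\gamma'$. Then $C_\gamma$ is the twist of $C_2:Y^2=X^3-4X$ (conductor $64$, minimal discriminant $2^{12}$) by $\pm\gamma'\equiv1\pmod4$. This twist is unramified at $2$, so it transports both $f_2=6$ and $v_2(\Delta_{\min})=12$. If you prefer the direct computation, it runs as follows.
\begin{itemize}
\item The auxiliary cubic is $T^3-\gamma'^2T\equiv T(T+1)^2\pmod 2$.
\item After $x\mapsto x+2$ one has $a_2=6$, $v_2(a_4)=3$ and $v_2(a_6)\geq6$.
\item The quadratic $Y^2-a_6/16$ has a double root mod $2$.
\item The next quadratic is $3X^2+(a_4/8)X+a_6/32\equiv X^2+X\pmod 2$, with distinct roots.
\end{itemize}
So the type is $I_2^*$ with $m=7$, and Ogg gives $f_2=12-7+1=6$.

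(ii) Your stated reason for minimality at $2$ when $\gamma$ is even is not valid as written. Under $x=4x'+r$, $y=8y'+4sx'+t$ one gets $a_4'=(-\gamma^2+3r^2-2st)/16$, and a translation can make this integral. For instance, $\gamma=2$ with $(r,s,t)=(2,1,4)$ gives $a_4'=0$. The actual obstruction is as follows. Integrality of $a_3'=t/4$ and of $a_4'$ forces $v_2(r)=1$ and $s$ odd. Then $3r-s^2\equiv1\pmod4$, so $a_2'=(3r-s^2)/4$ is not integral. Kraus's criterion, which you also name, is cleaner: $c_4/2^4=3\gamma^2$ has $v_2=2<4$, and $c_6/2^6=0\not\equiv-1\pmod4$.

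(iii) When $3\mid\gamma$ one has $v_3(c_4)=3$, not $2$. The type $I_0^*$ at $3$ must then come from Tate's algorithm, not from the valuation table. Concretely, $T^3-(\gamma/3)^2T\equiv T^3-T\pmod 3$ has distinct roots. Your conclusion $f_3=2$ stands.
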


\begin{proof}
For \(Y^2=X^3-\gamma^2X\) the invariants are
\[
  c_4=48\gamma^2
  \qquad
  c_6=0
  \qquad
  \Delta=2^6\gamma^6.
\]
If an odd prime \(p\) does not divide \(\gamma\), the discriminant is a \(p\)-adic unit and the reduction is good.
Let \(p\) be an odd prime dividing \(\gamma\).
Since \(\gamma\) is squarefree, \(v_p(\gamma^2)=2\), the model is minimal at \(p\), and the finite Tate algorithm gives conductor exponent two.
At \(2\) the same finite algorithm has two branches.
When \(\gamma\) is odd, the minimal discriminant has valuation six and the conductor exponent is five.
When \(\gamma\) is even and squarefree, the minimal discriminant has valuation twelve and the conductor exponent is six.
These calculations are instances of Tate's algorithm as presented in~\cite{Cohen1993}.
Multiplication of the local conductor factors gives the displayed formula.
\end{proof}

\begin{corollary}[Uniform Szpiro ceiling for the descent Jacobian]
\label{cor:descent-jacobian-szpiro}
For every parity-class triple \(\mathcal{T}\neq(1,1,2)\) one has
\[
  \operatorname{Szp}(J_{\mathcal{T}})<3.
\]
More precisely, writing \(\gamma=\gamma_{\mathcal{T}}\),
\[
  \operatorname{Szp}(J_{\mathcal{T}})
  =
  \begin{cases}
    \displaystyle
    3-\frac{9\log2}{2\log\gamma+5\log2}
    & \gamma\ \text{odd} \\[1.4em]
    \displaystyle
    3-\frac{6\log2}{2\log\gamma+4\log2}
    & \gamma\ \text{even}.
  \end{cases}
\]
\end{corollary}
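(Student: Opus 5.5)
The plan is to substitute the exact data of Proposition~\ref{prop:descent-jacobian-conductor} into Definition~\ref{def:descent-szpiro}. Theorem~\ref{thm:descent-jacobian} gives \(J_{\mathcal{T}}\cong C_{\gamma}\) over \(\Q\), with \(\gamma=\gamma_{\mathcal{T}}\). The minimal discriminant and the conductor are isomorphism invariants, so the Szpiro quotient of \(J_{\mathcal{T}}\) equals that of \(C_\gamma\).

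First I check that the quotient is defined, that is, \(N_{C_\gamma}>1\). This is automatic, since the conductor is divisible by \(2^4\) in both cases.

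Next I compute the quotient in the two parity cases.

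\textbf{Case \(\gamma\) odd.} Proposition~\ref{prop:descent-jacobian-conductor} gives \(\log\lvert\Delta_{\min}\rvert=6\log2+6\log\gamma\) and \(\log N=5\log2+2\log\gamma\). Writing \(6\log\gamma+6\log2=3(2\log\gamma+5\log2)-9\log2\) gives the first displayed formula.

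\textbf{Case \(\gamma\) even.} Here I must use the minimal discriminant. The proof of Proposition~\ref{prop:descent-jacobian-conductor} says its \(2\)-adic valuation is twelve, not six, even though the displayed value \(2^6\gamma^6\) already has \(v_2=12\) when \(v_2(\gamma)=1\), so the two are consistent. Thus \(\log\lvert\Delta_{\min}\rvert=6\log2+6\log\gamma\) again, and \(\log N=4\log2+2\log\gamma\). Then \(6\log\gamma+6\log2=3(2\log\gamma+4\log2)-6\log2\), which gives the second formula.

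Finally, \(\gamma\geq1\) makes both denominators positive and the subtracted terms strictly positive, so \(\operatorname{Szp}(J_{\mathcal{T}})<3\).

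The hypothesis \(\mathcal{T}\neq(1,1,2)\) is needed only so that \(\Gamma_{\mathcal{T}}\neq0\) and \(\gamma\) is a well-defined positive squarefree integer. This holds by Lemma~\ref{lem:descent-pencil}, since \(M\neq0\).

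The only delicate point is the even-\(\gamma\) case: I need to confirm that the globally minimal discriminant is indeed \(2^6\gamma^6\). I would verify this by noting that \(v_2(c_4)=v_2(48\gamma^2)=6<12\), together with \(v_2(\Delta)=12\). These valuations rule out any integral change of variables lowering the discriminant by \(2^{12}\), since such a change would require \(v_2(c_4)\geq4\) and \(v_2(c_6)\geq6\) to remain integral after scaling, and the resulting \(c_4\) would have negative valuation. So the model is minimal and the formula is exact.
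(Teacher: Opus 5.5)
Your main computation is the paper's own proof and is correct. You transport $\Delta_{\min}$ and $N$ from Proposition~\ref{prop:descent-jacobian-conductor} to $J_{\mathcal{T}}$ via Theorem~\ref{thm:descent-jacobian}, then write $6\log\gamma+6\log2$ as $3(2\log\gamma+5\log2)-9\log2$ or as $3(2\log\gamma+4\log2)-6\log2$.

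Your supplementary check of minimality at $2$ for even $\gamma$ is wrong as written.

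\begin{itemize}
\item Here $v_2(c_4)=v_2(48\gamma^2)=6$ and $c_6=0$. So the necessary conditions $v_2(c_4)\geq4$ and $v_2(c_6)\geq6$ for a rescaling with $u=2$ are in fact satisfied.
\item The rescaled invariant $c_4'=c_4/2^4=3\gamma^2$ has $v_2(c_4')=2\geq0$, so it does not have negative valuation.
\item The comparison $6<12$ plays no role, since the relevant threshold for $c_4$ is $4$.
\end{itemize}

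Minimality is true, but it needs a finer input. Either of the following works.

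\begin{itemize}
\item Kraus's criterion at $2$ says integral invariants $(c_4',c_6')$ come from an integral model only if $c_6'\equiv-1\pmod4$, or $v_2(c_4')\geq4$ together with $c_6'\equiv0,8\pmod{32}$. For $(c_4',c_6')=(3\gamma^2,0)$ both alternatives fail, since $c_6'=0$ and $v_2(c_4')=2$.
\item Directly, take $x=4x'+r$, $y=8y'+4sx'+t$ with $r,s,t\in\Z_2$. Integrality of $a_3'$ and $a_2'$ forces $4\mid t$ and $3r\equiv s^2\pmod4$. If $s$ is even, then $4\mid r$, and $-\gamma^2+3r^2-2st\equiv-\gamma^2\not\equiv0\pmod{16}$, so $a_4'$ is not integral. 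If $s$ is odd, then $r$ is odd, and $r^3-r\gamma^2-t^2$ is odd, so $a_6'$ is not integral.
\end{itemize}

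Simplest of all, Proposition~\ref{prop:descent-jacobian-conductor} already asserts $\Delta_{\min}(C_\gamma)=2^6\gamma^6$. You can cite it as the paper does and drop the check entirely.
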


\begin{proof}
Theorem~\ref{thm:descent-jacobian} and Proposition~\ref{prop:descent-jacobian-conductor} give
\[
  \log\lvert\Delta_{\min}(J_{\mathcal{T}})\rvert
  =
  6\log\gamma+6\log2.
\]
If \(\gamma\) is odd, then
\[
  \log N_{J_{\mathcal{T}}}=2\log\gamma+5\log2
\]
and if \(\gamma\) is even, then
\[
  \log N_{J_{\mathcal{T}}}=2\log\gamma+4\log2.
\]
Division gives the two formulas.
\end{proof}

\begin{remark}[What the Jacobian retains]
\label{rem:descent-jacobian-information}
The Frey curve and the descent Jacobian retain different parts of the defect.
Corollary~\ref{cor:descent-szpiro} places the total defect in the minimal discriminant of the Frey curve, whereas Theorem~\ref{thm:descent-jacobian} replaces
\[
  \Gamma_{\mathcal{T}}=d_ad_bKM
\]
by its squarefree part \(\gamma_{\mathcal{T}}\), so that every square divisor of \(\Gamma_{\mathcal{T}}\) disappears.
The uniform ceiling of Corollary~\ref{cor:descent-jacobian-szpiro} measures that loss.
The defect forced by transgression is therefore better studied through the point \(P_{\mathcal{T}}\) and the kernels \(d_a,d_b\) than through the Szpiro quotient of \(J_{\mathcal{T}}\).
Against this the Jacobian is rigid, the whole family lying within the single square class of curves with \(j=1728\) and complex multiplication by \(\Z[i]\).
\end{remark}

\subsection{Height and kernel compression}
\label{subsec:descent-compression}

\begin{definition}[Descent heights]
\label{def:descent-heights}
For the distinguished point \(P_{\mathcal{T}}=(u_a:u_b:1:1)\), define
\[
  H_{\times}(P_{\mathcal{T}})
  \coloneqq
  u_au_b
  \qquad\text{and}\qquad
  H_{\infty}(P_{\mathcal{T}})
  \coloneqq
  \max\{u_a,u_b\}.
\]
\end{definition}

\begin{proposition}[Exact height and kernel identity]
\label{prop:descent-height}
For every parity-class triple,
\[
  d_ad_bH_{\times}(P_{\mathcal{T}})^2
  =
  ab
  =
  s(\mathcal{T})\bigl(2K-s(\mathcal{T})\bigr)
\]
and
\[
  H_{\infty}(P_{\mathcal{T}})^2
  \geq
  H_{\times}(P_{\mathcal{T}}).
\]
If \(K>1\) and \(E_\varepsilon(\mathcal{T})\geq0\), then
\[
  H_{\times}(P_{\mathcal{T}})
  \geq
  \bigl(s(\mathcal{T})R_{\mathcal K}(\mathcal{T})\bigr)^{1/2}
  (2K)^{\theta_\varepsilon/2}
\]
and
\[
  H_{\infty}(P_{\mathcal{T}})
  \geq
  \bigl(s(\mathcal{T})R_{\mathcal K}(\mathcal{T})\bigr)^{1/4}
  (2K)^{\theta_\varepsilon/4}.
\]
\end{proposition}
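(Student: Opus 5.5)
The identity comes straight from the square extraction, and the lower bounds are a repackaging of Theorem~\ref{thm:large-square}. First, from $a=d_au_a^2$ and $b=d_bu_b^2$,
\[
  d_ad_bH_{\times}(P_{\mathcal{T}})^2=d_ad_bu_a^2u_b^2=ab .
\]
Next, the larger summand equals $2K-s(\mathcal{T})$, as in the proof of Proposition~\ref{prop:boundary-identity}. Hence $ab=s(\mathcal{T})(2K-s(\mathcal{T}))$, which is the first display.

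The inequality $H_\infty^2\geq H_\times$ is the trivial bound $\max\{u_a,u_b\}^2\geq u_au_b$. This holds because $u_a,u_b\geq1$ are positive, and it is the last step in the proof of Theorem~\ref{thm:large-square}.

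For the transgressive bounds, assume $K>1$ and $E_\varepsilon(\mathcal{T})\geq0$. Theorem~\ref{thm:large-square} then gives exactly
\[
  u_au_b\geq\bigl(s(\mathcal{T})R_{\mathcal K}(\mathcal{T})\bigr)^{1/2}(2K)^{\theta_\varepsilon/2},
\]
which is the bound on $H_\times$. Applying the previous step and taking square roots of $H_\infty^2\geq H_\times$ gives the bound on $H_\infty$ with exponents $1/4$ and $\theta_\varepsilon/4$.

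There is no real obstacle here. The only point to check is that Theorem~\ref{thm:large-square} is stated under the same hypotheses, $K>1$ and transgression, and it is. Its proof goes through Corollary~\ref{cor:quotient-certificate} and Lemma~\ref{lem:defect-square}, and I would cite it rather than redo it.
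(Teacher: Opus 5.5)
Your proposal is correct and follows the paper's proof essentially step for step. Like the paper, you use the square extraction together with $ab=s(2K-s)$ from Proposition~\ref{prop:boundary-identity}, then the bound $\max\{u_a,u_b\}^2\geq u_au_b$. The $H_{\times}$ floor is Theorem~\ref{thm:large-square}, and taking the square root of $H_\infty^2\geq H_\times$ gives the $H_\infty$ bound.
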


\begin{proof}
The square extraction gives \(ab=d_ad_b(u_au_b)^2\), and the identity
\[
  ab=s(\mathcal{T})\bigl(2K-s(\mathcal{T})\bigr)
\]
is Proposition~\ref{prop:boundary-identity}.
The elementary inequality \(\max\{u_a,u_b\}^2\geq u_au_b\) gives the second assertion.
The transgressive lower bound for \(H_{\times}\) is Theorem~\ref{thm:large-square}, and taking its square root gives the bound for \(H_{\infty}\).
\end{proof}

\begin{theorem}[Compression of the squarefree kernels]
\label{thm:descent-kernel-compression}
Suppose that \(K>1\) and \(E_\varepsilon(\mathcal{T})\geq0\), and put \(s=s(\mathcal{T})\).
Then
\[
  d_ad_b
  \leq
  \frac{2K-s}{R_{\mathcal K}(\mathcal{T})(2K)^{\theta_\varepsilon}}.
\]
Furthermore
\[
  \lvert\Gamma_{\mathcal{T}}\rvert
  =
  d_ad_bK(K-s)
\]
and therefore both \(\lvert\Gamma_{\mathcal{T}}\rvert\) and the squarefree twist parameter \(\gamma_{\mathcal{T}}\) are at most
\[
  \frac{K(K-s)(2K-s)}{R_{\mathcal K}(\mathcal{T})(2K)^{\theta_\varepsilon}}.
\]
\end{theorem}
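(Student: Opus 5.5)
The plan is to combine the exact height--kernel identity of Proposition~\ref{prop:descent-height} with the transgressive lower bound on \(H_\times(P_{\mathcal{T}})\), and then to rewrite \(\Gamma_{\mathcal{T}}\) in terms of \(s\).

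\emph{Kernel bound.} Proposition~\ref{prop:descent-height} gives
\[
  d_ad_b
  =
  \frac{s(2K-s)}{H_\times(P_{\mathcal{T}})^2}.
\]
Under the hypotheses \(K>1\) and \(E_\varepsilon(\mathcal{T})\geq0\), the same proposition (equivalently Theorem~\ref{thm:large-square}) supplies
\[
  H_\times(P_{\mathcal{T}})^2\geq sR_{\mathcal K}(\mathcal{T})(2K)^{\theta_\varepsilon}.
\]
Substituting this into the identity, the factor \(s\) cancels and leaves
\[
  d_ad_b\leq\frac{2K-s}{R_{\mathcal K}(\mathcal{T})(2K)^{\theta_\varepsilon}}.
\]
This is the first assertion.

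\emph{Identity for \(|\Gamma_{\mathcal{T}}|\).} We have \(\Gamma_{\mathcal{T}}=d_ad_bKM\) with \(K>0\), so \(|\Gamma_{\mathcal{T}}|=d_ad_bK|M|\). Since \(a=K-M\) and \(b=K+M\), the smaller summand is \(s=K-|M|\), and hence \(|M|=K-s\). Note that \(M\neq0\) because \(K>1\) excludes \((1,1,2)\), so \(\Gamma_{\mathcal{T}}\neq 0\).

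\emph{Bound for \(|\Gamma_{\mathcal{T}}|\) and \(\gamma_{\mathcal{T}}\).} Multiplying the kernel bound by \(K(K-s)\geq0\) gives the stated upper bound for \(|\Gamma_{\mathcal{T}}|\). For \(\gamma_{\mathcal{T}}\), Theorem~\ref{thm:descent-jacobian} defines it as \(|\Gamma_{\mathcal{T}}|/h_{\mathcal{T}}^2\) with \(h_{\mathcal{T}}\geq1\), so \(\gamma_{\mathcal{T}}\leq|\Gamma_{\mathcal{T}}|\) and the same bound holds.

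There is no real obstacle here. The only points needing care are the sign bookkeeping \(|M|=K-s\) and checking that the transgressive height bound is applied under exactly the hypotheses \(K>1\) and \(E_\varepsilon\geq0\).
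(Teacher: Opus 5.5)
Your proposal is correct and follows the paper's own proof step for step. You divide the height--kernel identity of Proposition~\ref{prop:descent-height} by the transgressive height floor, then use \(\lvert M\rvert=K-s\) and \(\gamma_{\mathcal{T}}\leq\lvert\Gamma_{\mathcal{T}}\rvert\). Your added observation that \(K>1\) forces \(M\neq0\) is a harmless detail that the paper leaves implicit.
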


\begin{proof}
Proposition~\ref{prop:descent-height} gives
\[
  d_ad_b
  =
  \frac{s(2K-s)}{H_{\times}(P_{\mathcal{T}})^2}
\]
and the transgressive height floor gives
\[
  H_{\times}(P_{\mathcal{T}})^2
  \geq
  sR_{\mathcal K}(\mathcal{T})(2K)^{\theta_\varepsilon}.
\]
Division proves the kernel bound.
Since \(s=K-\lvert M\rvert\), one has \(\lvert M\rvert=K-s\) and hence
\[
  \lvert\Gamma_{\mathcal{T}}\rvert
  =
  d_ad_bK(K-s).
\]
The remaining bounds follow by substitution together with \(\gamma_{\mathcal{T}}\leq\lvert\Gamma_{\mathcal{T}}\rvert\).
\end{proof}

\begin{corollary}[A kernel certificate]
\label{cor:descent-kernel-certificate}
Suppose that \(K>1\).
If
\[
  R_{\mathcal K}(\mathcal{T})(2K)^{\theta_\varepsilon}
  >
  2K-s(\mathcal{T})
\]
then \(E_\varepsilon(\mathcal{T})<0\).
\end{corollary}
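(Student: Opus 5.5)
The plan is to prove the contrapositive: assuming \(K>1\) and \(E_\varepsilon(\mathcal{T})\geq0\), I will show that the displayed inequality must fail. This is a direct consequence of Theorem~\ref{thm:descent-kernel-compression}, which was proved under exactly these hypotheses.

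By that theorem, every transgressive triple with \(K>1\) satisfies
\[
  d_ad_b
  \leq
  \frac{2K-s(\mathcal{T})}{R_{\mathcal K}(\mathcal{T})(2K)^{\theta_\varepsilon}}.
\]
The squarefree kernels \(d_a\) and \(d_b\) are positive odd integers, so \(d_ad_b\geq1\). Combining these two facts gives
\[
  R_{\mathcal K}(\mathcal{T})(2K)^{\theta_\varepsilon}\leq 2K-s(\mathcal{T}).
\]
This is the negation of the hypothesis of the corollary, so \(E_\varepsilon(\mathcal{T})\geq0\) is impossible under that hypothesis, and hence \(E_\varepsilon(\mathcal{T})<0\).

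The argument can also be run directly, without the kernel bound. Corollary~\ref{cor:quotient-certificate} gives
\[
  D_{\lin}(\mathcal{T})\geq s(\mathcal{T})R_{\mathcal K}(\mathcal{T})(2K)^{\theta_\varepsilon}.
\]
On the other hand, \(D_{\lin}(\mathcal{T})\leq ab=s(\mathcal{T})(2K-s(\mathcal{T}))\), because the radicals \(\rad(a)\) and \(\rad(b)\) are at least \(1\). Dividing by \(s(\mathcal{T})\geq1\) yields the same inequality as above.

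There is no substantive obstacle here. The only point that needs stating is the lower bound \(d_ad_b\geq1\), which holds because \(a\) and \(b\) are positive.
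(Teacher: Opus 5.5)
Your proposal is correct and is the paper's proof: under \(K>1\) and \(E_\varepsilon(\mathcal{T})\geq0\), Theorem~\ref{thm:descent-kernel-compression} would force \(d_ad_b<1\), which contradicts \(d_ad_b\geq1\). Your second route, through Corollary~\ref{cor:quotient-certificate} and \(D_{\lin}(\mathcal{T})\leq ab=s(\mathcal{T})(2K-s(\mathcal{T}))\), is also valid; it reaches the same inequality without the square extraction.
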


\begin{proof}
Were the triple transgressive, Theorem~\ref{thm:descent-kernel-compression} would give \(d_ad_b<1\), which is impossible for positive integers.
\end{proof}

\begin{corollary}[Conductor bound for the descent Jacobian]
\label{cor:descent-jacobian-conductor-bound}
Suppose that \(K>1\) and \(E_\varepsilon(\mathcal{T})\geq0\), and put \(s=s(\mathcal{T})\).
Then
\[
  N_{J_{\mathcal{T}}}
  \leq
  32
  \left(
    \frac{K(K-s)(2K-s)}{R_{\mathcal K}(\mathcal{T})(2K)^{\theta_\varepsilon}}
  \right)^2.
\]
\end{corollary}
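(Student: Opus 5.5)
The plan is to combine Proposition~\ref{prop:descent-jacobian-conductor} with Theorem~\ref{thm:descent-kernel-compression}, passing through the isomorphism of Theorem~\ref{thm:descent-jacobian}. The conductor is an isomorphism invariant over \(\Q\), so \(J_{\mathcal{T}}\cong C_{\gamma_{\mathcal{T}}}\) gives \(N_{J_{\mathcal{T}}}=N_{C_{\gamma_{\mathcal{T}}}}\). Proposition~\ref{prop:descent-jacobian-conductor} evaluates this as \(2^5\gamma_{\mathcal{T}}^2\) when \(\gamma_{\mathcal{T}}\) is odd and \(2^4\gamma_{\mathcal{T}}^2\) when \(\gamma_{\mathcal{T}}\) is even. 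In either case
\[
  N_{J_{\mathcal{T}}}\leq 32\,\gamma_{\mathcal{T}}^2 .
\]
The construction needs \(M\neq0\), and this holds: \(K>1\) excludes \((1,1,2)\), so \(\Gamma_{\mathcal{T}}\neq0\) and \(\gamma_{\mathcal{T}}\geq1\) is a well-defined positive squarefree integer.

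Next I would apply Theorem~\ref{thm:descent-kernel-compression}, whose hypotheses \(K>1\) and \(E_\varepsilon(\mathcal{T})\geq0\) are exactly those assumed here. It gives
\[
  \gamma_{\mathcal{T}}\leq\frac{K(K-s)(2K-s)}{R_{\mathcal K}(\mathcal{T})(2K)^{\theta_\varepsilon}} .
\]
Both sides are nonnegative, so squaring preserves the inequality. Substituting into the conductor bound yields the claim.

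The only point needing care is the \(2\)-adic case split: the even branch gives the better constant \(16\), which is absorbed into \(32\). This case split is routine, so I expect no real obstacle. Everything substantive already sits in the cited conductor computation and the kernel compression theorem.
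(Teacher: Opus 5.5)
Your proposal is correct and is the paper's own argument: Proposition~\ref{prop:descent-jacobian-conductor} gives \(N_{J_{\mathcal{T}}}\leq32\gamma_{\mathcal{T}}^2\) in both parity branches, and the bound on \(\gamma_{\mathcal{T}}\) from Theorem~\ref{thm:descent-kernel-compression} is then squared and substituted. You also check that the conductor is invariant under the isomorphism \(J_{\mathcal{T}}\cong C_{\gamma_{\mathcal{T}}}\) and that \(K>1\) excludes the triple \((1,1,2)\); these are the routine details the paper leaves implicit.
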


\begin{proof}
Proposition~\ref{prop:descent-jacobian-conductor} gives \(N_{J_{\mathcal{T}}}\leq32\gamma_{\mathcal{T}}^2\).
Apply Theorem~\ref{thm:descent-kernel-compression}.
\end{proof}

Transgression forces the point upward and the squarefree kernels downward, and this is the part of the descent that remains sensitive to it.
The Jacobian, having passed to the square class, no longer reflects that multiplicity.

\subsection{Separate local conditions at \(2\)}
\label{subsec:descent-local}

\begin{definition}[Residual conics]
\label{def:descent-residual-conics}
Define the two residual conics
\[
  \mathscr{C}_{K}:
  \quad
  d_aU^2+d_bV^2=2KW^2
\]
and
\[
  \mathscr{C}_{M}:
  \quad
  d_bV^2-d_aU^2=2MT^2.
\]
Their intersection in \(\mathbb{P}^3\) is \(\mathscr{D}_{\mathcal{T}}\).
\end{definition}

\begin{proposition}[Unit criteria for the two residual conics]
\label{prop:descent-conic-local}
Let \(d_a\) and \(d_b\) be odd.
The conic \(\mathscr{C}_{K}\) has a point with \(U,V,W\in\Z_2^\times\) precisely when
\[
  d_a+d_b\equiv2K\pmod8
\]
and the conic \(\mathscr{C}_{M}\) has a point with \(U,V,T\in\Z_2^\times\) precisely when
\[
  d_b-d_a\equiv2M\pmod8.
\]
\end{proposition}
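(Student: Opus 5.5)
The plan is to reduce both statements to Proposition~\ref{prop:two-adic-complete}, which is exactly the unit criterion for an equation \(d_aX^2+d_bY^2=2K\) with odd coefficients. The projective variable on the right-hand side is the only new feature. Since the requested points have all coordinates in \(\Z_2^\times\), I will dehomogenize by that coordinate.

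\emph{The conic \(\mathscr{C}_K\).} Suppose \((U,V,W)\) is a point with unit coordinates. Put \(X=U/W\) and \(Y=V/W\). These are again two-adic units, and they satisfy \(d_aX^2+d_bY^2=2K\). Conversely, a solution \((X,Y)\) in units gives the point \((X:Y:1)\), whose coordinates are all units. So unit points on \(\mathscr{C}_K\) correspond exactly to unit solutions of the affine equation, and Proposition~\ref{prop:two-adic-complete} yields the criterion \(d_a+d_b\equiv2K\pmod8\). For the direct implication one can also argue without the proposition: every odd square is \(1\) modulo \(8\), so \(W^2\equiv1\), and reducing the equation modulo \(8\) gives the congruence at once.

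\emph{The conic \(\mathscr{C}_M\).} Rewrite it as \(d_bV^2+(-d_a)U^2=2MT^2\). The coefficient \(-d_a\) is odd, but it may be negative, and \(2M\) may be negative or zero. Proposition~\ref{prop:two-adic-complete} is stated for positive odd coefficients and a positive even right-hand side. However, its proof uses only that the coefficients are two-adic units and that the right-hand side is an even two-adic integer. Two facts carry it: unit squares are exactly the elements of \(1+8\Z_2\), and \(d^{-1}\equiv d\pmod 8\) for an odd \(d\).

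I will therefore repeat that argument directly. For the direct half, reduce modulo \(8\) using \(U^2\equiv V^2\equiv T^2\equiv1\). For the converse, set \(T=U=1\) and \(\alpha=(2M+d_a)/d_b\). Then \(\alpha\) is a unit, and the hypothesis gives \(\alpha\equiv1\pmod8\). Hensel's lemma, exactly as in the first paragraph of that proof, then produces a unit \(V\) with \(V^2=\alpha\).

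\emph{The one step needing care.} The only point requiring attention is this extension of Proposition~\ref{prop:two-adic-complete} to signed coefficients. This includes \(M=0\), which is excluded anyway by \(\mathcal{T}\neq(1,1,2)\), and which in any case causes no harm. I expect it to be routine and will state it explicitly rather than cite the proposition verbatim. Finally, I will note that for an actual triple both congruences hold automatically: Lemma~\ref{lem:two-adic-kernel} gives the first, and subtracting \(a\equiv d_a\) from \(b\equiv d_b\) modulo \(8\) gives the second.
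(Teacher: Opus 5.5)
Your proposal is correct and follows the paper's proof essentially step for step. Necessity comes from reducing modulo \(8\) using that unit squares lie in \(1+8\Z_2\); sufficiency comes from fixing \(V=W=1\) (respectively \(U=T=1\)) and using Hensel's lemma to extract a unit square root of \((2K-d_b)/d_a\) (respectively \((2M+d_a)/d_b\)), which is \(\equiv1\pmod8\). Your dehomogenization onto Proposition~\ref{prop:two-adic-complete}, together with your remark that its proof needs only unit coefficients (so signs do not matter), is exactly what the paper relies on when it invokes that proof.
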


\begin{proof}
Every square in \(\Z_2^\times\) is congruent to \(1\) modulo \(8\), so the two congruences are necessary.
Suppose that \(d_a+d_b\equiv2K\pmod8\).
Take \(V=W=1\) and put
\[
  \eta=\frac{2K-d_b}{d_a}.
\]
The numerator and denominator are odd, and the congruence gives \(\eta\equiv1\pmod8\).
Every element of \(1+8\Z_2\) is the square of a \(2\)-adic unit, as in the proof of Proposition~\ref{prop:two-adic-complete}, so \(U^2=\eta\) has a solution in \(\Z_2^\times\).
Suppose now that \(d_b-d_a\equiv2M\pmod8\).
Take \(U=T=1\) and put
\[
  \xi=\frac{2M+d_a}{d_b}.
\]
Again \(\xi\equiv1\pmod8\), so there is a unit \(V\in\Z_2^\times\) satisfying \(V^2=\xi\).
\end{proof}

\begin{corollary}[Congruences supplied by square extraction]
\label{cor:descent-conic-congruences}
For an actual parity-class triple,
\[
  d_a\equiv a\pmod8
  \qquad\text{and}\qquad
  d_b\equiv b\pmod8
\]
whence
\[
  d_a+d_b\equiv2K\pmod8
  \qquad\text{and}\qquad
  d_b-d_a\equiv2M\pmod8.
\]
Each residual conic therefore has a \(2\)-adic unit point.
\end{corollary}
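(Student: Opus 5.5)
The congruences come straight from the square extraction, so the plan is to reduce the extraction modulo $8$ and then apply Proposition~\ref{prop:descent-conic-local}. By Lemma~\ref{lem:two-adic-kernel}, the integers $d_a$, $u_a$, $d_b$ and $u_b$ are all odd, since each divides one of the odd summands. The square of an odd integer is congruent to $1$ modulo $8$. Reducing $a=d_au_a^2$ and $b=d_bu_b^2$ modulo $8$ therefore gives
\[
  d_a\equiv a\pmod8
  \qquad\text{and}\qquad
  d_b\equiv b\pmod8 .
\]

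Next I combine these with the class coordinates of Definition~\ref{def:frame-coordinates}, namely $a+b=2K$ and $b-a=2M$. Adding the two congruences gives $d_a+d_b\equiv2K\pmod8$, which is exactly the congruence already recorded in Lemma~\ref{lem:two-adic-kernel}. Subtracting them gives $d_b-d_a\equiv2M\pmod8$. Here $M$ may be negative, but the congruence is an identity in $\Z$ reduced modulo $8$, so the sign causes no trouble.

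Finally, $d_a$ and $d_b$ are odd, so both hypotheses of Proposition~\ref{prop:descent-conic-local} hold. That proposition then supplies a point of $\mathscr{C}_K$ with $U,V,W\in\Z_2^\times$ and a point of $\mathscr{C}_M$ with $U,V,T\in\Z_2^\times$.

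There is also a direct check. The rational point $(u_a,u_b,1)$ lies on $\mathscr{C}_K$, and $(u_a,u_b,1)$ also lies on $\mathscr{C}_M$ by Proposition~\ref{prop:descent-basic}. Its coordinates are odd integers and hence $2$-adic units, so each conic visibly has a unit point even without the proposition.

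No step is a real obstacle. The only point needing care is that the sign of $M$ does not interfere with the congruence $d_b-d_a\equiv2M\pmod8$, and it does not.
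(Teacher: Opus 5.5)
Your proposal is correct and follows the paper's proof: since $u_a$ and $u_b$ are odd, $u_a^2\equiv u_b^2\equiv1\pmod 8$, so reducing $a=d_au_a^2$ and $b=d_bu_b^2$ gives the first pair of congruences, addition and subtraction give the second pair, and Proposition~\ref{prop:descent-conic-local} then supplies the unit points. Your extra direct check, that $(u_a:u_b:1)$ is itself a unit point on each residual conic, is also valid and is essentially the observation recorded in Proposition~\ref{prop:descent-affine-two-adic}.
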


\begin{proof}
The integers \(u_a\) and \(u_b\) are odd, so \(u_a^2\equiv u_b^2\equiv1\pmod8\).
The identities \(a=d_au_a^2\) and \(b=d_bu_b^2\) give the first two congruences, and addition and subtraction give the remaining ones.
\end{proof}

\begin{proposition}[The exact condition on the distinguished affine fiber]
\label{prop:descent-affine-two-adic}
Let
\[
  \mathscr{D}_{\mathcal{T}}^\circ
  \coloneqq
  \mathscr{D}_{\mathcal{T}}\cap\{W=T=1\}.
\]
Then \(\mathscr{D}_{\mathcal{T}}^\circ(\Z_2^\times\times\Z_2^\times)\neq\varnothing\) precisely when
\[
  \frac{a}{d_a}\in1+8\Z_2
  \qquad\text{and}\qquad
  \frac{b}{d_b}\in1+8\Z_2
\]
that is, precisely when
\[
  a\equiv d_a\pmod8
  \qquad\text{and}\qquad
  b\equiv d_b\pmod8.
\]
For the squarefree kernels extracted from a parity-class triple, the pair \((u_a,u_b)\) is an integral solution on this affine fiber.
\end{proposition}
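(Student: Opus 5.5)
Setting $W=T=1$ in the diagonal form of Proposition~\ref{prop:descent-basic}, the affine fiber $\mathscr{D}_{\mathcal{T}}^\circ$ is the pair of equations
\[
  d_aU^2=K-M=a
  \qquad\text{and}\qquad
  d_bV^2=K+M=b.
\]
The point to use is that these two equations are decoupled: $U$ appears only in the first and $V$ only in the second. A point of $\mathscr{D}_{\mathcal{T}}^\circ$ with coordinates in $\Z_2^\times\times\Z_2^\times$ is therefore exactly a pair of units $U,V$ with $U^2=a/d_a$ and $V^2=b/d_b$. Both quotients make sense in $\Z_2^\times$, because $a,b,d_a,d_b$ are all odd by Lemma~\ref{lem:two-adic-kernel}.

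The equivalence then follows from one fact, already proved inside Proposition~\ref{prop:two-adic-complete}: an element of $\Z_2^\times$ is the square of a $2$-adic unit precisely when it lies in $1+8\Z_2$. The necessity half is the congruence $u^2\equiv1\pmod8$ for odd $u$. The sufficiency half is Hensel's lemma applied to $X^2-\alpha$ at $X=1$. Applying this separately to $a/d_a$ and to $b/d_b$ gives the first formulation.

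To pass to the congruences, I will use that $d_a$ is a unit with $d_a^2\equiv1\pmod8$. Hence $a/d_a\equiv1\pmod{8\Z_2}$ is equivalent to $a\equiv d_a\pmod8$, and the same holds for $b$. Here "$\pmod 8$" for integers agrees with congruence in $\Z_2$, since $\Z/8\Z\cong\Z_2/8\Z_2$.

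The last assertion is immediate from Proposition~\ref{prop:descent-basic}: $(u_a,u_b)$ are odd integers, hence $2$-adic units, and they satisfy both equations at $W=T=1$. This is consistent with Corollary~\ref{cor:descent-conic-congruences}, which gives the congruences directly.

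I expect no real obstacle. The only care needed is to state clearly that the fiber's equations decouple, so that solubility is the conjunction of two independent square conditions, and to justify reducing the $1+8\Z_2$ condition to integer congruences through invertibility of $d_a,d_b$ modulo $8$.
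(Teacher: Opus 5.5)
Your proposal is correct and follows the paper's proof essentially step for step. Like the paper, you reduce the fiber to the decoupled equations $d_aU^2=a$ and $d_bV^2=b$, invoke the fact that the squares of units in $\Z_2$ are exactly $1+8\Z_2$, and read off $(u_a,u_b)$ from Proposition~\ref{prop:descent-basic}. Your remark that $d_a$ and $d_b$ are units, so that $a/d_a\in1+8\Z_2$ is the same as $a\equiv d_a\pmod 8$, makes explicit a step the paper leaves implicit.
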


\begin{proof}
On the affine fiber \(W=T=1\) the equations are
\[
  d_aU^2+d_bV^2=2K
  \qquad\text{and}\qquad
  d_bV^2-d_aU^2=2M
\]
and adding and subtracting them gives
\[
  d_aU^2=K-M=a
  \qquad\text{and}\qquad
  d_bV^2=K+M=b.
\]
Hence a unit solution exists precisely when \(a/d_a\) and \(b/d_b\) are squares in \(\Z_2^\times\), and a \(2\)-adic unit is a square precisely when it is congruent to \(1\) modulo \(8\).
The extracted integers \(u_a,u_b\) give the final assertion.
\end{proof}

\begin{remark}[Compatibility of the two conics]
\label{rem:descent-local-compatibility}
Proposition~\ref{prop:descent-conic-local} gives the complete unit condition for each residual conic separately, while their simultaneous compatibility requires the same values of \(U\) and \(V\) in both equations.
On the distinguished affine fiber that compatibility is expressed by the stronger pair
\[
  a\equiv d_a\pmod8
  \qquad\text{and}\qquad
  b\equiv d_b\pmod8
\]
of Proposition~\ref{prop:descent-affine-two-adic}, of which the two summed congruences are the projections onto the two conics.
The summed congruences do not by themselves describe every \(2\)-adic point of the full projective intersection.
\end{remark}

\subsection{Arithmetic content of the descent}
\label{subsec:descent-content}

\begin{proposition}[Explicit descent packet]
\label{prop:descent-explicit-packet}
For every parity-class triple \(\mathcal{T}\neq(1,1,2)\), the data
\[
  d_a
  \qquad
  d_b
  \qquad
  u_a
  \qquad
  u_b
  \qquad
  \Gamma_{\mathcal{T}}
  \qquad
  \gamma_{\mathcal{T}}
\]
are obtained by finite integer operations, and they determine the following objects and bounds:
\begin{enumerate}[label=\textup{(\arabic*)}]
\item The point \(P_{\mathcal{T}}=(u_a:u_b:1:1)\) on the genus-one curve \(\mathscr{D}_{\mathcal{T}}\).
\item The Jacobian \(J_{\mathcal{T}}\simeq C_{\gamma_{\mathcal{T}}}:Y^2=X^3-\gamma_{\mathcal{T}}^2X\).
\item The exact minimal discriminant and conductor of \(C_{\gamma_{\mathcal{T}}}\).
\item The two unit congruences for the residual conics.
\item Under transgression, the height floor of Proposition~\ref{prop:descent-height} and the kernel compression of Theorem~\ref{thm:descent-kernel-compression}.
\end{enumerate}
\end{proposition}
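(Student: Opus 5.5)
The statement collects constructions already made, so the proof will go through the five items in order and, for each, name the earlier result that supplies it. What remains to be shown is that the six data come from finite integer operations and that the cited results apply to every parity-class triple other than \((1,1,2)\).

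\textbf{The data.} I would first compute \(\sq(a)\) and \(\sq(b)\) from the prime factorizations of \(a\) and \(b\) (Definition~\ref{def:square-divisor}). This gives \(u_a\) and \(u_b\). Exact division then gives \(d_a=a/u_a^2\) and \(d_b=b/u_b^2\). Next I form \(K=(a+b)/2\) and \(M=(b-a)/2\), and from them \(\Gamma_{\mathcal{T}}=d_ad_bKM\). The exclusion \(\mathcal{T}\neq(1,1,2)\) gives \(M\neq0\), so \(\Gamma_{\mathcal{T}}\neq0\). Finally \(\gamma_{\mathcal{T}}=\lvert\Gamma_{\mathcal{T}}\rvert/\sq(\lvert\Gamma_{\mathcal{T}}\rvert)^2\). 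Each of these is a factorization, a product or an exact division, so all are finite integer operations.

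\textbf{Items (1) and (2).} For item (1), Lemma~\ref{lem:descent-pencil} shows that \(\mathscr{D}_{\mathcal{T}}\) is a smooth genus-one curve once \(\Gamma_{\mathcal{T}}\neq0\). Proposition~\ref{prop:descent-basic} then places \(P_{\mathcal{T}}\) on it. For item (2), Theorem~\ref{thm:descent-jacobian} gives the isomorphism \(J_{\mathcal{T}}\simeq C_{\gamma_{\mathcal{T}}}\). This uses only \(\gamma_{\mathcal{T}}\), which is positive and squarefree by construction.

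\textbf{Items (3) to (5).} For item (3), I apply Proposition~\ref{prop:descent-jacobian-conductor} with \(\gamma=\gamma_{\mathcal{T}}\). This gives the minimal discriminant \(2^6\gamma^6\) and a conductor determined by the parity of \(\gamma\). For item (4), Lemma~\ref{lem:two-adic-kernel} shows that \(d_a\) and \(d_b\) are odd. Corollary~\ref{cor:descent-conic-congruences} supplies the two congruences modulo \(8\), and Proposition~\ref{prop:descent-conic-local} turns them into \(2\)-adic unit points on the two residual conics. For item (5), under \(K>1\) and \(E_\varepsilon\geq0\), I invoke Proposition~\ref{prop:descent-height} and Theorem~\ref{thm:descent-kernel-compression}. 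Both are stated in terms of \(s\), \(K\), \(R_{\mathcal K}\) and the packet data alone.

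\textbf{Main obstacle.} There is no real difficulty here. The one point that needs care is checking that every cited result requires only \(M\neq0\), together with \(K>1\) in item (5). That check is exactly what the exclusion of \((1,1,2)\) is there to guarantee.
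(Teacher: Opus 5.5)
Your proposal is correct and follows the paper's proof. The paper likewise obtains the kernels and square parts from the factorizations of \(a\) and \(b\), derives \(\Gamma_{\mathcal{T}}\) and \(\gamma_{\mathcal{T}}\) by multiplication and square extraction, and reads each item off the earlier results of the section; your check that excluding \((1,1,2)\) secures both \(M\neq0\) and \(K>1\) is a detail the paper leaves implicit.
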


\begin{proof}
The squarefree kernels and square divisors are obtained from the prime factorizations of \(a\) and \(b\), and the parameters \(\Gamma_{\mathcal{T}}\) and \(\gamma_{\mathcal{T}}\) then follow by multiplication and square extraction.
Every remaining assertion is given by the displayed formulas of this section.
\end{proof}

\section{Interpretation of the boundary reduction}\label{sec:interpretation-boundary}

The equivalence between an infinite transgressive family on the Mersenne line and a positive order--defect density \(\sigma_{\mathcal{W}}\) is exact, so as a statement of difficulty it is a reformulation, though not an arbitrary one. The boundary lines are the configurations on which several independent estimates take their most permissive form, and the reduction identifies the arithmetic quantity that remains once those estimates are saturated.

On a general boundary line the linear threshold of Theorem~\ref{thm:exact-linear-threshold} contains the term
\[
  \log\Bigl(s\Bigl(2-\frac{s}{K}\Bigr)\Bigr)
\]
whose minimum is attained at \(s=1\). An interior sequence with \(s_j\to\infty\) must meet a larger threshold, the certificate acquiring a supplement of order \(\log s_j\), and for \(s_j\geq K_j^{\alpha}\) Proposition~\ref{prop:exponent-amplification} transfers the certificate to the larger parameter \(\theta_{\varepsilon}+\alpha\). The exponential boundary lines, among them \((1,2^m-1,2^m)\), are therefore the lines on which a fixed positive excess imposes the weakest arithmetic conditions.

That advantage does not produce counterexamples on its own. The defect law
\[
  \Delta_m=\Omega_m+\log G_m
\]
of Theorem~\ref{thm:exact-defect-law} separates the possible sources. The quantity \(\log G_m\) measures every non-Wieferich repetition and is at most \(\log m\). The highly composite exponents \(m_k=\operatorname{lcm}(1,\dots,k)\) come close to that bound, and the quality margin they yield decays like \(\log m/(m\log2)\). Corollary~\ref{cor:superlog-wieferich} is sharper, forcing \(\Omega_m\to\infty\) for any uniform margin \(\varepsilon>0\). Ordinary prime lifting, multiplicity, and exponent divisibility are thus absorbed by a logarithmic term, and the intrinsic growth of the Wieferich order--defect data is the one parameter that remains.

A proof of the full \(abc\) conjecture would imply the finiteness of the boundary families, hence by Theorems~\ref{thm:order-defect-density} and~\ref{thm:even-base-density-criterion}
\[
  \sigma_{\mathcal{W},g}=0
\]
for every even base \(g\geq2\). In finite form it would exclude all admissible sequences of Wieferich sets with
\[
  B_g(S_j)\geq g^{\kappa D_g(S_j)}.
\]
Theorem~\ref{thm:bounded-order-cofactor} gives the parent transgressive sequence a bounded order-cofactor, and Theorem~\ref{thm:order-core-descent} together with Lemma~\ref{lem:order-core-idempotence} allows a strictly order-saturated representative to be substituted when the condition is tested, one for which \(D_n=n\) and the order-cofactor is identically \(1\). This is the canonical form in which the lower bound should be evaluated, and it removes any ambiguity about cofactor growth in the boundary data. The consequences are necessary ones only, and they might also be reached without isolating the Wieferich primes, since a global bound on the radical could make the exponential threshold unreachable.

The counting methods of Section~\ref{sec:benchmarks} offer a contrast. They are strongest in the interior regime, where \(\omega_{\tot}(\mathcal{T})\) grows without bound and no single prime power carries a positive proportion of the defect. The boundary lines can reverse those priorities, the number of prime factors staying bounded while the defect concentrates in a few large prime powers, as in Theorem~\ref{thm:counterexample-portrait}. Analytic estimates are not designed to detect whether a congruence such as \(2^{p-1}\equiv1\pmod{p^2}\) occurs with an unusually small multiplicative order. The boundary reduction separates that algebraic question and states exactly what remains of the problem once the available logarithmic estimates have been applied.

\section{The parity class under the negation}\label{sec:negation}

\begin{definition}[Critical quality]\label{def:critical-quality}
For \(B\geq1\), define
\[
  \Sigma(B)
  \coloneqq
  \sup
  \bigl\{
    q(\mathcal{T})
    :
    \mathcal{T}\ \text{is a parity-class triple and}\ K(\mathcal{T})\geq B
  \bigr\}.
\]
The \emph{critical quality} of the parity class is
\[
  q^*
  \coloneqq
  \inf_{B\geq1}\Sigma(B).
\]
\end{definition}

Theorem~\ref{thm:mersenne-hits} shows that triples of quality greater than one occur beyond every height, and Theorem~\ref{thm:pell-infinite-quality} gives a second family of the same kind.
Hence
\[
  q^*\geq1.
\]
Neither family contradicts Conjecture~\ref{conj:parity-abc}, since along both the margin \(q-1\) tends to \(0\), and the conjecture forbids infinitude only at a fixed \(\varepsilon>0\).
A crossing of quality one is therefore not a counterexample, and Proposition~\ref{prop:critical-equivalence} places the whole question in the value of \(q^*\) rather than in the existence of such crossings.

\begin{proposition}[Critical-quality equivalence]\label{prop:critical-equivalence}
Conjecture~\ref{conj:parity-abc} holds precisely when
\[
  q^*=1.
\]
\end{proposition}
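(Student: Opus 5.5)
The plan is to prove the two implications separately, working directly from Definition~\ref{def:critical-quality} and Definition~\ref{def:energy-quality}. Two preliminary observations will be used throughout. First, $\Sigma(B)$ is non-increasing in $B$, so $q^*=\lim_{B\to\infty}\Sigma(B)$. Second, since $K\geq1$ for every parity-class triple, the set over which $\Sigma(B)$ is taken contains a triple of quality above one for every $B$, by Theorem~\ref{thm:mersenne-hits}; hence $q^*\geq1$, as recorded just before the proposition. Only finitely many parity-class triples have $K$ below any given bound, because $c=2K$ and $a,b<c$. Consequently an infinite set of triples has unbounded $K$.

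\emph{Conjecture implies $q^*=1$.} Fix $\varepsilon>0$. By Conjecture~\ref{conj:parity-abc}, only finitely many parity-class triples satisfy $q(\mathcal{T})\geq1+\varepsilon$. Choose $B$ larger than the value of $K$ at each of them. Every triple with $K\geq B$ then has $q<1+\varepsilon$, so $\Sigma(B)\leq1+\varepsilon$ and therefore $q^*\leq1+\varepsilon$. Letting $\varepsilon\to0$ and combining with $q^*\geq1$ gives $q^*=1$.

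\emph{$q^*=1$ implies the conjecture.} Argue by contraposition. Suppose that for some $\varepsilon>0$ infinitely many parity-class triples satisfy $E_\varepsilon\geq0$, that is, $q\geq1+\varepsilon$. Since their $K$-values are unbounded, for every $B$ one of them has $K\geq B$, so $\Sigma(B)\geq1+\varepsilon$ for all $B$. Hence $q^*\geq1+\varepsilon>1$.

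The main point needing care is the finiteness step: only finitely many triples have $K<B$. This follows because $a,b,c$ are bounded by $2B$. A lesser point is the convention when the supremum is infinite. The formula $q=\log c/\log\rad(abc)$ is finite for parity-class triples, since $\rad(abc)\geq2$, and the argument never needs $\Sigma(B)$ to be finite, so it goes through unchanged if $\Sigma(B)=+\infty$.
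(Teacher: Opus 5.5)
Your proof is correct and is essentially the paper's argument: the forward direction is identical, and your contrapositive for the reverse direction uses the same reasoning as the paper's direct choice of a height $B$ with $\Sigma(B)<1+\varepsilon$. One small correction to your preliminaries: the bound $q^*\geq1$ comes from the triples $\mathcal{V}_{6n}$ having unbounded $K=2^{6n-1}$, not from $K\geq1$.
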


\begin{proof}
Suppose first that Conjecture~\ref{conj:parity-abc} holds.
For every \(\varepsilon>0\), the finitely many triples with \(q\geq1+\varepsilon\) lie below some height, so that \(q^*\leq1+\varepsilon\).
Since \(q^*\geq1\), passage to \(\varepsilon\to0^+\) gives \(q^*=1\).

Suppose now that \(q^*=1\).
For every \(\varepsilon>0\), there is a height \(B\) with \(\Sigma(B)<1+\varepsilon\).
Hence every triple with \(q\geq1+\varepsilon\) satisfies \(K<B\), and there are only finitely many such triples.
\end{proof}

\begin{definition}[Transgressive sequence]\label{def:transgressive-sequence}
Fix \(\varepsilon>0\).
A \emph{transgressive sequence} is a sequence of distinct parity-class triples \((\mathcal{T}_j)\) satisfying
\[
  E_{\varepsilon}(\mathcal{T}_j)\geq0
\]
for every \(j\).
\end{definition}

The existence of a transgressive sequence at one fixed exponent is precisely the failure of Conjecture~\ref{conj:parity-abc}.

\begin{theorem}[Growth along a transgressive sequence]\label{thm:counterexample-portrait}
Let \((\mathcal{T}_j)\) be a transgressive sequence at exponent \(\varepsilon\), and write \(K_j=K(\mathcal{T}_j)\).
Then \(K_j\) is unbounded, and so are the total defect \(\delta_{\tot}(\mathcal{T}_j)\), the relative quotient \(Q_{\mathrm{rel}}(\mathcal{T}_j)\), and the largest square divisor \(\max\{\sq(a_j)^2,\sq(b_j)^2\}\).
More precisely, one has
\[
  \delta_{\tot}(\mathcal{T}_j)
  \geq
  \log(2K_j-1)+\theta_{\varepsilon}\log(2K_j).
\]
One has
\[
  Q_{\mathrm{rel}}(\mathcal{T}_j)
  \geq
  s(\mathcal{T}_j)\,(2K_j)^{\theta_{\varepsilon}}.
\]
Finally, writing \(u_{a_j}=\sq(a_j)\) and \(u_{b_j}=\sq(b_j)\) for the square roots of the largest square divisors of the two summands, Theorem~\ref{thm:large-square} gives
\[
  \max\{
    \sq(a_j)^2,
    \sq(b_j)^2
  \}
  \geq
  u_{a_j}u_{b_j}
  \geq
  \bigl(s(\mathcal{T}_j)\,R_{\mathcal{K}}(\mathcal{T}_j)\bigr)^{1/2}
  (2K_j)^{\theta_{\varepsilon}/2}.
\]
Every member with \(K_j>1\) contains at least two repeated primes among \(a_j\), \(b_j\), and \(c_j\).
\end{theorem}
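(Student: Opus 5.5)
The plan is to assemble the statement from results already proved, beginning with the unboundedness of \(K_j\). For a fixed bound \(B\), the parity-class triples with \(K\leq B\) are finite in number, since \(c=2K\leq 2B\) and \(a,b<c\). A sequence of distinct triples therefore cannot keep \(K_j\) bounded. After discarding finitely many terms we may assume \(K_j>1\) throughout, and then every theorem stated under the hypothesis \(K>1\) applies to each \(\mathcal{T}_j\).

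The three displayed inequalities are then cited one at a time.
\begin{enumerate}[label=\textup{(\roman*)}]
\item The total-defect bound is Theorem~\ref{thm:total-criterion} combined with Lemma~\ref{lem:minimal-product}, exactly as in the proof of Theorem~\ref{thm:total-concentration}. The lemma gives \(ab\geq2K_j-1\) when \(K_j>1\); for \(K_j=1\) the triple is \((1,1,2)\), where \(\log(ab)=0=\log(2K_j-1)\), so the bound holds for every term.
\item The bound on \(Q_{\mathrm{rel}}\) is the second assertion of Corollary~\ref{cor:quotient-certificate}.
\item The square-divisor bound is Theorem~\ref{thm:large-square} verbatim.
\end{enumerate}
Since each right-hand side contains \((2K_j)^{\theta_\varepsilon}\) or \(\log(2K_j)\) with a positive coefficient, and the remaining factors satisfy \(s\geq1\) and \(R_{\mathcal K}\geq1\), unboundedness of \(K_j\) transfers to all three quantities along the same subsequence.

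The final sentence is Theorem~\ref{thm:two-repeated-primes}, applied to each member with \(K_j>1\).

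No step is a genuine obstacle. The only point needing care is the unboundedness of \(K_j\) from distinctness, and the finite-set argument handles it. A minor nuisance is the phrase ``unbounded'' for the derived quantities: the inequalities hold termwise, so the lower bounds are unbounded along the whole sequence, which is all that is claimed.
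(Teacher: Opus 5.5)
Your proposal is correct and follows the paper's proof: finiteness of triples at each height gives unbounded $K_j$, the three bounds come from Theorem~\ref{thm:total-criterion} with Lemma~\ref{lem:minimal-product}, Corollary~\ref{cor:quotient-certificate}, and Theorem~\ref{thm:large-square}, and the final sentence is Theorem~\ref{thm:two-repeated-primes}. One refinement: no terms need discarding, since the only triple with $K=1$ is $(1,1,2)$, whose quality is exactly $1$, so every transgressive triple already has $K>1$ and the bounds from Corollary~\ref{cor:quotient-certificate} and Theorem~\ref{thm:large-square} hold for every $j$, as the statement asserts.
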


\begin{proof}
Each fixed height contains finitely many parity-class triples, so the heights in a sequence of distinct triples are unbounded.
The displayed bounds are Theorem~\ref{thm:total-criterion}, Corollary~\ref{cor:quotient-certificate}, and Theorem~\ref{thm:large-square}, the last in the chained form given there.
The final assertion is Theorem~\ref{thm:two-repeated-primes}.
\end{proof}

\begin{theorem}[Two regimes for transgressive sequences]\label{thm:boundary-interior-dichotomy}
Let \((\mathcal{T}_j)\) be a transgressive sequence at exponent \(\varepsilon\), and put \(s_j=s(\mathcal{T}_j)\) and \(K_j=K(\mathcal{T}_j)\).
Then \((\mathcal{T}_j)\) contains a subsequence falling into exactly one of the following two mutually exclusive regimes:
\begin{enumerate}[label=\textup{(\arabic*)}]
\item \textit{Bounded smaller summand.} There is a fixed odd integer \(s_0\) such that \(s_j=s_0\) along the subsequence.
The subsequence lies on the affine line
\[
  \bigl\{(s_0,2K-s_0,2K):K>s_0,\ \gcd(s_0,2K)=1\bigr\}
\]
with \(K_j\to\infty\), and Corollary~\ref{cor:line-criterion} gives
\[
  \log\left(\frac{2K_j-s_0}{\rad(2K_j-s_0)}\right)
  \geq
  t^{\mathcal{K}}(\mathcal{T}_j)
  +
  \theta_{\varepsilon}\log(2K_j)
  +
  \log\rad(s_0).
\]
\item \textit{Unbounded smaller summand.} One has \(s_j\to\infty\) along the subsequence, and
\[
  \delta_{\lin}(\mathcal{T}_j)
  -
  t^{\mathcal{K}}(\mathcal{T}_j)
  -
  \theta_{\varepsilon}\log(2K_j)
  \geq
  \log s_j
  \longrightarrow
  \infty.
\]
If \(s_j\geq K_j^{\alpha}\) for a fixed \(\alpha\in(0,1]\) with \(\theta_{\varepsilon}+\alpha<1\), then Proposition~\ref{prop:exponent-amplification} applies, and the subsequence satisfies the linear threshold at the amplified exponent \(\theta_{\varepsilon'}=\theta_{\varepsilon}+\alpha\) up to the additive constant \(\log2\).
\end{enumerate}
\end{theorem}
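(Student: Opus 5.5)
The plan is a dichotomy on the sequence of smaller summands \(s_j\), followed by applying the earlier thresholds in each branch. The first step is to note that \(K_j\to\infty\) along the sequence, by Theorem~\ref{thm:counterexample-portrait}. After discarding finitely many terms we may therefore assume \(K_j>1\), so that Theorem~\ref{thm:exact-linear-threshold} and its consequences apply. Next, split on whether \((s_j)\) is bounded.

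\emph{Bounded case.} If \((s_j)\) is bounded, then \(s_j\) takes only finitely many odd values. By the pigeonhole principle some odd value \(s_0\) occurs infinitely often, and we pass to that subsequence. Since the triples are distinct and \(s_j=s_0\) is fixed, the values \(K_j\) must be distinct, and hence \(K_j\to\infty\) along a further subsequence. Discarding the finitely many terms with \(K_j\le s_0\) leaves triples of the form \((s_0,2K_j-s_0,2K_j)\) with \(K_j>s_0\). Pairwise coprimality gives \(\gcd(s_0,2K_j)=1\). The displayed inequality is then the second assertion of Corollary~\ref{cor:line-criterion}.

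\emph{Unbounded case.} If \((s_j)\) is unbounded, choose a subsequence with \(s_j\to\infty\). Corollary~\ref{cor:quotient-certificate}, in its logarithmic form, gives
\[
\delta_{\lin}-t^{\mathcal K}-\theta_\varepsilon\log(2K_j)\ \ge\ \log s_j+\log(2-s_j/K_j)\ \ge\ \log s_j,
\]
and the right-hand side tends to infinity. Under the extra hypothesis \(s_j\ge K_j^\alpha\) with \(\theta_\varepsilon+\alpha<1\), Proposition~\ref{prop:exponent-amplification} applies term by term and yields the amplified statement.

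Mutual exclusivity is immediate, because a subsequence cannot have \(s_j\) both constant and tending to infinity. The one point needing care, mild as it is, is justifying \(K_j\to\infty\) in the bounded branch. This follows from the distinctness of the triples: a fixed \(s_0\) together with a fixed \(K\) determines the triple.
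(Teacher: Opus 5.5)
Your proof is correct and follows the paper's argument essentially step for step. It uses the same bounded/unbounded split on $s_j$ with pigeonhole, Corollary~\ref{cor:line-criterion} in the first branch, the bound of Theorem~\ref{thm:exact-linear-threshold} with $2-s_j/K_j\geq1$ in the second (which you reach through Corollary~\ref{cor:quotient-certificate}), and Proposition~\ref{prop:exponent-amplification} for the amplified claim. One small slip: fixed $s_0$ and $K$ determine the triple only up to swapping $a$ and $b$, since the paper's triples are ordered and $M$ may have either sign, so the $K_j$ need not be distinct; each value occurs at most twice, however, so $K_j\to\infty$ still holds, as it does for the whole sequence because each height carries only finitely many triples.
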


\begin{proof}
The smaller summands \(s_j\) are positive odd integers.
Suppose first that some subsequence of \((s_j)\) is bounded.
Along it the values lie in a finite set, so one value \(s_0\) recurs infinitely often, and the corresponding triples lie on the stated line.
The heights along a sequence of distinct triples are unbounded, so \(K_j\to\infty\) there, and the displayed bound is the second assertion of Corollary~\ref{cor:line-criterion}.
Suppose instead that no subsequence of \((s_j)\) is bounded.
Then \(s_j\to\infty\), and the displayed interior bound follows from Theorem~\ref{thm:exact-linear-threshold} together with \(2-s_j/K_j\geq1\).
A subsequence cannot fall under both headings, since \(s_j\) cannot be constant and tend to infinity.
The final claim is Proposition~\ref{prop:exponent-amplification}.
\end{proof}

\begin{remark}[Boundary lines and the Mersenne line]\label{rem:dichotomy-reading}
Along a subsequence where \(s_j = s_0\) is constant, the triples take the form \((s_0, 2K-s_0, 2K)\). In this case, the lower bound in Corollary~\ref{cor:line-criterion} shifts by the fixed additive constant \(\log\rad(s_0)\), and the required defect must come almost entirely from the larger summand \(2K-s_0\). The choice \(s_0=1\) with \(2K=2^m\) gives the Mersenne line, where the factorization of \(2^m-1\) can be examined explicitly. Along a subsequence where \(s_j \to \infty\), the threshold increases by \(\log s_j\), requiring a strictly larger defect than on the boundary.
\end{remark}

\begin{remark}[Structural constraints on putative counterexamples]\label{rem:negative-resolution}
Any infinite transgressive sequence must exhibit squarefull parts that grow as a positive power of the height \(c\). Such growth cannot be generated by squarefree summands, by bounded defects, or by lifting a fixed finite set of primes (as shown in Corollary~\ref{cor:finite-prime-escape}).
By Theorem~\ref{thm:boundary-interior-dichotomy}, any transgressive sequence must either remain on a fixed line \((s_0, 2K-s_0, 2K)\) or satisfy a linear defect that grows faster than the boundary-free threshold by an additive term \(\log s_j \to \infty\).

Along the Mersenne line, Theorem~\ref{thm:order-defect-density} and Theorem~\ref{thm:order-core-descent} further require that the underlying Wieferich primes satisfy \(B(S_j) \ge 2^{\kappa D(S_j)}\) along an order-saturated sequence with \(D(S_j) \to \infty\).
No known arithmetic mechanism produces repetition on that scale.
\end{remark}

\bibliographystyle{smfalpha}
\bibliography{abc_conjecture}

\end{document}